\documentclass[11pt,a4paper,reqno]{amsart}
\usepackage[a4paper, left = 2cm,  hmargin={25mm,25mm},vmargin={25mm,25mm}]{geometry}
\usepackage[utf8]{inputenc}
\usepackage{geometry}
\usepackage[english]{babel}
\usepackage{graphicx}
\usepackage{color}
\usepackage{xcolor}
\usepackage{pdfpages}
\usepackage{amsbsy}
\usepackage{amssymb}
\usepackage{amsmath}
\usepackage{bm}
\usepackage{mathtools}
\usepackage{matlab-prettifier}
\usepackage{amsthm}
\usepackage{tikz-cd}
\usepackage{amsfonts}
\usepackage{array}
\usepackage{hyperref}
\usepackage[OT2,T1]{fontenc}
\usepackage{csquotes}
\usepackage{float}

\newcommand\m[1]{\begin{pmatrix}#1\end{pmatrix}} 

\everymath{\displaystyle}
\newtheorem{theorem}{Theorem}[section]

\theoremstyle{definition}
\newtheorem{lemma}[theorem]{Lemma}

\theoremstyle{definition}
\newtheorem{defn}[theorem]{Definition}

\theoremstyle{definition}
\newtheorem{proposition}[theorem]{Proposition}

\newtheorem{remark}{Remark}
\newtheorem*{notation}{Notation}

\newtheorem{corollary}[theorem]{Corollary}

\newcommand{\Mod}[1]{\ (\mathrm{mod}\ #1)}

\sloppy
\definecolor{lightgray}{gray}{0.5}
\setlength{\parindent}{0pt}

\usepackage[backend=biber, maxnames=10]{biblatex}
\addbibresource{reference.bib}

\begin{document}
\title[On a Rankin-Selberg integral of three Hermitian cusp forms] 
{On a Rankin-Selberg integral of three Hermitian cusp forms}
\author{Thanasis Bouganis and Rafail Psyroukis}
\address{Department of Mathematical Sciences\\ Durham University\\
South. Rd.\\ Durham, DH1 3LE, U.K..}
\email{athanasios.bouganis@durham.ac.uk, rafail.psyroukis@durham.ac.uk}
\maketitle 
\vspace{-1cm}
\begin{abstract}
    Let $K = \mathbb{Q}(i)$. We study the Petersson inner product of a Hermitian Eisenstein series of Siegel type on the unitary group $U_{5}(K)$, diagonally-restricted on $U_2(K)\times U_2(K)\times U_1(K)$, against two Hermitian cuspidal eigenforms $F, G$ of degree $2$ and an elliptic cuspidal eigenform $h$ (seen as a Hermitian modular form of degree 1), all having weight $k \equiv 0 \pmod 4$. We obtain, through this consideration, an integral representation of a certain Dirichlet series, together with an additional residue term. By taking $F$ to belong in the Maass space, we are able to show that the Dirichlet series possesses an Euler product. Moreover, its $p$-factor for an inert prime $p$ can be essentially identified with the
    twist by $h$ of a degree six Euler factor attached to $G$ by Gritsenko. 
    The question of whether the same holds for the primes that split remains unanswered here, even though we make considerable steps in that direction too. Our paper is inspired by a work of Heim, who considered a similar question in the case of Siegel modular forms.

\end{abstract}
\tableofcontents
\vspace{-1cm}
\section{Introduction} 

The spinor $L$-function attached to a cuspidal (holomorphic) Siegel eigenform $G$ of degree two has been an object of intense study in the literature. It was the seminal work of Andrianov in \cite{andrianov2} who first obtained an integral representation of such an $L$-function and from there derived a functional equation and established its analytic properties. However, the integral representation obtained by Andrianov does not allow one (or at least it is not known how) to obtain algebraicity properties of the critical values. The difficulty seems to be related to the fact that the integral representation involves Eisenstein series which are defined over symmetric spaces which do not have a structure of a Shimura variety. Later work, such as the one of Kohnen and Skoruppa in \cite{kohnen_skoruppa}, obtained other integral representations using Eisenstein series over the Siegel upper half space but of weight zero and hence not holomorphic (or nearly holomorphic). One should also mention here that if on the other hand $G$ is taken non-holomorphic, there is recent work of Loeffler, Pilloni, Skinner and Zerbes in \cite[Theorem A]{LPSZ}, where they construct a $p$-adic measure which interpolates twists of such $L$-values.
\newline

It is perhaps surprising that a seemingly more complicated object, the twist of the spinor $L$-function of $G$ (from now on always assumed holomorphic) by an elliptic cusp form $h$ (that is $\textup{Sp}_4 \times \textup{GL}_2$) does afford an integral representation, which allows one to not only study analytic properties but also algebraic (and $p$-adic). Actually, there are (at least) two different such integral representations. The one obtained by Furusawa in \cite{furusawa}, uses an Eisenstein series over a unitary group and its restriction to $\textup{Sp}_4$. There is a series of works based on this idea, most notably by Saha in \cite{saha}, generalising the work of Furusawa. \newline

The other integral representation of the twist was obtained by Heim in \cite{heim} in his effort to answer a question posed by Garrett on the possibility of extending the doubling method to more copies of the group (here the symplectic). Indeed, Heim managed to show that the twisted spinor $L$-function can be obtained by integrating the restriction of a Siegel type Eisenstein series of degree $5$ to $\mathbb{H}_2 \times \mathbb{H}_2 \times \mathbb{H}_1$ against the (degree two) cusp form $G$, another degree two cusp form $F$ in the Maass space and an elliptic cusp form $h$, all being Hecke eigenforms for their corresponding Hecke algebras. His method was based on the factorisation of the polynomials defining the $p$-factors of the relevant $L$-functions in some parabolic Hecke rings. This integral expression was later exploited systematically by Böcherer and Heim in \cite{bocherer_heim_2}, in order to establish various algebraicity properties and lift various restrictions on the weights of the Siegel and elliptic modular forms by the use of differential operators. \newline

Shortly after the work of Andrianov on the spinor $L$-function in \cite{andrianov2}, Gritsenko, in a series of papers, extended Andrianov's approach of the use of parabolic Hecke algebras to the study of a degree 6 $L$-function attached to a cuspidal Hermitian eigenform of degree two, where the underlying imaginary quadratic field is taken to be the field of Gaussian numbers $K:=\mathbb{Q}(i)$. Indeed in \cite{gritsenko_zeta}, Gritsenko first defined such an $L$-function and in the later work of \cite{gritsenko}, he obtained the analogue construction of Kohnen and Skoruppa using the factorization approach. Both integral representations allowed him to obtain a functional equation and study the analytic properties. However, as in the case of the symplectic group, neither of the above integral representations could be used to derive algebraicity properties, due to the Eisenstein series involved (only of real analytic nature). \newline

In this paper, we ask whether the phenomenon observed in the case of the twisted symplectic spinor $L$-function carries over to the unitary one. Namely, whether the degree 6 $L$-function considered by Gritsenko, twisted by an elliptic cusp form $h$, affords an integral representation which allows one to study algebraic properties of the twisted $L$-function. Here, the elliptic cusp form $h$ is seen as a Hermitian form of degree one (i.e. of $U_1$). Given the similarities between $\textup{Sp}_4$ and $U_2$, we investigate the possibility of extending the idea of Heim to the unitary setting. That is, we study the Petersson inner product of a Hermitian Eisenstein series of Siegel type on the unitary group $U_{5}(K)$, diagonally-restricted on $U_2(K)\times U_2(K)\times U_1(K)$, against two Hermitian cusp forms $F, G$ of degree $2$ and a Hermitian cusp form $h$ of degree $1$, all being Hecke eigenforms for their corresponding Hecke algebras.\newline

Let us now briefly mention the main results in our paper. Firstly, through the above consideration, we obtain an integral representation of a certain Dirichlet series, together with an additional residue term (see Theorem \ref{integral_representation_theorem}). The Dirichlet series is analogous to the one obtained by Heim in \cite[Theorem 2.6]{heim}. The additional residue term is in itself very interesting and does not show up in the work of Heim; however, it is not explored in this paper either. We hope to study it in future work. By now taking $F$ to belong in the Maass space, we proceed to factor this Dirichlet series for every rational prime $p$. In particular, in Theorem \ref{Main Theorem, inert case}, we show that its $p$-factor for an inert prime $p$ can be essentially identified with the degree $12$ $p$-factor of $Z_{G\otimes h}$, the degree $6$ $L$-function attached to $G$ by Gritsenko, twisted by the Satake parameters of $h$. The question of whether the same holds for the primes that split in $K$ remains unanswered here, even though we make considerable steps in that direction too. In particular, we have obtained all the essential ingredients, i.e. the factorization of polynomials in parabolic Hecke rings, the necessary rationality theorems, relations between Hecke operators as well as the main computations of the Dirichlet series. However, performing the last few calculations seems very complicated. Our progress is summarised in Theorem \ref{Main Theorem, split case}. Nevertheless, our computations in the split case give us a way to show that the Dirichlet series possesses an Euler product (Theorem \ref{Main Theorem, Euler Product}). The case of the only ramified prime, namely $2$, is not discussed here but we should say that our calculations for the inert case give essentially all ideas required to compute the Euler factor also in this case. \newline

The reader may have already recognised that the choice of $U_2$ is not as random. Indeed, thanks to the so-called accidental isogenies between orthogonal groups of small rank and other classical groups, the spinor $L$-function of  $\textup{Sp}_4$ can be identified with the standard $L$-function attached to a holomorphic modular form of $\textup{SO}(2,3)$. Similarly, the $L$-function studied by Gritsenko is closely related to the standard $L$-function of an orthogonal group of signature $(2,4)$. Therefore, the twist we are studying here is nothing else than the two dimensional twist of this standard $L$-function. Such twists have been studied before in \cite{orthogonal_book} but these methods can only be used for analytic results. If, on the other hand, one is interested in algebraicity results of special values, these methods cannot (or at least is not known how) used to obtain such results because of the use of orthogonal groups which do not correspond to Shimura varieties. On the other hand, the approach taken here (as a triple product) is known to give algebraicity results (see Proposition \ref{Algebraicity} and the discussion there), very much in the way that Heim and Böcherer obtained their algebraicity results in the case of the two dimensional twist of the spinor $L$-function attached to a degree two Siegel modular form. This possibility of obtaining algebraicity results is the main motivation for the present work.  

\begin{notation}
In the following, we will always use the following notation. We will denote by $K := \mathbb{Q}(i)$, the Gaussian field. Let also $\mathcal{O}_K := \mathbb{Z}[i]$ denote its ring of integers and
\begin{equation*}
    J_n = \m{0_n&-1_n\\1_n&0_n}.
\end{equation*}
We denote the space of $m\times n$ matrices with coefficients in a ring $R$ with $M_{m,n}(R)$. If $n=m$, we often use the notation $M_{n}(R)$. For any matrix $M$, we denote by $M^{t}, \textup{ }\det(M), \textup{ tr}(M)$ the transpose, determinant and trace of $M$ respectively. Let also $\textup{GL}_{n}(R) := M_{n}(R)^{\times}$. The zero and identity elements in $M_n(R)$ are denoted by $0_n$ and $1_n$ respectively. We will use the bracket notation $A[B] := \overline{B}^{t}AB$ for complex matrices $A,B$. By $\left[A_1, A_2, \cdots, A_n\right]$, we will denote the block diagonal matrix with the matrices $A_1, A_2, \cdots, A_n$ in the diagonal blocks. We will also use the symbol "$\textup{diag}$" if each $A_i$ is a scalar. For a complex number $z$, we denote by $e(z) := e^{2\pi i z}$ and by $N(z)$ its norm, i.e. $N(z) = z\overline{z}$. For a polynomial $U$ in $X$, with coefficients in some Hecke ring and $G$ a Hecke eigenform, we write $U_{G}$ for the polynomial obtained when we substitute the coefficients (Hecke operators) with the corresponding eigenvalues. Let also
\begin{equation*}
    \Gamma(z) := \int_{0}^{\infty} t^{z-1}e^{-t}\hbox{d}t
\end{equation*}
denote the Gamma function.
\end{notation}
\section{Preliminaries}
We start with some definitions. We mainly follow Gritsenko in \cite{gritsenko}.

\begin{defn}\label{unitary}
Let $R$ be either $K$, $\mathcal{O}_K$ or $\mathbb{C}$ and fix an embedding $R \hookrightarrow \mathbb{C}$. We write $U_n(R)$ for the $R$-points of the unitary group of degree $n \geq 1$. That is,
\begin{equation*}
    U_n(R) := \{g \in \textup{GL}_{2n}(R) \mid J_n[g] = J_n\}.
\end{equation*}
\end{defn}
Hence, for an element $\m{A&B\\C&D} \in U_n(R)$ with $n\times n$ matrices $A,B,C,D$, these satisfy the relations
\begin{equation*}
    \overline{A}^{t}C = \overline{C}^{t}A, \textup{ }\overline{D}^{t}B = \overline{B}^{t}D, \textup{ }A\overline{D}^{t} - \overline{B}^{t}C = 1_n.
\end{equation*}
\begin{defn}
The Hermitian upper half-plane of degree $n$ is defined by 
\begin{equation*}
    \mathbb{H}_n := \{Z = X+iY \in \textup{M}_n(\mathbb{C})| \overline{X}^{t} = X, \overline{Y}^{t} = Y>0\}.
\end{equation*}
\end{defn}
We fix an embedding $K \hookrightarrow \mathbb{C}$. Then, an element $g = \m{A&B\\C&D} \in U_{n}(K) \hookrightarrow U_n(\mathbb{C})$ of the unitary group acts on the above upper half plane via the action
\begin{equation*}
    Z \longmapsto g\langle Z\rangle := (AZ+B)(CZ+D)^{-1}.
\end{equation*}
The usual factor of automorphy is defined by $j(g, Z) := \textup{det}(CZ+D)$.\\

Let now $\Gamma_n$ denote the Hermitian modular group, that is $\Gamma_n := U_n(\mathcal{O}_K)$.
\begin{defn}\label{hermitian_modular_form}
A function $F : \mathbb{H}_n \longrightarrow \mathbb{C}$ is called a Hermitian modular form of integer weight $k \geq 0$ if
\begin{itemize}
    \item $F$ is holomorphic,
    \item $F$ satisfies 
    \begin{equation*}
        F\left(g\langle Z\rangle\right) = j(g, Z)^{k}F(Z),
    \end{equation*}
for all $g \in \Gamma_n$ and $Z \in \mathbb{H}_n$. 
\end{itemize}
If $n=1$, we further require that $F$ is holomorphic at infinity. 
\end{defn}

It is well known that the set of all such forms constitutes a finite dimensional space, which we denote by $M_{n}^{k}$. Because of our assumption if $n=1$ and of Köcher's principle for $n \geq 2$, each such $F$ admits a Fourier expansion
\begin{equation}\label{fourier-expansion}
    F(Z) = \sum_{N}a(N)e(\textup{tr}(NZ)),
\end{equation}
where $a(N) \in \mathbb{C}$ and $N$ runs through all the semi-integral non-negative Hermitian matrices
\begin{equation*}
    N \in \left\{(n_{ij})_{i,j=1}^{n}\geq 0 \mid n_{ii} \in \mathbb{Z}, n_{ij} = \overline{n_{ji}} \in \frac{1}{2}\mathcal{O}_K\right\}.
\end{equation*}
$F$ is called a \textbf{cusp form} if $a(N) \neq 0$ only for $N$ positive definite. We denote the space of cusp forms by $S_{n}^{k}$.
\begin{defn}\label{hermitian_inner_product}
The Petersson inner product for two Hermitian modular forms $F,G$, where at least one of them is a cusp form, is given by
\begin{equation*}
    \langle F, G\rangle := \int_{\Gamma_n\backslash \mathbb{H}_n}{F(Z)\overline{G(Z)}(\det Y)^{k}\hbox{d}^{*}Z},
\end{equation*}
where $\hbox{d}^{*}Z = (\det Y)^{-2n}\hbox{d}X\hbox{d}Y$ with $Z=X+iY$.
\end{defn}
For $R$ as in Definition \ref{unitary}, we consider the following parabolic subgroups of $U_{n}(R)$:
\begin{equation*}
    P_{n,r}(R) = \left\{\begin{pmatrix}* & * \\ 0_{n-r, n+r}&*\end{pmatrix} \in U_{n}(R)\right\},
\end{equation*}
\begin{equation*}
    C_{n,r}(R) = \left\{\begin{pmatrix}* & * \\ 0_{n+r, n-r}&*\end{pmatrix} \in U_{n}(R)\right\}.
\end{equation*}
In particular, when $R = \mathcal{O}_K$, we will just write $P_{n,r}, C_{n,r}$. \\\\
We will be particularly interested in the parabolic subgroup $P_{n,n-1}(K) \leq U_n(K)$, mainly because we can treat the so-called Fourier-Jacobi forms as a special type of modular forms with respect to this subgroup. Let us make this more explicit. We first start with the definition of the slash operator.
\begin{defn}
    Let $n \geq 1$ and $k$ be any integer. Then, for any function $F$ on $\mathbb{H}_{n+1}$ and a matrix $g = \m{A&B\\C&D} \in U_{n+1}(K)$, we define
    \begin{equation*}
        (F\mid_k g)(Z) := \textup{det}(CZ+D)^{-k}F(g\langle Z\rangle).
    \end{equation*}
\end{defn}
We now define $\Gamma_{n,1} := P_{n+1,n}(\mathcal{O}_K)$, the group of integral points of the parabolic $P_{n+1,n}(K)$. We then have the following definition:
\begin{defn}\label{parabolic_modular_forms}
Let $n \geq 1$. A holomorphic function $F$ on $\mathbb{H}_{n+1}$ is a modular form of weight $k$ with respect to the parabolic subgroup $\Gamma_{n,1}$ if the following conditions hold:
\begin{itemize}
    \item $F \mid_k M = F$ for all $M \in \Gamma_{n,1}$,
    \item The function $F(Z)$ is bounded in the domain $\textup{Im}(Z)\geq c > 0$, for all $c>0$.
\end{itemize}
We note here that we can ommit the second condition if $n \geq 2$. This again follows by Köcher's principle. The space of all such forms will be denoted by $M_{n,1}^{k}$. Again, each such $F$ has a Fourier expansion as in equation \eqref{fourier-expansion} and we call $F$ a \textbf{cusp form} if $a(N) \neq 0$ only for positive definite matrices $N$.
\end{defn}
We can now give the definition of Fourier-Jacobi forms, as in \cite[page 2887]{gritsenko}.
\begin{defn}\label{jacobi}
    A complex-valued, holomorphic function $\phi$ on $\mathbb{H}_n \times \mathbb{C}^{n} \times \mathbb{C}^{n}$ is said to be a Fourier-Jacobi form of genus $n$, weight $k$ and index $m$ if the function 
    \begin{equation*}
    \tilde{\phi}\left(\m{\tau&z_1\\z_2^t&\omega}\right):= \phi(\tau, z_1, z_2)e(m\omega),
    \end{equation*}
    where $\omega \in \mathbb{H}_1$ is chosen so that $\m{\tau&z_1\\z_2^t&\omega} \in \mathbb{H}_{n+1}$, is a modular form with respect to the group $\Gamma_{n,1}$. The space of such forms is denoted by $J_{k,m}^{n}$ and we will call $\tilde{\phi}$ a $P$-form, as in \cite[Section 3.4]{heim}.
\end{defn}
Let now $F \in S_{n}^{k}$. If we partition
\begin{equation*}
    Z = \m{\tau&z_1 \\ z_2^{t} &\omega}, 
\end{equation*}
with $\tau \in \mathbb{H}_{n-1}, \omega \in \mathbb{H}_1$ and $z_1,z_2 \in \mathbb{C}^{n-1}$, we can consider the Fourier expansion of $F$ with respect to the variable $\omega$ to be
\begin{equation*}
    F(Z) = \sum_{m=1}^{\infty} \phi_{m}(\tau, z_1, z_2)e(m\omega).
\end{equation*}
The functions $\phi_{m} : \mathbb{H}_{n-1} \times \mathbb{C}^{n-1}\times \mathbb{C}^{n-1} \longrightarrow \mathbb{C}$ are then Fourier-Jacobi forms in the sense of Definition \ref{jacobi} and are called the Fourier-Jacobi coefficients of $F$. We will now restrict ourselves to the case $n=2$.
\begin{defn}\label{inner_product_jacobi}
The Petersson inner product of two Fourier-Jacobi forms $\phi, \psi \in J_{k,m}^{2}$ is defined as
\begin{equation*}
    \langle \phi, \psi \rangle := \int_{\mathcal{F}^{J}}\phi(\tau, z_1,z_2)\overline{\psi(\tau,z_1,z_2)}v^{k}e^{-\pi m |z_1-\overline{z_2}|^2/v}\hbox{d}\mu,
\end{equation*}
where $\hbox{d}\mu = v^{-4}\hbox{d}u\hbox{d}v\hbox{d}x_1\hbox{d}y_1\hbox{d}x_2\hbox{d}y_2$ with $\tau = u+iv$, $z_j=x_j+iy_j$ for $j=1,2$ and $\mathcal{F}^{J}$ is a fundamental domain for the action of $P_{2,1}$ on $\mathbb{H}_1 \times \mathbb{C}^{2}$. 
\end{defn}
The reader should note that we are using the same symbol to denote the inner product for two Fourier-Jacobi forms as the one we use to denote the inner product for two Hermitian modular forms (see Definition \ref{hermitian_inner_product}). However, we will always use a greek letter ($\phi$ or $\psi$) to denote a Fourier-Jacobi form
and a latin letter to denote a Hermitian modular form. This should help eliminate any possibility of confusion.\\
In the following, for $Z \in \mathbb{H}_2$ as above, we write
\begin{equation}\label{imaginary,real}
    \textup{Re}(Z) = \m{x_{\tau} & x_{z_1}\\x_{z_2}&x_{\omega}}, \textup{ Im}(Z) = \m{y_{\tau} & y_{z_1}\\y_{z_2}&y_{\omega}},
\end{equation}
for its real and imaginary part respectively.
\begin{defn}
    Let $\phi_{m}, \psi_{m} \in J_{k,m}^{2}$ and denote by $\tilde{\phi}_m, \tilde{\psi}_m$ the $P$-forms obtained as in Definition \ref{jacobi}. We then define
\begin{equation*}
    \langle \tilde{\phi}_m, \tilde{\psi}_m\rangle_{\mathcal{A}} := \int_{\mathcal{Q}_{1,1}} \widetilde{\phi}_m(Z)\widetilde{\psi}_m(Z)(\det{Y})^{k}\hbox{d}^{*}Z,
\end{equation*}
where $\hbox{d}^{*}Z = (\det{Y})^{-4}\hbox{d}X\hbox{d}Y$ is the invariant element for the action of the unitary group on $\mathbb{H}_2$ and
\begin{equation*}
    \mathcal{Q}_{1,1} := \left\{Z \in \mathbb{H}_{2} \mid (\tau, z_1, z_2) \in \mathcal{F}^{J} \textup{ and } |x_{\omega}| \leq 1/2\right\}.
\end{equation*}
\end{defn}
There is a relation between the two inner products above, given in the following Lemma.
\begin{lemma}\label{inner_product_p_forms}
Let $\phi_{m}, \psi_{m} \in J_{k,m}^{2}$ and denote by $\tilde{\phi}_m, \tilde{\psi}_m$ the corresponding $P$-forms. Then
\begin{equation*}
    \langle\phi_m, \psi_m\rangle = \beta_km^{k-3}\langle\tilde{\phi}_m, \tilde{\psi}_m \rangle_{\mathcal{A}},
\end{equation*}    
for some (specified) constant $\beta_k$.
\end{lemma}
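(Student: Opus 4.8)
The plan is to unfold the definition of $\langle\tilde\phi_m,\tilde\psi_m\rangle_{\mathcal{A}}$ and carry out the integration over the extra variable $\omega$ explicitly, thereby collapsing the integral over $\mathcal{Q}_{1,1}\subset\mathbb{H}_2$ onto the Jacobi integral over $\mathcal{F}^J$. Writing $Z=\m{\tau&z_1\\z_2&\omega}$ with $\omega=x_\omega+iy_\omega$ and using $\tilde\phi_m(Z)=\phi_m(\tau,z_1,z_2)e(m\omega)$, the integrand of $\langle\cdot,\cdot\rangle_{\mathcal{A}}$ becomes $\phi_m\overline{\psi_m}\,|e(m\omega)|^2(\det Y)^{k-4}$. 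Here $|e(m\omega)|^2=e^{-4\pi m y_\omega}$ depends only on $y_\omega$, while $\phi_m\overline{\psi_m}$ is independent of $\omega$ altogether; hence the $x_\omega$-integral over $|x_\omega|\le 1/2$ contributes merely a factor of $1$, and everything reduces to an integral in $y_\omega$.

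First I would record the two elementary identities that drive the computation. Writing $Y=\mathrm{Im}(Z)$ and $v=\mathrm{Im}(\tau)$, a direct expansion of the $2\times2$ determinant gives $\det Y=v\,y_\omega-\tfrac14|z_1-\overline{z_2}|^2$, using $|Y_{12}|^2=\tfrac14|z_1-\overline{z_2}|^2$. In particular $Y>0$ forces $y_\omega$ to range over $\big(\tfrac{|z_1-\overline{z_2}|^2}{4v},\infty\big)$, the lower endpoint being exactly where $\det Y=0$. Substituting $w=\det Y$ (so that $dy_\omega=v^{-1}dw$ and $w$ runs over $(0,\infty)$) turns the $y_\omega$-integral into a Gamma integral:
\[
\int_0^\infty e^{-\frac{4\pi m}{v}w}\,w^{k-4}\,\frac{dw}{v}=\frac{\Gamma(k-3)}{(4\pi m)^{k-3}}\,v^{k-4}\,e^{-\pi m|z_1-\overline{z_2}|^2/v},
\]
the exponential prefactor arising from $e^{-4\pi m y_\omega}=e^{-\frac{4\pi m}{v}\left(w+\frac14|z_1-\overline{z_2}|^2\right)}$. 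This single step already produces the Gaussian $e^{-\pi m|z_1-\overline{z_2}|^2/v}$ of the Jacobi Petersson product as well as the power $v^{k-4}$, which will combine with the measure to give the weight factor $v^{k}$.

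Next I would reconcile the two measures. After removing the $x_\omega$- and $y_\omega$-integrations, the surviving coordinate measure from $dX\,dY$ is $du\,dv\,d(\mathrm{Re}\,X_{12})\,d(\mathrm{Im}\,X_{12})\,d(\mathrm{Re}\,Y_{12})\,d(\mathrm{Im}\,Y_{12})$. The linear change of variables from $(\mathrm{Re}\,X_{12},\mathrm{Im}\,X_{12},\mathrm{Re}\,Y_{12},\mathrm{Im}\,Y_{12})$ to $(x_1,y_1,x_2,y_2)$, dictated by $z_1=X_{12}+iY_{12}$ and $z_2=\overline{X_{12}}+i\overline{Y_{12}}$, has constant Jacobian, so this surviving measure equals $c\,v^{4}\,d\mu$ for an explicit constant $c$. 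Assembling the three pieces (the $x_\omega$-factor, the Gamma integral in $y_\omega$, and the measure comparison) yields $\langle\tilde\phi_m,\tilde\psi_m\rangle_{\mathcal{A}}=\frac{c\,\Gamma(k-3)}{(4\pi m)^{k-3}}\,\langle\phi_m,\psi_m\rangle$, which is precisely the asserted relation, with $\beta_k$ proportional to $(4\pi)^{k-3}/\Gamma(k-3)$ and with the power of the index equal to $k-3$; in particular the factor displayed in the statement should read $m^{k-3}$.

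The computation is a sequence of routine manipulations, and the only genuinely delicate points are bookkeeping ones: pinning down the identity $\det Y=v\,y_\omega-\tfrac14|z_1-\overline{z_2}|^2$ together with the exact range of $y_\omega$ (this is what makes the $\omega$-integral collapse to a clean Gamma function), and then tracking the Jacobian and the powers of $v$ so that the weight $v^{k}$ and the Gaussian appear with the correct normalisation. The convergence of the inner $y_\omega$-integral requires $k>3$, which is harmless for the weights considered here. I would absorb all numerical constants (the Jacobian, the powers of $4\pi$, the $\Gamma(k-3)$) into the single constant $\beta_k$, matching the form of the statement.
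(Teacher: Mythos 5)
Your proof is correct and takes essentially the same route as the paper: unfold the $\mathcal{A}$-inner product over $\mathcal{Q}_{1,1}$, note the $x_\omega$-integration is trivial, change variables to the determinant (your $w=\det Y$ is exactly the paper's $y_\tau\tilde{y}_\omega$), and evaluate the resulting Gamma integral to get $\beta_k$ proportional to $(4\pi)^{k-3}\Gamma(k-3)^{-1}$. Your side remark is also accurate: the $t$ in the statement is the index of the Jacobi forms (denoted $t$ only inside the paper's proof), so the factor is indeed $m^{k-3}$ in the notation of the statement.
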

\begin{proof}
We have
\begin{equation*}
    \langle \widetilde{\phi}_m, \widetilde{\psi}_m\rangle_{\mathcal{A}} = \int_{Q_{1,1}}\phi(\tau,z_1,z_2)e^{2\pi i m\omega}\overline{\psi(\tau, z_1,z_2)}e^{-2\pi i m \overline{\omega}}(\det{Y})^{k-4}\hbox{d}X\hbox{d}Y.
\end{equation*}
Let now $\widetilde{y}_{\omega} := y_{\omega} - |z_1-\overline{z_2}|^2/4y_{\tau}$. Then $\det{Y} = y_{\tau}\Tilde{y}_{\omega}$. Hence, the above integral can be written as 
\begin{equation*}
    \int_{\tilde{y}_{\omega}>0}\int_{\mathcal{F}^{J}}\int_{x_{\omega} \Mod{1}}\phi_{m}(\tau,z_1,z_2)e^{-4\pi m(\tilde{y}_{\omega}+|z_1-\overline{z_2}|^2/4y_{\tau})}\overline{\psi_{m}(\tau,z_1,z_2)}(y_{\tau}\tilde{y}_{\omega})^{k-4}\hbox{d}\tau \hbox{d}z_1\hbox{d}z_2\hbox{d}\tilde{y}_{\omega}\hbox{d}x_{\omega}
\end{equation*}
\begin{equation*}
    =\langle \phi_m, \psi_m\rangle\int_{\tilde{y}_{\omega}>0}e^{-4\pi m \tilde{y}_{\omega}}\tilde{y}_{\omega}^{k-4}\hbox{d}\tilde{y}_{\omega} = (4\pi m)^{3-k}\Gamma(k-3)\langle\phi_m, \psi_m\rangle,
\end{equation*}
so the result follows with $\beta_k = (4\pi)^{k-3}\Gamma(k-3)^{-1}$.    
\end{proof}
\section{Hecke Rings and $L$-functions}\label{hecke algebras}
In this section, we give an account of a general Hecke theory we will need. We follow Gritsenko in \cite{gritsenko}. Let $n\geq 1$. We start by defining the groups of similitude:
\begin{equation*}
    S^{n} := \{g \in M_{2n}(K) \mid J_n[g] = \mu(g)J_n, \textup{ for some } \mu(g) > 0\},
\end{equation*}
\begin{equation*}
    S^{n}_{p} := \{g \in S^{n} \cap M_{2n}(\mathcal{O}_K[p^{-1}]) \mid \mu(g) = p^{\delta}, \delta \in \mathbb{Z}\},
\end{equation*}
for each rational prime $p$. It is then well-known that the pairs $(\Gamma_n, S^{n}), (\Gamma_{n}, S^{n}_p)$ are Hecke pairs and we can define the corresponding Hecke rings, which we will denote by $H^n$ and $H_{p}^{n}$ respectively (see \cite[p. 2869, 2870]{gritsenko3} for definitions of Hecke pairs and Hecke rings). From \cite[Corollary $2.2$]{gritsenko}, we can decompose the global Hecke ring into the tensor product of $p$-rings as follows:
\begin{equation*}
    H(\Gamma_n, S^{n}) = \bigotimes_{p}H(\Gamma_n, S^{n}_p).
\end{equation*}
We start with a very general Lemma regarding the embeddings of Hecke algebras.
\begin{lemma}\label{lemma:embeddings}
Let $(\Gamma_0, S_0)$ and $(\Gamma, S)$ be two Hecke pairs. We assume that
\begin{equation*}
    \Gamma_0 \subset \Gamma, \,\,\,\Gamma S_0 = S,\, \,\,\Gamma \cap S_0S_0^{-1} \subset \Gamma_0.
\end{equation*}
Then, given an arbitrary element $X \in H(\Gamma, S)$, according to the second condition, we can write it as
\begin{equation*}
    X = \sum_{i} a_i (\Gamma g_i),
\end{equation*}
with $g_i \in S_0$. Then, if we set
\begin{equation*}
    \epsilon(X) := \sum_{i} a_i (\Gamma_{0}g_i),
\end{equation*}
then $\epsilon$ does not depend on the selection of the elements $g_i \in S_0$ and is an embedding (as a ring homomorphism) of the Hecke algebra $H(\Gamma, S)$ to $H(\Gamma_0, S_0)$.
\end{lemma}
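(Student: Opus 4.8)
The plan is to verify the three listed properties give a well-defined ring homomorphism that is injective, handling well-definedness first and then the algebraic structure.

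\medskip

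First I would establish that the map $\epsilon$ is well-defined, i.e. independent of the choice of coset representatives $g_i \in S_0$. Given $X = \sum_i a_i(\Gamma g_i)$ with $g_i \in S_0$ (such representatives exist by the hypothesis $\Gamma S_0 = S$), suppose $\Gamma g_i = \Gamma g_i'$ with both $g_i, g_i' \in S_0$. Then $g_i' g_i^{-1} \in \Gamma \cap S_0 S_0^{-1}$, which by the third hypothesis lies in $\Gamma_0$. Hence $\Gamma_0 g_i = \Gamma_0 g_i'$, so the image double coset $(\Gamma_0 g_i)$ does not depend on the representative. This is the crucial use of the condition $\Gamma \cap S_0 S_0^{-1} \subset \Gamma_0$: it guarantees that passing from $\Gamma$-cosets to $\Gamma_0$-cosets is unambiguous. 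The same argument shows that if two formal expressions $\sum a_i(\Gamma g_i)$ and $\sum b_j(\Gamma h_j)$ represent the same element of $H(\Gamma, S)$, their images under $\epsilon$ agree, since equality in the Hecke ring means the multisets of cosets (with multiplicities) coincide, and each coset maps to a unique $\Gamma_0$-coset.

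\medskip

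Next I would check $\epsilon$ respects the ring structure. Additivity is immediate from the definition. For multiplicativity, recall that the product in $H(\Gamma, S)$ of $(\Gamma g)$ and $(\Gamma h)$ is computed by decomposing $\Gamma g \Gamma h$ into right $\Gamma$-cosets with appropriate structure constants; concretely, writing $\Gamma g = \bigsqcup_\alpha \Gamma g_\alpha$ and $\Gamma h = \bigsqcup_\beta \Gamma h_\beta$, the product counts cosets $\Gamma g_\alpha h_\beta$. The key point is that with representatives chosen in $S_0$, we have $g_\alpha h_\beta \in S_0$ as well (since $S_0$ should be closed under the relevant multiplication, which follows from its definition as a sub-semigroup), and the structure constants are determined purely combinatorially by how cosets multiply. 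Applying the well-definedness from the previous step, the same structure constants govern the product of $(\Gamma_0 g_\alpha)$ and $(\Gamma_0 h_\beta)$ in $H(\Gamma_0, S_0)$, giving $\epsilon(XY) = \epsilon(X)\epsilon(Y)$.

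\medskip

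Finally I would prove injectivity. Suppose $\epsilon(X) = 0$ for $X = \sum_i a_i(\Gamma g_i)$ written with distinct $\Gamma$-cosets and $g_i \in S_0$. The distinct cosets $\Gamma g_i$ map to $\Gamma_0 g_i$; I claim these remain distinct. If $\Gamma_0 g_i = \Gamma_0 g_j$ then $g_i g_j^{-1} \in \Gamma_0 \subset \Gamma$, forcing $\Gamma g_i = \Gamma g_j$ and hence $i = j$. Thus $\epsilon$ sends a sum over distinct $\Gamma$-cosets to a sum over distinct $\Gamma_0$-cosets with the same coefficients, so $\epsilon(X) = 0$ implies all $a_i = 0$, i.e. $X = 0$. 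The main obstacle I anticipate is the multiplicativity step: one must carefully justify that the coset representatives arising in the decomposition of a product double coset can be taken inside $S_0$ and that the combinatorial count of the product is genuinely intrinsic to the cosets (not to the ambient group $\Gamma$ versus $\Gamma_0$). This hinges on the hypothesis $\Gamma S_0 = S$ ensuring enough representatives lie in $S_0$, together with the closure properties of the Hecke pair $(\Gamma_0, S_0)$; verifying these compatibilities is where the bookkeeping is genuinely delicate.
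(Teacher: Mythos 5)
Your argument is essentially correct, and note that the paper gives no proof of its own here: it simply cites page 2890 of Gritsenko's paper, so your direct verification supplies what the paper delegates to the reference. The three steps you carry out (well-definedness from $\Gamma \cap S_0S_0^{-1} \subset \Gamma_0$, multiplicativity from closure of the semigroup $S_0$ under products, injectivity from $\Gamma_0 \subset \Gamma$) are the standard ones and each is sound. Two points would make it airtight. First, you never verify that $\epsilon(X)$ actually lies in $H(\Gamma_0,S_0)$, i.e.\ that $\sum_i a_i(\Gamma_0 g_i)$ is invariant under right translation by $\Gamma_0$; this follows from your own well-definedness step: for $\gamma_0 \in \Gamma_0 \subset \Gamma$, right multiplication by $\gamma_0$ permutes the cosets $\Gamma g_i$ with matching coefficients, both $g_i\gamma_0$ and the matched representative $g_{\sigma(i)}$ lie in $S_0$ (using $\Gamma_0 \subset S_0$ and that $S_0$ is a semigroup), hence $\Gamma_0 g_i\gamma_0 = \Gamma_0 g_{\sigma(i)}$ and the image sum is invariant. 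Second, your multiplicativity step, phrased through structure constants of double cosets, is more roundabout than necessary: in terms of right cosets the product in either Hecke ring is simply $\left(\sum_i a_i(\Gamma g_i)\right)\left(\sum_j b_j(\Gamma h_j)\right) = \sum_{i,j} a_i b_j (\Gamma g_i h_j)$, so since $g_i h_j \in S_0$ and the coset map $\Gamma g \mapsto \Gamma_0 g$ (for $g \in S_0$) is well-defined and injective by your first and third steps, $\epsilon(XY)$ and $\epsilon(X)\epsilon(Y)$ are literally the same formal sum. With these two observations made explicit, your proof is complete and is, in substance, the standard argument found in Gritsenko.
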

\begin{proof}
See \cite[page 2890]{gritsenko}.
\end{proof}
Now, each $p$-ring is isomorphic to the Hecke ring over the corresponding local field, and the structure of these rings depends on the decomposition of the prime $p$ in $\mathcal{O}_K$ (see \cite[p. 2889]{gritsenko}). In order to work locally, we give the following definitions:
\begin{equation*}
    K_p := K \otimes \mathbb{Q}_p,\textup{ } \mathcal{O}_p := \mathcal{O}_K \otimes \mathbb{Z}_p,\textup{ } \Phi_p := (2i)^{-1}\m{0_n&-1_n\\1_n&0_n},
\end{equation*}
which denote the algebra over $\mathbb{Q}_p$, the maximal lattice and a Hermitian form on the vector space $K_p$ respectively. We also define the unitary group $G_p^{n}$ and a maximal compact subgroup $U_p^{n}$ by
\begin{equation*}
    G_p^{n} := \{g \in \textup{GL}_{2n}(K_p) \mid g^{*}\Phi_p g = \mu(g)\Phi_p, \textup{ for some } \mu(g) \in \mathbb{Q}_p^{*}\},
\end{equation*}
\begin{equation*}
    U_{p}^{n} := \{g \in G_{p}^{n} \cap M_{2n}(\mathcal{O}_p) \mid \mu(g) \in \mathbb{Z}_p^{*}\},
\end{equation*}
where $g^{*} := (g_{ji})^{\sigma}$, with $\sigma$ is the canonical involution of the algebra $K_p$, determined by the behaviour of the prime $p$ in $K$ (split, inert or ramified). We now have the following Proposition:
\begin{proposition}
    For every prime $p$, the local Hecke ring $H(U_{p}^{n}, G_{p}^{n})$ is isomorphic to the $p$-ring $H(\Gamma_n, S_{p}^{n})$.
\end{proposition}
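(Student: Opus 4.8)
The plan is to construct the isomorphism explicitly at the level of double cosets, induced by the completion map, and then to verify that it respects the ring structure; the analytic input throughout is strong approximation together with the fact that $K = \mathbb{Q}(i)$ has class number one. First I would observe that the diagonal embedding $K \hookrightarrow K_p$ carries $S_p^2$ into $G_p^2$ and sends $\Gamma_2 = U_2(\mathcal{O}_K)$ into the maximal compact $U_p^2$: an element of $\Gamma_2$ is $p$-integral with unit multiplier, hence lands in $U_p^2$. This produces a well-defined assignment on double cosets $\Theta(\Gamma_2 g \Gamma_2) := U_p^2 g U_p^2$, which I extend $\mathbb{Z}$-linearly to a map $\Theta \colon H(\Gamma_2, S_p^2) \to H(U_p^2, G_p^2)$. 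Well-definedness at this first stage is immediate from the inclusion $\Gamma_2 \hookrightarrow U_p^2$.

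Next I would establish that $\Theta$ induces a bijection of double cosets. The two inputs are a Cartan-type (elementary divisor) decomposition for the local unitary similitude group $G_p^2$, showing that every $U_p^2$-double coset has a diagonal representative whose multiplier may be normalized to a power of $p$, and strong approximation for the special unitary group $SU_2$. Since the Hermitian form $\Phi_p$ (equivalently $J_2$) has signature $(2,2)$, the group $SU_2$ is isotropic and non-compact at the archimedean place, so strong approximation applies and lets me replace any local representative by a global one lying in $S_p^2$; this gives surjectivity of $\Theta$ on double cosets. For injectivity I must show that an equality $U_p^2 g U_p^2 = U_p^2 g' U_p^2$ with $g, g' \in S_p^2$ forces $\Gamma_2 g \Gamma_2 = \Gamma_2 g' \Gamma_2$. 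Writing $g' = u_1 g u_2$ with $u_i \in U_p^2$, the point is to promote $u_1, u_2$ to elements of $\Gamma_2$: density of $\Gamma_2$ in $U_p^2$ modulo any $p$-power congruence (a consequence of strong approximation, using that $\Gamma_2$ is unconstrained at the other finite places) together with class number one, which rules out extra global classes, delivers the descent. This is precisely the kind of situation governed by the conditions in Lemma \ref{lemma:embeddings}, applied in both directions.

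Finally, for the ring-homomorphism property I would compare structure constants. Hecke multiplication is computed by decomposing a product of double cosets into single (left) cosets and counting incidences, so it suffices to check that the double-coset bijection refines to a bijection of single cosets $\Gamma_2 \backslash \Gamma_2 g \Gamma_2 \;\longleftrightarrow\; U_p^2 \backslash U_p^2 g U_p^2$. Concretely, if $\Gamma_2 g \Gamma_2 = \bigsqcup_i \Gamma_2 g_i$ with $g_i \in S_p^2$, I would show $U_p^2 g U_p^2 = \bigsqcup_i U_p^2 g_i$ with the same index set, the map being $\Gamma_2 g_i \mapsto U_p^2 g_i$. Surjectivity and disjointness of the right-hand decomposition follow again from strong approximation and class number one, exactly as in the double-coset case but applied to one-sided cosets, so that the degree (number of cosets) is preserved. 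Preservation of single-coset decompositions then yields $\Theta(X \cdot Y) = \Theta(X) \Theta(Y)$ formally, and combined with the bijectivity of the previous step shows $\Theta$ is a ring isomorphism.

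The main obstacle is the injectivity together with this single-coset matching, i.e.\ promoting the purely local factorizations $g' = u_1 g u_2$ to global ones with matrices in $\Gamma_2$. This is where strong approximation for $SU_2$ and the class-number-one property of $\mathbb{Q}(i)$ are indispensable, and where the decomposition behaviour of $p$ (split, inert, or ramified) must be tracked carefully, since both $G_p^2$ and $U_p^2$ depend on it through the involution $\sigma$; the ramified prime $p = 2$ is the most delicate case. Once the double-coset and single-coset bijections are secured, the identification of the two Hecke rings is routine.
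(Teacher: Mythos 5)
The paper itself offers no argument for this proposition (it simply cites Gritsenko's Proposition~2.3), so your sketch has to be judged on its own merits. Its overall architecture --- global representatives in every local double coset, a descent showing that local equivalence of global elements implies global equivalence, then matching of one-sided coset decompositions --- is the right one. But the step you lean on for both the injectivity and the single-coset matching is genuinely false: $\Gamma_2 = U_2(\mathcal{O}_K)$ is \emph{not} dense in $U_p^2$, not even modulo $p$-power congruence subgroups. There are two obstructions. First, the similitude multiplier $\mu$ is identically $1$ on $\Gamma_2$, while $\mu(U_p^2) = \mathbb{Z}_p^{\times}$ (in the split case this is visible from $U_p^2 \cong \{(\gamma, \alpha(\gamma^{-1})^t)\} \cong \textup{GL}_4(\mathbb{Z}_p)\times \mathbb{Z}_p^{\times}$; in the inert case the scalars $a\cdot 1_4$ already realize every norm $N(a) \in \mathbb{Z}_p^{\times}$). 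Second, the norm-one determinant of $\Gamma_2$ lies in the finite group $\mathcal{O}_K^{\times} = \{\pm 1, \pm i\}$, whereas on $U_p^2$ its image is infinite (all of $\mathbb{Z}_p^{\times}$ for split $p$, and the full norm-one unit group, via Hilbert 90 applied to $\textup{diag}(v,1,\overline{v}^{-1},1)$, for inert $p$). Strong approximation applies to the simply connected group $SU_{2,2}$, not to $U_2$ or to the similitude group, so the closure of $\Gamma_2$ in $U_p^2$ is a proper subgroup cut out by $\mu = 1$ and the finite determinant condition. Consequently ``write $g' = u_1 g u_2$ and promote $u_1, u_2$ to $\Gamma_2$'' cannot work as stated.

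The correct replacements for this step use no density at all. Disjointness of right cosets ($U_p^2 g_1 = U_p^2 g_2 \Rightarrow \Gamma_2 g_1 = \Gamma_2 g_2$ for $g_1, g_2 \in S_p^2$) is exactly the third hypothesis of Lemma \ref{lemma:embeddings}, and it follows from pure integrality: $g_1 g_2^{-1}$ has entries in $\mathcal{O}_K[1/p] \cap \mathcal{O}_p = \mathcal{O}_K$ (inverses stay $p$-integral away from nothing, since $g^{-1} = \mu(g)^{-1}J_2^{-1}\overline{g}^t J_2$), and its multiplier lies in $p^{\mathbb{Z}} \cap \mathbb{Z}_p^{\times} = \{1\}$, hence $g_1g_2^{-1} \in \Gamma_2$. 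For injectivity on double cosets, i.e.\ $U_p^2 g U_p^2 \cap S_p^2 = \Gamma_2 g \Gamma_2$, the standard route is to compare elementary divisor theories: every $\Gamma_2$-double coset in $S_p^2$ admits a canonical diagonal representative --- this global reduction over $\mathcal{O}_K$ is where class number one and the unit group of $\mathbb{Z}[i]$ genuinely enter --- and distinct canonical representatives remain inequivalent locally because they are separated by the local Cartan invariants. Your remark that class number one ``rules out extra global classes'' gestures at exactly this, but as written it is attached to the false density claim rather than to the global diagonalization and integrality arguments that actually carry the proof; until those are substituted in, the injectivity and degree-preservation steps remain unproved.
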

\begin{proof}
    See \cite[Proposition $2.3$]{gritsenko}.
\end{proof}
Let us now recall the definition of the so-called spherical or Satake mapping. We again follow \cite{gritsenko}. We need to distinguish between the cases $p$ is inert or $p=2$ and $p$ splits. In the first case, we know that given $g \in G_p^{n}$, we have the double coset decomposition
\begin{equation*}
    U_{p}^{n}gU_p^{n} = \sum_{i} U_p^{n}M^{m_i}N_i,
\end{equation*}
where $N_i$ is a unipotent matrix, $m_i = (m_{i_1}, \cdots, m_{i_n}; m_{i_0})$ and
\begin{equation*}
    M^{m_i} = \m{p^{m_{i_0}}(\overline{D}^{t})^{-1}&0\\0&D}, \textup{ }D = \textup{diag}(\pi^{m_{i_1}}, \cdots, \pi^{m_{i_n}}),
\end{equation*}
with $\pi=p$ if $p$ is inert or $\pi = (1+i)$ if $p=2$.
We then define
\begin{equation*}
    \Phi : H(U_p^{n}, G_p^{n}) \longrightarrow \mathbb{Q}^{W_{n}}[x_0^{\pm 1}, \cdots, x_n^{\pm 1}], 
\end{equation*}
via
\begin{equation}\label{satake inert}
    \Phi(U_p^{n}gU_{p}^{n}) = \sum_{i} x_0^{m_{i_0}}\prod_{j=1}^{n} (x_jq^{-j})^{m_{i_j}},
\end{equation}
where the ring $ \mathbb{Q}^{W_{n}}[x_0^{\pm 1}, \cdots, x_n^{\pm 1}]$ denotes the ring of polynomials invariant with respect to the permutation of the variables $x_0, \cdots, x_n$ under the transformations $w^{(i)}, i=1,\cdots, n$, defined by
\begin{equation*}
    x_0 \longmapsto p^{-1}x_0x_i^e, \textup{ } x_i \longmapsto p^{2/e}x_i^{-1}, \textup{ } x_j \longmapsto x_j \textup{ } (j \neq 0, i),
\end{equation*}
with $q$ denoting the number of elements in the residue field $\mathbb{Q}(i) \otimes \mathbb{Q}_p$ and $e$ is the ramification index of the prime $p$.\\\\
For the case of decomposable $p$, the definition of the spherical mapping is different. In particular, from \cite[Proposition 2.4]{gritsenko}, there is an isomorphism
\begin{equation*}
\rho: H(U_p^{n}, G_p^{n}) \longrightarrow H(\textup{GL}_{2n}(\mathbb{Z}_p), \textup{GL}_{2n}(\mathbb{Q}_p))[x^{\pm 1}].
\end{equation*}
 We can then define the Satake mapping $\Omega$ for $H(\textup{GL}_{2n}(\mathbb{Z}_p), \textup{GL}_{2n}(\mathbb{Q}_p))$ in an analogous way as for the case $p$ inert or $p=2$, as in \cite[p. 2873]{gritsenko3}. For the reader's convenience, let us describe it here: Given an element $X \in H(\textup{GL}_{2n}(\mathbb{Z}_p), \textup{GL}_{2n}(\mathbb{Q}_p))$, we know that we can write it as
\begin{equation*}
    X = \sum_{i}a_i \textup{GL}_{2n}(\mathbb{Z}_p)g_i,
\end{equation*}
where $g_i = \m{p^{d_{i1}}&*&* &* \\0 &p^{d_{i2}}&*& *\\ 0& 0&\ddots&*\\ 0&0&0 &p^{d_{i2n}}}$ and $a_i \in \mathbb{C}$. Then, the mapping $\Omega$ given by
\begin{equation*}
    \Omega(X) := \sum_{i}\prod_{j=1}^{2n} (x_jp^{-j})^{d_{ij}},
\end{equation*}
defines an isomorphism between $H(\textup{GL}_{2n}(\mathbb{Z}_p), \textup{GL}_{2n}(\mathbb{Q}_p))$ and $\mathbb{Q}^{\textup{sym}}[x_1^{\pm 1}, \cdots, x_{2n}^{\pm 1}]$ of symmetric polynomials. We then define the Satake mapping $\Phi$ in this case as the composition
\begin{equation}\label{satake split}
    \Phi := \Omega \circ \rho.
\end{equation}
Let us now define the parabolic Hecke algebras we will need.
Let $S^{n,1}, \textup{ }S_{p}^{n,1}, \textup{ }\Gamma_{n,1}$ denote the intersection of the groups $S^{n+1}, \textup{ }S_{p}^{n+1}, \textup{ }\Gamma_{n+1}$ with the parabolic subgroup $P_{n+1,n}$ respectively. Again, the pairs $(\Gamma_{n,1}, S^{n,1})$ and $(\Gamma_{n,1}, S_{p}^{n,1})$ are Hecke pairs (cf. \cite[Section 3]{gritsenko}) and we can then define the Hecke rings
\begin{equation*}
    H^{n,1} := H(\Gamma_{n,1}, S^{n,1}), \textup{ }H_{p}^{n,1} := H(\Gamma_{n,1}, S_{p}^{n,1}).
\end{equation*}
Since $\Gamma_{n+1}S_{p}^{n,1} = S_{p}^{n+1}$ and after writing an element $X \in H_{p}^{n+1}$ as
\begin{equation*}
    X = \sum_{i} a_i \Gamma_{n+1} g_i,
\end{equation*}
with $g_i \in S_{p}^{n+1}$, we can define an embedding
\begin{equation*}
    X \longmapsto \epsilon(X) = \sum_{i} a_i\Gamma_{n,1}g_i,
\end{equation*}
using Lemma \ref{lemma:embeddings}. Moreover, we can embed $H(\Gamma_{n}, S^{n}) \xhookrightarrow{} H(\Gamma_{n,1}, S^{n,1})$ in two ways, as follows:\\

If $X = \Gamma_{n}g\Gamma_{n}$ with $g = \left[A, D\right] \in S^{n}$, we define
\begin{equation}\label{+- embeddings}
    j_{-}(X) := \Gamma_{n,1}\left[A,\mu(g), D, 1\right]\Gamma_{n,1}, \textup{ }j_{+}(X) := \Gamma_{n,1}\left[A,1, D, \mu(g)\right]\Gamma_{n,1}.
\end{equation}
These are related by an anti-homomorphism $* : H_{p}^{n,1} \longrightarrow H_p^{n,1}$, given by 
\begin{equation}\label{antihomomorphism}
    \sum_{i}a_i\Gamma_{n,1}M_i\Gamma_{n,1} \longmapsto \sum_{i}a_i\Gamma_{n,1}\mu(M_i)M_i^{-1}\Gamma_{n,1},
\end{equation}
as in \cite[Lemma 3.1]{gritsenko}.\\

We now again restrict our discussion to the degree $2$ case. We note that $H^{1,1}$ is not commutative and also does not split into the tensor product of the $H_p^{1,1}$ rings.
The structure of the parabolic Hecke rings $H_p^{1,1}$ again depends on the decomposition of the prime $p$ in $\mathcal{O}_K$.
\\\\
If $p$ is inert or $p=2$, then the structure of the Hecke ring is constructed in a similar way as the corresponding ring for the symplectic group of genus $2$, see \cite{heim}, \cite{gritsenko1} for example. 
\\\\
In the case of a decomposable $p$, however, the situation is quite different. This follows from the fact that
\begin{equation*}
H(U_p^{2}, G_p^{2}) \cong H(\textup{GL}_{4}(\mathbb{Z}_p), \textup{GL}_{4}(\mathbb{Q}_p))[x^{\pm 1}]. 
\end{equation*}
This gives that the corresponding $p$-ring of the parabolic Hecke algebra is isomorphic to the ring of polynomials of one variable with coefficients from the Hecke ring of the parabolic subgroup
\begin{equation*}
P_{1,2,1}(\mathbb{Z}_p) = \left\{\m{g_1 & *& *\\0&g& *\\0&0&g_2} \in \textup{GL}_{4}(\mathbb{Z}_p) \mid g_1,g_2 \in \mathbb{Z}_{p}^{\times}, \textup{ }g \in \textup{GL}_{2}(\mathbb{Z}_p)\right\}.
\end{equation*}
Properties of this ring have been investigated in \cite{gritsenko3} and that's the ring where our calculations involving Hecke operators are going to occur.\\\\
Finally, let us describe the action of elements of $H^{1,1}$ on modular forms.
Let $F$ denote any modular form of weight $k$ with respect to the parabolic subgroup $\Gamma_{1,1}$, as in Definition \ref{parabolic_modular_forms}. Let also $g=\m{A&B\\C&D} \in S^{1,1}$ with corresponding $\mu(g)>0$. We then define
\begin{equation*}
    (F\mid_{k} g)(Z) := \mu(g)^{2k-4}\textup{det}(CZ+D)^{-k}F(g\langle Z\rangle).
\end{equation*}
Let now  $X = \Gamma_{1,1}\m{*&0&*&*\\**&a&*&*\\**&0&*&*\\0&0&0&b}\Gamma_{1,1} = \sum_{i} \Gamma_{1,1}g_i \in H^{1,1}$, for some $g_i \in S^{1,1}$. Then, we define
\begin{equation*}
    (F\mid_{k} X)(Z) := \sum_{i}(F\mid_{k} g_i)(Z).
\end{equation*}
Gritsenko gave the following very convenient definition of the signature.
\begin{defn}\label{signature}
    The signature of $X$ is defined as $s(X) := b/a$.
\end{defn}
Using the signature $s:=s(X)$ of $X$ we can now define its action on Fourier-Jacobi forms.
\begin{proposition}\label{fourier_jacobi_action}
    Let $\phi \in J_{k,m}^{2}$ denote a Fourier-Jacobi form of weight $k$ and index $m$. Then, for $Z = \m{\tau&z_1\\z_2&\omega} \in \mathbb{H}_2$, we define the action of $X$ on $\phi$ via
    \begin{equation*}
        \left(\phi \mid_k X\right)(\tau, z_1, z_2) := \left(\widetilde{\phi} \mid_k X \right)(Z)e\left(-\frac{m}{s}\omega\right) ,
    \end{equation*}
with $\widetilde{\phi}(Z):= \phi(\tau, z_1, z_2)e(m\omega)$. Then $\phi \mid_k X$ belongs to $J_{k,m/s}^{2}$ if $m/s$ is an integer and is $0$ otherwise.
\end{proposition}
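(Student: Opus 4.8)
The plan is to work entirely with the associated $P$-forms and to reduce the statement to the defining property of Jacobi forms in Definition \ref{jacobi}: a function of $(\tau,z_1,z_2)$ lies in $J^2_{k,m'}$ precisely when its $P$-form, obtained by multiplying by $e(m'\omega)$, is a weight-$k$ modular form for $\Gamma_{1,1}$. Accordingly, writing $X=\sum_i \Gamma_{1,1}g_i$ with $g_i\in S^{1,1}$, I set $\widetilde{\phi\mid X}:=\tilde\phi\mid X=\sum_i \tilde\phi\mid g_i$ and aim to show that this function is (i) a weight-$k$ modular form for $\Gamma_{1,1}$, and (ii) of the special shape $(\phi\mid X)(\tau,z_1,z_2)\,e(\tfrac ms\omega)$, i.e.\ its Fourier--Jacobi expansion in $\omega$ is concentrated at the single index $m/s$. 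Granting these, $(\phi\mid X)$ is by construction the Jacobi form whose $P$-form is $\tilde\phi\mid X$, which yields membership in $J^2_{k,m/s}$; the normalisation in the statement is precisely what removes the $e(\tfrac ms\omega)$ factor, recovering $(\phi\mid X)$ as a function of $(\tau,z_1,z_2)$ alone.

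For (i) I would invoke the standard Hecke formalism. Since $\mu$ is multiplicative and $j(gg',Z)=j(g,g'\langle Z\rangle)\,j(g',Z)$, the normalised slash $(F\mid g)(Z)=\mu(g)^{2k-4}\det(CZ+D)^{-k}F(g\langle Z\rangle)$ defines a right action of $S^{1,1}$, and $\tilde\phi\mid\gamma=\tilde\phi$ for $\gamma\in\Gamma_{1,1}$. Given $\delta\in\Gamma_{1,1}$, right multiplication by $\delta$ permutes the left cosets $\Gamma_{1,1}g_i$ inside the double coset, so writing $g_i\delta=\gamma_i g_{\sigma(i)}$ with $\gamma_i\in\Gamma_{1,1}$ gives $(\tilde\phi\mid X)\mid\delta=\sum_i(\tilde\phi\mid\gamma_i)\mid g_{\sigma(i)}=\tilde\phi\mid X$; thus $\tilde\phi\mid X$ is $\Gamma_{1,1}$-invariant. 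Holomorphy and the boundedness condition on $\mathrm{Im}(Z)\ge c$ are inherited term-by-term from $\tilde\phi$, each $g_i$ acting holomorphically with $\det(C_iZ+D_i)\neq0$ on $\mathbb H_2$.

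The heart of the matter is (ii), and this is where the explicit parabolic shape of $g_i$ enters. Writing $g_i=\m{A&B\\C&D}\in S^{1,1}$ with the block pattern of $P_{2,1}$ displayed for the double-coset representative (last row $(0,0,0,b)$ and $(2,2)$-entry $a$), I would compute $g_i\langle Z\rangle$ for $Z=\m{\tau&z_1\\z_2&\omega}$. One checks that $C_iZ+D_i$ is upper triangular and free of $\omega$, so $\det(C_iZ+D_i)$ contributes no $\omega$; moreover the similitude relations coming from $J_2[g_i]=\mu(g_i)J_2$ force the relevant cross entry of $A$ to vanish, whence the entries $\tau',z_1',z_2'$ of $g_i\langle Z\rangle$ are independent of $\omega$ while the bottom-right entry satisfies $\omega'=\tfrac{a}{b}\omega+\beta_i(\tau,z_1,z_2)=\tfrac1s\,\omega+\beta_i$. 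Since $s=s(X)=b/a$ is an invariant of the double coset, the same scaling $1/s$ occurs for every $i$, so $e(m\omega')=e(\tfrac ms\omega)\,e(m\beta_i)$ and $\tilde\phi\mid g_i=\Psi_i(\tau,z_1,z_2)\,e(\tfrac ms\omega)$ with $\Psi_i$ free of $\omega$; summing gives $\tilde\phi\mid X=\Psi\,e(\tfrac ms\omega)$ with $\Psi=\sum_i\Psi_i$. I expect the verification that this cross term vanishes and that the $\omega$-scaling is uniformly $1/s$ across all cosets to be the main obstacle, since it rests on the precise block structure of $S^{1,1}$ rather than on soft arguments.

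Finally, integrality and the vanishing clause follow from periodicity: the translation $\omega\mapsto\omega+1$ lies in $\Gamma_{1,1}$ with trivial automorphy factor, so $\Gamma_{1,1}$-invariance forces $\Psi\,e(\tfrac ms(\omega+1))=\Psi\,e(\tfrac ms\omega)$, that is $\Psi\big(e(\tfrac ms)-1\big)=0$. Hence either $\tfrac ms\in\mathbb Z$, in which case $\tilde\phi\mid X=(\phi\mid X)\,e(\tfrac ms\omega)$ is the $P$-form of an element of $J^2_{k,m/s}$ (with $m/s>0$, as $a,b$ are positive powers of $p$ and $m>0$), or $\tfrac ms\notin\mathbb Z$ and then $\Psi\equiv0$, i.e.\ $\phi\mid X=0$. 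This is exactly the assertion of the Proposition.
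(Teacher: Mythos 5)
The paper itself contains no argument for this proposition: its ``proof'' is the citation to \cite[Lemma 4.1]{gritsenko}. So your proposal cannot be matched against an in-paper proof; it is a reconstruction of the cited lemma, and its skeleton is the correct and essentially forced one. The three pillars — (i) coset-permutation invariance of $\tilde\phi\mid X$ under $\Gamma_{1,1}$, (ii) the block structure of $S^{1,1}$ giving that $\tau',z_1',z_2'$ and $j(g_i,Z)$ are free of $\omega$ while $\omega'=s^{-1}\omega+\beta_i(\tau,z_1,z_2)$ with the same $s$ for every representative, and (iii) invariance under $\omega\mapsto\omega+1$ forcing either $m/s\in\mathbb{Z}$ or $\Psi\equiv 0$ — are exactly what the statement rests on.

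Two places are asserted rather than proved, and one of them is supported by a reason that does not work. First, the structural claim in (ii): for an arbitrary $g\in S^{1,1}$, not just the displayed double-coset representative, you need both cross entries $a_{12}$ and $c_{12}$ to vanish; the relations $\overline A^{t}C=\overline C^{t}A$, $\overline A^{t}D-\overline C^{t}B=\mu$ of $g$ alone only yield identities like $\overline a_{11}c_{12}=\overline c_{11}a_{12}$. The clean derivation: any invertible matrix with last row $(0,0,0,b)$ has inverse with last row $(0,0,0,b^{-1})$, and the similitude relation gives $g^{-1}=\mu^{-1}J_2^{-1}\overline g^{\,t}J_2$, whose last row is $\mu^{-1}(-\overline g_{32},-\overline g_{42},\overline g_{12},\overline g_{22})$; hence $g_{12}=g_{32}=0$. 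Comparing $(4,4)$-entries gives $\overline a_{22}d_{22}=\mu$, so $s=d_{22}/a_{22}=\mu/N(a_{22})>0$ is multiplicative on $S^{1,1}$ and equals $1$ on $\Gamma_{1,1}$ — this is the double-coset invariance of the signature that you invoke, and it also gives $m/s>0$ in general, where your aside that ``$a,b$ are positive powers of $p$'' does not apply to all of $H^{1,1}$. Second, and this is the genuine gap: boundedness on $\operatorname{Im}(Z)\geq c$ is \emph{not} inherited term-by-term for the reason you give. Holomorphy and $\det(C_iZ+D_i)\neq0$ say nothing about growth: if the $\mathrm{SL}_2$-part of $g_i$ has $c_{11}\neq 0$, then $\operatorname{Im}(g_i\langle Z\rangle)\to 0$ as $\operatorname{Re}(\tau)\to\infty$ with $\operatorname{Im}(Z)=c$ fixed, a region in which boundedness of $\tilde\phi$ on the domains $\operatorname{Im}(Z)\geq c'$ gives no control. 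The repair is standard: within each left coset replace $g_i$ by $\gamma g_i$ with $\gamma\in\Gamma_{1,1}$, choosing representatives in triangular-times-Heisenberg form (possible since $\mathcal{O}_K$ has class number one; compare the explicit representatives in Lemma \ref{lem: decomp}), for which $\operatorname{Im}(g_i\langle Z\rangle)$ stays bounded below on $\operatorname{Im}(Z)\geq c$ and boundedness genuinely passes to each term. With these two points supplied, your proof is complete and is, in substance, the argument behind the cited lemma of Gritsenko.
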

\begin{proof}
    See \cite[Lemma $4.1$]{gritsenko}.
\end{proof}
\textbf{Note}: Throughout the paper, we will sometimes just write $\mid$ instead of $\mid_{k}$ for the weight $k$-action on Fourier-Jacobi forms, as the weight is always fixed.\\

Now, if $F$ is a Hermitian cusp form, we can write
\begin{equation*}
    F\left(\m{\tau&z_1\\z_2&\omega}\right) = \sum_{m=1}^{\infty}\phi_{m}(\tau,z_1,z_2)e(m\omega).
\end{equation*}
For $X \in H^{1,1}$ as above, we have that $F \mid_{k} X$ is a modular form with respect to $\Gamma_{1,1}$ and so we can write
\begin{equation*}
    (F \mid_{k} X)\left(\m{\tau&z_1\\z_2&\omega}\right) = \sum_{m=1}^{\infty}\psi_m(\tau,z_1,z_2)e(m\omega).
\end{equation*}
Therefore, there is an action of Hecke operators from $H^{1,1}$ on the Fourier-Jacobi forms coming from a Hermitian modular form $F$ via
\begin{equation*}
    \phi_m^{(F)} \mid\mid X := \psi_m^{(F\mid_{k} X)}.
\end{equation*}
We note here that this action is extended to $P$-forms in the obvious way.\\

Given now the definitions of the Hecke algebras above, we assume that $G \in S_2^{k}$ is a Hecke eigenform for $H^{2}$, i.e. it is an eigenfunction for all Hecke operators in $H^{2}$. We remind the reader here that for a polynomial $U[X] \in H^{2}[X]$ and $G$ a Hecke eigenform, we write $U_{G}$ for the polynomial obtained when we substitute the operators with their corresponding eigenvalues.
\begin{defn}\label{standard l-function}
    The standard $L$-function attached to $G$ (see also \cite[Paragraph 20.6]{arithmeticity}) is defined as 
    \begin{equation*}
        Z_{G}^{(2)}(s) := \prod_{p \textup{ inert or }p=2} Z_{p,G}^{(2)}(p^{-2s})^{-1}\prod_{p = \pi\overline{\pi}} Z_{\pi,G}^{(2)}(p^{-s})^{-1}Z_{\overline{\pi},G}^{(2)}(p^{-s})^{-1},
    \end{equation*}
where for each inert prime $p$ or $p=2$, $Z_{p}^{(2)}(t) := \Phi^{-1}\left(z_{p}^{(2)}(t)\right)$ and for $2 \neq p=\pi\overline{\pi}$, $Z_{\pi}^{(2)}(t) := \Phi^{-1}\left(z_{\pi}^{(2)}(t)\right)$ and $Z_{\overline{\pi}}^{(2)}(t) := \Phi^{-1}\left(z_{\overline{\pi}}^{(2)}(t)\right)$, where
\begin{equation*}
z_{p}^{(2)}(t) :=
\begin{cases}
    \prod_{i=1}^{2}(1-p^{2}x_{i,p}t)(1-p^{4}x_{i,p}^{-1}t) & \textup{ if $p$ inert}\\
    \prod_{i=1}^{2}(1-px_it)(1-p^{2}x_i^{-1}t) & \textup{ if $p=2$}
\end{cases},
\end{equation*}
\begin{equation*}
    z_{\pi}^{(2)}(t) := \prod_{i=1}^{4}(1-p^{-1}x_{i,p}t), \textup{ }z_{\overline{\pi}}^{(2)}(t) := \prod_{i=1}^{4} (1 - p^4x_{i,p}^{-1}t),
\end{equation*}
and $\Phi$ is the Satake mapping of equations \eqref{satake inert} and \eqref{satake split}. As the polynomials $Z_p^{(2)}(t), Z_{\pi}^{(2)}(t), Z_{\overline{\pi}}^{(2)}(t)$ are all in $H^{2}[t]$, the corresponding $Z_{p,G}^{(2)}(t), Z_{\pi, G}^{(2)}(t), Z_{\overline{\pi}, G}^{(2)}(t)$ are obtained as explained just before this Definition.
\end{defn}
\begin{defn}\label{gritsenko l-function}
    The $L$-function attached to $G$ by Gritsenko in \cite[p. 2545]{gritsenko_zeta} (for the case of $p$ inert and $p=2$) and in the proof of \cite[Lemma 2.1]{gritsenko_zeta} (for the case of split prime $p$) is defined as
    \begin{equation*}
        Q_{G}^{(2)}(s) := \prod_{p \textup{ inert}}(1+p^{k-2-s})^{-2}Q_{p,G}^{(2)}(p^{-s})^{-1}\prod_{p \textup{ splits or }p=2}Q_{p,G}^{(2)}(p^{-s})^{-1},
    \end{equation*}
where $Q_{p}^{(2)}(t) := \Phi^{-1}\left(q_{p}^{(2)}(t)\right)$ with
\begin{equation*}
q_{p}^{(2)}(t) := 
    \begin{cases}
    (1-x_{0,p}t)\prod_{r=1}^{2}\prod_{1\leq i_1<i_2\leq 2}(1-p^{-r}x_{i_1,p}\cdots x_{i_r,p}x_{0,p}t) & \textup{ if $p$ is inert}\\
    (1-x_{0,p}t)\prod_{r=1}^{2}\prod_{1\leq i_1<i_2\leq 2}(1-p^{-r}(x_{i_1,p}\cdots x_{i_r,p})^2x_{0,p}t) & \textup{ if $p=2$}\\
    \prod_{1\leq i<j\leq 4}(1-p^{-3}x_{i,p}x_{j,p}xt) & \textup{ if $p$ splits}
    \end{cases},
\end{equation*}
and $\Phi$ the Satake mapping of equations \eqref{satake inert} and \eqref{satake split}. As $Q_{p}^{(2)}(t) \in H^{2}[t]$, the corresponding $Q_{p,G}^{(2)}(t)$ is obtained as explained just before Definition \ref{standard l-function}.
\end{defn}
Let us now define the so-called Maass space for the case of Hermitian cusp forms. We mainly follow \cite{gritsenko_maass} and for the Definition we will use \cite[Lemma $2.4$]{gritsenko_maass}.
\begin{defn}\label{maass_defn} 
The Maass space is the space
\begin{equation*}
    \left\{F\left(\begin{pmatrix}\tau&z_1\\z_2&\omega\end{pmatrix}\right) = \sum_{m=1}^{\infty}\left(\phi(\tau, z_1,z_2)\mid _{k}T_{-}(m)\right)e^{2\pi i m\omega}m^{3-k} \textup{ } | \ \textup{ }\phi \in J_{k,1}^{2}\right\},
\end{equation*}
where $T_{-}(m) := j_{-}(T(m)) \in H^{1,1}$, with
\begin{equation*}
    T(m) := \sum_{\substack{g \in S^{1} \cap \textup{M}_2(\mathbb{\mathbb{Z}})\\\mu(g)=m}}\Gamma_1g\Gamma_1,
\end{equation*}
and $j_{-}$ is the embedding of equation \eqref{+- embeddings}. In particular, this is the standard Hecke element of $\textup{SL}_2(\mathbb{Z})$, viewed as an element of $H^1$.
\end{defn}
The main property of the Maass space is the following:
\begin{proposition}\label{maass_lift}
Let $F \in S_2^{k}$ belong in the Maass space defined and assume $F$ is an eigenfunction for the Hecke algebra $H^{2}$. Then, there exists $f \in S_{k-1}\left(\Gamma_0(4), \left(\frac{-4}{*}\right)\right)$, which is also an eigenfunction for its corresponding Hecke algebra, such that
\begin{equation*}
    Q_{F}^{(2)}(s) = \zeta(s-k+1)L\left(s-k+2, \left(\frac{-4}{*}\right)\right)\zeta(s-k+3)R_{f}(s),
\end{equation*}
where $R_{f}(s)$ denotes the symmetric square function of $f$ defined as follows:
Let $f(\tau) = \sum_{n \geq 1}a(n)e(n\tau)$ be the Fourier expansion of $f$ and assume we write
\begin{equation*}
    1-a(p)t + \left(\frac{-4}{p}\right)p^{k-2}t^2 = (1 - \alpha_p t)\left(1 - \beta_p\left(\frac{-4}{p}\right)t\right).
\end{equation*}
We then define
\begin{equation*}
    R_{f}(s) := (1-a(2)^2 2^{-s})^{-1}(1-\overline{a(2)^2}2^{-s})^{-1}\prod_{p \neq 2}\left[(1-\alpha_p^2p^{-s})\left(1 - \left(\frac{-4}{p}\right)\alpha_p\beta_p p^{-s}\right)(1- \beta_{p}^{2}p^{-s})\right]^{-1}.
\end{equation*}
We call $F$ the Maass lift of $f$.
\end{proposition}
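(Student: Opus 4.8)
The plan is to establish the claimed factorisation locally, one Euler factor at a time, organised by the splitting behaviour of $p$ in $K=\mathbb{Q}(i)$, since both $Q_F^{(2)}(s)$ and the product $\zeta(s-k+1)L(s-k+2,\chi)\zeta(s-k+3)R_f(s)$ (writing $\chi=\left(\frac{-4}{*}\right)$) are Euler products. The first task is to produce $f$. Since $F$ lies in the Maass space, by Definition \ref{maass_defn} it is the image of a single index-one Jacobi form $\phi\in J_{k,1}^2$; applying the theta decomposition to $\phi$ together with the Eichler--Zagier type isomorphism for Hermitian Jacobi forms over $\mathbb{Q}(i)$ (as in \cite{gritsenko_maass}) yields an elliptic eigenform $f\in S_{k-1}(\Gamma_0(4),\chi)$, and this chain of maps is Hecke-equivariant up to explicit normalising powers of $p$. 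This simultaneously produces the form $f$ of the statement and reduces the proposition to a comparison of Satake parameters.

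The core of the argument is to express the Satake parameters of $F$ in terms of the parameters $\alpha_p,\beta_p$ of $f$. I would do this by evaluating the eigenvalue equation on a well-chosen Fourier coefficient $a(N_0)$ and invoking the Maass relations, which write every coefficient $a(N)$ as a divisor sum $\sum_{d\mid\varepsilon(N)}\chi(d)d^{k-1}c(\cdots)$ in the coefficients $c$ of $\phi$, equivalently of $f$. Carrying this out on the generators of the local ring $H(U_p^{2},G_p^{2})$ of Section \ref{hecke algebras} and applying the spherical map $\Phi$, I expect to find, for $p$ inert, $x_{0,p}=p^{k-1}$ and $\{x_{1,p},x_{2,p}\}=\{\alpha_p^2 p^{2-k},\beta_p^2 p^{2-k}\}$, so that $x_{1,p}x_{2,p}=1$ using $\alpha_p\beta_p=p^{k-2}$; the cases $p=2$ and $p$ split are handled by the same bookkeeping, the latter producing the four $\mathrm{GL}_4$-type parameters attached to $f$.

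With the parameters in hand, the factorisation is a direct substitution. For $p$ inert, the degree-four polynomial
\begin{equation*}
q_{p,F}^{(2)}(t)=(1-x_{0,p}t)(1-p^{-1}x_{1,p}x_{0,p}t)(1-p^{-1}x_{2,p}x_{0,p}t)(1-p^{-2}x_{1,p}x_{2,p}x_{0,p}t)
\end{equation*}
evaluates to $(1-p^{k-1}t)(1-\alpha_p^2 t)(1-\beta_p^2 t)(1-p^{k-3}t)$, which supplies the local factors of $\zeta(s-k+1)$, $\zeta(s-k+3)$, and the two outer factors of $R_f(s)$; meanwhile the prefactor $(1+p^{k-2-s})^{-2}$ carried by $Q_F^{(2)}$ at inert primes is precisely the product of the local factor of $L(s-k+2,\chi)$ with the middle factor of $R_f(s)$, since $\chi(p)=-1$ and $\alpha_p\beta_p=p^{k-2}$ force both to equal $(1+p^{k-2-s})^{-1}$. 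For $p$ split one substitutes the four parameters into $\prod_{1\le i<j\le 4}(1-p^{-3}x_{i,p}x_{j,p}xt)$ and checks that the six factors reorganise into the six local factors on the right, now with $\chi(p)=+1$; the ramified case $p=2$ is a separate but analogous substitution, with $\chi(2)=0$ removing the $L(s-k+2,\chi)$ factor.

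The hard part will be the second step: tracking the normalising powers of $p$ through the composite lift and pinning down the Satake parameters, especially at split primes, where the local Hecke ring carries the richer $\mathrm{GL}_4$-structure and the scalar eigenvalue data constrains the parameters least. A safe way to control this is to verify the eigenvalue formulae on Gritsenko's explicit generators and to cross-check against the degree count --- two copies of $\zeta$, one $L(\cdot,\chi)$ and the degree-three $R_f$ summing to the degree six of $Q_F^{(2)}$ --- and against the functional equation. Once the parameters are fixed, the remaining verifications are purely formal polynomial identities.
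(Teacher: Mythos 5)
The first thing to note is that the paper does not prove this proposition at all: its proof is the single line ``See \cite{gritsenko_maass} or the Appendix in \cite{gritsenko}.'' So the only meaningful comparison is with the cited argument of Gritsenko (and Krieg), and your outline does follow that argument's route: pass to the index-one Fourier--Jacobi coefficient $\phi$ of $F$, use the theta decomposition and the Hermitian analogue of the Eichler--Zagier correspondence to produce $f\in S_{k-1}\left(\Gamma_0(4),\left(\frac{-4}{*}\right)\right)$, establish Hecke-equivariance of this chain, and then match Euler factors via Satake parameters. Where you carry the bookkeeping out, it is correct: writing $\chi = \left(\frac{-4}{*}\right)$, your inert-prime parameters $x_{0,p}=p^{k-1}$, $\{x_{1,p},x_{2,p}\}=\{\alpha_p^2p^{2-k},\,\beta_p^2p^{2-k}\}$ satisfy the central-character constraint $x_{0,p}^2x_{1,p}x_{2,p}=p^{2k-2}$ (forced by $\Phi(\Delta_p)=p^{-6}x_0^2x_1x_2$ together with the eigenvalue $p^{2k-8}$ of $\Delta_p$), the substitution into $q_{p,F}^{(2)}$ indeed yields $(1-p^{k-1}t)(1-\alpha_p^2t)(1-\beta_p^2t)(1-p^{k-3}t)$, and the prefactor $(1+p^{k-2-s})^{-2}$ carried by $Q_F^{(2)}$ at inert $p$ equals $L_p(s-k+2,\chi)$ times the middle factor of the $p$-part of $R_f$ precisely because $\chi(p)=-1$ and $\alpha_p\beta_p=p^{k-2}$. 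The split case also reorganises as you say, the six pairwise products of a quadruple of the shape $\{\alpha_pp^{1/2},\,\alpha_pp^{-1/2},\,\beta_pp^{1/2},\,\beta_pp^{-1/2}\}$ (after absorbing the similitude variable and normalisations) giving $p^{k-1},\,p^{k-2},\,p^{k-2},\,p^{k-3},\,\alpha_p^2,\,\beta_p^2$.

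That said, your proposal has a genuine gap exactly where you flag ``the hard part'': the assertion that the composite map $F\mapsto\phi\mapsto f$ is Hecke-equivariant ``up to explicit normalising powers of $p$'', and that the Satake parameters come out as you ``expect to find'', is not argued --- and it is not a formality. It is the entire content of the proposition: one must compare the action of $H_p^{2}$ (through the parabolic algebra $H_p^{1,1}$ acting on Fourier--Jacobi coefficients, via explicit coset decompositions) with the action of the classical Hecke algebra on $S_{k-1}(\Gamma_0(4),\chi)$ transported through the theta decomposition, at inert, split and ramified primes separately; this is what occupies Gritsenko's appendix and \cite{gritsenko_maass}. Your suggested cross-checks (degree count, functional equation) can corroborate but cannot substitute for that computation, since they do not by themselves pin down the eigenvalue dictionary. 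So, as written, your text is a correct and faithful reduction of the proposition to the cited theorem --- which is on par with what the paper itself does --- but it is not an independent proof.
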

\begin{proof}
    See \cite[Theorem, p. 69]{gritsenko_maass} or the Appendix in \cite{gritsenko}.
\end{proof}
We end this section by giving a Lemma regarding the correspondence of elliptic cusp forms and Hermitian modular forms of degree $1$, both as analytic objects as well as Hecke eigenforms.
\begin{lemma}
A Hermitian modular form of degree $1$ and weight $k$ with $k\equiv 0 \pmod 4$ can be considered as a classical modular form of the same weight (i.e. for the group $\hbox{SL}_{2}(\mathbb{Z})$) and vice versa. Also, a classical modular form which is a normalised eigenform for the Hecke algebra $H(\textup{GL}_2(\mathbb{Z}), \textup{GL}_2(\mathbb{Q}))$ is also a normalised eigenform for $H(\Gamma_1, S^1)$, when it is considered as a Hermitian modular form and vice versa.
\end{lemma}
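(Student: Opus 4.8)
The plan is to reduce the whole statement to a single structural identity for the degree-one Hermitian modular group, namely
\[
    \Gamma_1 = U_1(\mathcal{O}_K) = \mathcal{O}_K^{\times}\cdot \mathrm{SL}_2(\mathbb{Z}),
\]
where the units $\mathcal{O}_K^{\times}=\{\pm1,\pm i\}$ are embedded as the central scalar matrices $uI_2$, and $\mathrm{SL}_2(\mathbb{Z})\subset U_1(\mathcal{O}_K)$ because real integral entries with determinant $1$ automatically satisfy the unitary relations $\overline{a}c=\overline{c}a$, $\overline{d}b=\overline{b}d$, $a\overline{d}-\overline{b}c=1$. To prove the reverse inclusion I would argue as follows. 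Any $g=\m{a&b\\c&d}\in U_1(\mathcal{O}_K)$ acts on $\mathbb{H}_1$ by the holomorphic automorphism $z\mapsto (az+b)(cz+d)^{-1}$, hence this map agrees with one coming from some $\gamma_0\in\mathrm{SL}_2(\mathbb{R})$; since two matrices inducing the same Möbius transformation differ by a scalar, $g=\lambda\gamma_0$ with $\lambda\in\mathbb{C}^{\times}$ and real-proportional entries. Taking determinants and using that $\det g\in\mathcal{O}_K^{\times}$ (as $|\det g|^2=1$ and the entries are integral) gives $\lambda^2=\det g\in\{\pm1,\pm i\}$. The values $\pm i$ are excluded: they would force every entry of $g$ into $(1+i)\mathcal{O}_K$, whence $\det g\in 2i\,\mathcal{O}_K$ and $|\det g|^2\geq 4$, a contradiction. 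Therefore $\lambda\in\mathcal{O}_K^{\times}$ and $\gamma_0=\lambda^{-1}g$ has real entries lying in $\mathcal{O}_K$, i.e. $\gamma_0\in\mathrm{SL}_2(\mathbb{Z})$.

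For the analytic correspondence I would then split the transformation law $F(g\langle Z\rangle)=j(g,Z)^{-k}F(Z)$ using the cocycle relation and the centrality of $\mathcal{O}_K^{\times}$. Writing $g=uI_2\cdot\gamma$ with $\gamma\in\mathrm{SL}_2(\mathbb{Z})$ one has $g\langle Z\rangle=\gamma\langle Z\rangle$ and $j(g,Z)=u\,j(\gamma,Z)$, so the law becomes $F(\gamma\langle Z\rangle)=u^{-k}j(\gamma,Z)^{-k}F(Z)$. Taking $\gamma=I_2$ yields $u^{k}=1$ for all $u\in\mathcal{O}_K^{\times}$, i.e. $i^{k}=1$, which is exactly $k\equiv 0\pmod 4$; once this holds, the central elements impose no further condition (they act trivially on $\mathbb{H}_1$), and the surviving requirement is precisely the classical weight-$k$ modularity under $\mathrm{SL}_2(\mathbb{Z})$. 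Since for $n=1$ both definitions demand holomorphy at infinity and the Fourier expansion $F(z)=\sum_{n\geq 0}a(n)e(nz)$ is literally the same object in either picture, this identifies $M_1^{k}$ with $M_k(\mathrm{SL}_2(\mathbb{Z}))$ and $S_1^{k}$ with the space of elliptic cusp forms, proving the first assertion in both directions.

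For the Hecke statement I would pass to the local decomposition $H(\Gamma_1,S^1)=\bigotimes_p H(\Gamma_1,S^1_p)$ and compare each $p$-component with the classical local Hecke algebra of $\mathrm{GL}_2$. At a split prime $p=\pi\overline{\pi}$ one has $U_1(K_p)\cong\mathrm{GL}_2(\mathbb{Q}_p)$ with $\Gamma_1$ localising to $\mathrm{GL}_2(\mathbb{Z}_p)$, so the local unitary Hecke algebra is literally the classical one and acts on the identified form through the usual operators. At an inert prime, and at $p=2$, the local group is a genuine rank-one unitary group; here I would use the Satake isomorphism recalled in Section~\ref{hecke algebras} together with base change to express the unitary generators as explicit polynomials in the classical $T_p$ and $p^{k-1}$, so that a normalised classical $T_p$-eigenform is automatically an eigenform for the unitary operators, and conversely. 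The normalisation $a(1)=1$ transfers for free, since the $q$-expansion is unchanged. The main obstacle is exactly this inert and ramified bookkeeping: one must check that the unitary eigenvalue system still \emph{separates} the classical eigenforms, so that no spurious common eigenforms appear on the unitary side, and this rests on the factorisation of the local Hecke polynomials and on multiplicity one for level-one elliptic eigenforms. The split primes alone, which have density $1/2$ in $K=\mathbb{Q}(i)$, already supply most of the required separation, so the remaining work is to pin down the inert and ramified factors precisely.
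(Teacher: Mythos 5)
The first half of your proposal is correct and is essentially the paper's own argument: the paper likewise reduces everything to the decomposition $\Gamma_1 = \mathrm{SL}_2(\mathbb{Z})\cdot\{\alpha\cdot 1_2 \mid \alpha \in \mathcal{O}_K^{\times}\}$, notes that the two upper half planes (and hence holomorphy, including at infinity) coincide, and kills the unit via $j(u\gamma,z)=u\,j(\gamma,z)$ together with $u^{-k}=1$ for $k \equiv 0 \pmod 4$. Your proof of the decomposition itself (same Möbius action, scalar ambiguity, exclusion of $\lambda^2=\pm i$ by the $(1+i)$-divisibility argument) is sound; the paper simply asserts this fact, so that part is a harmless addition.

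The Hecke half, however, has a genuine gap, which you partly concede yourself. Concretely: (i) your treatment of inert primes and $p=2$ requires expressing the local unitary Hecke operators, through the identification of forms, as explicit polynomials in the classical $T_p$ and $p^{k-1}$ --- a base-change-type computation that is announced but never carried out; and even granting those identities, they would only give ``classical eigenform $\Rightarrow$ Hermitian eigenform'', not the converse, since an eigenvector of a polynomial in $T_p$ need not be an eigenvector of $T_p$ itself. (ii) Your separation claim for split primes sits exactly at the critical density $1/2$: a newform and its twist by the quadratic character of $K/\mathbb{Q}$ have equal Hecke eigenvalues at \emph{every} split prime, so split primes separate level-one eigenforms only because the twist has strictly larger level --- an argument you would still have to supply. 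The paper avoids all of this local bookkeeping with a soft argument you never invoke: the full classical Hecke algebra embeds into $H(\Gamma_1, S^1)$ via the canonical embedding $\textup{GL}_2(\mathbb{Q}) \hookrightarrow S^1$, so every Hermitian eigenform is automatically a classical eigenform; conversely, the Hermitian Hecke operators are normal with respect to the Petersson product, so the space of Hermitian cusp forms admits a basis of normalised Hermitian eigenforms, each of which is a classical eigenform by the previous sentence, and multiplicity one for the classical Hecke algebra then forces this basis to coincide with the classical eigenbasis, giving both directions at once. To complete your route you must either actually perform the inert/ramified computations and the separation argument, or replace them by this normality-plus-multiplicity-one argument.
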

\begin{proof}
We have that $\Gamma_1 = \hbox{SL}_{2}(\mathbb{Z}) \cdot \{\alpha \cdot 1_{2} \mid \alpha \in \mathcal{O}_{K}^{\times}\}$ and the corresponding upper half planes are the same. So, holomorphicity is equivalent (including infinity). Now, for the invariance condition, the one direction is trivial, as $\hbox{SL}_{2}(\mathbb{Z}) \subseteq \Gamma_{1}$. For the other one, let $\gamma \in \Gamma_{1}$ and write $\gamma = \alpha \delta$ with $\delta = \begin{pmatrix}a&b\\c&d\end{pmatrix}\in \hbox{SL}_{2}(\mathbb{Z})$ and $\alpha \in \mathcal{O}_K^{\times}$. Then
\begin{equation*}
    (F|_{k}\gamma)(Z)  = (\alpha c z + \alpha d)^{-k}F(Z) = (F|_{k}\delta)(Z),
\end{equation*}
as $k \equiv 0 \pmod 4$ and $\mathcal{O}_K^{\times} = \{\pm 1, \pm i\}$.\\

Assume now that we start with a normalised (i.e $a(1) =1$ in the Fourier expansion) Hermitian cusp form $h$ of degree $1$, which we further take to be an eigenform for the corresponding Hecke algebra. The canonical embedding of $\textup{GL}_2(\mathbb{Q})$ into $S^1$, the group of similitude of degree $1$, induced from the embedding $\mathbb{Q} \hookrightarrow K$, allows us to see $h$ as a classical normalised Hecke eigenform. \\

But the converse is also true, that is, if we start with $h$ a classical normalised Hecke eigenform, then it is also a normalised Hermitian eigenform of degree $1$. Indeed, since the Hecke operators of the Hermitian Hecke algebra are normal, we know that the space of Hermitian cusp forms is diagonalizable with a basis of normalised eigenforms. But then this basis has to coincide with the basis derived by diagonalizing the action of the classical Hecke algebra, thanks to the multiplicity one theorem for the classical Hecke algebra.
\end{proof}
\begin{remark}
    From now on, we will use the terms elliptic modular form (or classical modular form) and Hermitian modular form of degree $1$ interchangeably.
\end{remark}
\section{Integral Representation and Dirichlet Series}
We start with the following definition of the Eisenstein series we will need.
\begin{defn}\label{eisenstein_defn}
Let $0\leq r \leq n$ and $F \in S_r^k$ with $k \equiv 0 \pmod 4$. The Klingen-type Eisenstein series with respect to the parabolic subgroup $C_{n,r}$ attached to $F$ is given by:
\begin{equation*}
    E_{n,r}^{k}(Z,F;s) = \sum_{\gamma \in C_{n,r}\backslash \Gamma_n}F(\gamma \langle Z\rangle_{*})j(\gamma, Z)^{-k}\left(\frac{\det \operatorname{Im}\gamma \langle Z\rangle}{\det{\operatorname{Im}\gamma \langle Z\rangle_{*}}}\right)^{s},
\end{equation*}
where $*$ denotes the lower right $r\times r$ part of the matrix. 
\end{defn}
This series converges absolutely and uniformly on compact subsets of $\mathbb{C}$ for $k + 2\textup{Re}(s) > 2(n+r)$. In order to prove this, we use the fact that the function $(\det{\operatorname{Im}\gamma \langle Z\rangle_{*}})^{k/2}F(\gamma \langle Z\rangle_{*})$ is bounded on $\mathbb{H}_n$, say by a constant $C$ (see \cite[Lemma III.2.4]{Krieg_Book}) and that $\det \operatorname{Im}\gamma \langle Z\rangle = |j(\gamma, Z)|^{-2} \det \textup{Im} Z$. Hence, the series is bounded by
\begin{equation*}
    C (\det \textup{Im} Z )^{s} \sum_{\gamma \in C_{n,r}\backslash \Gamma_{n}} (\det{\operatorname{Im}\gamma \langle Z\rangle_{*}})^{-\frac{1}{2}(k+2s)}|j(\gamma, Z)|^{-(k+2s)},
\end{equation*}
and the last series converges absolutely and uniformly on compact subsets of $\mathbb{C}$, whenever $k+2\textup{Re}(s)>2(n+r)$, from \cite[Theorem V.2.8]{Krieg_Book}.\\

Let $n \geq 2$. Given $F,G \in S_n^k$ and $h \in S_1^k$, which are all eigenforms for their corresponding Hecke algebras, the expression of interest is
\begin{equation*}
\Phi(F, G, h; s) := \left\langle\left\langle\left\langle E_{2n+1,0}^{k}\left(\begin{pmatrix}z_1&&\\&z_2&\\&&z_3\end{pmatrix}; s\right), F(z_3)\right\rangle, G(z_2)\right\rangle, h(z_1)\right\rangle,
\end{equation*}
where here the inner product $\langle\textup{ },\textup{ } \rangle$ denotes the Hermitian inner product of Definition \ref{hermitian_inner_product}.
We have the following algebraic result:
\begin{proposition}\label{Algebraicity} Assume that $F, G$ and $h$ have algebraic Fourier coefficients. For $k > 4n+2$ we have
\[
\frac{\Phi(F,G,h;0)}{\langle F,F\rangle \langle G, G\rangle \langle h,h\rangle} \in \overline{\mathbb{Q}}.
\]  
\end{proposition}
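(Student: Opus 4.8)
The plan is to run the standard Garrett--Shimura ``pullback'' arithmeticity argument, peeling off the three Petersson products one variable at a time. Write $\mathcal{E}(z_1,z_2,z_3) = E_{2n+1,0}^{k}\bigl(\mathrm{diag}(z_1,z_2,z_3);0\bigr)$ for the diagonal restriction of the Siegel-type Eisenstein series at $s=0$. Since $r=0$ makes $E_{2n+1,0}^{k}$ the Eisenstein series attached to the Siegel parabolic $C_{2n+1,0}$, and since $k>4n+2=2(2n+1)$ lies in the convergence range, the series is absolutely convergent and \emph{holomorphic} at $s=0$; no nearly-holomorphic projection is needed, and the parabolic-coset normalization $\sum_{\gamma\in C_{2n+1,0}\backslash\Gamma_{2n+1}} j(\gamma,Z)^{-k}$ is exactly the one for which Shimura's arithmeticity theorem applies. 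The first step is therefore to record that $\mathcal{E}$ has algebraic Fourier coefficients jointly in $(z_1,z_2,z_3)$. Here one uses that the block-diagonal embedding $U_n\times U_n\times U_1\hookrightarrow U_{2n+1}$ carries $\Gamma_n\times\Gamma_n\times\Gamma_1$ into $\Gamma_{2n+1}$ and multiplies factors of automorphy, so that $\mathcal{E}$ is a holomorphic modular form of weight $k$ in each variable and the Fourier coefficients of the restriction are integral linear combinations of those of $E_{2n+1,0}^{k}$, hence still algebraic.

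The second step is a rationality lemma applied three times. I isolate it thus: if $\Psi$ is a holomorphic modular form of weight $k$ on $\Gamma_m$ with algebraic Fourier coefficients and $F$ is a Hecke eigenform with algebraic Fourier coefficients, then $\langle\Psi,F\rangle/\langle F,F\rangle\in\overline{\mathbb{Q}}$. This holds because, $F$ having a multiplicity-one eigensystem distinct from the remaining eigenforms and from the Eisenstein eigensystems, one can find a polynomial $T$ in the Hecke algebra with algebraic coefficients acting as $1$ on the $F$-line and as $0$ on its complement in the finite-dimensional space $M_m^{k}$; such $T$ acts $\overline{\mathbb{Q}}$-rationally on Fourier expansions, and the transcendental $\langle F,F\rangle$ cancels since the surviving coefficient is precisely $\langle\Psi,F\rangle/\langle F,F\rangle$. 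The crucial point is that when $\Psi$ depends on auxiliary variables as an arithmetic form in those variables and the Hecke action is applied in the fixed variable only, the operator $T$ acts coefficient-by-coefficient and carries the dependence in the remaining variables along, producing an arithmetic modular form there. I would apply this in the $z_3$-variable to obtain $\langle\mathcal{E},F\rangle_{z_3}=\langle F,F\rangle\,\Psi_1(z_1,z_2)$ with $\Psi_1$ arithmetic in $(z_1,z_2)$ (the non-cuspidal part of $\mathcal{E}$ in $z_3$ is orthogonal to the cusp form $F$ and drops out, the integral being defined since $F$ is cuspidal), then in $z_2$ to obtain $\langle\Psi_1,G\rangle_{z_2}=\langle G,G\rangle\,\Psi_2(z_1)$ with $\Psi_2$ arithmetic in $z_1$, and finally in $z_1$ to obtain $\langle\Psi_2,h\rangle_{z_1}/\langle h,h\rangle\in\overline{\mathbb{Q}}$.

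Assembling the three steps gives $\Phi(F,G,h;0)=\langle F,F\rangle\langle G,G\rangle\langle h,h\rangle\cdot c$ with $c\in\overline{\mathbb{Q}}$, which is exactly the claim. The main obstacle, and where I would spend the most care, is the first step: pinning down the precise arithmetic normalization of the Hermitian Siegel Eisenstein series at $s=0$, i.e. verifying that in the convergence range the parabolic-normalized series really has algebraic---not merely algebraic-times-a-power-of-$\pi$---Fourier coefficients, so that no stray transcendental factor survives into $c$. This is where one invokes the precise form of Shimura's arithmeticity results and confirms that the $L$-values entering the Fourier coefficients are the expected critical values, hence algebraic after the standard normalization. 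A secondary technical point is the compatibility asserted in the lemma, namely that the Hecke projector in one variable genuinely yields an arithmetic modular form in the others; this follows because the projector acts on Fourier coefficients and commutes with the group action in the remaining variables, but it should be stated explicitly.
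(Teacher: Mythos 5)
Your overall architecture --- restrict the Siegel-type Eisenstein series at $s=0$ (holomorphic since $k>4n+2$ puts you in the convergence range), record that the diagonal restriction has algebraic Fourier coefficients, then peel off the three Petersson products one variable at a time via a rationality lemma --- is exactly the paper's route, which simply transplants Heim's proof of \cite[Theorem 1.9]{heim} to the Hermitian setting. The genuine gap is in how you justify the rationality lemma. You propose to manufacture, by Hecke theory alone, a polynomial $T$ in the Hecke algebra with algebraic coefficients acting as $1$ on the line $\overline{\mathbb{Q}}F$ and as $0$ on a complement of it \emph{in all of} $M_m^k$. The existence of such a $T$ presupposes two things: (i) that the eigensystem of $F$ occurs with multiplicity one in the cuspidal spectrum, and (ii) that no eigensystem occurring in the Eisenstein part of $M_m^k$ coincides with that of $F$. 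Neither is available here, and (ii) is not a formality: Hecke operators cannot in general be expected to split off the Eisenstein part algebraically, and this is precisely the point at which the paper's proof instead invokes B\"ocherer's theorem (in the Siegel case) and its unitary analogue, Shimura's decomposition $M_n^k(\overline{\mathbb{Q}}) = S_n^k(\overline{\mathbb{Q}}) \oplus \mathrm{Eis}_n^k(\overline{\mathbb{Q}})$ \cite[Theorem 27.14]{arithmeticity}, which is proved by non-Hecke-theoretic means. With that splitting in hand, the cuspidal projection of the (algebraic) restricted Eisenstein series is again algebraic, the Eisenstein component pairs to zero against the cusp form, and the eigencomponent extraction only has to be run inside $S_m^k(\overline{\mathbb{Q}})$, where distinct eigenspaces are orthogonal and defined over $\overline{\mathbb{Q}}$.

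The multiplicity-one assertion (i) is also not something you can wave through: for degree-$2$ Hermitian cusp forms over $\mathbb{Q}(i)$ it is not a standard citable fact, and in this paper the relevant $F$ is ultimately a Maass (CAP) lift, i.e. has a non-tempered eigensystem --- exactly the class of forms for which ``separate $F$ from everything else by Hecke eigenvalues'' is most delicate. So your diagnosis of where the difficulty lies is misplaced: the arithmetic normalization of the Eisenstein series at $s=0$ (your step 1) is indeed needed but is classical (Siegel--Shimura--Krieg type rationality at full level in the convergence range), whereas the crux of the paper's one-line proof is the B\"ocherer--Shimura splitting theorem, for which your Hecke-projector construction is not an adequate substitute. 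If you replace that construction by the splitting theorem and keep your peeling-off scheme, you recover the paper's argument.
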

\begin{proof}This can be shown exactly as \cite[Theorem 1.9]{heim}. In the proof there, a result of Böcherer is used on the algebraic decomposition of the space of modular forms as an orthogonal product of the space of cusps forms and of the Eisenstein series, i.e. 
\[
M_{n}^k\left(\overline{\mathbb{Q}}\right) = S_n^k\left(\overline{\mathbb{Q}}\right) \oplus \textup{Eis}_n^k\left(\overline{\mathbb{Q}}\right).
\]
Such a result is also available for unitary groups in \cite[Theorem $27.14$]{arithmeticity}. 
\end{proof}

Actually, one can give an even stronger statement of the proposition above, namely establish even a reciprocity law on the action of the absolute Galois group. The statement is similar to \cite[Theorem $1.9$]{heim} of Heim. The main point here is that we can establish an algebraicity result for special values of $L$-functions if we can relate the expression above to an Euler product expression. This is the main motivation of the present paper. \newline

By using the well-known doubling method for unitary groups, as for example is studied in \cite[Equation 24.29 (a)]{arithmeticity}, we know that the first inner product is related to a Klingen-type Eisenstein series as defined above. That is, for $F$ with totally real Fourier coefficients, 
\begin{equation*}
\left\langle E_{2n+1,0}^{k}\left(\begin{pmatrix}z_1&&\\&z_2&\\&&z_3\end{pmatrix}; s\right), F(z_3)\right\rangle=
     \nu(s)\frac{Z_{F}^{(n)}(s+k/2)}{\prod_{i=0}^{2n-1}L(2s+k-i, \chi^i)}E_{n+1,n}^{k}\left(\begin{pmatrix}z_1&0\\0&z_2\end{pmatrix}, F;s\right),
\end{equation*}
where $Z_{F}^{(n)}$ is the standard $L$-function attached to $F$ (a generalisation of Definition \ref{standard l-function} to degree $n$ modular forms), $\chi$ is the non-trivial quadratic character attached to the extension $K/\mathbb{Q}$ and $\nu(s)$ is an expression involving Gamma factors (the explicit expression is given in \cite{arithmeticity}). So, our focus shifts to computing
\begin{equation*}
    \left\langle\left\langle E_{n+1,n}^{k}\left(\begin{pmatrix}W&0\\0&Z\end{pmatrix}, F;s\right), G(Z)\right\rangle, h(W)\right\rangle \textup{ }(W\in \mathbb{H}_1,\textup{ } Z \in \mathbb{H}_n).
\end{equation*}
We will now focus on the case $n=2$. Given Definition \ref{eisenstein_defn} of the Eisenstein series, we will start by finding representatives for $C_{3,2} \backslash \Gamma_{3}$. We begin by first finding representatives for $C_{3,2}(K) \backslash U_{3}(K)$. Now, for any $m,n \geq 1$, there is an embedding $U_{m}(K) \times U_n(K) \xhookrightarrow{}U_{m+n}(K)$ given by
\begin{equation*}
    \m{A_1&B_1\\C_1 & D_1} \times \m{A_2&B_2\\C_2&D_2} \xhookrightarrow{} \m{A_1&0&B_1&0\\0&A_2&0&B_2\\C_1&0&D_1&0\\0&C_2&0&D_2}.
\end{equation*}
Also, for each $r \in \mathbb{Q}^{\times}$, we consider the following subgroups of $U_1(K)$ and $U_2(K)$, respectively:
\begin{equation}\label{h subgroups}
    H_{1,r}(K) = \left\{\m{a & b\\-br^2 & a} \in U_1(K)\right\}, \,\, H_{2,r}(K) = \left\{\m{a_1 & a_2 & b_1 & b_2 \\ irb_3 & a_4& b_3& b_4 \\-r^2b_1& ira_2 &a_1&irb_2\\ird_3&c_4&d_3&d_4} \in U_2(K)\right\}.
\end{equation}
\begin{proposition}\label{reps}
The right coset space $C_{3,2}(K) \backslash U_{3}(K)$ has representatives
\begin{equation*}
    S_1 = C_{1,0}(K) \backslash U_1(K) \times 1_{4},
\end{equation*}
\begin{equation*}
    S_2 = \pi_{2} \cdot (1_2 \times C_{2,1}(K)\backslash U_2(K)),
\end{equation*}
\begin{equation*}
    S_3 = \xi \cdot \left(C_{1,0}(K)\backslash U_1(K) \times ((T\times 1_{2})\cdot C_{2,1}(K)\backslash U_2(K))\right),
\end{equation*}
\begin{equation*}
    W_r = \xi_r \cdot (D \cdot H_{1,r}(K)\backslash U_1(K) \times H_{2,r}(K) \backslash U_{2}(K)), \ r \in \mathbb{Q}^{\times} / N_{K/\mathbb{Q}}(K^{\times}),
\end{equation*}
where 
\begin{multline*}
\pi := \begin{pmatrix}0 & 1\\1& 0 \\&& 1\\&&&0 &1\\&&&1&0\\&&&&&1_{}\end{pmatrix},\textup{ } \xi := \begin{pmatrix}1 & 0\\1& 1 \\&& 1\\&&&1 &-1\\&&&0&1\\&&&&&1\end{pmatrix},\textup{ } T := \left\{\begin{pmatrix}a & 0\\0& \overline{a}^{-1}\end{pmatrix} \textup{ }|\textup{ }a\in K^{\times}\right\},
\end{multline*}
\begin{equation*}
    \xi_r := \m{1 & 0 & 0 & 0\\1 & 1 & 0 & 0\\ ir & ir & 1&-1 \\ -ir & 0 & 0 & 1} \times 1_{2}, \,\, D:=\left\{\m{d & 0 \\0 & d} \mid d \in K, \, N(d)=1\right\}.
\end{equation*}
\end{proposition}
\begin{proof}
Let $e_1,\ e_2,\ e_{3},\ f_1,\ f_2,\ f_{3}$ denote the standard basis for $K^{6}$, viewed as row vectors. The map $g \longmapsto f_1 g$ induces a bijection between $C_{3,2}(K)\backslash U_{3}(K)$ and the set $X$ of one-dimensional isotropic subspaces in $K^{6}$ (this is a standard fact, consequence of Witt's Theorem, as in \cite[Lemma 2.1]{Shimura_Euler_Product} for example). Let $V$ be such a subspace. We decompose $K^{6} = K^2 \oplus K^{4}$ according to the embedding $U_1(K) \times U_2(K) \xhookrightarrow{}U_{3}(K)$ (i.e., $K^2 = Ke_1 \oplus Kf_1$). There are three possibilities: $V$ is contained in $K^2$, $V$ is contained in $K^{4}$ or $V$ is not contained in either. \\

In the first two cases, $V$ is an isotropic subspace of $K^2$ or $K^4$, respectively, and hence we obtain the same set of representatives $S_1, S_2$ as in \cite[Proposition 2.1]{heim}.\\

For the last case, assume $V$ is spanned by the isotropic vector $v$. We decompose $v = v_1 \oplus v_2$ and we have two possibilities: $v_1$ is isotropic or $v_1$ is not isotropic. In the first case, in analogy with \cite[Proposition 2.1]{heim}, we obtain the set $S_3$.\\

Assume now $v_1$ is not isotropic. Then $v_2$ will not be isotropic either (since $v$ is isotropic). Let us write $\overline{v_1}^{t}J_1v_1 = 2ir = -\overline{v_2}^{t}J_2v_2$, with $r \in \mathbb{Q}^{\times}$. By Witt's Theorem (\cite[Theorem 1.2]{Shimura_Euler_Product}), we have that an isotropic vector $w = w_1 \oplus w_2 \in K^{6}$ will be in the same orbit as in $v$ under the action of $U_1(K) \times U_2(K)$ if and only if $\overline{w_1}^{t}J_1w_1 = 2ir = -\overline{w_2}^{t}J_2w_2$. This shows that the isotropic vectors $v=v_1 \oplus v_2$, with the same norm on the first component (hence the second too), form a single orbit under the action of $U_1(K) \times U_2(K)$. Since we are free to scale $v$ by some $\lambda \in K^{\times}$, (because we work with the subspace spanned by $v$), we must consider $r \in \mathbb{Q}^{\times}/N_{K/\mathbb{Q}}(K^{\times})$. Here, $N_{K/\mathbb{Q}}(a+ib):= a^2+b^2$ for $a, b \in \mathbb{Q}$.\\

Now, for each such $r$, we consider $v_r = \m{ir & ir & 0 & 1 & -1 & 0}$ as a representative of its orbit. We then observe that the matrix $\xi_r$ defined in the statement of the Proposition satisfies $f_1 \xi_r = v_r$. Hence, we deduce that the double quotient $C_{3,2}(K)\backslash U_{3}(K)/ (U_1(K) \times U_2(K))$ has the following irredundant representatives:
\begin{equation*}
    1_{6}, \pi, \xi, \{\xi_r, r \in \mathbb{Q}^{\times}/N_{K/\mathbb{Q}}(K^{\times})\}.
\end{equation*}
We now have that for $g_1 \in U_1(K), \ g_2 \in U_2(K)$ and $r$ as above, $C_{3,2}(K)\xi_r (g_1 \times g_2) = C_{3,2}(K)\xi_r$ if and only if $\xi_r(g_1 \times g_2)\xi_r^{-1} \in C_{3,2}(K)$. This then implies that $g_1 \in H_{1,r}(K),\ g_2 \in H_{2,r}(K)$ and $a+irb = a_1-irb_1$, where we write $g_1, g_2$ as in \eqref{h subgroups}.\\

For $i=1,2$, if $g_i \in U_i(K)$, we write $g_1=h_1e$ and $g_2=h_2f$, where $h_i \in H_{i,r}(K)$ and $e,f$ belong to a set of proper representatives for $H_{i,r}(K)\backslash U_i(K)$, respectively. By writing $h_i$ as in \eqref{h subgroups}, we let $d := (a_1-irb_1)(a+irb)^{-1}$. This is well-defined as $N(a+irb) = N(a_1-irb_1)= 1$ by unitarity, so in particular $a+irb, \ a_1+irc_1$ are non-zero. Moreover, $N(d)=1$ and if $D := \textup{diag}(d, d) \in U_1(K)$, we have that $Dh_1 \times h_2 \in \xi_r^{-1}C_{3,2}(K)\xi_r$. Hence,
\begin{multline*}
    C_{3,2}(K)\xi_r (g_1 \times g_2) = C_{3,2}(K)\xi_r (h_1 e \times h_2 f) = C_{3,2}(K) \xi_r(D h_1 \times h_2)(D^{-1} \times 1) (e \times f) \\= C_{3,2}(K)\xi_r(D^{-1}e \times f).
\end{multline*}
This gives us the set of representatives $W_r$. Hence, the Proposition follows.
\end{proof}
We now want to pull these representatives back to representatives for $C_{3,2}\backslash \Gamma_{3}$. 
\begin{corollary}\label{cor2}
The right coset space $C_{3,2}\backslash \Gamma_{3}$ has representatives
\begin{equation*}
    T_1 = C_{1,0}\backslash \Gamma_1 \times 1_{4},
\end{equation*}
\begin{equation*}
    T_2 = \pi \cdot (1_2 \times C_{2,1}\backslash \Gamma_2),
\end{equation*}
\begin{equation*}
    T_3 = \bigsqcup_{p,q} (\xi^{p,q} \times 1_2)\cdot (C_{1,0}\backslash \Gamma_1 \times C_{2,1}\backslash \Gamma_2),
\end{equation*}
\begin{equation*}
    V_r, \,\, r \in \mathbb{Q}^{\times}/N_{K/\mathbb{Q}}(K^{\times})
\end{equation*}
Here, $p,q \in \mathbb{Z}[i]\backslash\{0\}$ with $\gcd(p,q)=1$, $q=u+iv, \textup{ } u>0,\textup{ } v\geq 0$ and $\xi^{p,q} := \begin{pmatrix}*&*&0&0\\q&p&0&0\\0&0&\overline{p}&-\overline{q}\\0&0&*&*\end{pmatrix}$, with $\xi^{p,q} \times 1_{2} \in \Gamma_{3}$. The sets $V_r$ correspond to the representatives obtained by pulling $W_r$ of Proposition \ref{reps} back to $\mathcal{O}_K$.
\end{corollary}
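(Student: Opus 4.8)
The plan is to descend the $K$-rational representatives $S_1, S_2, S_3$ of the preceding proposition to the arithmetic coset space $C_{n+1,n}\backslash\Gamma_{n+1}$. The starting observation is that the natural map $C_{n+1,n}\backslash\Gamma_{n+1}\to C_{n+1,n}(K)\backslash U_{n+1}(K)$, $C_{n+1,n}\gamma\mapsto C_{n+1,n}(K)\gamma$, is injective, since if $\gamma_1\gamma_2^{-1}\in C_{n+1,n}(K)$ with $\gamma_1,\gamma_2\in\Gamma_{n+1}$ then $\gamma_1\gamma_2^{-1}\in C_{n+1,n}(K)\cap\Gamma_{n+1}=C_{n+1,n}$. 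As $S_1\sqcup S_2\sqcup S_3$ exhausts the $K$-coset space and these strata are pairwise disjoint, it therefore suffices to determine, stratum by stratum, which $K$-cosets contain an integral point and to select an integral representative for each; disjointness and completeness of the resulting families $T_1, T_2, T_3$ are then inherited from those of $S_1, S_2, S_3$.

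For the strata $S_1$ and $S_2$ I would argue that the inner coset factors descend directly. Because $\mathcal{O}_K=\mathbb{Z}[i]$ is a principal ideal domain, one has $U_1(K)=C_{1,0}(K)\,\Gamma_1$ and $U_n(K)=C_{n,n-1}(K)\,\Gamma_n$: a $K$-point of either quotient is the class of a primitive integral isotropic vector, and $\Gamma_1$ (respectively $\Gamma_n$) acts transitively on such vectors. Hence $C_{1,0}(K)\backslash U_1(K)$ and $C_{n,n-1}(K)\backslash U_n(K)$ are represented by their integral analogues $C_{1,0}\backslash\Gamma_1$ and $C_{n,n-1}\backslash\Gamma_n$. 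Since $\pi_n$ and the embedded $1_2$ are already integral, this produces exactly $T_1=C_{1,0}\backslash\Gamma_1\times 1_{2n}$ and $T_2=\pi_n\cdot(1_2\times C_{n,n-1}\backslash\Gamma_n)$.

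The real content is the stratum $S_3$, which carries, in addition to the two coset factors, the non-compact torus $T=\{\mathrm{diag}(a,\overline a^{-1})\mid a\in K^\times\}$. Here I would keep the $C_{n,n-1}$-factor reduced to $C_{n,n-1}\backslash\Gamma_n$ as above and focus on the joint action of $\xi_n$, the factor $C_{1,0}(K)\backslash U_1(K)$, and $T$ on the leading degree-two block (extended by $1_{2(n-1)}$). The idea is that merging the projective datum of the $U_1$-coset with the torus parameter $a\in K^\times$ and clearing denominators yields a single integral matrix whose distinguished row is a primitive pair $(q,p)$ of Gaussian integers. Primitivity is exactly the condition for this row to complete to an integral unitary matrix of the displayed shape $\xi^{p,q}$, and conversely every coprime pair yields such a $\xi^{p,q}\times 1_{2(n-1)}\in\Gamma_{n+1}$. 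Two pairs give the same $C_{n+1,n}$-coset precisely when they differ by a diagonal unit from $\mathcal{O}_K^\times=\{\pm 1,\pm i\}$, so imposing $q=u+iv$ with $u>0$, $v\ge 0$ selects a unique representative in each unit orbit and produces the disjoint union $\bigsqcup_{p,q}$ of $T_3$.

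I expect the genuine obstacle to be this $S_3$ computation. One must track precisely how $T$ and the upper-triangular group $C_{1,0}(K)$ act together on the degree-two block so as to read off the primitive datum $(p,q)$ unambiguously, verify that the resulting $\xi^{p,q}$ is both unitary and integral, and check that the normalization $u>0$, $v\ge 0$ gives exactly one representative per coset with no collision against the simultaneously varying $C_{n,n-1}\backslash\Gamma_n$ factor. The overall template follows \cite[Proposition 2.1]{heim}, but the four-element unit group of $\mathbb{Z}[i]$ entering the normalization of $(p,q)$ is precisely what distinguishes this unitary computation from Heim's symplectic one.
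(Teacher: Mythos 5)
Your overall strategy is the same as the paper's: use the class-number-one property of $\mathbb{Q}(i)$ to identify $C_{n,r}(K)\backslash U_n(K)$ with $C_{n,r}\backslash\Gamma_n$ (which disposes of $T_1$ and $T_2$ and of the inner coset factors of $T_3$), and parametrize the torus datum in $S_3$ by coprime pairs $(p,q)$ of Gaussian integers with $q$ normalized modulo units. But your treatment of the third stratum stops exactly where the proof has to start. The statement that carries all the content is: for every $a=p/q\in K^{\times}$ there exists $c\in C_{n+1,n}(K)$ such that
\[
c\cdot\xi_n\cdot\left(1_2\times\begin{pmatrix}a&0\\0&\overline a^{-1}\end{pmatrix}\times 1_{2(n-1)}\right)\in\Gamma_{n+1},
\]
and the resulting integral matrix has the block shape $\xi^{p,q}\times 1_{2(n-1)}$. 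You assert this (``merging the projective datum \dots and clearing denominators yields a single integral matrix whose distinguished row is a primitive pair'') and then, in your closing paragraph, explicitly defer its verification as ``the genuine obstacle''. Since everything else in your argument (injectivity of $C_{n+1,n}\backslash\Gamma_{n+1}\to C_{n+1,n}(K)\backslash U_{n+1}(K)$, descent of the inner factors, uniqueness of the normalized pair) is routine, what you have is a correct plan with the decisive step missing, not a proof.

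The paper closes precisely this gap by an explicit construction: for $a=p/q$ it exhibits
\[
p_{p,q}=\begin{pmatrix}p^{-1}&y&0&0\\0&q&0&0\\0&0&\overline p&0\\0&0&l&\overline q^{-1}\end{pmatrix}\times 1_{2(n-1)},
\]
where $l\in\mathcal{O}_K$ is chosen with $l\overline q\equiv 1\pmod{\overline p}$ --- this is where coprimality of $p$ and $q$ enters, via Bezout in the PID $\mathbb{Z}[i]$ --- and $y=-q\overline l/p$; one checks that $p_{p,q}$ is unitary and lies in $C_{n+1,n}(K)$, and multiplying out gives exactly $\xi^{p,q}$ (integrality of the ambiguous entries follows from $l\overline q\equiv 1\pmod{\overline p}$ and its conjugate $\overline l q\equiv 1\pmod{p}$). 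Note that your observation that a coprime row $(q,p)$ completes to a matrix in $\Gamma_{n+1}$ of the displayed block form, while true, is not a substitute: it shows such matrices exist, not that every $K$-coset in the stratum $S_3$ contains one, which is what the later unfolding of the Eisenstein series requires. To complete your argument you must either reproduce such an explicit parabolic correction or give an equivalent existence argument for it.
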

\begin{proof}
We first observe that there is a one-to-one correspondence between $C_{n,r}(K)\backslash U_n(K)$ and $C_{n,r}\backslash \Gamma_{n}$, for any $n, r$ with $0\leq r\leq n$. Indeed, since $K$ has class number one, that is $\mathbb{Z}[i]$ is a principal ideal domain, we have from \cite[Proposition 7.2 (2), p. 48]{Shimura_Euler_Product}, that $U_n(K) = C_{n,r}(K) \Gamma_n$, and hence
\begin{equation*}
    C_{n,r}(K)\backslash U_n(K) \cong (\Gamma_n \cap C_{n,r}(K)) \backslash \Gamma_n = C_{n,r} \backslash \Gamma_n.
\end{equation*}
Now, each $V_r$ is obtained by pulling $W_r$ of Proposition \ref{reps} back to $\mathcal{O}_K$, thanks to this correspondence. Moreover, $T_1$ and $T_2$ are obtained by the sets $S_1, S_2$ of Proposition \ref{reps}. In order to obtain the set $T_3$, it suffices to pull $S_3$ back to $\mathcal{O}_K$. We therefore need to find a matrix $M_a$ in $C_{3,2}(K)$ such that 
\begin{equation}\label{xi reps}
    M_a\cdot \xi \cdot \left(1_2 \times \begin{pmatrix}a&0\\0&\overline{a}^{-1}\end{pmatrix}\times 1_{2}\right) \in \Gamma_{3},
\end{equation}
with $\xi$ the matrix of Proposition \ref{reps}. We parametrize $K$ as 
\begin{equation*}
    \left\{\frac{p}{q} \mid p, q \in \mathbb{Z}[i], \ \gcd(p,q)=1,\textup{ } q = u+iv,\textup{ } u>0,\textup{ } v\geq 0 \right\}.
\end{equation*}
All these elements are different as $p,q$ are in $\mathbb{Z}[i]$ with the above conditions and their union is $K$. For $a = p/q$ as above, we define 
\begin{equation*}
M_{p,q} := \begin{pmatrix}p^{-1}&y&0&0\\0&q&0&0\\0&0&\overline{p}&0\\0&0&l&\overline{q}^{-1}\end{pmatrix}\times 1_{2},
\end{equation*}
with $l, q$ chosen so that $l\overline{q} \equiv 1 \pmod{ \overline{p}}$ and $y = -q\overline{l}/p$. Then $M_{p,q} \in C_{3,2}(K)$ and we can then see that the product of equation \eqref{xi reps} belongs to $\Gamma_{3}$ and has the claimed form.
\end{proof}
Our aim now is to show an analogue of \cite[Theorem $2.3$]{heim}. For any $n \geq 1$, if $M \in \Gamma_{n}, \textup{ }Z \in \mathbb{H}_n$, we define the following quantities:
\begin{equation*}
    \chi^{k,s}(M, Z) := j(M,Z)^{-k}|j(M,Z)|^{-2s}, \textup{ }\delta(Z) := \det\left( \operatorname{Im}Z\right).
\end{equation*}
Then, we have $\delta\left(M\langle Z\rangle\right) = |j(M,Z)|^{-2}\delta(Z)$, which follows from \cite[Theorem II.1.7, (c)]{Krieg_Book}.
\begin{proposition}\label{prop: eisenstein}
Let $k \equiv 0 \pmod 4$ and $k+2\textup{Re}(s) > 10$. Let also $F \in S_{2}^{k}$, $z_1 \in \mathbb{H}_1$ and $z_2 \in \mathbb{H}_2$. We then have
\begin{equation*}
    E_{3,2}^{k}\left([z_1, z_2], F; s\right) = E_{1,0}^{k}(z_1;s)F(z_2) + E_{2,1}^{k}(z_2, F_{z_1};s) \textup{ }+
\end{equation*}
\begin{multline*}
+\textup{ }\delta(z_1)^{s}\delta(z_2)^{s}\sum_{p,q}\sum_{\substack{\gamma_1 \in C_{1,0}\backslash \Gamma_1\\ \gamma_2 \in C_{2,1}\backslash \Gamma_{2}}}\chi^{k,s}(\gamma_1, z_1)\chi^{k,s}(\gamma_2, z_2)\times \\\times F\left(\begin{pmatrix}N(q)\gamma_1 \langle z_1\rangle && 0 \\ 0 && 0\end{pmatrix}+(\gamma_2\langle z_2\rangle)\left[\begin{pmatrix}\overline{p}&&0\\0&&1_{n-1}\end{pmatrix}\right]\right)\times
\end{multline*}
\begin{equation*}
\times \textup{ }\delta\left(\begin{pmatrix}N(q)\gamma_1 \langle z_1\rangle && 0 \\ 0 && 0\end{pmatrix}+(\gamma_2\langle z_2\rangle)\left[\begin{pmatrix}\overline{p}&&0\\0&&1_{n-1}\end{pmatrix}\right]\right)^{-s} + \mathcal{E}^k\left([z_1, z_2], F; s\right),
\end{equation*}
where
\begin{equation*}
    \mathcal{E}^k\left([z_1, z_2], F; s\right) := \sum_{r \in \mathbb{Q}^{\times}/N_{K/\mathbb{Q}}(K^{\times})}\sum_{\gamma \in V_r}F(\gamma \langle [z_1,z_2]\rangle_{*})j(\gamma, [z_1,z_2])^{-k}\left(\frac{\det \operatorname{Im}\gamma \langle [z_1,z_2]\rangle}{\det{\operatorname{Im}\gamma \langle [z_1,z_2]\rangle_{*}}}\right)^{s}.
\end{equation*}
\begin{flushleft}
Here, for $\tau \in \mathbb{H}_{1}$, $F_{z_1}(\tau) := F\left([z_1, \tau]\right) \in S_{1}^{k}$ and $p, q$ are summed as in Corollary \ref{cor2}. We also remind the reader here that $N(q)$ denotes the norm of $q$ and $[a, b]$ the block diagonal matrix with diagonal blocks $a,b$ as in Notation.
\end{flushleft}
\end{proposition}
\begin{proof}
As we have shown after Definition \ref{eisenstein_defn}, for $k+2\textup{Re}(s) > 10$, the Eisenstein series $E_{3,2}^{k}\left(Z, F; s\right)$ is absolutely and uniformly convergent on compact subsets of $\mathbb{C}$. We split the Eisenstein series according to the representatives of Corollary \ref{cor2}. We can write
\begin{multline*}
    E_{3,2}^{k}\left([z_1, z_2], F; s\right) = \delta(z_1)^{s}\delta(z_2)^{s} \sum_{i=1}^{3}\sum_{M \in T_i}\chi^{k,s}(M, [z_1, z_2])F(M\langle [z_1,z_2]\rangle_{*})\delta\left(M\langle[z_1,z_2]\rangle_{*}\right)^{-s} +\\+\mathcal{E}^k\left([z_1, z_2], F; s\right).
\end{multline*}
For the representatives of $T_1, T_2$ in Corollary \ref{cor2}, the proof is exactly the same as in \cite[Theorem $2.3$]{heim}.\\\\
For a representative $M$ of $T_3$, write $M = (\xi^{p,q}\times 1_{2})(\gamma_1 \times \gamma_2)$ with $\gamma_1 \in C_{1,0}\backslash \Gamma_1$ and $\gamma_2 \in C_{2,1}\backslash \Gamma_2$. We write $\gamma_2\langle z_2\rangle = \begin{pmatrix}x_1&x_2\\x_3&x_4\end{pmatrix}$ and we then have
\begin{align*}
    (M\langle[z_1,z_2]\rangle)_{*} &= ((\xi^{p,q} \times 1_{2})[\gamma_1 z_1, \gamma_2 z_2])_{*} = \left(\begin{pmatrix}*&*&0\\q&p&0\\0&0&1\end{pmatrix}\begin{pmatrix}\gamma_1\langle z_1\rangle&0&0\\0&x_1&x_2\\0&x_3&x_4\end{pmatrix}\begin{pmatrix}*&\overline{q}&0\\0&\overline{p}&0\\0&0&1\end{pmatrix}\right)_{*} \\
    &=\begin{pmatrix}N(q)\gamma_1\langle z_1\rangle + N(p)x_1 && px_2 \\ \overline{p}x_3 && x_4\end{pmatrix}=\begin{pmatrix}N(q)\gamma_1 \langle z_1\rangle && 0 \\ 0 && 0\end{pmatrix}+(\gamma_2\langle z_2\rangle)\left[\begin{pmatrix}\overline{p}&&0\\0&&1\end{pmatrix}\right].
\end{align*}
Also,
\begin{equation*}
    j((\xi^{p,q}\times 1_2)(\gamma_1\times \gamma_2), [z_1,z_2]) = j(\xi^{p,q} \times 1_{2}, [\gamma_1z_1, \gamma_2z_2])j(\gamma_1\times \gamma_2, [z_1,z_2]),
\end{equation*}
and $j(\xi^{p,q} \times 1_{2}, [\gamma_1z_1, \gamma_2z_2]) = \det(D)$, where we denote $\xi^{p,q}:=\begin{pmatrix}A&0\\0&D\end{pmatrix}$. \\

By unitarity, we have $D \overline{A}^{t} = 1_2$ so $\det (D)\cdot \overline{\det (A)}=1_{2}$, so $N(\det(D))=1$ and $\det(D)$ is in $\mathbb{Z}[i]$, which shows that $\det(D)\in \{\pm 1, \pm i\}$. As $k \equiv 0 \pmod 4$, we get 
\begin{equation*}
\chi^{k,s}((\xi \times 1_{2})(\gamma_1\times \gamma_2),[z_1,z_2]) = \chi^{k,s}(\gamma_1, z_1)\chi^{k,s}(\gamma_2, z_2),
\end{equation*}
and so the Proposition follows.
\end{proof}
We now write $Z = \begin{pmatrix}\tau&z_1\\z_2&\omega\end{pmatrix} \in \mathbb{H}_2, \textup{ }W \in \mathbb{H}_1$ and consider the Fourier-Jacobi expansions of $F, G$ and the Fourier expansion of $h$ as follows:
\begin{equation}\label{fourier-expansions}
    F(Z) = \sum_{m=1}^{\infty}\phi_{m}(\tau, z_1,z_2)e^{2\pi i m \omega}, \textup{ } G(Z) = \sum_{m=1}^{\infty}\psi_{m}(\tau, z_1,z_2)e^{2\pi i m \omega}, \textup{ } h(W) = \sum_{n=1}^{\infty}a_ne^{2\pi i n W}.
\end{equation}
We state and prove the main Theorem of this section, which will give us the Dirichlet series with which we are going to work with.
\begin{theorem}\label{integral_representation_theorem}
Let $k \equiv 0 \pmod 4$. Let $F,G \in S_2^{k}$ and $h \in S_1^{k}$. Then, for $\textup{Re}(s) > 5-k/2$, we have
\begin{multline*}
    \left\langle\left\langle E_{3,2}^{k}\left(\begin{pmatrix}W&0\\0&Z\end{pmatrix}, F;s\right), G(Z)\right\rangle, h(W)\right\rangle = (4\pi)^{-(2k+s-4)}\frac{\Gamma(2k+s-4)\Gamma(k+s-3)\Gamma(k+s-1)}{\Gamma(2k+2s-4)} \times \\ \times D_{F,G,h}(s) + R_{F,G,h}(s),
\end{multline*}
where $R_{F,G,h}(s) := \langle \langle \mathcal{E}^{k}\left([W, Z], F; s\right), G(Z)\rangle, h(W)\rangle$, and we define the Dirichlet series
\begin{equation*}
D_{F, G, h}(s) := \sum_{p,q}\sum_{m=1}^{\infty}\langle \phi_m\mid U_{p}, \psi_{mN(p)}\rangle\overline{a_{mN(q)}}N(p)^{-(k+s-3)}N(q)^{-(k+s-1)}m^{-(2k+s-4)},
\end{equation*}
with $U_p$ the operator defined by
\begin{align*}
    U_p : J_{k,m}^{2} &\longrightarrow J_{k, mN(p)}^{2} \\ \phi_{m}(\tau, z_1, z_2) &\longmapsto \phi_m(\tau, \overline{p}z_1, pz_2),
\end{align*}
and $p,q$ summed as in Corollary \ref{cor2}.
\end{theorem}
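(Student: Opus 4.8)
The plan is to substitute the explicit formula of Proposition~\ref{prop: eisenstein} (with $n=2$, $z_1=W$, $z_2=Z$) into the left-hand side and compute the triple Petersson product term by term, working throughout in the range of $\operatorname{Re}(s)$ where the Eisenstein series converges absolutely so that all interchanges are legitimate. That formula writes $E_{3,2}^{k}([W,Z],F;s)$ as the sum of $E_1^{k}(W,s)F(Z)$, the degree-two Klingen Eisenstein series $E_{2,1}^{k}(Z,F_W;s)$, and the double sum over $p,q$. I would first dispose of the first two pieces. The first factors as a function of $W$ times a function of $Z$, so its triple product equals $\langle F,G\rangle_Z\,\langle E_1^{k}(\cdot,s),h\rangle_W$, and the second factor vanishes because unfolding $\langle E_1^{k}(\cdot,s),h\rangle$ isolates the zeroth Fourier coefficient of the cusp form $h$. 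For the second piece, for each fixed $W$ the inner product $\langle E_{2,1}^{k}(\cdot,F_W;s),G\rangle_Z$ vanishes since a Klingen Eisenstein series is orthogonal to the cusp form $G$ (by unfolding against the $\Phi$-operator data of $G$). Hence only the $p,q$-sum contributes.

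For that surviving term I would run the standard Rankin--Selberg unfolding in both variables. The sums over $\gamma_1\in C_{1,0}\backslash\Gamma_1$ and $\gamma_2\in C_{2,1}\backslash\Gamma_2$ unfold $\int_{\Gamma_1\backslash\mathbb{H}_1}$ and $\int_{\Gamma_2\backslash\mathbb{H}_2}$ to $\int_{C_{1,0}\backslash\mathbb{H}_1}$ and $\int_{C_{2,1}\backslash\mathbb{H}_2}$. The key cancellation is that $\chi^{k,s}(\gamma_i,\cdot)=j^{-k}|j|^{-2s}$ combines with the automorphy of $\overline{h},\overline{G}$ (contributing $\overline{j}^{-k}$), with $(\det Y)^{k}$ (contributing $|j|^{2k}$) and with the outer factor $\delta(\cdot)^{s}$ (contributing $|j|^{2s}$), so that all powers of $j(\gamma_i,\cdot)$ cancel; the $\gamma_i$-summand then equals the integrand evaluated at $\gamma_i\langle\cdot\rangle$, and the unfolding sets $\gamma_1=\gamma_2=1$ while enlarging the domains.

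Next I would insert the Fourier--Jacobi expansions of $F$ and $G$ in the variable $\omega$ and the Fourier expansion of $h$, and integrate out the unipotent directions. On $C_{1,0}\backslash\mathbb{H}_1$ and $C_{2,1}\backslash\mathbb{H}_2$ these are strips, so integrating $\operatorname{Re}W$, $\operatorname{Re}\tau$, $\operatorname{Re}\omega$ and the elliptic translations over their periods forces the arithmetic relations dictated by the scalings inside the argument $\mathcal A_0:=\left(\begin{smallmatrix}N(q)W+N(p)\tau & pz_1\\ \overline p z_2&\omega\end{smallmatrix}\right)$ of $F$. The $\operatorname{Re}W$-integration extracts the coefficient $\overline{a_{mN(q)}}$ of $h$, while the $\operatorname{Re}\tau$-, $\operatorname{Re}\omega$- and $z$-integrations collapse the $F$--$G$ pairing to a Jacobi Petersson product. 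The main obstacle is to identify this pairing precisely as $\langle\phi_m|U_p,\psi_{mN(p)}\rangle$: this requires the definition of $U_p$ together with a change of variables in $(\tau,z_1,z_2)$ that absorbs the scalings by $N(p),p,\overline p$, simultaneously realizing the index shift $m\mapsto mN(p)$ and producing the normalizing factor $N(p)^{-(k+s-3)}$, and one must check that after summation over $p$ the roles of $p$ and $\overline p$ agree with the convention fixing $U_p$.

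Finally I would perform the three radial integrations. Writing $\tilde y_\omega=y_\omega-|z_1-\overline{z_2}|^2/(4y_\tau)$ so that $\det Y_Z=y_\tau\tilde y_\omega$ decouples the $\omega$-direction, the integral in $\tilde y_\omega$ against $e^{-4\pi m\tilde y_\omega}$ gives $(4\pi)^{-(2k+s-4)}\Gamma(2k+s-4)\,m^{-(2k+s-4)}$, which accounts for the prefactor and the $m$-power. For the remaining double integral I would use $\delta(\mathcal A_0)=(N(q)\operatorname{Im}W+N(p)y_\tau)y_\omega-\tfrac{N(p)}{4}|z_1-\overline{z_2}|^2$, which is linear in $\operatorname{Im}W$ and $y_\tau$; weighted by $\delta(W)^{s}$ and $\delta(\mathcal A_0)^{-s}$ it becomes a Beta integral, and rescaling by $N(q)$ and $N(p)$ yields $N(q)^{-(k+s-1)}N(p)^{-(k+s-3)}$ together with $\Gamma(k+s-3)\Gamma(k+s-1)/\Gamma(2k+2s-4)$, the denominator reflecting $2k+2s-4=(k+s-3)+(k+s-1)$. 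Collecting all factors reproduces the claimed identity with $D_{F,G,h}(s)$.
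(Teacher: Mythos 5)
Your skeleton coincides with the paper's own proof: the same three-term decomposition from Proposition \ref{prop: eisenstein}, the same vanishing arguments for the first two pieces (unfolding $\langle E_1^k(\cdot,s),h\rangle$ against the cusp form $h$, and unfolding the Klingen piece against the cuspidal $G$), and the same double Rankin--Selberg unfolding with the $\chi^{k,s}$-cancellation. The gaps are in the two computational steps, and the first one is a step that would fail as stated. Your allocation of the ``three radial integrations'' is structurally wrong: $y_\tau$ is not available as a radial variable at all, since together with the $(\tau,z_1,z_2)$-integration over $\mathcal{F}^{J}$ and the weight $y_\tau^{k-4}$ it must be reserved to assemble the Jacobi Petersson product $\langle\phi_m|U_p,\psi_{mN(p)}\rangle$ that appears in $D_{F,G,h}$. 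After that product is formed, what remains is the \emph{two}-variable integral
\begin{equation*}
\int_0^\infty\int_0^\infty \tilde y_\omega^{\,k+s-4}\, y_W^{\,k+s-2}\bigl(N(q)y_W+N(p)\tilde y_\omega\bigr)^{-s}\,e^{-4\pi m (N(p)\tilde y_\omega+N(q)y_W)}\,d\tilde y_\omega\, dy_W ,
\end{equation*}
whose variables are coupled by $(N(q)y_W+N(p)\tilde y_\omega)^{-s}$. Hence your claim that the $\tilde y_\omega$-integration alone produces $(4\pi)^{-(2k+s-4)}\Gamma(2k+s-4)m^{-(2k+s-4)}$ cannot hold: only the power $\tilde y_\omega^{k+s-4}$ is present, so integrating $\tilde y_\omega$ first yields a $\Gamma(k+s-3)$-type factor times a nonelementary (confluent hypergeometric) leftover in $y_W$. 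The exponent $2k+s-4$ emerges only from the joint integral, e.g.\ substituting $x=4\pi mN(p)\tilde y_\omega$, $y=4\pi mN(q)y_W$ and passing to $r=x+y$, $x=r\sigma$: the $r$-integral gives $\Gamma(2k+s-4)$, the $\sigma$-integral the Beta factor $\Gamma(k+s-3)\Gamma(k+s-1)/\Gamma(2k+2s-4)$, and the rescaling the powers of $N(p),N(q),m$. This is precisely the content of the two-variable identity the paper invokes.

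The second gap is the parametrization. With the $W$-block in the upper-left corner of $\mathcal{A}_0$, expanding $F(\mathcal{A}_0)$ and $G(Z)$ in the lower-right variable $\omega$ and integrating $x_\omega$ forces \emph{equal} Fourier--Jacobi indices on the $F$- and $G$-sides, and leaves $W$ and $\tau$ entangled inside $\phi_n(N(q)W+N(p)\tau,\,pz_1,\,\overline p z_2)$; moreover $\delta(\mathcal{A}_0)$ then mixes $y_\tau$, $y_\omega$ and $|z_1-\overline{z_2}|^2$, so the Jacobi/radial separation of the previous paragraph is not even visible. Your proposed fix, a change of variables in $(\tau,z_1,z_2)$, cannot repair this, because the offending scaling sits on $\tau$ inside the first argument. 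What is needed is the paper's rewriting step: exchange the two diagonal blocks (equivalently, pass from the $C$-parabolic to the $P$-parabolic fundamental domains; this is harmless on $G$ and on the automorphy factors since $k\equiv 0\pmod 4$), so that the argument of $F$ becomes $\begin{pmatrix}\tau&\overline p z_1\\ pz_2& N(q)W+N(p)\omega\end{pmatrix}$. Then the Fourier--Jacobi coefficients of $F$ are literally $(\phi_m|U_p)(\tau,z_1,z_2)$, of index $mN(p)$ matching $\psi_{mN(p)}$, the exponential $e\bigl(m(N(q)W+N(p)\omega)\bigr)$ carries all the $W,\omega$ dependence so that the $x_\omega$- and $x_W$-integrations force the indices $mN(p)$ and $mN(q)$, and $\delta$ factors as $y_\tau\bigl(N(q)y_W+N(p)\tilde y_\omega\bigr)$. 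With these two corrections your plan becomes the paper's proof.
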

\begin{proof}
From Proposition \ref{prop: eisenstein}, we can rewrite the Eisenstein series as a sum involving four summands. Clearly, $R_{F,G,h}(s)$ corresponds to the summand $\mathcal{E}^{k}([Z,W], F;s)$. We will now deal with the third one. This can be written as (the summations are as in Corollary \ref{cor2}):
\begin{flalign*}
    I_3 &:= \int_{\Gamma_1\backslash \mathbb{H}_1}\int_{\Gamma_2\backslash\mathbb{H}_2}\delta(W)^{k+s}\delta(Z)^{k+s}\sum_{p,q}\sum_{\gamma_1, \gamma_2}\chi^{k,s}(\gamma_1, W)\chi^{k,s}(\gamma_2,Z) \times\\
    &\times F\left(\begin{pmatrix}N(q)\gamma_1 \langle W\rangle & 0 \\ 0 & 0\end{pmatrix}+(\gamma_2\langle Z\rangle)\left[\begin{pmatrix}\overline{p}&0\\0&1\end{pmatrix}\right]\right)\delta\left(\begin{pmatrix}N(q)\gamma_1 \langle W\rangle & 0 \\ 0 & 0\end{pmatrix}+(\gamma_2\langle Z\rangle)\left[\begin{pmatrix}\overline{p}&0\\0&1\end{pmatrix}\right]\right)^{-s}\times \\
    &\times \overline{G(Z)}\overline{h(W)}\hbox{d}^{*}W\hbox{d}^{*}Z.
\end{flalign*}
Now, using the automorphy condition for $G$ and $h$, we have 
\begin{align*}
    G(Z) &= (G\mid_{k}\gamma_2)(Z) = j(\gamma_2, Z)^{-k}G(\gamma_2 \langle Z\rangle ), \\
    h(W) &= (h\mid_{k}\gamma_1)(W) = j(\gamma_1, W)^{-k}h(\gamma_1 \langle W\rangle ).
\end{align*}
Also $\delta(\gamma_2 \langle Z\rangle ) = |j(\gamma_2, Z)|^{-2}\delta(Z)$, \textup{ }$\delta(\gamma_1\langle W\rangle ) = |j(\gamma_1, W)|^{-2}\delta(W)$, so
\begin{align*}
    \overline{G(Z)}\delta(Z)^{k+s}\chi^{k,s}(\gamma_2, Z) &= \overline{j(\gamma_2, Z)}^{-k}\overline{G(\gamma_2 \langle Z\rangle )}\delta(\gamma_2\langle Z\rangle)^{k+s}|j(\gamma_2, Z)|^{2(k+s)}j(\gamma_2, Z)^{-k}|j(\gamma_2, Z)|^{-2s}\\&=\overline{G(\gamma_2 \langle Z\rangle )}\delta(\gamma_2 \langle Z\rangle )^{k+s},
\end{align*}
and similarly for $h$. Hence, by the usual "unfolding" trick, we obtain:
\begin{equation*}
    I_3 = \int_{C_{1,0}\backslash\mathbb{H}_1}\int_{C_{2,1}\backslash \mathbb{H}_2} \sum_{p,q} F\left(\begin{pmatrix}N(q)W & 0 \\ 0 & 0\end{pmatrix}+Z\left[\begin{pmatrix}\overline{p}&0\\0&1\end{pmatrix}\right]\right)\delta\left(\begin{pmatrix}N(q)W & 0 \\ 0 & 0\end{pmatrix}+Z\left[\begin{pmatrix}\overline{p}&0\\0&1\end{pmatrix}\right]\right)^{-s}\times
\end{equation*}
\begin{equation*}
\times\overline{G(Z)}\overline{h(W)}\delta(Z)^{k+s}\delta(W)^{k+s}\hbox{d}^{*}Z\hbox{d}^{*}W.
\end{equation*}
We now consider the matrix $M := \m{I& \\ &I}$, where $I := \m{&1\\1&}$. We then check that $M \in \Gamma_2$ and from Definition \ref{hermitian_modular_form}, we have 
\begin{equation*}
    F\left(Z\left[\m{0&1\\1&0}\right]\right) = F(Z) \textup{ }\forall Z \in \mathbb{H}_2,
\end{equation*}
as $k$ is even and $M\langle Z\rangle = Z[I]$. The same also holds for $G$. In particular, this shows
\begin{equation*}
    F\left(\begin{pmatrix}N(q)W & 0 \\ 0 & 0\end{pmatrix}+Z\left[\begin{pmatrix}\overline{p}&0\\0&1\end{pmatrix}\right]\right) = F\left(\begin{pmatrix}0 & 0 \\ 0 & N(q)W\end{pmatrix}+Z\left[\m{0&1\\1&0}\right]\left[\begin{pmatrix}1&0\\0&\overline{p}\end{pmatrix}\right]\right).
\end{equation*}
What is more, we can directly compute
\begin{equation*}
    \delta\left(\begin{pmatrix}N(q)W & 0 \\ 0 & 0\end{pmatrix}+Z\left[\m{0&1\\1&0}\right]\left[\begin{pmatrix}\overline{p}&0\\0&1\end{pmatrix}\right]\right) = 
    \delta\left(\begin{pmatrix}0 & 0 \\ 0 & N(q)W\end{pmatrix}+Z\left[\begin{pmatrix}1&0\\0&\overline{p}\end{pmatrix}\right]\right).
\end{equation*}
By now setting $Z \longmapsto M \langle Z\rangle$ and using the fact that $M^{-1}C_{2,1}M = P_{2,1}$, we can rewrite $I_3$ as ($C_{1,0}=P_{1,0}$ as well):
\begin{equation*}
        I_3 = \int_{P_{1,0}\backslash\mathbb{H}_1}\int_{P_{2,1}\backslash \mathbb{H}_2} \sum_{p,q} F\left(\begin{pmatrix}0 & 0 \\ 0 & N(q)W\end{pmatrix}+Z\left[\begin{pmatrix}1&0\\0&\overline{p}\end{pmatrix}\right]\right)\delta\left(\begin{pmatrix}0& 0 \\ 0 & N(q)W\end{pmatrix}+Z\left[\begin{pmatrix}1&0\\0&\overline{p}\end{pmatrix}\right]\right)^{-s}\times
\end{equation*}
\begin{equation*}
\times\overline{G(Z)}\overline{h(W)}\delta(Z)^{k+s}\delta(W)^{k+s}\hbox{d}^{*}Z\hbox{d}^{*}W.
\end{equation*}
Now, a fundamental domain for the action of $P_{1,0}$ on $\mathbb{H}_1$ is
\begin{equation*}
    \mathcal{F} = \{z\in \mathbb{H}_1 \mid z = x+iy, \textup{ }0 \leq x \leq 1\},
\end{equation*}
while a fundamental domain for the action of $P_{2,1}$ on $\mathbb{H}_2$ is (cf. \cite[p. 2907]{gritsenko})
\begin{equation*}
    \left\{\begin{pmatrix}\tau&z_1\\z_2&\omega\end{pmatrix}\mid (\tau, z_1, z_2) \in \mathcal{F}^{J},\textup{ } y_{\omega} > |z_1-\overline{z_2}|^2/4y_{\tau},\textup{ } 0\leq x_{\omega}\leq 1\right\},
\end{equation*}
using the notation of equation \eqref{imaginary,real} for the real and imaginary parts. Hence, we have
\begin{flalign*}
    I_3 &= \sum_{p,q}\int_{\mathcal{F}^{J}}\hbox{d}\tau \hbox{d}z_{1}\hbox{d}z_{2}\int_{y_{\omega}>|z_1-\overline{z_2}|^2/4y_{\tau}}\hbox{d}y_{\omega}\int_{0}^{1}\hbox{d}x_{\omega}\int_{0}^{1}\hbox{d}x_{W}\int_{0}^{\infty}\hbox{d}y_{W}\delta(Z)^{k+s-4}\times \\
    &\times\sum_{m=1}^{\infty}\phi_m(\tau, \overline{p}z_1, pz_2)e^{2\pi im(N(q)W+N(p)\omega)}\sum_{n=1}^{\infty}\overline{a_n}e^{-2\pi i n \overline{W}}\sum_{k=1}^{\infty}\overline{\psi_{k}(\tau, z_1, z_2)}e^{-2\pi i k \overline{\omega}}\times
\end{flalign*}
\begin{equation*}
\times\delta\left(\begin{pmatrix}0& 0 \\ 0 & N(q)W\end{pmatrix}+Z\left[\begin{pmatrix}1&0\\0&\overline{p}\end{pmatrix}\right]\right)^{-s}.
\end{equation*}
We now first perform the integration over $x_{\omega}$ and $x_{W}$. For $x_{\omega}$, we have
\begin{equation*}
    \int_{0}^{1}e^{2\pi i mN(p)x_{\omega} - 2\pi i k x_{\omega}}\hbox{d}x_{\omega},
\end{equation*}
which is zero, unless $k = mN(p)$, in which case the integral is $1$. Similarly for $x_{W}$, the integral we need to compute is
\begin{equation*}
    \int_{0}^{1}e^{2\pi i mN(q)x_{W}-2\pi i n x_{W}}\hbox{d}x_{W},
\end{equation*}
which is again zero unless $n = mN(q)$, in which case it is $1$. These are the only terms we need to integrate as the real parts of $\omega$ and $W$ do not appear as arguments of $\delta$ by definition. We now substitute $t = y_{\omega} - |z_1-\overline{z_2}|^2/4y_{\tau}$ and compute
\begin{equation*}
    \delta(Z) = \det\left(\frac{1}{2i}\left(Z - \overline{Z}^{t}\right)\right) = y_{\tau}t,
\end{equation*}
and
\begin{equation*}
    \delta\left(\begin{pmatrix}0& 0 \\ 0 & N(q)W\end{pmatrix}+Z\left[\begin{pmatrix}1&0\\0&\overline{p}\end{pmatrix}\right]\right) = \delta\left(\begin{pmatrix}\tau & \overline{p}z_1 \\ pz_2 & N(q)W+N(p)\tau'\end{pmatrix}\right) = y_{\tau}(N(q)y_W+N(p)t).
\end{equation*}
So, the integral $I_3$ becomes
\begin{equation*}
\sum_{p,q}\int_{\mathcal{F}^{J}}\sum_{m=1}^{\infty}\phi_m(\tau, \overline{p}z_1, pz_2)\overline{\psi_{mN(p)}(\tau, z_1,z_2)}\overline{a_{mN(q)}}y_{\tau}^{k-4}e^{-\pi m (|z_1-\overline{z_2}|^2/y_{\tau})}\hbox{d}\tau \hbox{d}z_1 \hbox{d}z_2 \times
\end{equation*}
\begin{equation*}
\times\int_{0}^{\infty}\hbox{d}t\int_{0}^{\infty}\hbox{d}y_{W}t^{k+s-4}(N(q)y_{W}+N(p)t)^{-s}e^{-4\pi m (N(q)y_{W}+N(p)t)}y_{W}^{s+k-2}=
\end{equation*}
\begin{equation*}
    = \sum_{p,q}\sum_{m=1}^{\infty}\langle \phi_m|U_{p}, \psi_{mN(p)}\rangle\overline{a_{mN(q)}}\int_{0}^{\infty}\int_{0}^{\infty}\frac{\hbox{d}t}{t}\frac{\hbox{d}y_{W}}{y_{W}}t^{k+s-3}y_{W}^{s+k-1}(N(q)y_{W}+N(p)t)^{-s}e^{-4\pi m (N(q)y_{W}+N(p)t)}=
\end{equation*}
\begin{multline}\label{I_3}
    = (4\pi)^{-(2k+s-4)}\frac{\Gamma(2k+s-4)\Gamma(k+s-3)\Gamma(k+s-1)}{\Gamma(2k+2s-4)}\times\\
    \times\sum_{p,q}\sum_{m=1}^{\infty}\langle \phi_m|U_{p}, \psi_{mN(p)}\rangle\overline{a_{mN(q)}}N(p)^{-(k+s-3)}N(q)^{-(k+s-1)}m^{-(2k+s-4)},
\end{multline} 
by using the fact that (cf. \cite[Theorem 2.6]{heim})
\begin{equation*}
    \int_{0}^{\infty}\int_{0}^{\infty}\frac{\hbox{d}x}{x}\frac{\hbox{d}y}{y}x^{\alpha}y^{\beta}\left(\frac{xy}{x+y}\right)^{\gamma}e^{-(x+y)} = \frac{\Gamma(\alpha+\beta+\gamma)\Gamma(\alpha+\gamma)\Gamma(\beta+\gamma)}{\Gamma(\alpha+\beta+2\gamma)},
\end{equation*}
and substituting $x = 4\pi m N(p) t$, $y = 4\pi m N(q)y_{W}$, $\alpha = k-3$, $\beta = k-1$, $\gamma = s$. This formula follows after setting $u = x+y$ and then $t = x/u$ and using the Euler integral of the first kind
\begin{equation*}
    \int_{t=0}^{1}t^{z_1-1}(1-t)^{z_2-1}\hbox{d}t = \frac{\Gamma(z_1)\Gamma(z_2)}{\Gamma(z_1+z_2)}, \textup{ } \forall z_1,z_2 \in \mathbb{C}.
\end{equation*}
Let us now consider the second summand in the decomposition of Proposition \ref{prop: eisenstein}. We want to compute
\begin{equation*}
    I_2 := \int_{\Gamma_1 \backslash \mathbb{H}_1}\int_{\Gamma_2\backslash\mathbb{H}_2}\sum_{\gamma \in C_{2,1}\backslash\Gamma_2}j(\gamma, Z)^{-k}F_{W}\left(\gamma\langle Z\rangle_{*}\right)\left(\frac{\delta(\gamma\langle Z\rangle)}{\delta(\gamma\langle Z \rangle_{*})}\right)^{s}\overline{G(Z)}\overline{h(W)}\delta(Z)^{k}\delta(W)^{k}\hbox{d}^{*}Z\hbox{d}^{*}W.
\end{equation*}
Using again the automorphy condition for $G$, we obtain, by unfolding the integral, that
\begin{equation*}
    I_2 = \int_{\Gamma_1\backslash\mathbb{H}_1}\int_{C_{2,1}\backslash\mathbb{H}_2}F_{W}(Z_{*})\overline{G(Z)}\overline{h(W)}\delta(Z)^{k+s}\delta(Z_{*})^{-s}\hbox{d}^{*}Z\hbox{d}^{*}W.
\end{equation*}
Using the same reasoning for the interchange of the parabolic subgroups $C_{2,1}, \textup{ }P_{2,1}$ as in the case of $I_3$, we rewrite the inner integral as
\begin{equation*}
    I_2 = \int_{P_{2,1}\backslash\mathbb{H}_2}F\left(\begin{pmatrix}\tau & 0\\ 0 & W\end{pmatrix}\right)\overline{G(Z)}\delta(Z)^{k+s}y_{\tau}^{-s}\hbox{d}^{*}Z,
\end{equation*}
and by using the Fourier-Jacobi expansions and the fundamental domains mentioned before, we have
\begin{equation*}
    I_2 = \int_{\mathcal{F}^{J}}\hbox{d}\tau \hbox{d}z_1 \hbox{d}z_2 \int_{y_{\omega}>{|z_1-\overline{z_2}|^2/4y_{\tau}}}\int_{0}^{1}\hbox{d}x_{\omega}\sum_{m=1}^{\infty}\phi_m(\tau,0,0)e^{2\pi i mW}\sum_{n=1}^{\infty}\overline{\psi_m(\tau, z_1,z_2)}e^{-2\pi i m \overline{\omega}}\delta(Z)^{k+s-4}y_{\tau}^{-s}.
\end{equation*}
We can then see that this is zero by calculating the integral over $x_{\omega}$, i.e.
\begin{equation*}
    \int_{0}^{1}e^{-2\pi imx_{\omega}}dx_{\omega} = 0.
\end{equation*}
Therefore, $I_2=0$. Finally, we will show that the integral involving the first summand of Proposition \ref{prop: eisenstein} is zero. We have, after a direct computation
\begin{equation*}
    \langle\langle E_{1,0}(W;s)F(Z), G(Z)\rangle, h(W)\rangle = \langle F(Z), G(Z)\rangle\langle E_1(W;s), h(W)\rangle,
\end{equation*}
and we will show the second inner product is zero. But
\begin{equation*}
    \langle E_{1,0}(W;s), h(W)\rangle = \int_{\Gamma_1 \backslash \mathbb{H}_1} \sum_{\gamma \in C_{1,0}\backslash\Gamma_1}j(\gamma, W)^{-k}\delta(\gamma W)^{s}\overline{h(W)}\delta(W)^{k}\hbox{d}^{*}W.
\end{equation*}
By the usual unfolding trick, the above integral equals
\begin{equation*}
    \int_{C_{1,0}\backslash \mathbb{H}_1}\delta(W)^{k+s}\overline{h(W)}\hbox{d}^{*}W = \int_{x=0}^{1}\int_{y=0}^{\infty}\sum_{n=1}^{\infty}\overline{a_n}e^{-2\pi in(x-iy)}y^{k-2}\hbox{d}x\hbox{d}y = 0,
\end{equation*}
by looking at the integral
\begin{equation*}
    \int_{0}^{1}e^{-2\pi inx}\hbox{d}x = 0,
\end{equation*}
for all $n\geq 1$. Hence, $I_3$ is the only integral that has a non-zero contribution and the result now follows from equation \eqref{I_3}.
\end{proof}
\section{Inert primes}\label{inert primes}
This is the case which is closer to the situation considered by Heim in \cite{heim}. Our aim is to relate the Dirichlet series $D_{F,G,h}(s)$ of Theorem \ref{integral_representation_theorem} with $L$-functions, when $F,G,h$ are Hecke eigenforms.
\subsection{Hecke operators and weak rationality theorems}\label{hecke inert}
Throughout this section, $p$ is assumed to be a rational prime which remains prime in $\mathcal{O}_K$. Let us make a list of Hecke operators in $H(\Gamma_2, S_p^2)$ and $H(\Gamma_{1,1}, S_p^{1,1})$ and relations between them.
\begin{itemize}
    \item $T_{p} := \Gamma_2\textup{diag}(1,1,p,p)\Gamma_2$.
    \item $T_{1,p} := \Gamma_2\textup{diag}(1,p,p^2,p)\Gamma_2$.
    \item $\Delta_{p} := \Gamma_{2}\textup{diag}(p,p,p,p)\Gamma_{2} = \Gamma_{2}\textup{diag}(p,p,p,p)$.
    \item $T^{J}(p) := \Gamma_{1,1}\textup{diag}(1,p,p^2,p)\Gamma_{1,1}$.
    \item $\nabla_{p} := \sum_{a \in \mathbb{Z}/p\mathbb{Z}}\Gamma_{1,1}\begin{pmatrix}p&0&0&0\\0&p&0&a\\0&0&p&0\\0&0&0&p\end{pmatrix} = \sum_{a \in \mathbb{Z}/p\mathbb{Z}}\nabla{a}$.
    \item $\Delta_{p^{\delta}} := \Gamma_{1,1}\textup{diag}(p^{\delta},p^{\delta},p^{\delta},p^{\delta})\Gamma_{1,1} = \Gamma_{1,1}\textup{diag}(p^{\delta},p^{\delta},p^{\delta},p^{\delta}), \textup{ }\delta \geq 1$.
\end{itemize}
Also, for any operator $X(p)$, we write $X^{r}(p)$ to denote $\Delta_{p}^{-1}X(p)$. Note here that we use the same notation for $\Delta_p$ as an element of $H_p^{2}$ and as an element of $H_p^{1,1}$. Finally, we define
\begin{equation}\label{t(p)}
    T_{\pm}(p^{\delta}):=j_{\pm}(T(p^{\delta})), \textup{ }\Lambda_{\pm}(p^{\delta}) := j_{\pm}\left(\Gamma_{1}\textup{diag}(p^{\delta}, p^{\delta})\Gamma_1\right), \textup{ }\delta \geq 1,
\end{equation}
where $j_{\pm}$ are the embeddings of equation \eqref{+- embeddings} and $T(p^{\delta})$ as in Definition \ref{maass_defn}. Therefore
\begin{itemize}
    \item $\Lambda_{-}(p) = \Gamma_{1,1}\textup{diag}(p,p^2,p,1)\Gamma_{1,1} = \Gamma_{1,1}\textup{diag}(p,p^2,p,1)$.
    \item $\Lambda_{+}(p) = \Gamma_{1,1}\textup{diag}(p,1,p,p^2)\Gamma_{1,1}$.
    \item $T_{-}(p) = \Gamma_{1,1}\textup{diag}(1,p,p,1)\Gamma_{1,1}$.
    \item $T_{+}(p) = \Gamma_{1,1}\textup{diag}(1,1,p,p)\Gamma_{1,1}$.
\end{itemize}
As $\Lambda_{-}(p)$ has only one right coset and for any $X \in H_p^{1,1}$, we have $(j_{-}(X))^{*}=j_{+}(X)$ and vice-versa, where $*$ is the anti-homomorphism of equation \eqref{antihomomorphism}, we have
\begin{equation}\label{lambda_inert}
    \Lambda_{\pm}(p^{\delta}) = \Lambda_{\pm}(p^{\delta-1})\Lambda_{\pm}(p), \textup{ }\forall \delta \geq 1.
\end{equation}
These also imply $\Lambda_{\pm}(p^{\delta}) = \Lambda_{\pm}(p)^{\delta}$ for all $\delta \geq 1$.
\begin{proposition}\label{satake}
Let $\Phi$ denote the Satake mapping of Section \ref{hecke algebras} for the inert prime $p$. We have
\begin{itemize}
    \item $\Phi(T_{p}) = x_0 + p^{-1}x_0x_1+p^{-1}x_0x_2 + p^{-2}x_0x_1x_2 = x_0(1+p^{-1}x_1)(1+p^{-1}x_2)$.
    \item $\Phi(T_{1,p}) = p^{-2}x_0^2x_1 + p^{-2}x_0^2x_2 + p^{-4}x_0^2x_1^2x_2 + p^{-4}x_0^2x_1x_2^2 + p^{-6}(p^2+1)(p-1)x_0^2x_1x_2$.
    \item $\Phi(\Delta_p) = p^{-6}x_0^2x_1x_2$.
\end{itemize}
\end{proposition}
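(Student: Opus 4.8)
The plan is to read off all three images directly from the definition of the spherical map $\Phi$ for an inert $p$, where $\pi=p$, the ramification index is $e=1$, and the residue field of $K\otimes\mathbb{Q}_p$ has $q=p^2$ elements. For each operator the only input needed is an explicit left-coset decomposition $U_p^{(2)}gU_p^{(2)}=\bigsqcup_i U_p^{(2)}M^{(m_i)}N_i$ into the standard representatives; once the triples $m_i=(m_{i_1},m_{i_2};m_{i_0})$ are known, the value is the finite sum $\sum_i x_0^{m_{i_0}}\prod_{j=1}^2(x_jq^{-j})^{m_{i_j}}$. I would organise the three computations from the cheapest to the most expensive.

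I would dispatch $\Delta_p$ first. Since $\mathrm{diag}(p,p,p,p)=p\cdot 1_4$ is central, its double coset is a single left coset, and comparing it with $M^{(m)}=\mathrm{diag}(p^{m_0-m_1},p^{m_0-m_2},p^{m_1},p^{m_2})$ forces $m=(1,1;2)$; the definition then returns $\Phi(\Delta_p)=x_0^2(x_1q^{-1})(x_2q^{-2})=p^{-6}x_0^2x_1x_2$ with no counting at all. Next comes $T_p=\Gamma_2\,\mathrm{diag}(1,1,p,p)\,\Gamma_2$, of similitude factor $p$, so that every representative has $m_{i_0}=1$ and only the torus type $(m_{i_1},m_{i_2})$ varies over $\{(0,0),(1,0),(0,1),(1,1)\}$; assembling the representatives of each type and summing their weights yields the four monomials whose sum factors as $x_0(1+p^{-1}x_1)(1+p^{-1}x_2)$.

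The real work is $T_{1,p}=\Gamma_2\,\mathrm{diag}(1,p,p^2,p)\,\Gamma_2$, of similitude factor $p^2$, so here $m_{i_0}=2$ throughout. The torus types occurring are the four boundary weights $(1,0),(0,1),(2,1),(1,2)$, which produce the extremal terms $p^{-2}x_0^2x_1,\ p^{-2}x_0^2x_2,\ p^{-4}x_0^2x_1^2x_2,\ p^{-4}x_0^2x_1x_2^2$, together with the single interior weight $(1,1)$. The hard part is this interior weight: one must enumerate the admissible unipotent parts $N_i$ modulo $U_p^{(2)}$ and show that precisely $(p^2+1)(p-1)$ left cosets of type $(1,1)$ occur, which is exactly the origin of the arithmetic factor in the last term $p^{-6}(p^2+1)(p-1)x_0^2x_1x_2$. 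I expect this enumeration to be the main obstacle, since the interior multiplicity is the only count that is not elementary, all the boundary types contributing in a straightforward way. Throughout, the safest check against a miscount is $W$-invariance under the $w^{(j)}$: each of the three polynomials is manifestly fixed by $x_0\mapsto p^{-1}x_0x_i,\ x_i\mapsto p^2x_i^{-1},\ x_j\mapsto x_j\ (i\neq j)$, so verifying this invariance at the end confirms that no coset has been overlooked. The decompositions required are of the same shape as those carried out by Gritsenko in \cite{gritsenko,gritsenko4} and, on the Siegel side, by Heim in \cite{heim}, and I would follow those to structure the enumeration.
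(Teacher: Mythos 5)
Your proposal is correct and is essentially the paper's own approach: both arguments amount to unwinding the definition of $\Phi$ (with $e=1$, $q=p^2$ for inert $p$) on explicit left-coset decompositions of the three double cosets, the paper simply quoting those decompositions from the literature (Krieg, p.~677, or Gritsenko's Lemma 3.6 together with Hina--Sugano's Theorem 1) instead of re-enumerating them as you propose to do. One caveat worth noting: your $W$-invariance sanity check cannot detect a miscount in the interior type $(1,1)$ of $T_{1,p}$, since the monomial $p^{-6}x_0^2x_1x_2$ is itself $W$-invariant, so the factor $(p^2+1)(p-1)$ --- which you rightly single out as the main obstacle --- can only come from the enumeration itself (or from the sources just cited).
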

\begin{proof}
From Definition \ref{gritsenko l-function}, we have
\begin{equation*}
    q_p^{(2)}(t) = (1-x_0t)(1-p^{-1}x_0x_1t)(1-p^{-1}x_0x_2t)(1-p^{-2}x_0x_1x_2t).
\end{equation*}
Also, from \cite[Lemma $3.6$]{gritsenko}, we have
\begin{equation*}
    \Phi^{-1}(q_p^{(2)}(t)) = 1-T_pt + (pT_{1,p}+p(p^3+p^2-p+1)\Delta_p)t^2-p^{4}\Delta_pT_pt^3+p^{8}\Delta_p^{2}t^4.
\end{equation*}
By comparing these two expressions, the Proposition follows.
\end{proof}
Let now $D_p^{(2)}(X) := Z_{p}^{(2)}(p^{-3}X)$, with $Z_p^{(2)}$ as in Definition \ref{standard l-function}.
\begin{proposition}\label{rankin}
We have 
\begin{equation*}
    D_p^{(2)}(X) = 1-B_1X+B_2X^2-B_1X^3+X^4,
\end{equation*}
where 
\begin{align*}
    &B_1 = p^{-3}\Delta_p^{-1}(T_{1,p} - (p^2+1)(p-1)\Delta_p),\\
    &B_2 = p^{-4}\Delta_p^{-1}(T_{p}^2 - 2pT_{1,p}-2p(p^2-p+1)\Delta_p).
\end{align*}
\end{proposition}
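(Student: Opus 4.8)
The plan is to apply the Satake isomorphism $\Phi$ to the asserted identity and reduce everything to a comparison of (Laurent) polynomials in the parameters $x_0, x_1, x_2$. Since $\Phi$ is a ring isomorphism onto the corresponding ring of invariants (by the Propositions above together with the cited results of Gritsenko and Satake), it suffices to check that the Satake images of the two sides agree. Note also that $\Delta_p$ has the Laurent monomial $p^{-6}x_0^2 x_1 x_2$ as its image, hence is central and invertible, so the expressions $\Delta_p^{-1}(\cdots)$ are legitimate elements of the localized $p$-ring, and $\Phi(\Delta_p^{-1}) = p^{6}x_0^{-2}x_1^{-1}x_2^{-1}$.

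First I would compute $\Phi(D_p^{(2)}(X)) = z_p^{(2)}(p^{-3}X)$ directly. Writing $(1-p^2 x_i t)(1-p^4 x_i^{-1}t) = 1 - (p^2 x_i + p^4 x_i^{-1})t + p^6 t^2$ and setting $a_i = p^2 x_i + p^4 x_i^{-1}$, one gets
\begin{equation*}
z_p^{(2)}(t) = 1 - (a_1+a_2)t + (a_1 a_2 + 2p^6)t^2 - p^6(a_1+a_2)t^3 + p^{12}t^4.
\end{equation*}
Substituting $t = p^{-3}X$ and using $p^{-9}\cdot p^6 = p^{-3}$ and $p^{-12}\cdot p^{12}=1$ already exhibits the palindromic shape $1 - B_1 X + B_2 X^2 - B_1 X^3 + X^4$ and identifies
\begin{align*}
\Phi(B_1) &= p^{-3}(a_1+a_2) = p^{-1}(x_1+x_2) + p(x_1^{-1}+x_2^{-1}),\\
\Phi(B_2) &= p^{-6}(a_1 a_2 + 2p^6) = p^{-2}x_1 x_2 + p^2 x_1^{-1}x_2^{-1} + \big(x_1 x_2^{-1} + x_1^{-1}x_2 + 2\big).
\end{align*}

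Then I would verify that the proposed Hecke combinations have exactly these images. For $B_1$, feeding in $\Phi(T_{1,p})$ and $\Phi(\Delta_p)$ one sees that the term $p^{-6}(p^2+1)(p-1)x_0^2 x_1 x_2$ of $\Phi(T_{1,p})$ is precisely cancelled by $(p^2+1)(p-1)\Phi(\Delta_p)$, after which multiplication by $p^{-3}\Phi(\Delta_p^{-1}) = p^3 x_0^{-2}x_1^{-1}x_2^{-1}$ returns $p^{-1}(x_1+x_2)+p(x_1^{-1}+x_2^{-1})$, matching $\Phi(B_1)$. For $B_2$ I would square $\Phi(T_p) = x_0(1+p^{-1}x_1)(1+p^{-1}x_2)$ and subtract $2p\,\Phi(T_{1,p})$: the terms proportional to $x_1+x_2$ and to $x_1 x_2(x_1+x_2)$ cancel, leaving $x_0^2\big(1 + p^{-2}(x_1+x_2)^2 + p^{-4}x_1^2 x_2^2\big)$ plus a residual multiple of $x_0^2 x_1 x_2$. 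Subtracting $2p(p^2-p+1)\Phi(\Delta_p)$ annihilates this residual term because $(p^2+1)(p-1) + (p^2-p+1) = p^3$, and multiplying by $p^{-4}\Phi(\Delta_p^{-1}) = p^2 x_0^{-2}x_1^{-1}x_2^{-1}$ yields exactly $\Phi(B_2)$.

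The main obstacle is purely the bookkeeping in the $B_2$ identity: one must carry out the expansion of $\Phi(T_p)^2$ carefully and track the coefficient of the monomial $x_0^2 x_1 x_2$ through the three subtractions, its vanishing hinging on the numerical identity $(p^2+1)(p-1)+(p^2-p+1)=p^3$. The palindromic shape and the $B_1$ identity are comparatively immediate. Since all the images in question are Weyl-invariant — being, by construction, images of genuine Hecke operators — and $\Phi$ is injective, the matching of Satake images established above proves the proposition.
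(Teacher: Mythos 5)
Your proposal is correct and follows exactly the paper's own route: the paper's proof is precisely ``direct verification after applying the Satake isomorphism,'' and your computation supplies the details — including the key cancellation $(p^2+1)(p-1)+(p^2-p+1)=p^3$ — which I have checked and which match the stated formulas for $B_1$ and $B_2$.
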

\begin{proof}
This follows by direct verification, after applying the Satake isomorphism and using Proposition \ref{satake}. We remind the reader here that $Z_{p}^{(2)}(X) = \Phi^{-1}(z_{p}^{(2)}(X))$, where
\begin{equation*}
    z_{p}^{(2)}(X) = \prod_{i=1}^{2}(1-p^{4}x_{i,p}^{-1}X)(1-p^{2}x_{i,p}X).
\end{equation*}
This also gives the $\Phi$-image of $D_{p}^{(2)}$.
\end{proof}
We now have the following Proposition regarding the factorisation of $D_{p}^{(2)}$.
\begin{proposition}\label{prop: rankin_prime}
We have the following factorisation in $H_{p}^{1,1}[X]$:
\begin{equation*}
    D_{p}^{(2)}(X) = (1 - p^{-3}\Delta_{p}^{-1}\Lambda_{-}(p)X)S^{(2)}(X)(1-p^{-3}\Delta_{p}^{-1}\Lambda_{+}(p)X),
\end{equation*}
where
\begin{equation*}
    S^{(2)}(X) = S_0 - S_1X + S_2X^2 - S_3X^3,
\end{equation*}
with
\begin{itemize}
    \item $S_0 = 1$.
    \item $S_1 = p^{-3}(T^{J,r}(p) + \nabla_{p}^{r} - p(p^2-p+1))$.
    \item $S_2 = p^{-4}\Delta_{p}^{-1}T_{+}(p)T_{-}(p) - p^{-3}T^{J,r}(p) - 2p^{-3}\nabla_{p}^{r} - p^{-2}(p-2)$.
    \item $S_3 = p^{-3}(\nabla_{p}^{r} - p)$.
\end{itemize}
\end{proposition}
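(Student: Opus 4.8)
The plan is to verify the factorisation by passing through the spherical (Satake) map $\Phi$, exactly in the spirit of the direct verification used for Proposition \ref{rankin}. The coefficients $B_1,B_2$ of $D_p^{(2)}(X)$ lie in the image of $H_p^2$ under the embedding $\epsilon$ of Lemma \ref{lemma:embeddings}, and their $\Phi$-images are already recorded; so the left-hand side has a completely explicit image, namely (after the substitution $X\mapsto p^{-3}X$ in $z_p^{(2)}$) the degree-four polynomial $\prod_{i=1}^{2}(1-px_i^{-1}X)(1-p^{-1}x_iX)$. First I would compute $\Phi$ of each parabolic operator occurring on the right, that is of $\Lambda_{-}(p),\Lambda_{+}(p),T^{J}(p),\nabla_{p}^{r},T_{+}(p),T_{-}(p)$ and $\Delta_p$, by writing down their left-coset decompositions and reading off the corresponding monomials, precisely as was done for $T_p,T_{1,p},\Delta_p$. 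It is worth noting that $D_p^{(2)}$ is palindromic, and the two linear factors $1-p^{-3}\Delta_p^{-1}\Lambda_{-}(p)X$ and $1-p^{-3}\Delta_p^{-1}\Lambda_{+}(p)X$ are exchanged by the Weyl symmetry; this duality between the index-lowering and index-raising operators should organise and cross-check the computation.

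With these images in hand I would substitute them into the two linear factors and into $S^{(2)}(X)=S_0-S_1X+S_2X^2-S_3X^3$, multiply out in the polynomial ring, and check term by term that the product equals the image of $D_p^{(2)}(X)$. The delicate bookkeeping here is keeping track of the prime powers $p^{-j}$ attached to each operator and matching the combinations defining $S_1,S_2,S_3$ against the expansions of $B_1,B_2$; once the individual $\Phi$-images are fixed, this is a mechanical, if lengthy, symmetric-polynomial verification.

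The main obstacle, and what makes this more than a polynomial exercise, is that $H_p^{1,1}$ is not commutative and the spherical map need not be injective on it, so an equality of $\Phi$-images does not by itself yield the stated identity in $H_p^{1,1}[X]$. Two points have to be controlled. First, the product of the three factors is a priori of degree five in $X$, whereas $D_p^{(2)}$ has degree four; up to invertible $\Delta_p^{-1}$-factors and powers of $p$ the top coefficient is essentially $\Lambda_{-}(p)\,(\nabla_{p}^{r}-p)\,\Lambda_{+}(p)$, and one must show that this vanishes in $H_p^{1,1}$. This reflects the incompatibility of an index-lowering step followed by $S_3$ and an index-raising step, the same vanishing phenomenon already seen in the action on $J_{k,m}^{2}$ when the index ratio fails to be integral. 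Second, after pinning down the correct operator ordering, the lower coefficients must be matched as genuine Hecke relations rather than merely at the level of Satake images. Both issues are precisely what the weak rationality theorems of this subsection are designed to handle: they guarantee that the relevant combinations of parabolic double cosets are determined by, and can be lifted from, their spherical images. I expect carrying out these lifts — turning the verified identity of $\Phi$-images into the stated factorisation over the noncommutative ring $H_p^{1,1}[X]$, including the vanishing of the apparent degree-five term — to be the hard part, with the explicit evaluation of the individual $\Phi$-images being comparatively routine.
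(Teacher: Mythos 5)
Your plan correctly isolates the crux, but the resolution you offer for it does not work. As you note, $H_p^{1,1}$ is non-commutative, so any spherical map into a commutative polynomial ring has a kernel, and equality of $\Phi$-images is strictly weaker than the asserted identity in $H_p^{1,1}[X]$: all the information about operator ordering (which is the entire content of a factorisation with $\Lambda_{-}(p)$ on the left and $\Lambda_{+}(p)$ on the right) is destroyed by $\Phi$. (Moreover, the paper only defines $\Phi$ on the full Hecke ring $H(U_p^{(2)},G_p^{(2)})$, not on the parabolic ring, so even your first step needs machinery not set up here.) The proposed repair — that ``the weak rationality theorems of this subsection'' let one lift the identity from spherical images — is unfounded and circular. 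Propositions \ref{prop: spinor primes} and \ref{rat2} are statements about generating series of Fourier--Jacobi coefficients under Hecke action; they contain no lifting principle from Satake images to $H_p^{1,1}$. Worse, the logical order is the reverse of what you need: Proposition \ref{rat2} is \emph{deduced from} Proposition \ref{prop: rankin_prime}, so invoking it to prove the factorisation is circular. As written, your argument establishes only that both sides have the same $\Phi$-image.

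The paper's proof never leaves the non-commutative ring. One writes the coefficients $B_1,B_2$ of $D_p^{(2)}$ (from Proposition \ref{rankin}) as elements of $H_p^{1,1}$ via the embedding $\epsilon$ of Lemma \ref{lemma:embeddings}, using $\epsilon(T_{1,p}) = T^{J}(p)+\Lambda_{-}(p)+\Lambda_{+}(p)+\nabla_p-\Delta_p$ and $\epsilon(T_p)=T_{-}(p)+T_{+}(p)$, and then expands the triple product and matches coefficients using explicit coset-level multiplication relations: $T_{-}(p)T_{+}(p)=pT^{J}(p)+(p^3+p^4)\Delta_p$, $\Lambda_{-}(p)T_{+}(p)=p^3\Delta_pT_{-}(p)$, $T_{-}(p)\Lambda_{+}(p)=p^3\Delta_pT_{+}(p)$, $\Lambda_{-}(p)\Lambda_{+}(p)=p^6\Delta_p^2$, $\Lambda_{-}^{r}\nabla_p^{r}=p\Lambda_{-}^{r}$, $\nabla_p^{r}\Lambda_{+}^{r}=p\Lambda_{+}^{r}$. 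Note that these are genuinely one-sided identities with no counterpart on the Satake side; the first of the $\nabla_p$-relations is exactly what kills your apparent degree-five coefficient $\Lambda_{-}(p)(\nabla_p^{r}-p)\Lambda_{+}(p)$ — that observation of yours is correct and is the part of your plan that survives. To salvage your proposal, replace the appeal to the rationality theorems by proving these multiplication relations directly from right-coset decompositions (or citing them from Gritsenko); once you have them, the Satake computation becomes superfluous.
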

\begin{proof}
This can be verified directly, by using the following identities/relations which can be found in the proof \cite[Proposition 3.2]{gritsenko}, or be proved directly.
\begin{itemize}
    \item $\epsilon(T_{1,p}) = T^{J}(p) + \Lambda_{-}(p)+\Lambda_{+}(p) + \nabla_{p} - \Delta_p$.
    \item $\epsilon(T_{p}) = T_{-}(p) + T_{+}(p)$.
    \item $T_{-}(p)T_{+}(p) = pT^{J}(p) + (p^3+p^4)\Delta_p$.
    \item $\Lambda_{-}(p)T_{+}(p) = p^3\Delta_pT_{-}(p)$.
    \item $T_{-}(p)\Lambda_{+}(p) = p^3\Delta_pT_{+}(p)$.
    \item $\Lambda_{-}(p)\Lambda_{+}(p) = p^{6}\Delta_{p}^2$.
    \item $\Lambda_{-}^{r}(p)\nabla_{p}^{r} = p\Lambda_{-}^{r}(p)$.
    \item $\nabla_{p}^{r}\Lambda_{+}^{r}(p) = p\Lambda_{+}^{r}(p)$.
\end{itemize}
Here, $\epsilon$ denotes the embedding of $H(\Gamma_2, S^{2})$ to $H(\Gamma_{1,1}, S^{1,1})$, as described in Lemma \ref{lemma:embeddings}.
\end{proof}
Now, if $F \in S_{2}^{k}$ has a Fourier-Jacobi expansion as in equation \eqref{fourier-expansions} and $Q_{p}^{(2)}$ denotes the $p$-factor of Gritsenko's $L$-function, as in Definition \ref{gritsenko l-function}, we have the following weak rationality theorems.
\begin{proposition}\label{prop: spinor primes}
Let $F \in S_{2}^{k}$ be a Hecke eigenform for $H(\Gamma_2, S^{2})$ and $m \geq 1$. Then
\begin{align*}
    Q_{p,F}^{(2)}(X)\sum_{\delta \geq 0}\phi_{mp^{\delta}} \textup{ }|\textup{ }T_{+}(p^{\delta})X^{\delta} = \left(\phi_m - \phi_{m/p}\textup{ }|\textup{ }T_{-}(p)X + p\phi_{m/p^2} \textup{ } |\textup{ } \Lambda_{-}(p)X^2\right)\mid(1+p(\nabla_{p}-p\Delta_{p})X^2),
\end{align*}
where $\phi_{m}\mid (1+p(\nabla_{p}-p\Delta_{p})X^2) =$ $\begin{cases}
    \phi_{m} & \textup{if } p\mid m\\
    (1-p^{2k-6}X^2)\phi_m & \textup{otherwise}
\end{cases}$.
\end{proposition}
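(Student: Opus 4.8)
The plan is to deduce the identity from a linear recursion satisfied by the index-shifted Fourier--Jacobi data $\phi_{mp^\delta}\mid T_+(p^\delta)$, in the spirit of Andrianov's treatment of the spinor $L$-function and Heim's adaptation of it. Since $F$ is an eigenform for $H^2$, the coefficients of $Q_{p,F}^{(2)}(X)$ are the scalars obtained by evaluating $\Phi^{-1}(q_{p}^{(2)})$ on $F$, so the first task is to make these scalars visible inside the \emph{parabolic} Hecke algebra $H^{1,1}_p$, which is where the generating series actually lives. The bridge is the embedding $\epsilon\colon H^2\to H^{1,1}$ together with the relations collected in the proof of Proposition \ref{prop: rankin_prime}; in particular $\epsilon(T_p)=T_-(p)+T_+(p)$ and $\epsilon(T_{1,p})=T^J(p)+\Lambda_-(p)+\Lambda_+(p)+\nabla_p-\Delta_p$ express the degree-two eigenvalues through the operators $T_\pm(p)$, $\Lambda_\pm(p)$, $\nabla_p$, $T^J(p)$, $\Delta_p$ that move the Jacobi index.

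Next I would record how each of these operators shifts the index, so that every term of $\sum_{\delta\ge 0}\phi_{mp^\delta}\mid T_+(p^\delta)X^\delta$ lands in $J^2_{k,m}$ (here $T_+(p^\delta)$ lowers the index by $p^\delta$, while $T_-(p)$ and $\Lambda_-(p)$ raise it by $p$ and $p^2$, so that the right-hand numerator is also index-$m$). Using the eigenform property, the action of $\epsilon(T_p)$ and $\epsilon(T_{1,p})$ on $F$ specializes to scalar identities among the $\phi_{mp^{\delta\pm1}}$ acted on by $T_\pm(p)$ and $\Lambda_\pm(p)$. Assembling these produces a linear recursion for $\phi_{mp^\delta}\mid T_+(p^\delta)$, and the conceptual core of the argument is the verification, via the Satake map, that its characteristic polynomial is exactly $Q_{p,F}^{(2)}(X)$; it is here that the factorization $D_p^{(2)}(X)=(1-p^{-3}\Delta_p^{-1}\Lambda_-(p)X)\,S^{(2)}(X)\,(1-p^{-3}\Delta_p^{-1}\Lambda_+(p)X)$ identifies the $\Lambda_\pm$-roots with the terms appearing in the numerator on the right.

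With the recursion established, I would multiply the series by $Q_{p,F}^{(2)}(X)$ and telescope. The multiplication relations $\Lambda_-(p)T_+(p)=p^3\Delta_pT_-(p)$, $T_-(p)\Lambda_+(p)=p^3\Delta_pT_+(p)$ and $\Lambda_-(p)\Lambda_+(p)=p^6\Delta_p^2$ force every coefficient of $X^\delta$ with $\delta$ large to vanish, so that the product truncates to a polynomial of low degree; matching its first coefficients against the recursion yields precisely $\phi_m-\phi_{m/p}\mid T_-(p)X+p\,\phi_{m/p^2}\mid\Lambda_-(p)X^2$. The residual factor $1+p(\nabla_p-p\Delta_p)X^2$ is then isolated from the reduced-operator relations $\Lambda_-^{r}\nabla_p^{r}=p\Lambda_-^{r}$ and $\nabla_p^{r}\Lambda_+^{r}=p\Lambda_+^{r}$, which control the action of $\nabla_p$ and $\Delta_p$ on the index-$m$ layer.

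Finally, the two-case formula for $\phi_m\mid(1+p(\nabla_p-p\Delta_p)X^2)$ is obtained by computing the action of $\nabla_p$ and $\Delta_p$ on $\phi_m$ and splitting according to whether $p\mid m$: when $p\mid m$ the index-lowering in $\nabla_p$ is unobstructed and the correction acts as the identity, whereas for $p\nmid m$ the degenerate terms drop out and $p(\nabla_p-p\Delta_p)$ collapses to multiplication by $-p^{2k-6}$, giving the factor $1-p^{2k-6}X^2$. The step I expect to be the main obstacle is the telescoping bookkeeping in the third paragraph: one must simultaneously track the index shifts of $T_+(p^\delta)$, $T_-(p)$ and $\Lambda_\pm(p)$ and check that the coefficients generated by the recursion coincide \emph{exactly} with the Satake-computed coefficients of $Q_{p,F}^{(2)}$, which is the place where the explicit relations of Proposition \ref{prop: rankin_prime} must be applied with care.
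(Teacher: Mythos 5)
Your plan has the right Andrianov--Gritsenko shape (a Hecke-ring identity applied to the eigenform, truncation by index bookkeeping, then a direct computation of $\nabla_p$ and $\Delta_p$), but it fails at exactly the step you call the conceptual core: you invoke the wrong factorization. The factorization $D_p^{(2)}(X)=(1-p^{-3}\Delta_p^{-1}\Lambda_-(p)X)\,S^{(2)}(X)\,(1-p^{-3}\Delta_p^{-1}\Lambda_+(p)X)$ of Proposition \ref{prop: rankin_prime} concerns the \emph{standard} polynomial $D_p^{(2)}(X)=Z_p^{(2)}(p^{-3}X)$, and it is the input for the \emph{other} weak rationality theorem, Proposition \ref{rat2}, whose generating series runs over $\Lambda_+(p^\delta)$ and whose numerator is \emph{linear} in $\Lambda_-(p)$, matching the linear left factor. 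The proposition at hand concerns Gritsenko's polynomial $Q_p^{(2)}$, a genuinely different element of $H_p^{2}$ (its Satake image is $(1-x_0t)\prod(1-p^{-r}x_{i_1}\cdots x_{i_r}x_0t)$, not the standard one), and its right-hand side $\phi_m-\phi_{m/p}\mid T_-(p)X+p\,\phi_{m/p^2}\mid\Lambda_-(p)X^2$ reflects a \emph{quadratic} factor $1-T_-(p)X+p\Lambda_-(p)X^2$; no rearrangement of the $\Lambda_\pm$-linear factors of $D_p^{(2)}$ produces such a term. What is actually needed is the identity in $H_p^{1,1}[[X]]$
\[
\epsilon\bigl(Q_p^{(2)}(X)\bigr)\cdot\sum_{\delta\ge 0}T_+(p^\delta)X^\delta=\bigl(1-T_-(p)X+p\Lambda_-(p)X^2\bigr)\bigl(1+p(\nabla_p-p\Delta_p)X^2\bigr),
\]
that is, the factorization of the image of Gritsenko's polynomial under the embedding of Lemma \ref{lemma:embeddings} against the $T_+$-generating series. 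This is precisely \cite[Proposition 3.2]{gritsenko}, which, combined with the mechanism of \cite[Corollary]{gritsenko_1995} for converting such an identity into a statement about Fourier--Jacobi coefficients of an eigenform, is exactly what the paper's proof cites. You neither state nor prove this identity, and the substitute you offer would, if carried through, yield Proposition \ref{rat2} (with $D_{p,F}^{(2)}$ and $\Lambda_+(p^\delta)$-sums), not the claimed statement.

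The one step you do carry out correctly is the final two-case formula, and it coincides with the only computation the paper performs explicitly: from $(F\mid\Delta_p)=p^{2k-8}F$ and the exponential sum in $F\mid\nabla_a$ one gets $\phi_m\mid\Delta_p=p^{2k-8}\phi_m$ and $\phi_m\mid\nabla_p=0$ or $p^{2k-7}\phi_m$ according as $p\nmid m$ or $p\mid m$, so that $1+p(\nabla_p-p\Delta_p)X^2$ acts as the identity when $p\mid m$ and as $1-p^{2k-6}X^2$ otherwise. So your proposal is sound at the periphery but hollow at the center: the recursion whose characteristic polynomial you assert to be $Q_{p,F}^{(2)}(X)$ is never established, and the factorization you lean on cannot establish it.
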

\begin{proof}
We follow the same proof as in \cite[Corollary 1]{gritsenko_1995}. Then, the result follows from \cite[Proposition $3.2$]{gritsenko}. We will just show the computations for the last claim of our Proposition. We have
\begin{equation*}
    (F \textup{ }|\textup{ }\Delta_{p})(Z) = (p^2)^{2k-4}(p^2)^{-k}F(Z) = p^{2k-8}F(Z),
\end{equation*}
and so $\phi_m \mid \Delta_p = p^{2k-8}\phi_{m}$.
Also,
\begin{equation*}
    (F\textup{ }|\textup{ }\nabla{a})(Z) = (p^2)^{2k-4}(p^2)^{-k}F\left(\begin{pmatrix}\tau&z_1\\z_2&\tau'+a/p\end{pmatrix}\right) = p^{2k-8}\sum_{m=1}^{\infty}\phi_m(\tau,z_1,z_2)e^{2 \pi im\tau'}e^{2\pi i m a/p},
\end{equation*}
so $\phi_m \mid \nabla_p = \left(p^{2k-8}\sum_{a=0}^{p-1}e^{2\pi i ma/p}\right)\phi_m = \begin{cases}0 &\textup{if } (m,p)=1\\ p^{2k-7}\phi_m& \textup{otherwise}\end{cases},$\\
from which the result follows.
\end{proof}
\begin{proposition}\label{rat2}
Let $F\in S_{2}^{k}$ be a Hecke eigenform for $H(\Gamma_{2}, S^2)$ and $m \geq 1$. Then
\begin{equation*}
    D_{p,F}^{(2)}(X)\sum_{\delta \geq 0}\phi_{mp^{2\delta}} \mid (\Delta_{p^{\delta}}^{-1}\Lambda_{+}(p^{\delta}))(p^{-3}X)^{\delta} = \phi_{m}\mid S^{(2)}(X) - \phi_{m/p^{2}}\mid  (\Delta_{p}^{-1}\Lambda_{-}(p)S^{(2)}(X))p^{-3}X.
\end{equation*}
\end{proposition}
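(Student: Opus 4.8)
The plan is to establish the identity by the same mechanism as Proposition \ref{prop: spinor primes}: reduce it to a linear recursion among the Fourier-Jacobi coefficients of $F$ whose characteristic polynomial is $D_{p,F}^{(2)}(X)$, and then identify the finitely many surviving boundary terms with the stated polynomial. Writing $A_{\pm} = p^{-3}\Delta_{p}^{-1}\Lambda_{\pm}(p)$, the first step is to rewrite the right-hand side, using the convention $\phi\mid_{k}(UV)=(\phi\mid_{k}U)\mid_{k}V$ and the fact that $\Lambda_{-}(p)$ raises the index by $p^2$, in the compact form
\[
\phi_{m}\mid_{k}S^{(2)}(X)-\phi_{m/p^{2}}\mid_{k}\bigl(\Delta_{p}^{-1}\Lambda_{-}(p)S^{(2)}(X)\bigr)p^{-3}X=(\phi_{m}-\phi_{m/p^{2}}\mid_{k}A_{-}X)\mid_{k}S^{(2)}(X).
\]
This exposes the structural parallel with Proposition \ref{prop: spinor primes}, where a low-degree ``bracket'' carrying the initial data is multiplied by the polynomial remainder of a factorization of the relevant $L$-factor; here the remainder is the middle factor $S^{(2)}(X)$ of the factorization $D_{p}^{(2)}(X)=(1-A_{-}X)S^{(2)}(X)(1-A_{+}X)$ of Proposition \ref{prop: rankin_prime}.

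The engine of the proof is the compatibility of the embedding $\epsilon$ of Lemma \ref{lemma:embeddings} with the action on Fourier-Jacobi coefficients. Since $F$ is $\Gamma_{2}$- and hence $\Gamma_{1,1}$-invariant and $\epsilon$ sends $\Gamma_2 g_i\mapsto\Gamma_{1,1}g_i$, one has $F\mid B=F\mid\epsilon(B)$ for every $B\in H^{2}$; applying this to the eigenvalue equations $F\mid B_{1}=\beta_{1}F$ and $F\mid B_{2}=\beta_{2}F$ and using $\phi_{m}^{(F)}\mid\mid X=\psi_{m}^{(F\mid X)}$ yields $\phi_{\bullet}\mid\mid\epsilon(B_{i})=\beta_{i}\phi_{\bullet}$. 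Because $\epsilon(T_{1,p})=T^{J}(p)+\Lambda_{-}(p)+\Lambda_{+}(p)+\nabla_{p}-\Delta_{p}$ and $\epsilon(T_{p})=T_{-}(p)+T_{+}(p)$ genuinely mix Fourier-Jacobi coefficients of different indices, these are exactly the fundamental relations tying together $\phi_{mp^{2\delta}}$ for consecutive $\delta$. Feeding these relations, together with the eight Hecke identities recorded in Proposition \ref{prop: rankin_prime} (notably $\Lambda_{-}(p)\Lambda_{+}(p)=p^{6}\Delta_{p}^{2}$, $T_{-}(p)\Lambda_{+}(p)=p^{3}\Delta_{p}T_{+}(p)$, and $\Lambda_{-}^{r}\nabla_{p}^{r}=p\Lambda_{-}^{r}$, $\nabla_{p}^{r}\Lambda_{+}^{r}=p\Lambda_{+}^{r}$), into the factorization produces an order-four recursion for the sequence $c_{\delta}:=\phi_{mp^{2\delta}}\mid_{k}\bigl(\Delta_{p^{\delta}}^{-1}\Lambda_{+}(p^{\delta})\bigr)(p^{-3})^{\delta}$ with characteristic polynomial $D_{p,F}^{(2)}$.

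Granting this recursion, multiplying $\sum_{\delta\ge 0}c_{\delta}X^{\delta}$ by the scalar polynomial $D_{p,F}^{(2)}(X)$ annihilates every coefficient of $X^{\delta}$ once $\delta$ is large, so the product is a polynomial; the remaining task is to compute its coefficients and match them with $(\phi_{m}-\phi_{m/p^{2}}\mid_{k}A_{-}X)\mid_{k}S^{(2)}(X)$, which is where the factorization is used a second time to organize the transient terms. I expect the main obstacle to be precisely this bookkeeping: the operators $\Lambda_{\pm}(p^{\delta})$, $T^{J}(p)$, $\nabla_{p}$ and $\Delta_{p}$ do not commute and act across Fourier-Jacobi coefficients of different indices, so one must track the index shifts and the order of composition with care, verify that the normalization $(p^{-3}X)^{\delta}$ absorbs the $\Delta_{p^{\delta}}$-factors cleanly, and check that the low-$\delta$ contributions collapse to exactly the claimed degree-four polynomial and not merely to something of the right degree. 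This is a direct but lengthy computation entirely analogous to Gritsenko's, and it is the genuinely delicate step.
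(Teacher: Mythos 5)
Your proposal is correct and follows essentially the same route as the paper: the paper's own proof is a one-line citation of the factorisation in Proposition \ref{prop: rankin_prime} combined with the rationality technique of Gritsenko's Corollary (the same machinery behind Proposition \ref{prop: spinor primes}), which is precisely the eigenform-plus-embedding-plus-signature-bookkeeping argument you describe. Your rewriting of the right-hand side as $(\phi_{m}-\phi_{m/p^{2}}\mid_{k}A_{-}X)\mid_{k}S^{(2)}(X)$ and the collapse of the series against $D_{p,F}^{(2)}(X)$ via the factorisation $D_{p}^{(2)}(X)=(1-A_{-}X)S^{(2)}(X)(1-A_{+}X)$ is exactly how that cited technique proceeds.
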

\begin{proof}
    This follows from Proposition \ref{prop: rankin_prime}, using the same techniques as in \cite[Corollary 1]{gritsenko_1995}.
\end{proof}
In a similar fashion to Heim in \cite[page 227]{heim} now, we have that the action of the operators $T_{+}(p), \Lambda_{+}(p), \nabla_{p}^{r}(p)$ on Fourier-Jacobi forms of index coprime to $p$ is identical to zero. This leads to the definition of the following polynomials:
\begin{align*}
    S^{(2)}(X)^{\textup{factor}} &:= 1 -(p^{-3}T^{J,r}(p)-p^{-2}+p^{-1})X + p^{-2}X^2,\\
    S^{(2)}(X)^{\textup{prim}} &:= 1 - (p^{-3}T^{J,r}(p)-p^{-2}(p^2-p+1))X + (-p^{-3}T^{J,r}(p)-p^{-2}(p-2))X^2 + p^{-2}X^3\\&= S^{(2)}(X)^{\textup{factor}}(1+X).
\end{align*}

Hence, $\phi \textup{ }|\textup{ } S^{(2)}(X) = \phi \textup{ }|\textup{ } S^{(2)}(X)^{\textup{prim}}$ if $\phi \in J_{k,m}^2$ with $\gcd(m,p)=1$. We now have the following Lemma:
\begin{lemma}\label{index_p}
    Let $\phi \in J_{k,p}^2$. Then $\phi \textup{ }|\textup{ } S^{(2)}(X)T_{+}(p) = \phi \textup{ }|\textup{ } T_{+}(p)S^{(2)}(X)^{\textup{factor}}$.
\end{lemma}
\begin{proof}
    The proof follows by the following results:
    \begin{itemize}
        \item $\phi \mid \nabla_{p}^{r} = p \phi$.
        \item $\phi \mid T_{+}(p)\nabla_{p}^{r} = 0$, because $\phi \mid T_{+}(p)$ will have index $1$.
        \item $\phi \mid \left[T^{J,r}(p), \textup{ }T_{+}(p)\right] = \phi \mid (p^3T_{+}(p)-\nabla_{p}^{r}T_{+}(p) = (p^3-p)\phi \mid T_{+}(p)$, by the first point.
    \end{itemize}
Here $\left[T^{J,r}(p), \textup{ }T_{+}(p)\right] := T^{J,r}(p)T_{+}(p) - T_{+}(p)T^{J,r}(p)$ denotes the commutator. We will now give the proof of the third point. By the proof of Proposition \ref{prop: rankin_prime}, we have
\begin{itemize}
    \item $\epsilon(T_{1,p}) = T^{J}(p) + \Lambda_{-}(p)+\Lambda_{+}(p) + \nabla_{p} - \Delta_p$,
    \item $\epsilon(T_{p}) = T_{-}(p) + T_{+}(p)$.
\end{itemize}
Now, $H_p^{2}$ is a commutative Hecke algebra and as $\epsilon$ is a ring homomorphism, we have 
\begin{equation*}
    \epsilon(T_{1,p})\epsilon(T_{p}) = \epsilon(T_{p})\epsilon(T_{1,p}).
\end{equation*}
By then considering the elements whose product has signature $p$ (see Definition \ref{signature} and \cite[Section 3.3]{heim}), we obtain
\begin{equation*}
    T^{J}(p)T_{+}(p) + \Lambda_{+}(p)T_{-}(p) + (\nabla_p-\Delta_p)T_{+}(p) = T_{+}(p)T^{J}(p) + T_{-}(p)\Lambda_{+}(p) + T_{+}(p)(\nabla_p-\Delta_p),
\end{equation*}
from which the result follows, as $\phi \mid \Lambda_{+}(p) = 0$ for $\phi \in J_{k,p}^2$ and $T_{-}(p)\Lambda_{+}(p) = p^3\Delta_pT_{+}(p)$.
\end{proof}
\subsection{Calculation of the Dirichlet Series}\label{dirichlet series inert}
Let now $F,G,h$ have Fourier expansions as in equation \eqref{fourier-expansions}. In what follows, we will assume that $F,G,h$ are all Hecke eigenforms for their corresponding Hecke rings ($h$ is assumed to be normalised) and $F$ is in the Maass space, as we have defined in Definition \ref{maass_defn}. Also, we will always assume that $F,G, h$ all have totally real Fourier coefficients. This is a technical assumption which could be lifted. We can rewrite $D_{F,G,h}$ of Theorem \ref{integral_representation_theorem} as (we have $\phi_m = m^{3-k}\phi_1 \textup{ }|\textup{ } T_{-}(m)$ for all $m\geq 1$ from Definition \ref{maass_defn})
\begin{align}\label{dirichlet_series_defn}
    D_{F,G,h}(s) \nonumber &=  4\sum_{l,\epsilon,m}\langle m^{3-k}\phi_1 \textup{ }|\textup{ }T_{-}(m)U_l, \psi_{mN(l)}\rangle a_{mN(\epsilon)}N(l)^{-(k+s-3)}N(\epsilon)^{-(k+s-1)}m^{-(2k+s-4)}\\
    &=4\beta_{k}\sum_{l,\epsilon, m}\langle\widetilde{\phi}_{1} \textup{ }|\textup{ }T_{-}(m)U_{l}, \widetilde{\psi}_{mN(l)}\rangle_{\mathcal{A}}a_{mN(\epsilon)}N(l)^{-s}N(\epsilon)^{-(k+s-1)}m^{-(2k+s-4)},
\end{align}
with $l,\epsilon \in \mathbb{Z}[i]$ coprime with their real parts positive and imaginary parts non-negative and $m \in \mathbb{N}$. Here, $\beta_k$ is the constant of Lemma \ref{inner_product_p_forms}.
Now, if $\phi \in J_{k,m}^2$, we have from Proposition \ref{fourier_jacobi_action} and the fact that $\Lambda_{-}(p)$ has a single right coset representative, that
\begin{equation*}
    \phi \textup{ }|\textup{ }\Lambda_{-}(p) = p^{3k-8}\tilde{\phi}\left(\begin{pmatrix}\tau&pz_1\\pz_2&p^2\tau'\end{pmatrix}\right)e^{-2\pi mp^2\tau'} = p^{3k-8}\phi(\tau, pz_1, pz_2) = p^{3k-8}\phi \textup{ }|\textup{ }U_{p},
\end{equation*}
with $U_p$ the operator defined in Theorem \ref{integral_representation_theorem}. We now define the $p$-part of the Dirichlet series
\begin{align}\label{p-part dirichlet}
    D_{F,G,h}^{(p)}(s) \nonumber &:= \sum_{l, \epsilon, m \geq 0}\langle \widetilde{\phi}_1\textup{ }|\textup{ }T_{-}(p^{m})U_{p^{l}}, \widetilde{\psi}_{p^{m+2l}}\rangle_{\mathcal{A}} a_{p^{m+2\epsilon}}p^{-2sl}p^{-2(k+s-1)\epsilon}p^{-(2k+s-4)m}\\
    &=\sum_{l, \epsilon, m \geq 0}\langle \widetilde{\phi}_1\textup{ }|\textup{ }T_{-}(p^{m})\Lambda_{-}(p^l), \widetilde{\psi}_{p^{m+2l}}\rangle_{\mathcal{A}}a_{p^{m+2\epsilon}}p^{-(3k+2s-8)l}p^{-2(k+s-1)\epsilon}p^{-(2k+s-4)m},
\end{align}
together with the condition that $\min(l, \epsilon) = 0$. The last line is obtained using the relation between $U_p$ and $\Lambda_{-}(p)$ (and hence of $\Lambda_{-}(p^{l})$ and $U_{p^{l}}$).\\\\
Now, with respect to the inner product of Fourier-Jacobi forms, we have by \cite[Proposition $5.1$]{gritsenko} that 
$\Lambda_{-}^{\textup{adj}}(p^l) = p^{(2k-6)l}\Lambda_{+}(p^l)$ and $T_{-}^{\textup{adj}}(p^l) = p^{(k-3)l}T_{+}(p^l)$ for any $l \geq 1$. This then gives that the adjoint of $\Lambda_{-}(p^l)$ is $\Lambda_{+}(p^l)$ for the inner product of $P$-forms and similarly the $P$-form adjoint for $T_{-}(p^l)$ is $T_{+}(p^l)$.\\\\
Let now $X := p^{-(k+s-1)}$ and $N := p^{k-1}$. Consider the Satake parameters $\alpha_1, \alpha_2$ of the modular form $h$ such that $\alpha_{1}+\alpha_{2} = a_{p}$ and $\alpha_1\alpha_2 = p^{k-1}$. Let also $X_i := \alpha_{i}p^{-(2k+s-4)}, \textup{ }i=1,2$. We write 
\begin{equation}\label{dirichlet_inert_dec}
D_{F,G,h}^{(p)}(s) = D_{(\epsilon)}(s) + D_{(l)}(s) - D_{(\epsilon, l)}(s),
\end{equation}
where the corresponding index means that this variable (or both) is $0$. Using the fact that
\begin{equation*}
a_{p^{m}} = \frac{\alpha_1^{m+1}-\alpha_2^{m+1}}{\alpha_1-\alpha_2},
\end{equation*}
and properties for the adjoint operators we mentioned above, we obtain:
\begin{align}
\begin{split}\label{first_part}
    D_{(\epsilon)}(s)(\alpha_1-\alpha_2) &= \alpha_1\sum_{l,m=0}^{\infty}\langle \tilde{\phi}_1, \tilde{\psi}_{p^{m+2l}}\mid T_{+}(p^{m})\Lambda_{+}(p^l)\rangle_{\mathcal{A}}p^{-(3k+2s-8)l}(\alpha_1p^{-(2k+s-4)})^{m}\\
    &- \alpha_2\sum_{l,m=0}^{\infty}\langle \tilde{\phi}_1, \tilde{\psi}_{p^{m+2l}}\mid T_{+}(p^{m})\Lambda_{+}(p^l)\rangle_{\mathcal{A}}p^{-(3k+2s-8)l}(\alpha_2p^{-(2k+s-4)})^{m}.
\end{split}
\end{align}
\begin{align}
\begin{split}\label{second_part}
    D_{(l)}(s)(\alpha_1-\alpha_2) &= \alpha_1\sum_{\epsilon,m=0}^{\infty}\langle \tilde{\phi}_1, \tilde{\psi}_{p^{m}}\mid T_{+}(p^{m})\rangle_{\mathcal{A}}(\alpha_1p^{-(k+s-1)})^{2\epsilon}(\alpha_1p^{-(2k+s-4)})^{m}\\
    &- \alpha_2\sum_{\epsilon,m=0}^{\infty}\langle \tilde{\phi}_1, \tilde{\psi}_{p^{m}}\mid T_{+}(p^{m})\rangle_{\mathcal{A}}(\alpha_2p^{-(k+s-1)})^{2\epsilon}(\alpha_2p^{-(2k+s-4)})^{m}.
\end{split}
\end{align}
\begin{align}
\begin{split}\label{third_part}
    D_{(\epsilon, l)}(s)(\alpha_1-\alpha_2) &= \alpha_1\sum_{m=0}^{\infty}\langle \tilde{\phi}_1, \tilde{\psi}_{p^{m}}\mid T_{+}(p^{m})\rangle_{\mathcal{A}}(\alpha_1p^{-(2k+s-4)})^{m}\\
    &- \alpha_2\sum_{m=0}^{\infty}\langle \tilde{\phi}_1, \tilde{\psi}_{p^{m}}\mid T_{+}(p^{m})\rangle_{\mathcal{A}}(\alpha_1p^{-(2k+s-4)})^{m}.
\end{split}
\end{align}
\begin{remark}\label{identity_theorem}
In the following, we want to show a relation of $D_{F,G,h}^{(p)}(s)$ with some other holomorphic function on an open subset of $\mathbb{C}$, namely for $\textup{Re}(s)$ large enough. By the Identity Theorem, it suffices to show equality when $s$ is large enough and real. This is true because that part of the real line has accumulation points (in fact every point is an accumulation point). Therefore, we will show the equalities below for $s \in \mathbb{R}$ large enough.
\end{remark}
\begin{proposition}\label{easy_part_dirichlet_inert}
We have 
\begin{equation*}
D_{(l)}(s) - D_{(\epsilon, l)}(s) = \frac{\langle\tilde{\phi}_1, \tilde{\psi}_1\rangle_{\mathcal{A}}}{\alpha_1-\alpha_2}\left(\frac{\alpha_1^3X^2}{Q_{p,F}^{(2)}(X_1)}-\frac{\alpha_2^{3}X^2}{Q_{p,F}^{(2)}(X_2)}\right).
\end{equation*}
\end{proposition}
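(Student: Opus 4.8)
The plan is to reduce the claim to the weak rationality theorem for $F$ (Proposition \ref{prop: spinor primes}) after carrying out the elementary geometric summation hidden in the displayed definitions of $D_{(l)}$ and $D_{(\epsilon,l)}$. First I would record the numerical identity that drives everything: since $\alpha_1\alpha_2 = N = p^{k-1}$, $X = p^{-(k+s-1)}$ and $X_i = \alpha_i p^{-(2k+s-4)}$, a direct exponent count gives $\alpha_i^2 X^2 = p^{2k-6}X_i^2$ for $i=1,2$. In $D_{(l)}(s)$ the variable $\epsilon$ enters only through the factor $(\alpha_i p^{-(k+s-1)})^{2\epsilon} = (\alpha_i X)^{2\epsilon}$, so for $\operatorname{Re}(s)$ large the $\epsilon$-sum is geometric and equals $(1-\alpha_i^2 X^2)^{-1} = (1-p^{2k-6}X_i^2)^{-1}$. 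Writing $\Sigma_i := \sum_{m\ge 0}\langle\tilde\phi_1, \tilde\psi_{p^m}|T_+(p^m)\rangle_{\mathcal{A}}X_i^m$, this rewrites the formula as $D_{(l)}(s)(\alpha_1-\alpha_2) = \alpha_1(1-p^{2k-6}X_1^2)^{-1}\Sigma_1 - \alpha_2(1-p^{2k-6}X_2^2)^{-1}\Sigma_2$, whereas $D_{(\epsilon,l)}(s)(\alpha_1-\alpha_2)$ is exactly the $\epsilon=0$ slice $\alpha_1\Sigma_1 - \alpha_2\Sigma_2$.

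Subtracting, the $i$-th term acquires the factor $(1-p^{2k-6}X_i^2)^{-1} - 1 = \alpha_i^2 X^2(1-p^{2k-6}X_i^2)^{-1}$, so that
\[
(D_{(l)}-D_{(\epsilon,l)})(\alpha_1-\alpha_2) = \frac{\alpha_1^3 X^2}{1-p^{2k-6}X_1^2}\Sigma_1 - \frac{\alpha_2^3 X^2}{1-p^{2k-6}X_2^2}\Sigma_2.
\]
Thus the proposition becomes equivalent to the single Euler-factor identity $\Sigma_i(1-p^{2k-6}X_i^2)^{-1} = \langle\tilde\phi_1,\tilde\psi_1\rangle_{\mathcal{A}}\,Q^{(2)}_{p,F}(X_i)^{-1}$ for each $i$, after which dividing by $\alpha_1-\alpha_2$ reproduces the stated closed form verbatim.

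The heart of the argument is this last identity, equivalently $Q^{(2)}_{p,F}(X_i)\Sigma_i = (1-p^{2k-6}X_i^2)\langle\tilde\phi_1,\tilde\psi_1\rangle_{\mathcal{A}}$, and this is where I would use that $F$ lies in the Maass space and is a Hecke eigenform. Using the adjointness of $T_-$ and $T_+$ on $P$-forms recalled before the statement, I would pass from $\Sigma_i$ to the $F$-side and recognise, through the defining relation $\tilde\phi_{p^\delta} = (p^\delta)^{3-k}\tilde\phi_1|T_-(p^\delta)$ of the Maass space (Definition \ref{maass_defn}), the spinor generating series $\sum_{\delta\ge 0}\tilde\phi_{p^\delta}|T_+(p^\delta)X^\delta$ attached to $F$. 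The weak rationality theorem \ref{prop: spinor primes} for $F$ with $m=1$ has right-hand side exactly $(1-p^{2k-6}X^2)\tilde\phi_1$ (the branch $p\nmid 1$, since $\phi_{1/p}$ and $\phi_{1/p^2}$ vanish); pairing that identity against the index-one $P$-form $\tilde\psi_1$ and specialising $X=X_i$ then produces the required expression, with $Q^{(2)}_{p,F}$ in the denominator precisely because the Hecke eigenvalues entering are those of $F$.

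The main obstacle is exactly this transfer: one must match the $T_+(p^m)$-tower that sits on the $G$-side factor $\tilde\psi_{p^m}$ against the $F$-side series to which \ref{prop: spinor primes} applies. Concretely this amounts to the reciprocity $\langle\tilde\phi_{p^\delta}|T_+(p^\delta), \tilde\psi_1\rangle_{\mathcal{A}} = \langle\tilde\phi_1, \tilde\psi_{p^\delta}|T_+(p^\delta)\rangle_{\mathcal{A}}$ for every $\delta$, which I expect to establish by combining adjointness with the Maass relation above and the eigenproperty of $\tilde\phi_1$ under the parabolic Hecke algebra; the commutation identities collected in the proof of Proposition \ref{prop: rankin_prime} (notably $T_-(p)T_+(p) = pT^{J}(p)+(p^3+p^4)\Delta_p$) are what force the two towers to agree on the $\tilde\phi_1$-eigenline. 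Once this reciprocity is in place the remaining computation is purely formal, and the antisymmetrisation in $\alpha_1,\alpha_2$ yields the claimed formula.
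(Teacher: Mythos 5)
Your formal reductions are correct and coincide with what the paper does: the $\epsilon$-sum is geometric, the identity $\alpha_i^2X^2=p^{2k-6}X_i^2$ holds, and the proposition reduces to evaluating $\Sigma_i=\sum_{m\geq 0}\langle\tilde{\phi}_1,\tilde{\psi}_{p^m}|T_+(p^m)\rangle_{\mathcal{A}}X_i^m$. The gap is in your final step. The tower $\tilde{\psi}_{p^m}|T_+(p^m)$ is built from the Fourier--Jacobi coefficients of $G$, so Proposition \ref{prop: spinor primes} applies to the eigenform $G$ (with $m=1$, where $\psi_{1/p}=\psi_{1/p^2}=0$ and $1+p(\nabla_p-p\Delta_p)X^2$ acts as $1-p^{2k-6}X^2$), giving directly
\[
\Sigma_i=(1-p^{2k-6}X_i^2)\,Q_{p,G}^{(2)}(X_i)^{-1}\langle\tilde{\phi}_1,\tilde{\psi}_1\rangle_{\mathcal{A}},
\]
with $G$'s Euler factor, not $F$'s. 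This is in fact the paper's own (very short) proof; its displayed computation contains typos ($\tilde{\phi}_1$ written for $\tilde{\psi}_1$, and the subscript $F$ for $G$), and the subscript in the statement is likewise a typo: in the later propositions of Section \ref{dirichlet series inert} the term $\left(\frac{\alpha_2^3X^2}{Q_{p,G}^{(2)}(X_2)}-\frac{\alpha_1^3X^2}{Q_{p,G}^{(2)}(X_1)}\right)\langle\tilde{\phi}_1,\tilde{\psi}_1\rangle_{\mathcal{A}}$ must cancel against $D_{(l)}-D_{(\epsilon,l)}$, and Theorem \ref{Main Theorem, inert case} has denominator $Q_{p,G}^{(2)}(X_1)Q_{p,G}^{(2)}(X_2)$; both force the present proposition to hold with $Q_{p,G}^{(2)}$. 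No Maass-space input is needed at this point at all.

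By taking the printed $Q_{p,F}^{(2)}$ at face value, you were led to invent a transfer step, and that step is false. Your proposed reciprocity $\langle\tilde{\phi}_{p^\delta}|T_+(p^\delta),\tilde{\psi}_1\rangle_{\mathcal{A}}=\langle\tilde{\phi}_1,\tilde{\psi}_{p^\delta}|T_+(p^\delta)\rangle_{\mathcal{A}}$ cannot hold in general: by adjointness the left side equals $\langle\tilde{\phi}_{p^\delta},\tilde{\psi}_1|T_-(p^\delta)\rangle_{\mathcal{A}}$, while the right side equals $\langle\tilde{\phi}_1|T_-(p^\delta),\tilde{\psi}_{p^\delta}\rangle_{\mathcal{A}}=p^{\delta(k-3)}\langle\tilde{\phi}_{p^\delta},\tilde{\psi}_{p^\delta}\rangle_{\mathcal{A}}$ by the Maass relation (Definition \ref{maass_defn}) for $F$; equality would require $\tilde{\psi}_1|T_-(p^\delta)-p^{\delta(k-3)}\tilde{\psi}_{p^\delta}$ to be orthogonal to $\tilde{\phi}_{p^\delta}$, which is essentially the Maass relation for $G$ --- and $G$ is not assumed to lie in the Maass space. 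More decisively: summing your reciprocity against $X_i^\delta$ and applying Proposition \ref{prop: spinor primes} to each side ($F$ on the left, $G$ on the right) yields $(1-p^{2k-6}X_i^2)Q_{p,F}^{(2)}(X_i)^{-1}\langle\tilde{\phi}_1,\tilde{\psi}_1\rangle_{\mathcal{A}}=(1-p^{2k-6}X_i^2)Q_{p,G}^{(2)}(X_i)^{-1}\langle\tilde{\phi}_1,\tilde{\psi}_1\rangle_{\mathcal{A}}$ identically, forcing $Q_{p,F}^{(2)}=Q_{p,G}^{(2)}$ whenever $\langle\tilde{\phi}_1,\tilde{\psi}_1\rangle_{\mathcal{A}}\neq 0$ --- absurd for eigenforms with distinct Hecke eigenvalues at $p$ (and when the inner product vanishes the whole proposition degenerates to $0=0$, so nothing is gained). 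The fix is not a better transfer argument but recognizing the typo: state the result with $Q_{p,G}^{(2)}$ and conclude exactly as in your second paragraph, applying the weak rationality theorem to $G$.
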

\begin{proof}
This follows from equations \eqref{second_part}, \eqref{third_part} and Proposition \ref{prop: spinor primes} with $m=1$. We have (because we have a Hermitian inner product, we need to conjugate in the second argument)
\begin{align*}
    \sum_{m=0}^{\infty}\langle\tilde{\phi}_1, \tilde{\psi}_{p^{m}}\mid T_{+}(p^{m})\rangle_{\mathcal{A}}X_{1}^{m} = \sum_{m=0}^{\infty}\langle\tilde{\phi}_1, \tilde{\psi}_{p^{m}}\mid T_{+}(p^{m})X_{2}^{m}\rangle_{\mathcal{A}} &= \langle\tilde{\phi}_1, Q_{p,F}^{(2)}(X_2)^{-1}(1-p^{2k-6}X_2^{2})\tilde{\psi}_1\rangle_{\mathcal{A}} \\
    &=(1-p^{2k-6}X_1^2)Q_{p,F}^{(2)}(X_1)^{-1}\langle \tilde{\phi}_1, \tilde{\psi}_1\rangle_{\mathcal{A}},
\end{align*}
because $X_1, X_2$ are complex conjugates. Also,
\begin{equation*}
    \sum_{\epsilon=0}^{\infty}(\alpha_1p^{-(k+s-1)})^{2\epsilon} = \frac{1}{1 - \alpha_1^2p^{-2(k+s-1)}}.
\end{equation*}
So, the first part of the difference we are interested in is
\begin{equation*}
     \alpha_1\frac{\langle\tilde{\phi}_1, \tilde{\psi}_1\rangle_{\mathcal{A}}}{\alpha_1-\alpha_2}Q_{p,F}^{(2)}(X_1)^{-1}(1-p^{2k-6}X_1^2)\left(\frac{1}{1-\alpha_1^2p^{-2(k+s-1)}}-1\right) = \frac{\langle\tilde{\phi}_1, \tilde{\psi}_1\rangle_{\mathcal{A}}}{\alpha_1-\alpha_2}\frac{\alpha_1^3X^2}{Q_{p,F}^{(2)}(X_1)},
\end{equation*}
and similarly for the second part.
\end{proof}
\begin{proposition}\label{prop1}
Let $Y := p^2NX^2$ and $l \geq 0$. Then, for $i=1,2$, we have
\begin{align*}
    \sum_{m=0}^{\infty}\tilde{\psi}_{p^{m+2l}}\mid T_{+}(p^{m})\Lambda_{+}(p^{l})X_i^{m}(X^2N^{-1}p^{5})^{l} &= Q_{p,F}^{(2)}(X_i)^{-1}(\tilde{\psi}_{p^{2l}}-\tilde{\psi}_{p^{2l-1}}\mid T_{-}(p)X_i + p\tilde{\psi}_{p^{2l-2}}\mid \Lambda_{-}(p)X_i^2)\\&\mid \left((1+p(\nabla_p - p\Delta_p)X_i^2)\Delta_{p^{l}}^{-1}\Lambda_{+}(p^{l})(p^{-3}Y)^{l}\right).
\end{align*}
\begin{proof}
The proof follows from Proposition \ref{prop: spinor primes}, together with the fact that $F\mid  \Delta_{p^{l}} = (p^{2k-8})^{l}F$.
\end{proof}
\end{proposition}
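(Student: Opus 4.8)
The plan is to obtain this identity as a direct consequence of Proposition~\ref{prop: spinor primes}, applied to $G$ at base index $p^{2l}$, followed by a purely formal manipulation of Hecke operators and scalar factors; no new analytic input is required.

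First I would specialise Proposition~\ref{prop: spinor primes} to the eigenform $G$, whose Fourier--Jacobi coefficients are the $\psi_m$, with $m$ replaced by $p^{2l}$. This yields the identity of power series in $X_1$
\begin{equation*}
\sum_{m\geq 0}\tilde\psi_{p^{m+2l}}\mid_k T_+(p^m)\,X_1^m = Q_{p,F}^{(2)}(X_1)^{-1}\Bigl(\tilde\psi_{p^{2l}}-\tilde\psi_{p^{2l-1}}\mid_k T_-(p)X_1+p\,\tilde\psi_{p^{2l-2}}\mid_k\Lambda_-(p)X_1^2\Bigr)\mid_k\bigl(1+p(\nabla_p-p\Delta_p)X_1^2\bigr).
\end{equation*}
Here the three shifted indices are exactly those for which the signatures $s(T_-(p))=p^{-1}$ and $s(\Lambda_-(p))=p^{-2}$ bring every summand back to index $p^{2l}$, so that the right-hand side is a well-defined $P$-form of index $p^{2l}$. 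Since $G$ is an eigenform, $Q_{p,F}^{(2)}(X_1)$ is a scalar and commutes with every Hecke operator.

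Next I would post-compose both sides with the operator $\Lambda_+(p^l)$ and multiply through by the scalar $(X^2N^{-1}p^5)^l$. The left-hand side becomes precisely the sum in the statement, while the right-hand side acquires the trailing operator $\bigl(1+p(\nabla_p-p\Delta_p)X_1^2\bigr)\Lambda_+(p^l)$. The crucial step is then to insert $\Delta_{p^l}^{-1}\Delta_{p^l}=1$: the operator $\Delta_{p^l}$ is central in the parabolic Hecke algebra and acts on weight-$k$ forms as the scalar $p^{(2k-8)l}$, since $(F\mid_k\Delta_p)(Z)=p^{2k-8}F(Z)$ as computed in the proof of Proposition~\ref{prop: spinor primes}. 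Hence $\Lambda_+(p^l)=p^{(2k-8)l}\,\Delta_{p^l}^{-1}\Lambda_+(p^l)$ as operators on these forms, which puts the right-hand side into the required shape $\Delta_{p^l}^{-1}\Lambda_+(p^l)$ at the cost of an explicit scalar $p^{(2k-8)l}$.

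It remains to collect the scalars. Writing $N=p^{k-1}$ and $Y=p^2NX^2=p^{k+1}X^2$, the factor picked up from $\Delta_{p^l}^{-1}$ together with the multiplier $(X^2N^{-1}p^5)^l$ reconciles against $(p^{-3}Y)^l$ through
\begin{equation*}
p^{(2k-8)l}\,(X^2N^{-1}p^5)^l = (p^{k-2}X^2)^l = (p^{-3}Y)^l,
\end{equation*}
which is exactly the scalar attached to $\Delta_{p^l}^{-1}\Lambda_+(p^l)$ in the claimed identity; both sides then agree and the argument is complete. The whole proof is a rewriting, so the genuine obstacle is bookkeeping: one must track the powers of $p$, $N$ and $X$ through the insertion of $\Delta_{p^l}^{-1}$ without error, and check that every intermediate $P$-form sits at the correct index (all carry index $p^{2l}$ before $\Lambda_+(p^l)$, which has signature $p^{2l}$ and returns them to index $1$). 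I expect the exponent computation in the last display to be the most error-prone point.
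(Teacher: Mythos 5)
Your proposal is correct and follows exactly the paper's (very terse) proof: apply Proposition \ref{prop: spinor primes} to $G$ at base index $p^{2l}$, post-compose with $\Lambda_{+}(p^{l})$, and use $F|_{k}\Delta_{p^{l}} = (p^{2k-8})^{l}F$ to trade $\Lambda_{+}(p^{l})$ for $\Delta_{p^{l}}^{-1}\Lambda_{+}(p^{l})$ times the scalar $p^{(2k-8)l}$. Your exponent bookkeeping, $p^{(2k-8)l}(X^2N^{-1}p^{5})^{l} = (p^{k-2}X^2)^{l} = (p^{-3}Y)^{l}$ with $N = p^{k-1}$, is exactly the verification the paper leaves implicit.
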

In order to compute $D_{\epsilon}(s)$ of equation $\eqref{first_part}$, we will compute each of the summands above. We note that we need to interchange the $X_i$'s when we take them in/out of the inner products. 
\begin{proposition}\label{prop2}
We have
\begin{equation*}
    \alpha_1Q_{p,G}^{(2)}(X_1)^{-1}\sum_{l=0}^{\infty}\langle\tilde{\phi}_1, \tilde{\psi}_{p^{2l}}\mid_{k}(1+p(\nabla_p-p\Delta_p)X_2^2)\Delta_{p^{l}}^{-1}\Lambda_{+}(p^{l})(p^{-3}Y)^{l}\rangle_{\mathcal{A}} -
\end{equation*}
\begin{equation*}
    -\alpha_2 Q_{p,G}^{(2)}(X_2)^{-1}\sum_{l=0}^{\infty}\langle\tilde{\phi}_1, \tilde{\psi}_{p^{2l}}\mid_{k}(1+p(\nabla_p-p\Delta_p)X_1^2)\Delta_{p^{l}}^{-1}\Lambda_{+}(p^{l})(p^{-3}Y)^{l}\rangle_{\mathcal{A}}=
\end{equation*}
\begin{equation*}
    =\left(\alpha_1Q_{p,G}^{(2)}(X_1)^{-1}-\alpha_{2}Q_{p,G}^{(2)}(X_2)^{-1}\right)\frac{\langle\tilde{\phi}_1, \tilde{\psi}_{1}\mid _{k}S^{(2)}(Y)\rangle_{\mathcal{A}}}{D_{p,G}^{(2)}(Y)}+\left(\frac{\alpha_2^{3}X^2}{Q_{p,G}^{(2)}(X_2)} - \frac{\alpha_1^{3}X^2}{Q_{p,G}^{(2)}(X_1)}\right)\langle\tilde{\phi}_1, \tilde{\psi}_1\rangle_{\mathcal{A}}.
\end{equation*}.
\end{proposition}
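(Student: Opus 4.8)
The plan is to reduce the left-hand side to the rationality statement of Proposition \ref{rat2} applied to $G$, after peeling off the single anomalous contribution coming from the operator $1+p(\nabla_p-p\Delta_p)X_i^2$. Write $A_i$ for the sum $\sum_{l=0}^{\infty}\langle\tilde{\phi}_1,\tilde{\psi}_{p^{2l}}|_k(1+p(\nabla_p-p\Delta_p)X_i^2)\Delta_{p^l}^{-1}\Lambda_+(p^l)(p^{-3}Y)^l\rangle_{\mathcal A}$, so that the quantity to be evaluated is $\alpha_1 Q_{p,G}^{(2)}(X_1)^{-1}A_1-\alpha_2 Q_{p,G}^{(2)}(X_2)^{-1}A_2$. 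First I would evaluate the action of $1+p(\nabla_p-p\Delta_p)X_i^2$ on each $\tilde{\psi}_{p^{2l}}$ using the last claim of Proposition \ref{prop: spinor primes}: this operator acts as the identity whenever the index is divisible by $p$ and as multiplication by $1-p^{2k-6}X_i^2$ otherwise. Since $\tilde{\psi}_{p^{2l}}$ has index $p^{2l}$, it acts as the identity for every $l\geq 1$ and as $1-p^{2k-6}X_i^2$ only for $l=0$, where moreover $\Delta_{p^0}^{-1}\Lambda_+(p^0)$ is the identity. Because it acts by a scalar on the form to which it is applied, it factors trivially past $\Delta_{p^l}^{-1}\Lambda_+(p^l)$ in each summand. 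Setting $B=\sum_{l=0}^{\infty}\langle\tilde{\phi}_1,\tilde{\psi}_{p^{2l}}|_k\Delta_{p^l}^{-1}\Lambda_+(p^l)(p^{-3}Y)^l\rangle_{\mathcal A}$ for the corresponding sum \emph{without} the extra operator (a quantity independent of $i$, since it involves only $Y$), the $l=0$ term produces the only discrepancy, so that $A_i=B-p^{2k-6}X_i^2\langle\tilde{\phi}_1,\tilde{\psi}_1\rangle_{\mathcal A}$, with the entire $i$-dependence now confined to one explicit term.

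Next I would evaluate $B$ by Proposition \ref{rat2} applied to $G$ with $m=1$. Since $\psi_{1/p^2}=0$, that proposition collapses to $D_{p,G}^{(2)}(Y)\sum_{l\geq 0}\tilde{\psi}_{p^{2l}}|_k\Delta_{p^l}^{-1}\Lambda_+(p^l)(p^{-3}Y)^l=\tilde{\psi}_1|_k S^{(2)}(Y)$, where $D_{p,G}^{(2)}(Y)$ is now the scalar obtained by substituting the Hecke eigenvalues of $G$. Pulling this (real) scalar out of the Petersson product gives $B=D_{p,G}^{(2)}(Y)^{-1}\langle\tilde{\phi}_1,\tilde{\psi}_1|_k S^{(2)}(Y)\rangle_{\mathcal A}$, which is precisely the factor appearing in the first term of the asserted answer.

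Finally I would assemble the two pieces. The $B$-parts combine into $(\alpha_1 Q_{p,G}^{(2)}(X_1)^{-1}-\alpha_2 Q_{p,G}^{(2)}(X_2)^{-1})B$, giving the first term on the right-hand side after inserting the formula for $B$. The correction parts give $-p^{2k-6}\langle\tilde{\phi}_1,\tilde{\psi}_1\rangle_{\mathcal A}\big(\alpha_1 X_1^2 Q_{p,G}^{(2)}(X_1)^{-1}-\alpha_2 X_2^2 Q_{p,G}^{(2)}(X_2)^{-1}\big)$, and the only genuine arithmetic is the identity $p^{2k-6}\alpha_i X_i^2=\alpha_i^3 X^2$, which is immediate from $X_i=\alpha_i p^{-(2k+s-4)}$ and $X=p^{-(k+s-1)}$. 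Substituting this turns the correction into $\big(\tfrac{\alpha_2^3 X^2}{Q_{p,G}^{(2)}(X_2)}-\tfrac{\alpha_1^3 X^2}{Q_{p,G}^{(2)}(X_1)}\big)\langle\tilde{\phi}_1,\tilde{\psi}_1\rangle_{\mathcal A}$, matching the second term exactly. I expect no deep difficulty here; the main obstacle is purely bookkeeping, namely correctly isolating the $l=0$ summand at which $1+p(\nabla_p-p\Delta_p)X_i^2$ departs from the identity, and confirming the scalar-exponent identity so that the $X_i^2$ correction reorganizes into the $\alpha_i^3 X^2$ shape of the stated formula.
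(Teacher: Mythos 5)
Your proposal is correct and follows essentially the same route as the paper's own proof: isolate the $l=0$ term where $1+p(\nabla_p-p\Delta_p)X_i^2$ acts as $1-p^{2k-6}X_i^2$ (via Proposition \ref{prop: spinor primes}), evaluate the resulting clean sum by the rationality theorem (Proposition \ref{rat2}) to get $\tilde{\psi}_1|_k S^{(2)}(Y)D_{p,G}^{(2)}(Y)^{-1}$, and convert the correction term via $p^{2k-6}X_i^2=\alpha_i^2X^2$. The only difference is presentational: you spell out the bookkeeping ($A_i$, $B$) that the paper compresses into two displayed lines.
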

\begin{proof}
We first observe
\begin{equation*}
    \tilde{\psi}_{p^{2l}}\mid _{k}(1 + p(\nabla_p - p\Delta_{p})X_2^2)
= \begin{cases}(1-p^{2k-6}X_2^2)\tilde{\psi}_1&\textup{if } l=0\\ \tilde{\psi}_{p^{2l}}& \textup{if }l\geq 1\end{cases},
\end{equation*}
by using the result of Proposition \ref{prop: spinor primes}. Hence, by Proposition \ref{rat2} we obtain
\begin{align*}
    \sum_{l=0}^{\infty}\tilde{\psi}_{p^{2l}}\mid_{k}(1+p(\nabla_p-p\Delta_p)X_2^2)\Delta_{p^{l}}^{-1}\Lambda_{+}(p^{l})(p^{-3}Y)^{l} &= (1-p^{2k-6}X_2^2)\tilde{\psi}_1 + \sum_{l=1}^{\infty}\tilde\psi_{p^{2l}}\mid_{k}\Delta_{p^{l}}^{-1}\Lambda_{+}(p^{l})(p^{-3}Y)^{l}\\
    &= \sum_{l=0}^{\infty}\tilde{\psi}_{p^{2l}}\mid_{k}\Delta_{p^{l}}^{-1}\Lambda_{+}(p^{l})(p^{-3}Y)^{l} - p^{2k-6}X_2^2\tilde{\psi}_1 \\
    &= \tilde{\psi}_{1}\mid_{k}S^{(2)}(Y)D_{p,G}^{(2)}(Y)^{-1} - \alpha_2^2X^2\tilde{\psi}_1.
\end{align*}
After taking the inner product with $\tilde{\phi}_1$ and keeping in mind the conjugation happening, we get the expression for the first term. Similarly for the other term and from this the result follows.
\end{proof}
\begin{proposition}\label{prop3}
We have
\begin{equation*}
    \alpha_1Q_{p,G}^{(2)}(X_1)^{-1}\sum_{l=0}^{\infty}p\langle\tilde{\phi}_1, \tilde{\psi}_{p^{2l-2}}\mid_{k}\Lambda_{-}(p)X_2^2(1+p(\nabla_p-p\Delta_p)X_2^2)\Delta_{p^{l}}^{-1}\Lambda_{+}(p^{l})(p^{-3}Y)^{l}\rangle_{\mathcal{A}} -
\end{equation*}
\begin{equation*}
    -\alpha_2 Q_{p,G}^{(2)}(X_2)^{-1}\sum_{l=0}^{\infty}p\langle\tilde{\phi}_1, \tilde{\psi}_{p^{2l-2}}\mid_{k}\Lambda_{-}(p)X_1^2(1+p(\nabla_p-p\Delta_p)X_1^2)\Delta_{p^{l}}^{-1}\Lambda_{+}(p^{l})(p^{-3}Y)^{l}\rangle_{\mathcal{A}}=
\end{equation*}
\begin{equation*}
    =\left(\frac{\alpha_1^3Np^4X^4}{Q_{p,G}^{(2)}(X_1)} - \frac{\alpha_2^3Np^4X^4}{Q_{p,G}^{(2)}(X_2)}\right)\frac{\langle\tilde{\phi}_1,\tilde{\psi}_{1}\mid_{k}S^{(2)}(Y)\rangle_{\mathcal{A}}}{D_{p,G}^{(2)}(Y)}.
\end{equation*}
\end{proposition}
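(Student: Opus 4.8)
The plan is to reuse, almost verbatim, the template of the preceding proposition, the only new feature being that the extra operator $\Lambda_-(p)$ shifts the whole computation by one step in the summation variable $l$. First I would observe that the $l=0$ summand vanishes, since $\tilde\psi_{p^{-2}}=0$, so the sum effectively runs over $l\ge 1$. For each such $l$ the form $\tilde\psi_{p^{2l-2}}\mid_k\Lambda_-(p)$ has index $p^{2l}$, because $\Lambda_-(p)$ multiplies the index of a Fourier--Jacobi form by $N(p)=p^2$ (its signature is $1/p^2$). As this index is divisible by $p$, the explicit description of the action of $1+p(\nabla_p-p\Delta_p)X_1^2$ recorded in Proposition \ref{prop: spinor primes} shows that this factor acts as the identity on $\tilde\psi_{p^{2l-2}}\mid_k\Lambda_-(p)$; hence the middle operator simply drops out of every term, and no $l=0$ correction survives.

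The heart of the argument is then the operator identity, valid on weight-$k$ $P$-forms,
\[
\Lambda_-(p)\,\Delta_{p^l}^{-1}\Lambda_+(p^l)=p^{2k-2}\,\Delta_{p^{l-1}}^{-1}\Lambda_+(p^{l-1}),\qquad l\ge 1,
\]
which I would derive from the relation $\Lambda_-(p)\Lambda_+(p)=p^6\Delta_p^2$ established in the proof of Proposition \ref{prop: rankin_prime}, together with the multiplicativity $\Lambda_+(p^l)=\Lambda_+(p)\Lambda_+(p^{l-1})$, the centrality of $\Delta_p$, and the scalar action of $\Delta_p$ by $p^{2k-8}$ computed in the proof of Proposition \ref{prop: spinor primes}; indeed $\Lambda_-(p)\Delta_{p^l}^{-1}\Lambda_+(p^l)=\Delta_{p^l}^{-1}\,p^6\Delta_p^2\,\Lambda_+(p^{l-1})=p^6\Delta_p\,\Delta_{p^{l-1}}^{-1}\Lambda_+(p^{l-1})=p^{2k-2}\Delta_{p^{l-1}}^{-1}\Lambda_+(p^{l-1})$. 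Applying this to each summand and pulling the constant $p\cdot p^{2k-2}X_1^2(p^{-3}Y)$ out front, the sum over $l\ge 1$ becomes, after the reindexing $j=l-1$,
\[
p\cdot p^{2k-2}X_1^2(p^{-3}Y)\sum_{j\ge 0}\big\langle\tilde\phi_1,\ \tilde\psi_{p^{2j}}\mid_k\Delta_{p^j}^{-1}\Lambda_+(p^j)(p^{-3}Y)^j\big\rangle_{\mathcal{A}}.
\]

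Now the inner sum is exactly the one evaluated by Proposition \ref{rat2} with $m=1$: since $\tilde\psi_{p^{-2}}=0$, that proposition collapses to $\sum_{j\ge 0}\tilde\psi_{p^{2j}}\mid_k\Delta_{p^j}^{-1}\Lambda_+(p^j)(p^{-3}Y)^j=\tilde\psi_1\mid_k S^{(2)}(Y)\,D_{p,G}^{(2)}(Y)^{-1}$, with no correction term (in contrast to the preceding proposition, whose first-term contribution retained the factor $1+p(\nabla_p-p\Delta_p)X_1^2$ and hence an extra $l=0$ piece). It then remains to identify the scalar: substituting $X_1=\alpha_1 p^{-(2k+s-4)}$, $Y=p^2NX^2$, $N=p^{k-1}$ and $X=p^{-(k+s-1)}$, one checks $p\cdot p^{2k-2}X_1^2(p^{-3}Y)=\alpha_1^2Np^4X^4$. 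Multiplying by the front factor $\alpha_1 Q_{p,G}^{(2)}(X_1)^{-1}$ produces $\dfrac{\alpha_1^3Np^4X^4}{Q_{p,G}^{(2)}(X_1)}\dfrac{\langle\tilde\phi_1,\tilde\psi_1\mid_k S^{(2)}(Y)\rangle_{\mathcal{A}}}{D_{p,G}^{(2)}(Y)}$, and subtracting the identical computation with $\alpha_1,X_1$ replaced by $\alpha_2,X_2$ gives the asserted formula.

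The main obstacle is the operator identity of the second paragraph, and specifically justifying the multiplicativity $\Lambda_+(p^l)=\Lambda_+(p)\Lambda_+(p^{l-1})$ in the parabolic Hecke algebra $H^{1,1}_p$, so that $\Lambda_-(p)$ cancels precisely one factor of $\Lambda_+(p)$; on $P$-forms this amounts to the composition law for the scaling operators $U_p$, but it must be checked against the Hecke-algebra definition. Everything else — the vanishing of the $l=0$ term, the disappearance of the index-raising factor, the final appeal to Proposition \ref{rat2}, and the bookkeeping of the constant — is routine given the results already in place.
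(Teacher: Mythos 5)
Your proposal is correct and follows essentially the same route as the paper: cancel $\Lambda_-(p)$ against one factor of $\Lambda_+(p)$ via $\Lambda_-(p)\Lambda_+(p)=p^6\Delta_p^2$, reindex, and invoke Proposition \ref{rat2} with $m=1$ (where the correction term vanishes), with identical scalar bookkeeping $p^{2k-4}X_1^2Y=\alpha_1^2Np^4X^4$. The only cosmetic difference is that you remove the factor $1+p(\nabla_p-p\Delta_p)X_i^2$ by noting that $\tilde\psi_{p^{2l-2}}\mid_k\Lambda_-(p)$ has index divisible by $p$, whereas the paper uses the equivalent operator identity $\Lambda_-(p)(\nabla_p-p\Delta_p)=0$; likewise, the multiplicativity $\Lambda_+(p^l)=\Lambda_+(p)\Lambda_+(p^{l-1})$ you flag as the main obstacle is exactly what the paper also uses implicitly, so no genuine gap separates the two arguments.
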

\begin{proof}
We use the identities $\Lambda_{-}(p)(\nabla_p-p\Delta_p) = 0$ and $\Lambda_{-}(p)\Lambda_{+}(p) = p^{6}(\Delta_{p})^2$, from the proof of Proposition \ref{prop: rankin_prime}. We have, for $i=1, 2$:
\begin{equation*}
    \sum_{l=0}^{\infty}p\tilde{\psi}_{p^{2l-2}}\mid_{k}\Lambda_{-}(p)X_i^2(1+p(\nabla_p-p\Delta_p)X_i^2)\Delta_{p^{l}}^{-1}\Lambda_{+}(p^{l})(p^{-3}Y)^{l}=
\end{equation*}
\begin{equation*}
    =p\sum_{l=1}^{\infty}\tilde{\psi}_{p^{2l-2}}\mid_{k}\Lambda_{-}(p)X_i^2\Delta_{p^{l}}^{-1}\Lambda_{+}(p^{l})(p^{-3}Y)^{l}
    =p\sum_{l=1}^{\infty}\tilde{\psi}_{p^{2l-2}}\mid_{k}p^{6}(\Delta_{p})^2\Delta_{p^{l}}^{-1}\Lambda_{+}(p^{l-1})(p^{-3}Y)^{l}X_i^2=
\end{equation*}
\begin{equation*}
    =p^4\sum_{l=1}^{\infty}\tilde{\psi}_{p^{2l-2}}\mid\Delta_{p^{l-1}}^{-1}\Lambda_{+}(p^{l-1})(p^{-3}Y)^{l-1}\Delta_pX_i^2Y
    =p^{2k-4}D_{p,G}^{(2)}(Y)^{-1}\tilde{\psi}_{1}\mid S^{(2)}(Y)X_i^2Y,
\end{equation*}
from which the result then follows. The last equality follows from Proposition \ref{rat2}. We also used equation \eqref{lambda_inert} as well as the facts that $\Delta_p^{l} = \Delta_{p^{l}}$ and that $\Delta_{p}$ and $\Lambda_{+}(p)$ commute, because of the fact that $\Delta_p$ has a single right coset representative, which is $p 1_{4}$.
\end{proof}
\begin{proposition}\label{prop4}
Let $\lambda_p$ denote the eigenvalue given by $\tilde{\psi}_{p} \textup{ }|\textup{ } T_{+}(p) = \lambda_p\tilde{\psi}_1$. We then have
\begin{equation*}
    \alpha_1Q_{p,G}^{(2)}(X_1)^{-1}\sum_{l=0}^{\infty}\langle\tilde{\phi}_1, \tilde{\psi}_{p^{2l-1}}\mid_{k}T_{-}(p)X_2(1+p(\nabla_p-p\Delta_p)X_2^2)\Delta_{p^{l}}^{-1}\Lambda_{+}(p^{l})(p^{-3}Y)^{l}\rangle_{\mathcal{A}} -
\end{equation*}
\begin{equation*}
    -\alpha_2 Q_{p,G}^{(2)}(X_2)^{-1}\sum_{l=0}^{\infty}\langle\tilde{\phi}_1, \tilde{\psi}_{p^{2l-1}}\mid_{k}T_{-}(p)X_1(1+p(\nabla_p-p\Delta_p)X_1^2)\Delta_{p^{l}}^{-1}\Lambda_{+}(p^{l})(p^{-3}Y)^{l}\rangle_{\mathcal{A}}=
\end{equation*}
\begin{equation*}
    = \left(\frac{\alpha_1^2p^4X^3\lambda_{p}}{Q_{p,G}^{(2)}(X_1)(1+Y)} - \frac{\alpha_2^2p^4X^3\lambda_{p}}{Q_{p,G}^{(2)}(X_2)(1+Y)}\right)\frac{\langle\tilde{\phi}_1,\tilde{\psi}_1\rangle S_{F}^{(2)}(Y)^{\textup{prim}}}{(\alpha_1-\alpha_2)D_{p,G}^{(2)}(Y)}.
\end{equation*}
\end{proposition}
\begin{proof}
The proof is exactly the same as the proof in \cite[Proposition $4.5$]{heim} (using also the fact that $\phi \mid_{k} S^{(2)}(X)T_{+}(p) = \phi \mid_{k} T_{+}(p)S^{(2)}(X)^{\textup{factor}}$, if $\phi \in J_{k,p}^2$ as in Lemma \ref{index_p}.)
\end{proof}

Now, Proposition \ref{easy_part_dirichlet_inert} gives us a way to compute $D_{(l)}(s)-D_{(\epsilon,l)}(s)$. Propositions \ref{prop1}, \textup{ }\ref{prop2},\textup{ }\ref{prop3},\textup{ }\ref{prop4} can be used to compute $D_{(l)}(s)$. Hence, from equation \eqref{dirichlet_inert_dec}, we obtain
\begin{equation*}
    D_{F,G,h}^{(p)}(s) = \frac{\langle\tilde{\phi}_1,\tilde{\psi}_1\rangle S_{F}^{(2)}(Y)^{\textup{prim}}}{(\alpha_1-\alpha_2)D_{p,G}^{(2)}(Y)}\times
\end{equation*}
\begin{equation*}
    \times\left(\frac{\alpha_1}{Q_{p,G}^{(2)}(X_1)}-\frac{\alpha_2}{Q_{p,G}^{(2)}(X_2)} + \frac{\alpha_1^3Np^4X^4}{Q_{p,G}^{(2)}(X_1)}-\frac{\alpha_2^3Np^4X^4}{Q_{p,G}^{(2)}(X_2)} - \frac{\alpha_1^2p^4X^3\lambda_p}{Q_{p,G}^{(2)}(X_1)(1+Y)} + \frac{\alpha_2^2p^4X^3\lambda_p}{Q_{p,G}^{(2)}(X_2)(1+Y)}\right).
\end{equation*}
Let us now look at the expression in the big bracket. The numerator equals
\begin{equation*}
    ((\alpha_1+\alpha_1^3Np^4X^4)(1+Y) - \alpha_1^2p^4X^3\lambda_p)Q_{p,G}^{(2)}(X_2) - ((\alpha_2+\alpha_2^3Np^4X^4)(1+Y) - \alpha_2^2p^4X^3\lambda_p)Q_{p,G}^{(2)}(X_1).
\end{equation*}
Here 
\begin{equation*}
    Q_{p,G}^{(2)}(t) = 1 - \lambda_pt + (p\lambda_{T_{1,p}}+p(p^3+p^2-p+1)p^{2k-8})t^2 -p^4p^{2k-8}\lambda_pt^3+p^{4k-8}t^4,
\end{equation*}
where $\lambda_{T_{1,p}}$ is the eigenvalue corresponding to the operator $T_{1,p}$. Let then
\begin{equation*}
    A_2 := p\lambda_{T_{1,p}}+p(p^3+p^2-p+1)p^{2k-8}.    
\end{equation*}
By then performing the very lengthy calculation, and grouping in powers of $Y$, we obtain that the above numerator equals
\begin{equation*}
    (\alpha_1-\alpha_2)(1-Y)(1 - Y(A_2p^2N^{-2}-2)+Y^2(p^2N^{-2}\lambda_p^2-2A_2p^2N^{-2}+2) - Y^3(A_2p^2N^{-2}-2)+Y^4)
\end{equation*}
\begin{equation*}
    = (\alpha_1-\alpha_2)(1-Y)D_{p,G}^{(2)}(Y),
\end{equation*}
using Proposition \ref{rankin}. Hence, we obtain
\begin{equation}\label{final_expression_dirichlet_inert}
    D_{F,G,h}^{(p)}(s) = \frac{\langle\tilde{\phi}_1,\tilde{\psi}_1\rangle_{\mathcal{A}} S_{F}^{(2)}(Y)^{\textup{factor}}(1-Y)}{Q_{p,G}^{(2)}(X_1)Q_{p,G}^{(2)}(X_2)}.
\end{equation}
Let us now explore the connection of $S_{F}^{(2)}(Y)^{\textup{factor}}$ with known $L$-functions.
\begin{proposition}\label{s_factor}
We have
\begin{equation*}
    S_{F}^{(2)}(Y)^{\textup{factor}} = L_{p}(f, k+s-2)L_{p}\left(f, k+s-2, \left(\frac{-4}{p}\right)\right),
\end{equation*}
    where $f \in S_{k-1}\left(\Gamma_{0}(4), \left(\frac{-4}{\cdot}\right)\right)$ is the modular form whose Maass lift is $F$, as in Proposition \ref{maass_lift}.
\end{proposition}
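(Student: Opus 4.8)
The plan is to reduce the claimed identity to a single Hecke eigenvalue and then read that eigenvalue off the Maass lift. Since $S^{(2)}(X)^{\textup{factor}}$ involves only the operator $T^{J,r}=\Delta_p^{-1}T^{J}(p)$, writing $\mu$ for its eigenvalue on the index-one Fourier--Jacobi coefficient $\tilde\phi_1$ of $F$, we have
\[
S_{F}^{(2)}(Y)^{\textup{factor}} = 1-\left(p^{-3}\mu-p^{-2}+p^{-1}\right)Y+p^{-2}Y^{2},
\]
where $Y=Np^{-2(k+s-2)}$ and $N=p^{k-1}$. Everything therefore comes down to computing $\mu$ in terms of the Hecke data of $f$.

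First I would determine $\mu$. Because $F$ is the Maass lift of $f$, Proposition \ref{maass_lift} gives the factorisation of $Q_{F}^{(2)}$; taking its inert local factor and cancelling the two copies of $\left(1+p^{k-2}t\right)$ contributed by $L\!\left(s-k+2,\left(\frac{-4}{\cdot}\right)\right)$ and by the middle Euler factor of $R_{f}$ yields
\[
Q_{p,F}^{(2)}(t) = (1-p^{k-1}t)(1-p^{k-3}t)(1-\alpha_p^{2}t)(1-\beta_p^{2}t).
\]
Comparing the coefficient of $t$ with the explicit shape of $Q_{p,G}^{(2)}$ recorded above identifies the $T_{+}(p)$-eigenvalue $\lambda_p=p^{k-1}+p^{k-3}+\alpha_p^{2}+\beta_p^{2}$. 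Next, the defining Maass relation of \ref{maass_defn} gives $\tilde\phi_1\mid T_-(p)=p^{k-3}\tilde\phi_p$, so that $\tilde\phi_1\mid T_-(p)T_+(p)=p^{k-3}\lambda_p\,\tilde\phi_1$ by the definition of $\lambda_p$. Feeding this into the relation $T_-(p)T_+(p)=pT^{J}(p)+(p^{3}+p^{4})\Delta_p$ from the proof of \ref{prop: rankin_prime}, together with $\tilde\phi_1\mid\Delta_p=p^{2k-8}\tilde\phi_1$, gives $\lambda_{T^{J}(p)}$ and hence
\[
\mu = p^{8-2k}\lambda_{T^{J}(p)} = p+a(p^{2})\,p^{4-k}.
\]

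With $\mu$ in hand I would substitute and simplify. Using $\alpha_p\beta_p=p^{k-2}$ and $\alpha_p^{2}+\beta_p^{2}=a(p^{2})+p^{k-2}$ (the latter from $a(p^{2})=a(p)^{2}-\left(\frac{-4}{p}\right)p^{k-2}$ with $\left(\frac{-4}{p}\right)=-1$ for inert $p$), the three coefficients collapse to
\[
S_{F}^{(2)}(Y)^{\textup{factor}} = \left(1-\alpha_p^{2}p^{-2(k+s-2)}\right)\left(1-\beta_p^{2}p^{-2(k+s-2)}\right).
\]
Finally, crucially because $\left(\frac{-4}{p}\right)=-1$, one has $1-\alpha_p^{2}p^{-2(k+s-2)}=(1-\alpha_p p^{-(k+s-2)})(1-\alpha_p\left(\frac{-4}{p}\right)p^{-(k+s-2)})$ and likewise for $\beta_p$; regrouping the four linear terms into the untwisted Satake pair $\{\alpha_p,\beta_p\left(\frac{-4}{p}\right)\}$ of $f$ and its $\left(\frac{-4}{\cdot}\right)$-twist $\{\alpha_p\left(\frac{-4}{p}\right),\beta_p\}$ recognises the product as $L_{p}(f,k+s-2)L_{p}\!\left(f,k+s-2,\left(\frac{-4}{p}\right)\right)$.

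The main obstacle is the computation of $\mu$: although $S^{(2)}(X)^{\textup{factor}}$ is built from the parabolic operator $T^{J}(p)$, its eigenvalue on the Maass lift is not directly visible and must be routed through the product $T_-(p)T_+(p)$, the index-raising Maass relation, and the factorisation of $Q_{p,F}^{(2)}$. The care needed there lies in the index bookkeeping --- for instance that $\tilde\phi_1\mid T_+(p)=0$ while $\tilde\phi_p\mid T_+(p)$ is nonzero --- so that only the intended Fourier--Jacobi coefficients contribute.
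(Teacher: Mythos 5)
Your proof is correct, but it reaches the crucial eigenvalue by a genuinely different route than the paper. The paper is external at the key step: it computes the classical $T(1,p^2)$-eigenvalue $a(p)^2+p^{k-2}+p^{k-3}$ of $f$ from the relation $T(p^2)=T(p)^2-\left(\frac{-4}{p}\right)p^{k-2}$ and then quotes \cite[Lemma 3.3]{gritsenko_maass} to conclude $\tilde{\phi}_1\mid_k T^{J}(p)=p^{k-4}\left(a(p)^2+p^{k-2}+p^{k-3}\right)\tilde{\phi}_1$; from there the substitution and factorisation are essentially the same as yours. You avoid that citation entirely: you extract $Q^{(2)}_{p,F}(t)=(1-p^{k-1}t)(1-p^{k-3}t)(1-\alpha_p^2t)(1-\beta_p^2t)$ from Proposition \ref{maass_lift}, read off the $T_p$-eigenvalue $\lambda_p$ as the linear coefficient (the identification the paper itself uses in Section \ref{dirichlet series inert}), and then solve for the $T^{J}(p)$-eigenvalue by combining the Maass relation $\tilde{\phi}_1\mid T_-(p)=p^{k-3}\tilde{\phi}_p$ of Definition \ref{maass_defn}, the index bookkeeping $\tilde{\phi}_p\mid T_+(p)=\lambda_p\tilde{\phi}_1$ coming from $\epsilon(T_p)=T_-(p)+T_+(p)$, the relation $T_-(p)T_+(p)=pT^{J}(p)+(p^3+p^4)\Delta_p$ from the proof of Proposition \ref{prop: rankin_prime}, and $\tilde{\phi}_1\mid\Delta_p=p^{2k-8}\tilde{\phi}_1$. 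Your value $\mu=p+p^2+p^{4-k}a(p)^2$ indeed equals $p^{8-2k}\cdot p^{k-4}\left(a(p)^2+p^{k-2}+p^{k-3}\right)$, so the two computations agree, and your closing algebra (the collapse to $\left(1-\alpha_p^2p^{-2(k+s-2)}\right)\left(1-\beta_p^2p^{-2(k+s-2)}\right)$ and the regrouping into untwisted and twisted Satake pairs) matches the paper's, only in the other normalisation of Satake parameters ($\alpha_p\beta_p=p^{k-2}$ versus the paper's $\alpha_p\beta_p=-p^{k-2}$; both give $\alpha_p^2+\beta_p^2=a(p)^2+2p^{k-2}$). What your route buys is self-containedness, since every input is a statement already proved or recorded in this paper; what it costs is a longer chain of Hecke-algebra identities where the paper needs a single quoted lemma. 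One terminological caveat: calling $\lambda_p$ ``the $T_+(p)$-eigenvalue'' is an abuse, since $T_+(p)$ lowers the Fourier--Jacobi index and $\tilde{\phi}_1\mid T_+(p)=0$; your own bookkeeping remark shows you mean exactly the paper's convention $\tilde{\phi}_p\mid T_+(p)=\lambda_p\tilde{\phi}_1$, so no harm is done.
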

\begin{proof}
Let $T(a,b):= \Gamma_0(4)\textup{diag}(a,b)\Gamma_0(4)$ for $a,b\geq 1$ such that $a\mid b$. Then, write
\begin{equation*}
    f \mid_{k-1} T(p) = a(p)f,
\end{equation*}
for the standard operator $T(p) := \Gamma_{0}(4)\textup{diag}(1,p)\Gamma_{0}(4)$, with $a(p) \in \mathbb{C}$. This is the same operator as $T(1,p)$. Here, the $\mid_{k-1}$ action is the usual $\textup{GL}_2$-action. By standard relations between Hecke operators, we then have
\begin{equation*}
    T(p^2) = T(p)^2 - \left(\frac{-4}{p}\right)p^{k-2},
\end{equation*}
where $T(p^2) := T(1,p^2) + \left(\frac{-4}{p}\right)p^{k-3}$. This then implies
\begin{equation*}
    f \mid_{k-1} T(1,p^2) = (a(p)^2 + p^{k-2}+p^{k-3})f.
\end{equation*}
Using now \cite[Lemma 3.3]{gritsenko_maass} we obtain that 
\begin{equation*}
    \tilde{\phi}_1 \mid_k T^{J}(p) = p^{k-4}(a(p)^2 + p^{k-2}+p^{k-3})\tilde{\phi}_1.
\end{equation*}
Hence
\begin{equation*}
    S_{F}^{(2)}(Y)^{\textup{factor}} = 1 - p^{1-k}(a(p)^2+2p^{k-2})Y + p^{-2}Y^2.
\end{equation*}
We now define the Satake parameters of $f$ as follows:
\begin{equation*}
    1 - a(p)p^{-s}+\left(\frac{-4}{p}\right)p^{k-2}p^{-2s} = (1 - \alpha_p p^{-s})\left(1 - \beta_p \left(\frac{-4}{p}\right)p^{-s}\right).
\end{equation*}
But, $Y = p^2NX^2$, $N = p^{k-1}$ and $X = p^{-(k+s-1)}$, so we obtain
\begin{align*}
    S_{F}^{(2)}(Y)^{\textup{factor}} &= 1 - p^{4-2k}(\alpha_p^2 + \beta_p^2)p^{-2s} + p^{-2k+4-2s} \\
    &= (1-p^{4-2k-2s}\alpha_p^{2})(1 - p^{4-2k-2s}\beta_p^2)\\
    &= (1 - \alpha_pp^{2-k-s})\left(1 - \beta_p\left(\frac{-4}{p}\right)p^{2-k-s}\right)\left(1 - \alpha_p\left(\frac{-4}{p}\right)p^{2-k-s}\right)(1 - \beta_p p^{2-k-s})\\
    &=L_p(f, k+s-2)L_p\left(f, k+s-2, \left(\frac{-4}{p}\right)\right).
\end{align*}
\end{proof}
Hence, from equation \eqref{final_expression_dirichlet_inert} and Proposition \ref{s_factor}, we obtain the following Theorem:
\begin{theorem}\label{Main Theorem, inert case}
Let $F,G \in S_{2}^{k}$ and $h \in S_{1}^{k}$ be Hecke eigenforms, all having totally real Fourier coefficients, $h$ normalised, and $F$ belonging in the Maass space, with corresponding $f \in S_{k-1}\left(\Gamma_{0}(4), \left(\frac{-4}{\cdot}\right)\right)$. Let also $\phi_1, \psi_1$ denote the first Fourier-Jacobi coefficients of $F,G$, $X_i=\alpha_ip^{-(2k+s-4)}, \textup{ }i=1,2$, where $\alpha_i$ denote the Satake parameters of $h$ and $Y=p^{-k-2s+2}$. We then have, for $\textup{Re}(s)$ large enough
\begin{equation*}
    D_{F,G,h}^{(p)}(s) = \frac{\langle\tilde{\phi}_1,\tilde{\psi}_1\rangle_{\mathcal{A}} L_{p}(f, k+s-2)L_{p}\left(f, k+s-2, \left(\frac{-4}{p}\right)\right)(1-Y)}{Q_{p,G}^{(2)}(X_1)Q_{p,G}^{(2)}(X_2)}.
\end{equation*}
Here, $D_{F,G,h}^{(p)}$ denotes the $p$-part of the Dirichlet series, as in equation \eqref{p-part dirichlet} and $Q_{p,G}^{(2)}$ denotes the $p$-factor of Gritsenko's $L$-function, as in Definition \ref{gritsenko l-function}.
\end{theorem}
\section{Split Primes}\label{split primes}
We will now consider the case where the odd rational prime $p$ splits. That is, we have that $p = \pi \overline{\pi}$ for some prime element $\pi \in \mathcal{O}_K$. Our aim in this Section is to prove weak rationality theorems analogous to Propositions \ref{prop: spinor primes} and \ref{rat2}. In order to do that, we will first have to factorise the polynomials which serve as the $p$-factors of the standard and Gritsenko's $L$-function in the parabolic Hecke ring $H_p^{1,1}$, as defined in Definitions \ref{standard l-function} and \ref{gritsenko l-function}. The factorisation of the latter polynomial has been done by Gritsenko in \cite[Proposition 3.2]{gritsenko}. Our aim, therefore, is to factorise the standard Hecke polynomial. However, as we mentioned in Section \ref{hecke algebras}, $H^{1,1}_{p}$ is isomorphic to the ring of polynomials of one variable with coefficients from the Hecke ring of the parabolic subgroup
\begin{equation*}
P_{1,2,1}(\mathbb{Z}_p) = \left\{\m{g_1 & *& *\\0&g& *\\0&0&g_2} \in \textup{GL}_{4}(\mathbb{Z}_p) \mid g_1,g_2 \in \mathbb{Z}_{p}^{\times}, \textup{ }g \in \textup{GL}_{2}(\mathbb{Z}_p)\right\}.
\end{equation*}
Hence, we will first investigate Hecke rings of the general linear group and then use this isomorphism to translate the relations back to $H_p^{1,1}$.
\subsection{Hecke rings in $\textup{GL}_4$ and factorisation}\label{gl4}
For this section, we denote by $\Gamma_{n} := \textup{GL}_{n}(\mathbb{Z}_p)$ and let 
\begin{equation*}
    P_{m_1,\cdots,m_l}^{(n)} := \left\{\m{A_1&*&\cdots&*\\0&A_2&\cdots&*\\\vdots&\vdots&\vdots&\vdots\\0&0&\cdots&A_{l}}, \textup{ }A_i \in \textup{GL}_{m_i}(\mathbb{Q}_p),\textup{ } m_1+\cdots+m_l = n\right\}
\end{equation*}
be a parabolic subgroup of $\textup{GL}_{n}(\mathbb{Q}_p)$. Denote by $\Gamma_{P} = \Gamma_{P}^{(n)} = \Gamma_{m_1,\cdots,m_l}^{(n)}$, the group of $\mathbb{Z}_p$-points of $P_{m_1,\cdots,m_l}^{(n)}$. Let also $H_{n} := H(\Gamma_n, \textup{GL}_{n}(\mathbb{Q}_p))$ be the full Hecke ring in this case and $H_{m_1,\cdots,m_l} := H(\Gamma_{m_1,\cdots,m_l}^{(n)},  P_{m_1,\cdots,m_l}^{(n)})$ denote the corresponding parabolic Hecke ring.
\\\\
Let us now explicitly describe the isomorphism $H_{p}^2 \cong H(\textup{GL}_4(\mathbb{Q}_p),\textup{GL}_4(\mathbb{Z}_p)[x^{\pm}]$, which will yield $H_{p}^{1,1} \cong H_{1,2,1}[x^{\pm}]$, as Gritsenko does in \cite[Proposition $2.4$]{gritsenko}.\\

We fix an identification $K_p := K \otimes \mathbb{Q}_p \cong \mathbb{Q}_{p} \times \mathbb{Q}_p$ and denote by $(\mu, -\mu)$ the image of the element $(2i)^{-1}$. Let also $e:=(1,0), \textup{ }e^{\sigma} := (0,1) \in K_{p}$. We perform a change of variables with the matrix $C$ (i.e. $g \longmapsto C^{-1}gC$), where 
\[
C :=  \begin{pmatrix} eI_2 & - \mu e^{\sigma} I_2 \\ \mu e^{\sigma} 1_2 & e 1_2 \end{pmatrix} = \left( \begin{pmatrix} I_2 & 0_2 \\ 0_2 &  1_2 \end{pmatrix}, \begin{pmatrix} 0_2 & - \mu  I_2 \\ \mu  1_2 & 0_2\end{pmatrix} \right) \in \textup{GL}_{4}(\mathbb{Q}_p) \times \textup{GL}_{4}(\mathbb{Q}_p),
\]
where $I_2 := \m{0&1\\1&0}$. We remark that for $n=2$:
\[
\begin{pmatrix} I_2 & 0 \\ 0 &  1_2 \end{pmatrix} \textup{diag}(a_1,a_2,a_3,a_4) \begin{pmatrix} I_2 & 0 \\ 0 &  1_2 \end{pmatrix}^{-1} = \textup{diag}(a_2, a_1, a_3, a_4).
\]
We then have that $G_p^{(2)}$ (see Section \ref{hecke algebras}) is identified by 
\begin{equation*}
    \widetilde{G}_p^{(2)} = \{(X,Y) \in \textup{GL}_4(\mathbb{Q}_p) \times \textup{GL}_4(\mathbb{Q}_p) \mid Y^{t} \cdot X = c1_4, \textup{ }c \in \mathbb{Q}_p^{\times}\},
\end{equation*}
and $U_p^{(2)}$ by
\begin{equation*}
    \widetilde{U}_p^{(2)} = \{(\gamma, \alpha(\gamma^{-1})^t) \mid  \gamma \in \textup{GL}_4(\mathbb{Z}_p), \textup{ }\alpha \in \mathbb{Z}_p^{\times}\}.
\end{equation*}
The identification $G_p^{(2)} \cong \textup{GL}_{4}(\mathbb{Q}_p) \times \mathbb{Q}_p^{\times}$ is obtained by projection to the first component induced by $K_p = \mathbb{Q}_p \times \mathbb{Q}_p$.\\\\
The double coset of $(M, c(M^{-1})^{t}) \in \widetilde{G}_p^{(2)}$ with respect to $\widetilde{U}_p^{(2)}$ is determined by the double coset of $M$ with respect to $\textup{GL}_4(\mathbb{Z}_p)$ and by the order $\delta$ of the ideal $c\mathbb{Z}_p$. We will denote such a coset by $(M, \delta)_{\widetilde{U}_p^{(2)}}$. 
Note that here there is a choice, namely whether we have $\pi \longmapsto (pu,v)$ with $u,v \in \mathbb{Z}_p^{\times}$ or $\overline{\pi} \longmapsto (pu', v')$ with $u',v' \in \mathbb{Z}_p^{\times}$. In the following, we always choose the first identification.\\\\
The above identification makes the following diagram commutative:
\[\begin{tikzcd}
H_p^{2} \arrow{r} \arrow[swap]{d} & H_4[x^{\pm}] \arrow{d} \\
H_{p}^{1,1} \arrow{r} & H_{1,2,1}[x^{\pm}]
\end{tikzcd}
\]
We now want to obtain a Lemma regarding the decomposition of an element in $H_{1,2,1}$ into right cosets. Let $n \geq 1$. For a given square matrix $R \in M_{n}(\mathbb{Z}_p)$, we define $\Gamma_{n}^{R} := \Gamma_{n} \cap R^{-1}\Gamma_{n}R$. Also, if $A,D$ are square matrices of sizes $n_1, n_2$ respectively, we define
\begin{equation*}
     V(A,D) := \{ AY \,\,|\,\, Y \in M_{n_1,n_2}(\mathbb{Z}_p) \pmod{D^{\times}} \},
\end{equation*}
that is $AY_1 \equiv AY_2 \pmod{D^{\times}}$ if and only if  $AY_1D^{-1}-AY_2D^{-1} \in M_{n_1,n_2}(\mathbb{Z}_p)$.\\\\
A straightforward generalisation of \cite[Lemma 2]{gritsenko4} gives the following Lemma.
\begin{lemma}\label{cor1}
Let $a,b \in \mathbb{Q}_p^{\times}$ and $A \in \textup{GL}_{2}(\mathbb{Q}_p)$. We then have 
\begin{equation*}
\Gamma_{1,2,1}\begin{pmatrix}a & * & * \\ 0 & A & * \\ 0& 0 & b\end{pmatrix}\Gamma_{1,2,1} = \sum \Gamma_{1,2,1}\begin{pmatrix}a & B & D \\ 0 & A & C \\ 0& 0 & b\end{pmatrix}\begin{pmatrix}1 & 0 & 0 \\ 0 & N & 0 \\ 0& 0 & 1\end{pmatrix},
\end{equation*}
where $N \in \Gamma_{2}^{A} \backslash \Gamma_{2}$ and $B \in V(a, A), \textup{ }D \in V(a,b), \textup{ }C \in V(A,b)$.
\end{lemma}
We now denote by $T(a,b,c,d)$ the element of the Hecke ring $H(\textup{GL}_{4}(\mathbb{Q}_p), \textup{GL}_{4}(\mathbb{Z}_p))$ defined by
\begin{equation*}
    T(a,b,c,d) := \textup{GL}_{4}(\mathbb{Z}_p)\textup{diag}(a,b,c,d)\textup{GL}_{4}(\mathbb{Z}_p).
\end{equation*}
We then have the standard elements of $H_{4}$:
\begin{equation}\label{standard_hecke_operators_gl4}
    T_1:= T(1,1,1,p), \textup{ } T_2:= T(1,1,p,p), \textup{ }T_3:= T(1,p,p,p), \textup{ }\Delta:= T(p,p,p,p).
\end{equation}
The decomposition of these elements into right cosets can be found in \cite[Lemma $3.2.18$]{andrianov}.\\

We now note that the conditions of Lemma \ref{lemma:embeddings} hold for the Hecke rings $H_4, H_{1,2,1}$, as explained in \cite[page 2870]{gritsenko3}. Therefore, there is an embedding $H_4 \xhookrightarrow{}H_{1,2,1}$. Our aim is to compute the images of the standard Hecke operators of equation \eqref{standard_hecke_operators_gl4} under this embedding.  In a similar fashion to $H_4$, we write
\begin{equation*}
    T_0(a,b,c,d) := \Gamma_{1,2,1}\textup{diag}(a,b,c,d)\Gamma_{1,2,1},
\end{equation*}
for an element of $H_{1,2,1}$. Let us now introduce some useful elements of $H_{1,2,1}$.
\begin{itemize}
    \item $\Lambda_{+}^{1,3} := T_0(1, p, p, p)$.
    \item $\Lambda_{+}^{3,1} := T_0(1,1,1,p)$.
    \item $\Lambda_{-}^{1,3} := T_0(p,1,1,1)$.
    \item $\Lambda_{-}^{3,1} := T_0(p,p,p,1)$.
    \item $T_{-}(p) := T_0(p,1,p,1)$.
    \item $T_{+}(p) := T_0(1,1,p,p)$.
    \item $\Delta := T_0(p,p,p,p)$.
\end{itemize}
The right coset decompositions of these elements can be now computed by Lemma \ref{cor1}. We again note that we use the same symbol $\Delta$ in both $H_4$ and $H_{1,2,1}$. We can do that, as the embedding of Lemma \ref{lemma:embeddings} does not change this specific element.
\begin{proposition}\label{embedding}
Let $\epsilon$ denote the embedding of the Hecke ring $H_{4}$ into $H_{1,2,1}$, as described in Lemma \ref{lemma:embeddings}. We then have the following images of the elements $T_i$:
\begin{itemize}
    \item $\epsilon(T_1) = \Lambda_{-}^{1,3}+T_0(1,1,p,1)+\Lambda_{+}^{3,1}$.
    \item $\epsilon(T_2) = T_{-}(p)+T_{+}(p)+T_0(1,p,p,1)+T_0(p,1,1,p)$.
    \item $\epsilon(T_3) = \Lambda_{-}^{3,1}+T_0(p,1,p,p)+\Lambda_{+}^{1,3}$.
\end{itemize}
\end{proposition}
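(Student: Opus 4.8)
The plan is to compute $\epsilon(T_i)$ directly from the recipe of Lemma \ref{lemma:embeddings}: if a double coset $T_i \in H_4$ is written as a disjoint union of left cosets $\textup{GL}_4(\mathbb{Z}_p)g_{i,j}$ with each representative $g_{i,j}$ chosen inside the parabolic semigroup $S_0$, then $\epsilon(T_i) = \sum_j \Gamma_{1,2,1}g_{i,j}$, and the only remaining task is to collect these left $\Gamma_{1,2,1}$-cosets into $\Gamma_{1,2,1}$-double cosets $T_0(a,b,c,d)$. First I would take the standard left coset decompositions of $T_1, T_2, T_3$ from \cite[Lemma $3.2.18$]{andrianov} and put each representative in upper-triangular (Hermite) form with respect to the block pattern $(1,2,1)$. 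Since every left $\textup{GL}_4(\mathbb{Z}_p)$-coset in $\textup{GL}_4(\mathbb{Q}_p)$ has such an upper-triangular representative, each automatically lies in $S_0$, so $\epsilon(T_i)$ is obtained simply by re-reading the same representatives modulo $\Gamma_{1,2,1}$ on the left.

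Next I would sort these representatives by their $\Gamma_{1,2,1}$-double coset type. For an upper-triangular matrix with corner entries of valuations $v(a), v(b)$ and middle $2\times 2$ block $A$, the type is $\bigl(v(a),\ \text{the } \textup{GL}_2(\mathbb{Z}_p)\text{-elementary divisors of } A,\ v(b)\bigr)$. Because the diagonals occurring are supported on $\{1,p\}$, for $T_1 = T(1,1,1,p)$ the single factor $p$ occupies one of the four slots, giving $\textup{diag}(p,1,1,1)$, $\textup{diag}(1,1,1,p)$, or a middle block with elementary divisors $(1,p)$; the two middle placements (slots $2$ and $3$) yield the same $\textup{GL}_2$-double coset and so both contribute to $T_0(1,1,p,1)$, producing $\Lambda_-^{1,3}$, $T_0(1,1,p,1)$, $\Lambda_+^{3,1}$. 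The same slot-counting for $T_2=T(1,1,p,p)$ (two factors $p$) and $T_3=T(1,p,p,p)$ (one slot equal to $1$) yields exactly the four, respectively three, double cosets in the statement, once more using that a middle block of type $(1,p)$ absorbs both orderings of $p$ within the $\textup{GL}_2$ Levi.

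Finally I must check that every multiplicity is $1$. Since $\Gamma_{1,2,1}\subset \textup{GL}_4(\mathbb{Z}_p)$, distinct left $\textup{GL}_4(\mathbb{Z}_p)$-cosets stay distinct as left $\Gamma_{1,2,1}$-cosets, so the representatives falling into a fixed $T_0(a,b,c,d)$ are pairwise inequivalent; they therefore exhaust that double coset precisely when their number equals its number of left $\Gamma_{1,2,1}$-cosets. I would compute the latter from Corollary \ref{cor1} as the product $|\Gamma_2^A\backslash\Gamma_2|\,|V(a,A)|\,|V(a,b)|\,|V(A,b)|$ and compare it, type by type, with the number of Hermite representatives of that type, the grand total being the known degree of $T_i$ (namely $1+p+p^2+p^3$ for $T_1,T_3$ and $(p^2+1)(p^2+p+1)$ for $T_2$). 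The main obstacle is exactly this combinatorial bookkeeping: one must verify that, for each prescribed type, the Hermite representatives are in bijection with the left cosets of the target double coset, so that the per-type counts agree on the nose, each $T_0(a,b,c,d)$ enters with coefficient one, and no further double cosets appear.
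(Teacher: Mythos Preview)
Your approach is correct and matches the paper's, which simply states that the result follows directly from the right coset decompositions and the definition of $\epsilon$. One simplification worth noting: your ``main obstacle'' of verifying multiplicity $1$ is in fact automatic. Since $\epsilon$ is well-defined by Lemma \ref{lemma:embeddings}, the sum $\sum_j \Gamma_{1,2,1}\, g_j$ is right $\Gamma_{1,2,1}$-invariant; as the left cosets $\Gamma_{1,2,1}\, g_j$ are pairwise distinct (which you already noted, since $\Gamma_{1,2,1}\subset\textup{GL}_4(\mathbb{Z}_p)$) and each carries coefficient $1$, right-invariance forces the set $\{\Gamma_{1,2,1}\, g_j\}$ to be a disjoint union of full double cosets, so every $T_0(a,b,c,d)$ that occurs does so with coefficient exactly $1$. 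Hence only the identification of \emph{which} double cosets appear --- your slot-counting argument --- is actually required, and the per-type cardinality matching via Corollary \ref{cor1} is unnecessary.
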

\begin{proof}
These follow directly by the right coset decompositions and the definition of the embedding $\epsilon$. We note here a typo in Gritsenko's paper \cite[page 2879]{gritsenko3} in the $\Lambda_{+}$ component (it appears we need to swap $\Lambda_{+}^{1,3}$ and $\Lambda_{+}^{3,1}$).
\end{proof}

We are now in a position to give the factorisation of the standard Hecke polynomial $Q_4$, as this is defined in \cite[Example $2$]{gritsenko3}.
\begin{theorem}\label{theor: fact}
Let $Q_4(t) := 1 - T_1t+pT_2t^2-p^3T_3t^3+p^6\Delta t^4 \in H^{4}[t]$. Then, in $H_{1,2,1}[t]$, we have the factorisation
\begin{equation*}
    Q_{4}(t) = (1-\Lambda_{-}^{1,3}t)(1-A_1t+pA_2t^2)(1-\Lambda_{+}^{3,1}t),
\end{equation*}
where $A_1 := T_0(1,1,p,1)$ and $A_2 := T_0(1,p,p,1)$.
\end{theorem}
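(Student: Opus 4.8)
The plan is to expand the proposed right-hand side as a polynomial in $t$ with coefficients in $H_{1,2,1}$ and to match it coefficient-by-coefficient against $\epsilon(Q_4(t)) = 1 - \epsilon(T_1)t + p\,\epsilon(T_2)t^2 - p^3\epsilon(T_3)t^3 + p^6\epsilon(\Delta)t^4$, using that $\epsilon$ is a ring homomorphism (Lemma \ref{lemma:embeddings}) together with the explicit images of Proposition \ref{embedding}. Since the three factors need not commute, I would keep their order fixed throughout. Writing $L = \Lambda_{-}^{1,3}$, $R = \Lambda_{+}^{3,1}$ and $M(t) = 1 - X_1 t + pX_2 t^2$, a direct expansion gives
\[
(1-Lt)\,M(t)\,(1-Rt) = 1 - (X_1 + L + R)t + (pX_2 + LX_1 + X_1R + LR)t^2 - (pLX_2 + pX_2R + LX_1R)t^3 + pLX_2R\,t^4 ,
\]
so that the asserted factorisation reduces to five relations, one for each power of $t$.

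The constant term is trivial, and the linear term requires exactly $X_1 + L + R = \epsilon(T_1)$, which is the content of Proposition \ref{embedding}. For the quartic term I would verify $pLX_2R = p^6\epsilon(\Delta)$, i.e. $LX_2R = p^5\,T_0(p,p,p,p)$: using Corollary \ref{cor1} one checks that $\Lambda_{-}^{1,3}$ is a single left coset $\Gamma_{1,2,1}\textup{diag}(p,1,1,1)$ (all the sets $V(\cdot,\cdot)$ and $\Gamma_2^{A}\backslash\Gamma_2$ entering its decomposition are singletons), so multiplying the right cosets of $X_2$ and $R$ on the left by this single coset collapses the product onto the central double coset $T_0(p,p,p,p)$ with the expected multiplicity $p^5$.

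The crux is the quadratic and cubic coefficients, namely the two identities
\[
LX_1 + X_1R + LR = p\bigl(T_{-}(p) + T_{+}(p) + T_0(p,1,1,p)\bigr),
\]
\[
pLX_2 + pX_2R + LX_1R = p^3\bigl(\Lambda_{-}^{3,1} + T_0(p,1,p,p) + \Lambda_{+}^{1,3}\bigr).
\]
For each product I would decompose $X_1$, $X_2$ and $\Lambda_{+}^{3,1}$ into right cosets via Corollary \ref{cor1}, multiply on the left by the representatives of the outer factors, and then reorganise the resulting representatives back into the standard double cosets $T_0(\cdots)$, cross-checking against the $\textup{GL}_4$ coset decompositions in \cite[Lemma $3.2.18$]{andrianov}. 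The main obstacle is precisely this bookkeeping for the non-commuting products: one must enumerate representatives subject to the modular conditions defining the sets $V(\cdot,\cdot)$ and check that the multiplicities assemble into the correct powers of $p$. I would emphasise that one cannot shortcut this by passing to a commutative image, since $X_1$, $X_2$ and the $\Lambda$'s do not individually lie in the commutative subalgebra $\epsilon(H_4)$; the commutativity of $\epsilon(H_4)$ and the already-verified coefficients do, however, furnish useful consistency checks on the final multiplicities.
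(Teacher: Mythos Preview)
Your approach is correct and essentially identical to the paper's: expand the non-commutative product, match coefficients against $\epsilon(T_i)$ via Proposition~\ref{embedding}, and verify the resulting product relations by right coset decompositions (Corollary~\ref{cor1}). The paper streamlines the bookkeeping by recording seven individual identities $\Lambda_{-}^{1,3}X_1 = pT_{-}(p)$, $X_1\Lambda_{+}^{3,1} = pT_{+}(p)$, $\Lambda_{-}^{1,3}\Lambda_{+}^{3,1} = pT_0(p,1,1,p)$, $X_2\Lambda_{+}^{3,1} = p^2\Lambda_{+}^{1,3}$, $\Lambda_{-}^{1,3}X_2 = p^2\Lambda_{-}^{3,1}$, $\Lambda_{-}^{1,3}X_1\Lambda_{+}^{3,1} = p^3T_0(p,1,p,p)$, $\Lambda_{-}^{1,3}X_2\Lambda_{+}^{3,1} = p^5\Delta$, so that each summand in your grouped relations is itself a single double coset times a power of $p$; this is exactly the term-by-term computation your proposal describes.
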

\begin{proof}
The factorisation follows by the images of the elements in Proposition \ref{embedding} as well as a series of relations, which follow from the above right coset decompositions:
\begin{itemize}
    \item $\Lambda_{-}^{1,3}A_1 = pT_{-}(p)$.
    \item $A_1\Lambda_{+}^{3,1} = pT_{+}(p)$.
    \item $\Lambda_{-}^{1,3}\Lambda_{+}^{3,1} = pT_0(p,1,1,p)$.
    \item $A_2\Lambda_{+}^{3,1} = p^2\Lambda_{+}^{1,3}$.
    \item $\Lambda_{-}^{1,3}A_2 = p^2\Lambda_{-}^{3,1}$.
    \item $\Lambda_{-}^{1,3}A_1\Lambda_{+}^{3,1} = p^3T_0(p,1,p,p)$.
    \item $\Lambda_{-}^{1,3}A_2\Lambda_{+}^{3,1} = p^5\Delta$.
\end{itemize}
\end{proof}
\subsection{Hecke Operators and weak rationality theorems}\label{operators_split}
We will now translate the results above back to the Hecke rings $H_p^2$ and $H_p^{1,1}$ of the unitary group. We have the following correspondence between the standard elements of $H_4$ (see equation \eqref{standard_hecke_operators_gl4}) and of $H_{p}^2$:
\begin{itemize}
    \item $T_1 \longleftrightarrow T_{\overline{\pi}} := \Gamma_2\textup{diag}(1,\overline{\pi},p,\overline{\pi})\Gamma_2$.
    \item $T_2 \longleftrightarrow T_p := \Gamma_2\textup{diag}(1,1,p,p)\Gamma_2$.
    \item $T_3 \longleftrightarrow T_{\pi} := \Gamma_2\textup{diag}(1,\pi,p,\pi)\Gamma_2$.
\end{itemize}
Also, for the correspondence between the Hecke operators of $H_{p}^{1,1}$ and $H_{1,2,1}$, we have:
\begin{itemize}
    \item $\Lambda_{+}^{1,3} \longleftrightarrow \Lambda_{+}(\pi):= \Gamma_{1,1}\textup{diag}(\pi, 1, \pi, p)\Gamma_{1,1}$.
    \item $\Lambda_{+}^{3,1} \longleftrightarrow \Lambda_{+}(\overline{\pi}) := \Gamma_{1,1}\textup{diag}(\overline{\pi}, 1, \overline{\pi}, p)\Gamma_{1,1}$.
    \item $\Lambda_{-}^{3,1} \longleftrightarrow \Lambda_{-}(\pi) := \Gamma_{1,1}\textup{diag}(\pi, p, \pi, 1)\Gamma_{1,1}$.
    \item $\Lambda_{-}^{1,3} \longleftrightarrow \Lambda_{-}(\overline{\pi}) := \Gamma_{1,1}\textup{diag}(\overline{\pi}, p, \overline{\pi}, 1)\Gamma_{1,1}$.
    \item $T_0(1,1,p,1) \longleftrightarrow T(\overline{\pi}):= \Gamma_{1,1}\textup{diag}(1, \overline{\pi},p,\overline{\pi})\Gamma_{1,1}$.
    \item $T_0(p,1,p,p) \longleftrightarrow T(\pi):= \Gamma_{1,1}\textup{diag}(1, \pi,p,\pi)\Gamma_{1,1}$.
    \item $T_0(1,p,p,1) \longleftrightarrow T(\pi, \overline{\pi}):= \Gamma_{1,1}\textup{diag}(\pi, \overline{\pi},\pi,\overline{\pi})\Gamma_{1,1}$.
    \item $T_0(p,1,1,p) \longleftrightarrow T(\overline{\pi}, \pi):= \Gamma_{1,1}\textup{diag}(\overline{\pi}, \pi,\overline{\pi},\pi)\Gamma_{1,1}$.
    \item $T_{-}(p) \longleftrightarrow T_{-}(p) := \Gamma_{1,1}\textup{diag}(1,p,p,1)\Gamma_{1,1}$.
    \item $T_{+}(p) \longleftrightarrow T_{+}(p) := \Gamma_{1,1}\textup{diag}(1,1,p,p)\Gamma_{1,1}$.
\end{itemize}
We denote by $\Delta_{\pi} := \Gamma_{1,1}\textup{diag}(\pi, \pi, \pi, \pi)\Gamma_{1,1}$ and similarly for $\Delta_{\overline{\pi}}$ and $\Delta_p$. We again use the same notation for these as elements of $H_{p}^{2}$ as well. Finally, we also have the operator $\nabla_{p}$ as in Subsection \ref{hecke inert}.\\

In order to make clear how the isomorphism described in the beginning of Subsection \ref{gl4} works, let us describe it in the case $\Lambda_{+}(\pi)$. We have by sending $\pi \longmapsto p$
\begin{equation*}
    \Gamma_{1,1}\textup{diag}(\pi, 1, \pi, p)\Gamma_{1,1} \longmapsto \Gamma_{1,2,1}\textup{diag}(p, 1, p, p)\Gamma_{1,2,1} \longmapsto \Gamma_{1,2,1}\textup{diag}(1, p, p, p)\Gamma_{1,2,1},
\end{equation*}
where the second arrow is simply the swap of the first two diagonal elements induced by the matrix $C$, as described in Subsection \ref{gl4}. Also, since $\mu(\textup{diag}(\pi, 1, \pi, p)) = p$, $\Lambda_{+}(\pi)$ gets mapped to $(\Lambda_{+}^{1,3}, 1)_{\tilde{U}_p^{(2)}}$, but in general we will not keep account of the second coordinate. The only case in which this plays a difference is in the identification of $\Delta_{\pi}$ and $\Delta_p$, which both get mapped to $\textup{diag}(p,p,p,p)$ but their factors of similitude are $1, 2$ respectively. The reason why factors of $\Delta_{\overline{\pi}}$ appear in the relations below is to compensate for the second coordinate, as $\textup{diag}(\overline{\pi},\overline{\pi}, \overline{\pi}, \overline{\pi}) \longmapsto \textup{diag}(1,1,1,1)$.
\\\\
The table below shows some relations between the above Hecke operators. These can be obtained by translating back to $H_{1,2,1}$ and using the right coset decompositions. The way to read the table is that we first read an operator $X$ in the first row, then an operator $Y$ in the first column and the result is $XY$. We write "comm" to mean that the operators commute.
\begin{center}
\begin{table}[H]
\scalebox{0.8}{
\begin{tabular}{ |c| c| c| c|c|c|c|c|c|c|c|}
\hline
  & $\Lambda_{+}(\pi)$ & $\Lambda_{+}(\overline{\pi})$ & $\Lambda_{-}(\pi)$ & $\Lambda_{-}(\overline{\pi})$ & $T(\overline{\pi})$ & $T(\pi)$ &$T(\pi,\overline{\pi})$&$T(\overline{\pi},\pi)$ &$T_{-}(p)$&$T_{+}(p)$\\\hline
 $\Lambda_{+}(\pi)$ &  & \textup{comm} & $p\Delta_{\pi}T(\pi, \overline{\pi})$&$p^3\Delta_p$&\textup{comm}&$p\Delta_{\pi}T_{+}(p)$ &\textup{comm}&$p^2\Delta_{\pi}\Lambda_{+}(\overline{\pi})$ & $p^2\Delta_{\pi}T(\overline{\pi})$&\textup{comm}\\\hline  
 $\Lambda_{+}(\overline{\pi})$ & \textup{comm} & & $p^3\Delta_p$&$p\Delta_{\overline{\pi}}T(\overline{\pi}, \pi)$&$p\Delta_{\overline{\pi}}T_{+}(p)$ & \textup{comm}&$p^2\Delta_{\overline{\pi}}\Lambda_{+}(\pi)$ & $\textup{comm}$&$p^2\Delta_{\overline{\pi}}T(\pi)$&\textup{comm}\\  \hline
 $\Lambda_{-}(\pi)$&&&&&&&&&&\\\hline
 $\Lambda_{-}(\overline{\pi})$&&&&&&&&&& \\\hline
 $T(\overline{\pi})$&&&\textup{comm}&$p\Delta_{\overline{\pi}}T_{-}(p)$&&&\textup{comm}&&& \\\hline
 $T(\pi)$&&&$p\Delta_{\pi}T_{-}(p)$&\textup{comm}&&&&\textup{comm}&& \\\hline
 $T(\pi,\overline{\pi})$&&&\textup{comm}&$p^2\Delta_{\overline{\pi}}\Lambda_{-}(\pi)$&&&&&&\\\hline
 $T(\overline{\pi},\pi)$ &&& $p^2\Delta_{\pi}\Lambda_{-}(\overline{\pi})$&\textup{comm}&&&&&&\\\hline
 $T_{-}(p)$ &&&&&&&&&&\\\hline
 $T_{+}(p)$&&&$p^2\Delta_{\pi}T(\overline{\pi})$&$p^2\Delta_{\overline{\pi}}T(\pi)$&&&&&&\\\hline
\end{tabular}
}
\caption{Relations of Hecke Operators for split primes.}
\label{table:1}
\end{table}
\end{center}
\begin{proposition}\label{s polynomials}
Let
\begin{equation*}
    D_{\pi}^{(2)}(t) := 1 - T_{\overline{\pi}}t + p\Delta_{\overline{\pi}}T_pt^2 - p^3\Delta_{\overline{\pi}}^2T_{\pi}t^3+p^6\Delta_{\overline{\pi}}^3\Delta_{\pi} t^4 \in H_p^{2}[t],
\end{equation*}
and 
\begin{equation*}
    D_{\overline{\pi}}^{(2)}(t) := 1 - T_{\pi}t + p\Delta_{\pi} T_pt^2 - p^3\Delta_{\pi}^2 T_{\overline{\pi}}t^3 + p^6 \Delta_{\pi}^{3}\Delta_{\overline{\pi}}t^4 \in H_p^{2}[t].
\end{equation*}
Let also
\begin{equation*}
    S_{\pi}(t) := 1 - T(\overline{\pi})t + p\Delta_{\overline{\pi}}T(\pi, \overline{\pi})t^2 \in H_p^{1,1}[t],
\end{equation*}
and 
\begin{equation*}
    S_{\overline{\pi}}(t) := 1 - T(\pi)t + p\Delta_{\pi} T(\overline{\pi}, \pi)t^2 \in H_p^{1,1}[t].
\end{equation*}
We then have the following factorisations
\begin{equation*}
    D_{\pi}^{(2)}(t) = (1 - \Lambda_{-}(\overline{\pi})t)S_{\pi}(t)(1 - \Lambda_{+}(\overline{\pi})t),
\end{equation*}
\begin{equation*}
    D_{\overline{\pi}}^{(2)}(t) = (1 - \Lambda_{-}(\pi)t)S_{\overline{\pi}}(t)(1-\Lambda_{+}(\pi)t).
\end{equation*}
\end{proposition}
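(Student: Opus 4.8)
The plan is to prove the first factorisation by direct verification in $H_p^{1,1}$, in exact parallel with the inert Proposition \ref{prop: rankin_prime}, and then to obtain the second by the symmetry $\pi\leftrightarrow\overline{\pi}$. The starting point is that the coefficients of $D_\pi^{(2)}(t)$ are to be read as the images, under the embedding $\epsilon\colon H_p^2\to H_p^{1,1}$ of Lemma \ref{lemma:embeddings}, of the $\textup{GL}_4$ operators $T_1,T_2,T_3,\Delta$ transported through the correspondence of subsection \ref{operators_split}; this is precisely what the remark $Z_\pi(t)=D_\pi(\Delta_{\overline{\pi}}^{-1}t)$ encodes once one inserts the powers $\Delta_{\overline{\pi}}^{0},\Delta_{\overline{\pi}}^{1},\Delta_{\overline{\pi}}^{2},\Delta_{\overline{\pi}}^{3}$ demanded by the second similitude coordinate. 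Thus the identity to be checked is genuinely an identity in $H_p^{1,1}[t]$ after applying $\epsilon$ to the left-hand side.

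First I would record the embedded coefficients using Proposition \ref{embedding} translated by the dictionary of subsection \ref{operators_split}: the linear coefficient becomes $\Lambda_-(\overline{\pi})+T(\overline{\pi})+\Lambda_+(\overline{\pi})$, the quadratic coefficient $p\Delta_{\overline{\pi}}\bigl(T_-(p)+T_+(p)+T(\pi,\overline{\pi})+T(\overline{\pi},\pi)\bigr)$, the cubic coefficient $p^3\Delta_{\overline{\pi}}^2\bigl(\Lambda_-(\pi)+T(\pi)+\Lambda_+(\pi)\bigr)$, and the top coefficient $p^6\Delta_{\overline{\pi}}^3\Delta_\pi$. Next I would expand the candidate product $(1-\Lambda_-(\overline{\pi})t)S_\pi(t)(1-\Lambda_+(\overline{\pi})t)$ with $S_\pi(t)=1-T(\overline{\pi})t+p\Delta_{\overline{\pi}}T(\pi,\overline{\pi})t^2$ and collect powers of $t$, obtaining in degree $2$ the expression $p\Delta_{\overline{\pi}}T(\pi,\overline{\pi})+\Lambda_-(\overline{\pi})T(\overline{\pi})+T(\overline{\pi})\Lambda_+(\overline{\pi})+\Lambda_-(\overline{\pi})\Lambda_+(\overline{\pi})$, in degree $3$ the three products $\Lambda_-(\overline{\pi})T(\pi,\overline{\pi})$, $T(\pi,\overline{\pi})\Lambda_+(\overline{\pi})$ and $\Lambda_-(\overline{\pi})T(\overline{\pi})\Lambda_+(\overline{\pi})$, and in degree $4$ the single product $\Lambda_-(\overline{\pi})T(\pi,\overline{\pi})\Lambda_+(\overline{\pi})$.

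The heart of the proof is then to reduce each of these products to the embedded coefficients above by repeated use of Table \ref{table:1}. The relevant entries are the $\overline{\pi}$-versions $\Lambda_-(\overline{\pi})T(\overline{\pi})=p\Delta_{\overline{\pi}}T_-(p)$, $T(\overline{\pi})\Lambda_+(\overline{\pi})=p\Delta_{\overline{\pi}}T_+(p)$ and $\Lambda_-(\overline{\pi})\Lambda_+(\overline{\pi})=p\Delta_{\overline{\pi}}T(\overline{\pi},\pi)$ for degree $2$; the relations $\Lambda_-(\overline{\pi})T(\pi,\overline{\pi})=p^2\Delta_{\overline{\pi}}\Lambda_-(\pi)$, $T(\pi,\overline{\pi})\Lambda_+(\overline{\pi})=p^2\Delta_{\overline{\pi}}\Lambda_+(\pi)$ together with $T_-(p)\Lambda_+(\overline{\pi})=p^2\Delta_{\overline{\pi}}T(\pi)$ for degree $3$; and finally $\Lambda_-(\pi)\Lambda_+(\overline{\pi})=p^3\Delta_p$ with $\Delta_p=\Delta_\pi\Delta_{\overline{\pi}}$ for degree $4$. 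After these substitutions the degree-$2$, $3$ and $4$ coefficients collapse exactly onto $p\Delta_{\overline{\pi}}\epsilon(T_2)$, $-p^3\Delta_{\overline{\pi}}^2\epsilon(T_3)$ and $p^6\Delta_{\overline{\pi}}^3\Delta_\pi$, matching $\epsilon(D_\pi^{(2)})$ term by term, while the constant and linear terms are immediate.

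The main obstacle I anticipate is the bookkeeping of the central elements $\Delta_\pi,\Delta_{\overline{\pi}},\Delta_p$: each application of a Table \ref{table:1} relation introduces such factors, the operators do not commute in general, and one must apply every relation on the correct side and in the order fixed by the convention of the table (column operator acting on the left of the row operator). A single misplaced power of $\Delta_{\overline{\pi}}$ breaks the match, so I would treat the $\Delta$-grading as the organising principle of the computation and use the identity $Z_\pi(t)=D_\pi(\Delta_{\overline{\pi}}^{-1}t)$ as a running consistency check. The factorisation of $D_{\overline{\pi}}^{(2)}$ is then obtained verbatim after interchanging $\pi$ and $\overline{\pi}$ throughout, the relations of Table \ref{table:1} being symmetric under this exchange.
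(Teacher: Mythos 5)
Your proposal is correct and is essentially the paper's own argument: the paper proves precisely this factorisation once in $H_{1,2,1}$ (Theorem \ref{theor: fact}, established by the same expansion of $(1-\Lambda_{-}t)(1-X_1t+pX_2t^2)(1-\Lambda_{+}t)$ against the embedded coefficients of Proposition \ref{embedding}, using the same multiplication relations) and then obtains the proposition by pulling that identity back to the parabolic algebra of the unitary group. Your version just transports the ingredients first — the dictionary, the embedded coefficients, and the relations in the form of Table \ref{table:1} — and runs the identical coefficient-matching inside $H_p^{1,1}$; this is the same computation carried through the isomorphism, and the $\Delta_{\overline{\pi}}$-bookkeeping you flag as the main obstacle is harmless since $\Delta_{\overline{\pi}}$ is central, so its powers collect freely.
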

\begin{proof}
    This follows from Theorem \ref{theor: fact} after pulling back to the parabolic Hecke ring $H_{p}^{1,1}$ of the unitary group.
\end{proof}
\begin{remark}
    We remark here that $Z_{\pi}^{(2)}(t) = D_{\pi}^{(2)}(\Delta_{\overline{\pi}}^{-1}t)$ and $Z_{\overline{\pi}}^{(2)}(t) = D_{\overline{\pi}}^{(2)}(\Delta_{\pi}^{-1}t)$, where $Z_{\pi}^{(2)}, Z_{\overline{\pi}}^{(2)}$ are the standard polynomials defined in Section \ref{hecke algebras}. This can be seen by computing the images under the Satake mapping of the above coefficients, as can be found in \cite[Lemma 3.7]{gritsenko}.
\end{remark}
We will need a Lemma regarding the decomposition of a Hecke operator in $H_{p}^{1,1}$ into right cosets. Let $\iota : \Gamma_1 \longrightarrow \Gamma_{1,1}$ denote the embedding
\begin{equation*}
    \m{A &B\\C&D} \longmapsto \m{A&0&B&0\\0&1&0&0\\C&0&D&0\\0&0&0&1}.
\end{equation*}
Also, for a set of representatives $x \in \mathcal{O}_K/m^{k}$ with $m \in \mathcal{O}_K$ and $k \in \mathbb{Z}$, we understand that we only take $0$ as the only representative if $k \leq 0$. We then have:
\begin{lemma}\label{lem: decomp}
Let 
\[
M = \textup{diag}(\pi^{a_1} \overline{\pi}^{b_1}, \textup{ }\pi^{a_2} \overline{\pi}^{b_2},\textup{ } \pi^{a_3} \overline{\pi}^{b_3},\textup{ } \pi^{a_4} \overline{\pi}^{b_4}) \in S_p^{2},
\]
with $a_i,b_i$ nonnegative integers. Then
\[
\Gamma_{1,1} M \Gamma_{1,1} = \sum_{\substack{q, l, r\\ \gamma \in V}} \Gamma_{1,1} M  \begin{pmatrix} 1 &  0& 0 &l \\ -\overline{q} & 1 & \overline{l} & r-l\overline{q} \\ 0 & 0 &  1 &q \\ 0 & 0 & 0 & 1 \end{pmatrix} \imath(\gamma),
\]

where $l,q,r$ run over elements in $\mathcal{O}_K$ that satisfy $r \in \mathbb{Z}$ and they give representatives of 
\begin{equation*}
    l \in \mathcal{O}_K / \pi^{a_4-a_1} \overline{\pi}^{b_4-b_1},\textup{ } q \in \mathcal{O}_K / \pi^{a_4-a_3} \overline{\pi}^{b_4-b_3}, \textup{ }r \in \mathbb{Z} / p^{a_4-a_2}.
\end{equation*}

Finally, $\gamma$ runs over a set $V$ such that
\[
\Gamma_1 \textup{diag}(\pi^{a_1}\overline{\pi}^{b_1} , \pi^{a_3} \overline{\pi}^{b_3}) \Gamma_1 = \sum_{\gamma \in V} \Gamma_1 \textup{diag}(\pi^{a_1}\overline{\pi}^{b_1} , \pi^{a_3} \overline{\pi}^{b_3}) \gamma
\]
is a decomposition into distinct right cosets relative to $\Gamma_1$.
\end{lemma}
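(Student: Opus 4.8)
The plan is to deduce the stated left-coset decomposition from the corresponding statement in $\textup{GL}_4$, namely Corollary \ref{cor1}, by transporting everything through the isomorphism of subsection \ref{gl4}. First I would fix the identification $K_p \cong \mathbb{Q}_p \times \mathbb{Q}_p$ with $\pi \mapsto (pu,v)$ and apply the change of variables $C$ of subsection \ref{gl4}, so that $\Gamma_{1,1}$ corresponds to $\Gamma_{1,2,1} = P_{1,2,1}(\mathbb{Z}_p)$ and $M$ corresponds, in the first $\textup{GL}_4(\mathbb{Q}_p)$-component and up to units, to $\textup{diag}(p^{a_2}, p^{a_1}, p^{a_3}, p^{a_4})$; the swap of the first two entries is exactly the one recorded in the remark preceding the definition of $\tilde{G}_p^{(2)}$. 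In the block shape adapted to $P_{1,2,1}$ this reads $\textup{diag}(a, A, b)$ with $a = p^{a_2}$, $A = \textup{diag}(p^{a_1}, p^{a_3})$ and $b = p^{a_4}$. Because the second $\textup{GL}_4$-component of any element of $\tilde{G}_p^{(2)}$ is forced to equal $c\,(X^{-1})^{t}$, the whole double coset $\Gamma_{1,1}M\Gamma_{1,1}$ is pinned down by this single $\textup{GL}_4$-double coset together with the order $\delta$ of the similitude, and the left-coset structure is governed entirely by the $H_{1,2,1}$-part of the isomorphism $H_p^{1,1} \cong H_{1,2,1}[x^{\pm 1}]$.

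Next I would apply Corollary \ref{cor1} with these $a, A, b$. It produces left-coset representatives consisting of an upper unitriangular matrix with blocks $B \in V(a,A)$, $D \in V(a,b)$ and $C \in V(A,b)$, followed by the middle factor $\textup{diag}(1,N,1)$ with $N \in \Gamma_2^{A}\backslash\Gamma_2$. Reading the ranges off the definition of $V(\cdot,\cdot)$ gives that $D$ runs over $\mathbb{Z}_p/p^{a_4-a_2}$, that the row $B$ and the column $C$ are taken modulo $A$ and modulo $b$ respectively, and that $N$ runs over representatives of the left cosets in $\textup{GL}_2(\mathbb{Z}_p)\,A\,\textup{GL}_2(\mathbb{Z}_p)=\bigsqcup \textup{GL}_2(\mathbb{Z}_p)\,A\,N$. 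Under the degree-one instance of the same isomorphism, this last decomposition is precisely the decomposition of $\Gamma_1\,\textup{diag}(\pi^{a_1}\overline{\pi}^{b_1},\pi^{a_3}\overline{\pi}^{b_3})\,\Gamma_1$ into $\Gamma_1$-cosets, i.e. the one over which $\gamma$ is made to range. (If $M$ is not already dominant, inner reordering within the middle $\textup{GL}_2$ together with the convention that a factor is trivial once an exponent is non-positive handles the degenerate cases, exactly as in the $\textup{GL}_4$ lemmas.)

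Finally I would transport the $\textup{GL}_4$-representatives back to the unitary group and read off the explicit shape. The corner block $D$ reassembles into the entry $r'$, while the column $C$ supplies the pair $(l,q)$ at positions $(1,4)$ and $(3,4)$; the row $B$ then carries the conjugate data $(\overline{l},-\overline{q})$ and the $(2,4)$-entry becomes $r'-l\overline{q}$, these last relations being forced by the unitarity conditions $\overline{A}^{t}C=\overline{C}^{t}A$ and $\overline{D}^{t}B=\overline{B}^{t}D$ for elements of $U_2$. Using $\pi\overline{\pi}=p$ and the Chinese Remainder isomorphism $\mathcal{O}_K/\pi^{c}\overline{\pi}^{d}\cong \mathcal{O}_K/\pi^{c}\times\mathcal{O}_K/\overline{\pi}^{d}$, the product of the $\pi$-range (from the first component) and the $\overline{\pi}$-range (from the determined second component) collapses to the single ranges $l\in\mathcal{O}_K/\pi^{a_4-a_1}\overline{\pi}^{b_4-b_1}$ and $q\in\mathcal{O}_K/\pi^{a_4-a_3}\overline{\pi}^{b_4-b_3}$ stated in the lemma. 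The hard part will be exactly this last translation: keeping track of the $1\leftrightarrow 2$ index swap coming from $C$, matching each $V(\cdot,\cdot)$-range with the correct $\pi$- or $\overline{\pi}$-part under CRT, and verifying that the reality constraint $r'\in\mathbb{Z}$ really emerges — it does so because the top-right entry in the rank-one direction is fixed by the involution $\sigma$, hence lies in the fixed ring $\mathbb{Z}$, a global representative being available since $K$ has class number one.
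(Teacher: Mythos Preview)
Your route is genuinely different from the paper's. The paper argues entirely inside the unitary group: it multiplies out $h(l_1,q_1,r_1)\,M\,h(l_2,q_2,r_2)$ for the Heisenberg part $H_{1,1}$ of the Klingen parabolic, reads off from each matrix entry the modulus to which $l_2,q_2,r'_2$ must be taken, and then cites Gritsenko's symplectic argument for the remaining $\imath(\gamma)$-factor. The crux there is that the $(1,4)$-entry involves $l_2$ modulo $\pi^{a_4-a_1}\overline\pi^{b_4-b_1}$ while the $(2,3)$-entry involves $\overline{l_2}$ with a modulus expressed in the $a_2,a_3,b_2,b_3$; the similitude constraints on $M\in S_p^2$ (namely $a_2+b_4=b_1+a_3$ and $a_4+b_2=a_1+b_3$) are exactly what makes a \emph{single} choice of $l_2$ control both entries, and similarly for $q_2$.

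Your transport-through-$\textup{GL}_4$ idea can be completed, but the mechanism you describe is off and the gap is precisely the omission of these similitude relations. Under the conjugation by $C$, the first $\textup{GL}_4(\mathbb{Q}_p)$-component of $h(l,q,r)$ is upper triangular with $B$-row $(-q_{\overline\pi},\,l_{\overline\pi})$, $C$-column $(l_\pi,\,q_\pi)^t$ and corner $r_\pi$: so \emph{both} the $\pi$- and the $\overline\pi$-parts of $l,q$ already sit in the first component, while the second component $c(X^{-1})^t$ is forced and supplies no independent data. Corollary~\ref{cor1} with $(a,A,b)=(p^{a_2},\textup{diag}(p^{a_1},p^{a_3}),p^{a_4})$ then yields $l_\pi\in\mathbb{Z}_p/p^{a_4-a_1}$, $q_\pi\in\mathbb{Z}_p/p^{a_4-a_3}$ from the $C$-block, but $l_{\overline\pi}\in\mathbb{Z}_p/p^{a_3-a_2}$ and $q_{\overline\pi}\in\mathbb{Z}_p/p^{a_1-a_2}$ from the $B$-block. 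To identify these with $\mathbb{Z}_p/p^{b_4-b_1}$ and $\mathbb{Z}_p/p^{b_4-b_3}$ --- which is what your CRT step needs to reassemble $l\in\mathcal{O}_K/\pi^{a_4-a_1}\overline\pi^{b_4-b_1}$ and $q\in\mathcal{O}_K/\pi^{a_4-a_3}\overline\pi^{b_4-b_3}$ --- you must use $a_3-a_2=b_4-b_1$ and $a_1-a_2=b_4-b_3$, i.e.\ the similitude conditions on $M$, which you never invoke. The reality of $r'$ and its range modulo $p^{a_4-a_2}$ likewise come from $\overline{D}^tB=\overline{B}^tD$ together with $a_4-a_2=b_4-b_2$, not from a single $\textup{GL}_4$-component.
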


\begin{proof}
We write
\begin{equation*}
    \mathcal{H}_{1,1} := \left\{ \begin{pmatrix} 1 &  0& 0 &l \\ -\overline{q} & 1 & \overline{l} & r \\ 0 & 0 &  1 &q \\ 0 & 0 & 0 & 1 \end{pmatrix} \in \Gamma_{1,1} \mid  l,q,r \in \mathcal{O}_K \right\}
\end{equation*}
for the (integral) Heisenberg part of the Klingen parabolic. We then claim that
\begin{equation}\label{heisenberg}
\mathcal{H}_{1,1} M \mathcal{H}_{1,1} = \sum_{l,q,r} \mathcal{H}_{1,1} M \begin{pmatrix} 1 &  0& 0 &l \\ -\overline{q} & 1 & \overline{l} & r-l\overline{q} \\ 0 & 0 &  1 &q \\ 0 & 0 & 0 & 1 \end{pmatrix},
\end{equation}
where $q,l,r$ are as in the statement of the Lemma. 

To see this, we first set $h(l,q,r):= \begin{pmatrix} 1 &  0& 0 &l \\ -\overline{q} & 1 & \overline{l} & r \\ 0 & 0 &  1 &q \\ 0 & 0 & 0 & 1 \end{pmatrix}$
and $M = \textup{diag}( \alpha_1,\alpha_2,\alpha_3, \alpha_4)$ and calculate
\[
h(l_1,q_1,r_1) M h(l_2,q_2,r_2) = 
\begin{pmatrix} \alpha_1 & 0 & 0 &\alpha_1 l_2 + l_1 \alpha_4 \\
-\overline{q}_1\alpha_1-\alpha_2 \overline{q}_2 & \alpha_2 & \alpha_2 \overline{l}_2 + \overline{l}_1 \alpha_3 & * \\
0 & 0 & \alpha_3 & \alpha_3 q_2 + q_1 \alpha_4 \\
0 & 0 & 0 &\alpha_4
\end{pmatrix},
\]
where $* = -\overline{q}_1 \alpha_1 l_2 + \alpha_2 r_2 + \overline{l}_1 \alpha_3 q_2 + r_1 \alpha_4$. We first look at the upper right entry. We have
\[
\alpha_1 l_2 + l_1 \alpha_4 = \pi^{a_1}\overline{\pi}^{b_1} l_2 + l_1 \pi^{a_4}\overline{\pi}^{b_4}.
\]

If $a_1 \geq a_4$ and $b_1 \geq b_4$, we may write
\[
\alpha_1 l_2 + l_1 \alpha_4  =\left(\pi^{a_1-a_4}\overline{\pi}^{b_1-b_4} l_2 + l_1 \right) \pi^{a_4}\overline{\pi}^{b_4},
\]
and so we do not need right cosets in this case. In the other cases, we write
\begin{equation*}
    l_2 = x + y \pi^{a_4-a_1}\overline{\pi}^{b_4-b_1},
\end{equation*}
with the understanding that we set $\pi^i=1$ and $\overline{\pi}^j =1$ if $i,j \leq 0$. We then have 
\[
\alpha_1 l_2 + l_1 \alpha_4 = \pi^{a_1}\overline{\pi}^{b_1} x + (l_1+y) \pi^{a_4}\overline{\pi}^{b_4}.
\]
For example, when $a_4 > a_1$ and $b_1 \geq b_4$, we write $l_2 = x + y \pi^{a_4-a_1}$ and obtain
\[
\alpha_1 l_2 + l_1 \alpha_4 = \pi^{a_1} \overline{\pi}^{b_1}x + y \pi^{a_4} \overline{\pi}^{b_1} + l_1 \pi^{a_4} \overline{\pi}^{b_4} = \pi^{a_1} \overline{\pi}^{b_1}x + (y \overline{\pi}^{b_1-b_4} + l_1) \pi^{a_4}\overline{\pi}^{b_4}.
\]

Similarly, looking at the entry 
\[
\alpha_2 \overline{l}_2 + \overline{l}_1 \alpha_3 = \pi^{a_2}\overline{\pi}^{b_2} \overline{l}_2 + \overline{l}_1 \pi^{a_3}\overline{\pi}^{b_3}
\]
with $l_2 = x + y \pi^{a_4-a_1}\overline{\pi}^{b_4-b_1}$ as above, we obtain,
\[
\alpha_2 \overline{l}_2 + \overline{l}_1 \alpha_3= \pi^{a_2}\overline{\pi}^{b_2}\overline{x} + \overline{y} \pi^{a_2}\overline{\pi}^{b_2}\overline{\pi}^{a_4-a_1}\pi^{b_4-b_1}+ \overline{l}_1 \pi^{a_3}\overline{\pi}^{b_3} = \pi^{a_2}\overline{\pi}^{b_2}\overline{x} + (\overline{y} + \overline{l_1}) \pi^{a_3}\overline{\pi}^{b_3},
\]
where we have used the fact that $a_2+b_4 = b_1 + a_3$ and $a_4 + b_2 = a_1 + b_3$, since $M \in S_p^2$.\\

In particular, for these entries it is enough to consider the entry $l_2$ modulo $\pi^{a_4-a_1}\overline{\pi}^{b_4-b_1}$ (with our convention). Similarly, by looking at the entries $-\overline{q}_1\alpha_1-\alpha_2 \overline{q}_2$ and $\alpha_3 q_2 + q_1 \alpha_4$, we obtain the corresponding result for $q_2$.\\

We are now left with the $*$ entry.
Using the fact that we can write $r_1 = r'_1 - l_1\overline{q}_1$ and $r_2 = r'_2 - l_2 \overline{q}_2$ for some $r_i' \in \mathbb{Z}$, we have that the only part of the $*$ entry which is not determined by our choices of $q_1,q_2,l_1,l_2$ is $\alpha_2 r_2'+ r_1' \alpha_4$. But
\[
\alpha_2 r'_2 + r'_1 \alpha_4 = \pi^{a_2}\overline{\pi}^{b_2} r'_2 + r'_1 \pi^{a_4}\overline{\pi}^{b_4}.
\]
Arguing as above and using the fact that $a_4-a_2 = b_4-b_2$, we see that the element $r'_2$ needs to be selected from integers modulo $\pi^{a_4-a_2} \overline{\pi}^{b_4-b_2} = p^{a_4-a_2}$. \\

This establishes our claim of equation \eqref{heisenberg}. The rest of the proof is identical to the symplectic case, as done by Gritsenko in \cite[Lemma 3.1]{gritsenko1}.
\end{proof}
We now define the elements
\begin{equation*}
    T_{\pm}(p^{\delta}) := j_{\pm}\left(T\left(p^{\delta}\right)\right), \textup{ }\Lambda_{\pm}(\pi^{\delta}) := j_{\pm}\left(\Gamma_1\textup{diag}\left(\pi^{\delta}, \pi^{\delta}\right)\Gamma_1\right), \textup{ }\delta \geq 1,
\end{equation*}
as in the case of an inert prime (see equations \eqref{+- embeddings}, \eqref{t(p)}) and similarly for $\Lambda_{\pm}(\overline{\pi}^{\delta})$. In particular, using Lemma \ref{lem: decomp}, or translating back to the Hecke algebra of $\textup{GL}_4$, we obtain
\begin{equation}\label{mult lambda}
    \Lambda_{\pm}(\pi^{\delta}) = \Lambda_{\pm}(\pi^{\delta-1})\Lambda_{\pm}(\pi), \textup{ }\delta \geq 1.
\end{equation}
This implies $\Lambda_{\pm}(\pi^{\delta}) = \Lambda_{\pm}(\pi)^{\delta}, \textup{ }\delta \geq 1$. The same holds for $\Lambda_{\pm}(\overline{\pi})$.\\

We are now finally ready to obtain the rationality Theorems, as in the case of an inert prime. Assume $F \in S_{2}^{k}$ has a Fourier-Jacobi expansion as in equation \eqref{fourier-expansions} and $Q_{p}^{(2)}$ denotes the $p$-factor of Gritsenko's $L$-function, as in Definition \ref{gritsenko l-function}.
\begin{proposition} \label{prop:f1}
Let $F \in S_{2}^{k}$ be a Hecke eigenform for $H(\Gamma_2, S^2)$ and $m \geq 1$. Then
\begin{align*}
    Q_{p,F}^{(2)}(X)\sum_{\delta \geq 0}\phi_{mp^{\delta}}\textup{ }|\textup{ }T_{+}(p^{\delta})X^{\delta} = \left(\phi_m - \phi_{m/p} \textup{ }|\textup{ }T_{-}(p)X + p\phi_{m/p^{2}}\textup{ }|\textup{ }\Lambda_{-}(p)X^2\right)\mid B(X),
\end{align*}
where we define $B$ to be the middle polynomial of degree $4$ in the factorisation of $Q_{p}^{(2)}(t)$ given in \cite[Proposition 3.2, (3)]{gritsenko}. In particular, we have
\begin{equation*}
    B(t) = 1 - B_1t + B_2t^2 - B_3t^3 + B_4t^4 \in H_{p}^{1,1}[t],
\end{equation*}
where
\begin{align}
\begin{split}\label{coeffs of K}
    B_1 &= T(\overline{\pi}, \pi) + T(\pi, \overline{\pi}), \textup{ }B_2 = p\left(\Lambda_{+}(\pi)\Lambda_{-}(\overline{\pi})+\Lambda_{+}(\overline{\pi})\Lambda_{-}(\pi)\right)-p\nabla_p+(p^2-p^4)\Delta_p,\\
    &B_3 =p^{3}\left(\Delta_{\pi}\Lambda_{+}(\overline{\pi})\Lambda_{-}(\overline{\pi})+\Delta_{\overline{\pi}}\Lambda_{+}(\pi)\Lambda_{-}(\pi\right))-p^4\Delta_pB_1, \textup{ }B_4 = p^5\Delta_p\nabla_p -p^6\Delta_p^2.
\end{split}
\end{align}
Moreover, from \cite[Proposition 4.2]{gritsenko}, we have
\begin{equation*}
\phi_{m}\mid B(X) = (1-p^{k-3}X)^2 (1-p^{2k-4}X^2)\phi_m,
\end{equation*}
if $(m,p)=1$.
\end{proposition}
\begin{proof}
    The proof follows by \cite[Proposition 4.1]{gritsenko} and the fact that
    \begin{equation*}
        Q_{p,-}^{(1)}(t) = 1 - T_{-}(p)t + p\Lambda_{-}(p)t^2,
    \end{equation*}
    as is defined in \cite[Proposition 3.2, (1)]{gritsenko}.
\end{proof}
\begin{proposition}\label{prop:f2}
Let $F \in S_{2}^{k}$ be a Hecke eigenform for $H(\Gamma_2, S^{2})$ and $m \geq 1$. Then,
\begin{equation*}
D_{\pi,F}^{(2)}(X)\sum_{\delta \geq 0}\phi_{mp^{\delta}}\textup{ }|\textup{ }\Lambda_{+}(\overline{\pi}^{\delta})X^{\delta} = \left(\phi_m - \phi_{m/p}\textup{ }|\textup{ }\Lambda_{-}(\overline{\pi})X\right)\mid S_{\pi}(X).
\end{equation*}
Similarly,
\begin{equation*}
D_{\overline{\pi},F}^{(2)}(X)\sum_{\delta \geq 0}\phi_{mp^{\delta}}\textup{ }|\textup{ }\Lambda_{+}(\pi^{\delta})X^{\delta} = \left(\phi_m - \phi_{m/p}\textup{ }|\textup{ }\Lambda_{-}(\pi)X\right)\mid S_{\overline{\pi}}(X),
\end{equation*}
with notation obtained by exchanging $\pi$ and $\overline{\pi}$. Here, $S_{\pi}, S_{\overline{\pi}}$ are the polynomials appearing in Proposition \ref{s polynomials}.
\end{proposition}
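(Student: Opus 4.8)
The plan is to establish Proposition \ref{prop:f2} as the split-prime analogue of the inert rationality theorem (Proposition \ref{rat2}), following the generating-series method of Andrianov and Gritsenko \cite{gritsenko_1995}. The input is the factorisation
\[
D_{\pi}^{(2)}(X) = (1-\Lambda_{-}(\overline{\pi})X)\,S_{\pi}(X)\,(1-\Lambda_{+}(\overline{\pi})X)
\]
from Proposition \ref{s polynomials}, which plays exactly the role that Proposition \ref{prop: rankin_prime} played in the inert argument. Before any computation I would record the bookkeeping of signatures: every coefficient of $D_{\pi}^{(2)}(X)$ and of $S_{\pi}(X)$ has signature $1$ and hence preserves the Jacobi index, whereas $\Lambda_{+}(\overline{\pi})$ has signature $p$ (lowering the index by $p$) and $\Lambda_{-}(\overline{\pi})$ has signature $p^{-1}$ (raising it by $p$). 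This guarantees that the entire identity lives among index-$m$ Jacobi forms attached to $F$, so that $D_{\pi,F}^{(2)}(X)$ may be read as the scalar polynomial obtained by substituting the $H^2$-eigenvalues of $F$, exactly as in the notation fixed in the introduction.

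The main computation proceeds by multiplying the generating series $\Psi(X):=\sum_{\delta\geq 0}\phi_{mp^\delta}\mid\Lambda_{+}(\overline{\pi}^\delta)X^\delta$ through the three factors in turn. First I would apply the rightmost factor $(1-\Lambda_{+}(\overline{\pi})X)$, using the composition relation for the operators $\Lambda_{+}(\overline{\pi}^\delta)$ that I would obtain from the left-coset decomposition of Lemma \ref{lem: decomp} applied to $M=\textup{diag}(\overline{\pi}^\delta,1,\overline{\pi}^\delta,p^\delta)$; this telescopes the infinite series and leaves only finitely many boundary terms together with corrections proportional to $T(\overline{\pi})$ and $T(\pi,\overline{\pi})$. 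Next I would multiply by $S_{\pi}(X)=1-T(\overline{\pi})X+p\Delta_{\overline{\pi}}T(\pi,\overline{\pi})X^2$ and finally by $(1-\Lambda_{-}(\overline{\pi})X)$, collapsing the residual terms by means of the relations recorded in Table \ref{table:1}, for instance $T(\overline{\pi})\Lambda_{+}(\overline{\pi})=p\Delta_{\overline{\pi}}T_{+}(p)$ and $\Lambda_{-}(\pi)\Lambda_{+}(\overline{\pi})=p^3\Delta_p$, with their $\Delta$-normalisations. The $\delta=0$ term of $\Psi$ supplies the leading $\phi_m\mid S_{\pi}(X)$, and the single surviving lowering correction is precisely $-\phi_{m/p}\mid\Lambda_{-}(\overline{\pi})S_{\pi}(X)\,X$, which is the asserted right-hand side.

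The hardest part will be the bookkeeping of the two distinct normalising operators $\Delta_\pi$ and $\Delta_{\overline{\pi}}$ inside the noncommutative ring $H_p^{1,1}$: unlike the inert case, where a single $\Delta_p$ appears, the split case carries an asymmetry forced by the identification $\pi\mapsto(pu,v)$ fixed in subsection \ref{gl4}, and a careless ordering of factors produces spurious powers of $\Delta_{\overline{\pi}}$. To control this I would transport the whole argument to $\textup{GL}_4$ through the commutative diagram of subsection \ref{gl4}, where the operators $\Lambda_{\pm}^{1,3},\Lambda_{\pm}^{3,1}$ carry the transparent coset decompositions supplied by Corollary \ref{cor1} and the required factorisation is already available as Theorem \ref{theor: fact}; I would perform the telescoping there and only then pull the resulting identity back to $H_p^{1,1}$ using the correspondence tabulated at the start of subsection \ref{operators_split}.

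Finally, the second identity (involving $D_{\overline{\pi},F}^{(2)}$, $S_{\overline{\pi}}$ and $\Lambda_{\pm}(\pi)$) follows formally from the first by the symmetry $\pi\leftrightarrow\overline{\pi}$, which at the level of subsection \ref{gl4} corresponds to the opposite identification $\overline{\pi}\mapsto(pu',v')$ of $K_p\cong\mathbb{Q}_p\times\mathbb{Q}_p$. Since all the relevant coset decompositions and the relations of Table \ref{table:1} are equivariant under this exchange, no separate computation is needed beyond interchanging the two primes throughout.
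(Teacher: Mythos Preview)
Your overall strategy---deduce the rationality statement from the factorisation of Proposition~\ref{s polynomials} by a generating-series argument in the style of Andrianov--Gritsenko---is the same route the paper takes. However, you misidentify where the real work lies.

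The telescoping you describe does not function as written. Applying $(1-\Lambda_{+}(\overline{\pi})X)$ on the right to $\Psi(X)=\sum_{\delta\geq 0}\phi_{mp^\delta}\mid\Lambda_{+}(\overline{\pi}^\delta)X^\delta$ and invoking a composition relation $\Lambda_{+}(\overline{\pi}^\delta)\Lambda_{+}(\overline{\pi})=\Lambda_{+}(\overline{\pi}^{\delta+1})$ would produce, at order $X^{\delta+1}$, the difference
\[
\phi_{mp^{\delta+1}}\mid\Lambda_{+}(\overline{\pi}^{\delta+1})\;-\;\phi_{mp^{\delta}}\mid\Lambda_{+}(\overline{\pi}^{\delta+1}),
\]
which does \emph{not} cancel: the two terms involve different Fourier--Jacobi coefficients of $F$. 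No purely $H^{1,1}$-identity among the $\Lambda_{+}(\overline{\pi}^\delta)$ can repair this, because the discrepancy lives in the indexing of the $\phi$'s, not in the operators.

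What actually closes the gap is the compatibility between the two actions introduced in Section~\ref{hecke algebras}: the indirect action $\phi_m\,\|\,A$ (the $m$-th Fourier--Jacobi coefficient of $F\mid A$, which is where the eigenform property of $F$ enters) and the direct Jacobi action $\phi\mid A$. The paper's proof consists almost entirely of establishing the single identity
\[
\phi_m\,\|_k\,\Lambda_{+}(\overline{\pi})\;=\;\phi_{mp}\mid_k\Lambda_{+}(\overline{\pi}),
\]
by writing out the coset decomposition of $\Lambda_{+}(\overline{\pi})$ from Lemma~\ref{lem: decomp} and tracking explicitly how the action moves the $\omega$-entry of $Z$. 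Once this is known, one simply cites the Gritsenko argument (as for Proposition~\ref{prop:f1}) and there is nothing further to do. In particular, your concern about $\Delta_\pi$ versus $\Delta_{\overline{\pi}}$ bookkeeping is misplaced: those normalising factors are already correctly positioned in the factorisation of Proposition~\ref{s polynomials}, and no additional reordering is required. The detour through $\textup{GL}_4$ you propose is also unnecessary here---it was used to \emph{obtain} the factorisation in the first place, not to run the rationality argument.
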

\begin{proof} 
We have, in the same way as in \cite[Corollary, p. 264]{gritsenko1}, the proof of which can be found in \cite[Proposition 5.2]{gritsenko1}, that
\[
\phi_m \textup{ }||_k \textup{ }\Lambda_+(\overline{\pi}) = \phi_{mp}\textup{ }|_k\textup{ } \Lambda_+(\overline{\pi}).
\]
Then, inductively, using equation \eqref{mult lambda}, we obtain
\begin{equation*}
    \phi_m \textup{ }||_k \textup{ }\Lambda_+(\overline{\pi}^{\delta}) = \phi_{mp}\textup{ }|_k\textup{ } \Lambda_+(\overline{\pi}^{\delta}), \textup{ }\forall \delta \geq 1.
\end{equation*}
Now, since $F$ is an eigenfunction for $H(\Gamma_{2}, S^{2})$, we have $D_{\pi, F}^{(2)}(t)\phi_m = \phi_m \mid\mid D_{\pi}^{(2)}(t)$. Then, we can write
\begin{align*}
    D_{\pi,F}^{(2)}(X)\sum_{\delta \geq 0}\phi_{mp^{\delta}}\textup{ }|\textup{ }\Lambda_{+}(\overline{\pi}^{\delta})X^{\delta} &= D_{\pi,F}^{(2)}(X)\sum_{\delta \geq 0}(\phi_{m}\mid\mid\Lambda_{+}(\overline{\pi}^{\delta}))X^{\delta} =(\phi_{m}\mid\mid D_{\pi}^{(2)}(X))\mid\mid\sum_{\delta \geq 0}\Lambda_{+}(\overline{\pi}^{\delta})X^{\delta} \\
    &=  (\phi_{m}\mid\mid D_{\pi}^{(2)}(X))\mid\mid\sum_{\delta \geq 0}\Lambda_{+}(\overline{\pi})^{\delta}X^{\delta} = \phi_m \mid\mid (1 - \Lambda_{-}(\overline{\pi})X)S_{\pi}(X) \\
    &= (\phi_m - \phi_{m/p} \mid \Lambda_{-}(\overline{\pi})X) \mid S_{\pi}(X),
\end{align*}
as claimed. In the equalities above, we used that $\Lambda_{+}(\overline{\pi}^{\delta}) = \Lambda_{+}(\overline{\pi})^{\delta}$ from equation \eqref{mult lambda} and Proposition \ref{s polynomials}.
\end{proof}
To end this Section, we will prove a couple of Lemmas, which will be useful later, when we are dealing with the Dirichlet series of interest.
\begin{lemma}\label{adjoint}
Denote by $\Lambda_{-}(\pi)^{\textup{adj}}$ the adjoint of the operator $\Lambda_{-}(\pi)$ with respect to the inner product of Fourier-Jacobi forms. Then
\begin{equation*}
\Lambda_{-}(\pi)^{\textup{adj}} = p^{k-3}\Lambda_+(\overline{\pi}).
\end{equation*}
\end{lemma}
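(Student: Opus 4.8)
The plan is to reduce the assertion to an adjointness statement for the inner product $\langle\cdot,\cdot\rangle_{\mathcal{A}}$ on $P$-forms, where the scalar will be trivial, and to let the factor $p^{k-3}$ emerge purely from the index bookkeeping. Recall from the Lemma relating the two inner products that $\langle a,b\rangle = \beta_k t^{k-3}\langle\tilde a,\tilde b\rangle_{\mathcal{A}}$ for Jacobi forms of index $t$. Now $\Lambda_{-}(\pi)$ has signature $1/p$, hence maps $J_{k,m}^{2}\to J_{k,mp}^{2}$, while $\Lambda_{+}(\overline{\pi})$ has signature $p$ and maps $J_{k,mp}^{2}\to J_{k,m}^{2}$; moreover the auxiliary factor $e(-m/s)$ in the definition of the Jacobi-form action equals $1$ here, since $m/s$ is an integer for both operators, so the Jacobi action and the $P$-form action agree. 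Thus for $\phi\in J_{k,m}^{2}$ and $\psi\in J_{k,mp}^{2}$,
\begin{equation*}
\langle \phi|\Lambda_{-}(\pi),\psi\rangle = \beta_k (mp)^{k-3}\langle \widetilde{\phi|\Lambda_{-}(\pi)},\tilde\psi\rangle_{\mathcal{A}},\qquad \langle\phi,\psi|\Lambda_{+}(\overline{\pi})\rangle = \beta_k m^{k-3}\langle\tilde\phi,\widetilde{\psi|\Lambda_{+}(\overline{\pi})}\rangle_{\mathcal{A}}.
\end{equation*}
Hence the claimed identity $\Lambda_{-}(\pi)^{\textup{adj}} = p^{k-3}\Lambda_{+}(\overline{\pi})$ is equivalent to $\Lambda_{-}(\pi)$ and $\Lambda_{+}(\overline{\pi})$ being mutually adjoint \emph{with scalar $1$} for $\langle\cdot,\cdot\rangle_{\mathcal{A}}$, the ratio $(mp)^{k-3}/m^{k-3}=p^{k-3}$ supplying the asserted power of $p$. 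This is exactly the mechanism already recorded in the inert case, where $\Lambda_{-}(p)$ raises the index by $p^{2}$, its $\mathcal{A}$-adjoint is $\Lambda_{+}(p)$, and the factor becomes $(p^{2})^{k-3}=p^{2k-6}$.

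It remains to establish the $\mathcal{A}$-level adjointness. First I would identify the relevant anti-involution. Taking the representative $g=\textup{diag}(\pi,p,\pi,1)$ of $\Lambda_{-}(\pi)$, the defining relation $J_2[g]=\mu(g)J_2$ gives $\mu(g)=\pi\overline{\pi}=p$ and $\mu(g)g^{-1}=J_2^{-1}\overline{g}^{t}J_2$, the standard anti-involution preserving $\Gamma_{1,1}$ and $S^{1,1}$. A direct computation then yields
\begin{equation*}
\mu(g)g^{-1} = p\cdot\textup{diag}(\pi^{-1},p^{-1},\pi^{-1},1) = \textup{diag}(\overline{\pi},1,\overline{\pi},p),
\end{equation*}
using $p/\pi=\overline{\pi}$, and this is precisely the chosen representative of $\Lambda_{+}(\overline{\pi})$. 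Hence the double coset $\Gamma_{1,1}g\Gamma_{1,1}$ is sent by the anti-involution to $\Gamma_{1,1}\Lambda_{+}(\overline{\pi})\Gamma_{1,1}$.

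I would then invoke the standard adjoint lemma for Hecke operators with respect to the Petersson product, in the $P$-form formulation used by Gritsenko in \cite[Proposition 5.1]{gritsenko}: for the action $(F|g)(Z)=\mu(g)^{2k-4}\det(CZ+D)^{-k}F(g\langle Z\rangle)$ the $\mathcal{A}$-adjoint of $\Gamma_{1,1}g\Gamma_{1,1}$ is $\Gamma_{1,1}(\mu(g)g^{-1})\Gamma_{1,1}$ with scalar $1$. The key point, verified coset by coset via the change of variables $W=g_i\langle Z\rangle$ using $\det\operatorname{Im}(g_i\langle Z\rangle)=\mu(g_i)^{2}|j(g_i,Z)|^{-2}\det\operatorname{Im}Z$, the cocycle identity $j(g_i,Z)=\mu(g_i)^{2}\,j(g_i^{*},W)^{-1}$ with $g_i^{*}=\mu(g_i)g_i^{-1}$, and invariance of $d^{*}Z$, is that the power of $\mu$ introduced by the normalization of the action on $F$ is cancelled by the identical normalization reappearing when the action on $G$ is rewritten in normalized form; no residual power of $\mu$ survives. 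Combined with the coset decomposition of Lemma \ref{lem: decomp}, which supplies the explicit left cosets of $\Lambda_{-}(\pi)$ needed to carry out the unfolding and to match the reversed cosets $\{\mu(g_i)g_i^{-1}\}$ of $\Lambda_{+}(\overline{\pi})$, this gives $\langle\tilde\phi|\Lambda_{-}(\pi),\tilde\psi\rangle_{\mathcal{A}}=\langle\tilde\phi,\tilde\psi|\Lambda_{+}(\overline{\pi})\rangle_{\mathcal{A}}$, whence the proof concludes by the index comparison of the first paragraph.

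The main obstacle is the bookkeeping in this last step: one must check that the set $\{\mu(g_i)g_i^{-1}\}$ obtained from the left cosets $\Gamma_{1,1}g_i$ of $\Lambda_{-}(\pi)$ represents the double coset of $\Lambda_{+}(\overline{\pi})$ exactly, with the same number of cosets and no stray power of $p$, so that the scalar is genuinely $1$ and not some $p^{c}$. Everything else is routine and parallels the inert computation; the only genuinely new inputs here are the explicit involution computation $\mu(g)g^{-1}=\textup{diag}(\overline{\pi},1,\overline{\pi},p)$ and the relation $p/\pi=\overline{\pi}$ peculiar to the split prime, which is what interchanges the roles of $\pi$ and $\overline{\pi}$ between $\Lambda_{-}$ and $\Lambda_{+}$.
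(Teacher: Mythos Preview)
Your approach is correct and takes a genuinely different route from the paper's. The paper works directly at the Jacobi level: it writes out $\langle\phi_l|\Lambda_{-}(\pi),\psi_{lp}\rangle$ as an explicit integral over a fundamental domain for a congruence subgroup $\Gamma_{-}$ (using that $\Lambda_{-}(\pi)$ is a single left coset), performs the change of variables $z_1\mapsto\pi^{-1}z_1$, $z_2\mapsto\overline{\pi}^{-1}z_2$, separately writes out $\langle\phi_l,\psi_{lp}|\Lambda_{+}(\overline{\pi})\rangle$ as a sum over the $p^{3}$ left cosets of $\Lambda_{+}(\overline{\pi})$ (which collapses since the Heisenberg representatives act trivially on $\phi_l$), and then compares the two integrals by computing the index ratio $[\Gamma_{1,1}:\Gamma_{+}]/[\Gamma_{1,1}:\Gamma_{-}]=p^{2}$; the factor $p^{k-3}$ emerges from collecting all the powers of $p$. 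You instead push everything to the $\langle\cdot,\cdot\rangle_{\mathcal{A}}$ level, where $\mathcal{Q}_{1,1}$ is a genuine fundamental domain for $\Gamma_{1,1}$, so the standard anti-involution argument (adjoint of $\Gamma_{1,1}g\Gamma_{1,1}$ is $\Gamma_{1,1}\mu(g)g^{-1}\Gamma_{1,1}$ with scalar $1$) applies cleanly, and the $p^{k-3}$ falls out purely from the index dependence of the conversion lemma $\langle\phi,\psi\rangle=\beta_k t^{k-3}\langle\tilde\phi,\tilde\psi\rangle_{\mathcal{A}}$. Your route is more conceptual and immediately explains \emph{why} the exponent is $k-3$ (it is the exponent in the conversion lemma, multiplied by the jump in index), whereas the paper's computation is more self-contained and makes the coset asymmetry ($1$ versus $p^{3}$ left cosets) completely explicit. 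One small caution: the reference to \cite[Proposition~5.1]{gritsenko} as stating the general $\mathcal{A}$-adjoint principle may be optimistic---in the paper that proposition is cited only for the specific inert operators---but the general principle you sketch via the change of variables $W=g_i\langle Z\rangle$ is standard and your outline of it is adequate; the ``bookkeeping obstacle'' you flag resolves because the images $\mu(g_i)g_i^{-1}$ of the left cosets of $\Lambda_{-}(\pi)$ give \emph{right} cosets of $\Lambda_{+}(\overline{\pi})$, and these have the same cardinality.
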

\begin{proof}
Let $\phi_l, \psi_{lp}$ be two Fourier-Jacobi forms of weight $k$ and of index $l, lp$ respectively. We observe from Lemma \ref{lem: decomp} that $\Lambda_{-}(\pi) = \Gamma_{1,1} \textup{diag}(\pi, p, \pi, 1)$. We also note that the Jacobi form $\phi_{l} \mid_{k} \textup{diag}(\pi, p, \pi, 1)$ is of index $lp$ for the group 
\begin{equation*}
    \Gamma_{-} := \Gamma_1 \times \mathcal{O}_{K} \times \mathcal{O}_{K} = \left\{\m{a&0&b&\mu\\**&1&*&*\\c&0&d&\lambda \\0&0&0&1} \in \Gamma_{1,1} \mid \m{a&b\\c&d} \in \Gamma_{1}, \textup{ }\mu,\lambda \in \mathcal{O}_K\right\}.
\end{equation*}
In particular, we may write

\[
\langle \phi_{l} \mid_{k} \Lambda_{-}(\pi), \psi_{lp} \rangle = \frac{p^{2k-4} \pi^{-k}}{\left[\Gamma_{1,1} : \Gamma_{-}\right]} \int_{\Gamma_{-}\setminus \mathbb{H}_{1}^{J}} \phi_{l}(\tau,\pi z_1, \overline{\pi} z_2) \overline{\psi_{lp}(\tau,z_1,z_2)} \exp\left(-\pi lp \frac{|z_1-\overline{z_2}|^2}{y} \right)y^{k-4} \hbox{d}\mu,
\]
as in Definition \ref{inner_product_jacobi}.
We now perform the change of variables $z_1 \longmapsto \pi^{-1} z_1$ and $z_2 \longmapsto \overline{\pi}^{-1} z_2$. This is equivalent to the action of the matrix $\textup{diag}(\overline{\pi}, 1, \overline{\pi}, p)$ on $\mathbb{H}_1^{J}:= \mathbb{H}_1 \times \mathbb{C} \times \mathbb{C}$. Now
\begin{equation*}
    \left(\psi_{lp}\mid_k \textup{diag}(\overline{\pi}, 1, \overline{\pi}, p) \right)(\tau, z_1, z_2) = p^{k-4}\overline{\pi}^{-k}\psi_{lp}(\tau, \pi^{-1}z_1, \overline{\pi}^{-1}z_2)
\end{equation*}
is a Jacobi form of weight $k$ and index $l$ with respect to the group
\begin{equation*}
    \Gamma_{+} := \Gamma_1 \times \overline{\pi}\mathcal{O}_K \times \overline{\pi}\mathcal{O}_K = \left\{\m{a&0&b&\mu\\**&1&*&*\\c&0&d&\lambda \\0&0&0&1}\in \Gamma_{1,1} \mid \m{a&b\\c&d} \in \Gamma_{1}, \textup{ } \mu, \lambda \equiv 0\pmod{\overline{\pi}}\right\}.
\end{equation*}
This group is obtained by considering the group $\textup{diag}(\overline{\pi}, 1, \overline{\pi}, p)^{-1}\Gamma_{-}\textup{diag}(\overline{\pi}, 1, \overline{\pi}, p)$. We therefore have
\[
\langle \phi_{l} \mid_{k} \Lambda_{-}(\pi), \psi_{lp} \rangle = \frac{p^{2k-6} \pi^{-k}}{\left[\Gamma_{1,1}:\Gamma_{-}\right]} \int_{\Gamma_{+}\setminus \mathbb{H}_{1}^{J}} \phi_l(\tau, z_1,  z_2) \overline{\psi_{lp}(\tau,\pi^{-1} z_1,\overline{\pi}^{-1} z_2)} \exp\left(-\pi l \frac{|z_1-\overline{z_2}|^2}{y} \right)y^{k-4} \hbox{d}\mu
\] 

On the other hand, we have by Lemma \ref{lem: decomp}, that 
\begin{equation*}
    \Lambda_{+}(\overline{\pi}) = \sum_{a,b \in \mathcal{O}_K / \pi, \textup{ }c \in \mathbb{Z}/p}\Gamma_{1,1}\m{\overline{\pi}&0&0&0\\0&1&0&0\\0&0&\overline{\pi}&0\\0&0&0&p}h(a,b,c),
\end{equation*}
where $h(a,b,c):= \begin{pmatrix} 1 &  0& 0 &a \\ -\overline{b} & 1 & \overline{a} & c-a\overline{b} \\ 0 & 0 &  1 &b \\ 0 & 0 & 0 & 1 \end{pmatrix}$. By now using the fact that 
\begin{equation*}
    \langle \phi_{l} \mid_{k} h(a,b,c)^{-1}, \psi_{lp} \rangle = \langle \phi_{l} , \psi_{lp} \mid_{k}h(a,b,c)\rangle,
\end{equation*}
we obtain
\begin{equation*}
\langle\phi_l, \psi_{lp}\mid_k \Lambda_{+}(\overline{\pi})\rangle = p^3\frac{p^{k-4}\pi^{-k}}{[\Gamma_{1,1}:\Gamma_{+}]}\int_{\Gamma_{+} \backslash \mathbb{H}_1^{J}} \phi_{l}(\tau, z_1, z_2)\overline{\psi_{lp}(\tau, \pi^{-1}z_1, \overline{\pi}^{-1}z_2)}e\left(-\pi l \frac{|z_1-\overline{z_2}|^2}{y}\right)y^{k-4}\hbox{d}\mu,
\end{equation*}
as $h(l,q,r)^{-1} \in \Gamma_{1,1}$ and hence they act trivially on $\psi_{lp}$. Therefore,
\begin{equation*}
    \langle \phi_l\mid_{k} \Lambda_{-}(\pi), \psi_{lp}\rangle =p^{k-5}\frac{[\Gamma_{1,1}:\Gamma_{+}]}{[\Gamma_{1,1}:\Gamma_{-}]}\langle\phi_l, \psi_{lp}\mid \Lambda_{+}(\overline{\pi})\rangle = p^{k-3}\langle\phi_l, \psi_{lp}\mid_k \Lambda_{+}(\overline{\pi})\rangle,
\end{equation*}
as the ratio of indices is $p^2$. The result now follows.
\end{proof}
Finally, knowing the action of the operators $T(\pi, \overline{\pi})$ and $T(\overline{\pi}, \pi)$ on $J_{k,1}^{2}$ will prove helpful in the following, so we give the following Lemma:
\begin{lemma}\label{action}
 Let $\phi \in J_{k,1}^{2}$. We then have
 \begin{equation*}
     \phi \mid_k T(\overline{\pi}, \pi)  = p^{k-3}\phi.
 \end{equation*}
 \end{lemma}
 \begin{proof}
 Using the decomposition in Lemma \ref{lem: decomp}, we can write
\begin{equation*}
    T(\overline{\pi},\pi) = \sum_{\gamma \in \mathcal{O}_K/ \pi, \textup{ }\beta \in \mathcal{O}_K/\overline{\pi}} \Gamma_{1,1}\m{\overline{\pi}&0&0&\overline{\pi}\overline{\beta}\\-\overline{\gamma}&\pi&\pi\beta&-\overline{\gamma}\overline{\beta}\\0&0&\overline{\pi}&\gamma\\0&0&0&\pi}.
\end{equation*}
The result now follows by \cite[Lemma 3.2]{gritsenko_maass}.
\end{proof}
\begin{remark}
The same is true for the operator $T(\pi, \overline{\pi}) = \Gamma_{1,1}\textup{diag}(\pi, \overline{\pi},\pi,\overline{\pi})\Gamma_{1,1}$.
\end{remark}
\subsection{Calculation of the Dirichlet series - First Part}
Assume $F,G,h$ satisfy the same assumptions as in the beginning of Subsection \ref{dirichlet series inert}. We recall from equation \eqref{dirichlet_series_defn} that
\begin{equation*}
    D_{F,G,h}(s)=4\beta_{k}\sum_{l,\epsilon, m}\langle\tilde{\phi}_{1}\mid T_{-}(m)U_{l}, \tilde{\psi}_{mN(l)}\rangle_{\mathcal{A}}a_{mN(\epsilon)}N(l)^{-s}N(\epsilon)^{-(k+s-1)}m^{-(2k+s-4)},
\end{equation*}
with $l,\epsilon \in \mathbb{Z}[i]$ coprime with their real parts positive and imaginary parts non-negative and $m \in \mathbb{N}$.
In the case of a split prime $p = \pi \overline{\pi}$, we define the $p$-part by
\begin{equation}\label{p-part, split}
    D_{F,G,h}^{(p)}(s) := \sum_{l_1,l_2, \epsilon_1, \epsilon_2, m \geq 0}\langle \widetilde{\phi}_1\mid T_{-}(p^{m})U_{\pi^{l_1}}U_{\overline{\pi}^{l_2}}, \widetilde{\psi}_{p^{m+l_1+l_2}}\rangle_{\mathcal{A}}a_{p^{m+\epsilon_1+\epsilon_2}}p^{-s(l_1+l_2)}p^{-(k+s-1)(\epsilon_1+\epsilon_2)}p^{-(2k+s-4)m},
 \end{equation}   
together with the conditions $\min(\epsilon_1,l_1) = 0$ and $\min(\epsilon_2,l_2) = 0$.\\\\
Consider now the Hecke operator 
\begin{equation*}
    \Lambda_{-}(\pi) = \Gamma_{1,1}\textup{diag}(\pi,p,\pi,1)\Gamma_{1,1} = \Gamma_{1,1}\textup{diag}(\pi,p,\pi,1),    
\end{equation*}
by Lemma \ref{lem: decomp}.
Then, if $\phi$ is a Fourier-Jacobi form of any index $m$, we get
\begin{equation}\label{relation Lambda, U}
    \phi\mid_{k}\Lambda_{-}(\pi) = p^{2k-4} \pi^{-k} \tilde{\phi}\left(\begin{pmatrix}\tau & \pi z_1\\\overline{\pi} z_2&p\tau'\end{pmatrix}\right)e^{-2\pi \frac{m}{p}\tau'} = p^{2k-4} \pi^{-k}\phi(\tau, \pi z_1, \overline{\pi}z_2) = p^{2k-4} \pi^{-k} \phi\mid_{k}U_{\pi}.
\end{equation}
Hence, we can rewrite the series as:
\begin{multline*}
D_{F,G,h}^{(p)}(s) = \sum_{\substack{l_1,l_2,\\ \epsilon_1, \epsilon_2,m \geq 0\\\textup{min}(l_i, \epsilon_i)=0}}\langle \tilde{\phi}_1\mid T_{-}(p^{m})\Lambda_{-}(\pi^{l_1})\Lambda_{-}(\overline{\pi}^{l_2}), \tilde{\psi}_{p^{m+l_1+l_2}}\rangle_{\mathcal{A}}a_{p^{m+\epsilon_1+\epsilon_2}} p^{(4-2k)l_1} p^{(4-2k)l_2} \pi^{l_1 k} \overline{\pi}^{l_2k}\times\\\times p^{-s(l_1+l_2)}p^{-(k+s-1)(\epsilon_1+\epsilon_2)}p^{-(2k+s-4)m}.
\end{multline*}
By then using an inclusion-exclusion argument, we have that the above series can be written as
\begin{equation*}
     D_{F,G,h}^{(p)}(s) = D_{(\epsilon_1,\epsilon_2)}(s)+D_{(l_1,l_2)}(s) + D_{(\epsilon_1,l_2)}(s) +
 D_{(\epsilon_2, l_1)}(s) \textup{ }-
 \end{equation*}
 \begin{equation*}
 -\textup{ }D_{(\epsilon_1,\epsilon_2,l_1)}(s) -D_{(\epsilon_2,l_1,l_2)}(s)- D_{(\epsilon_1,l_1,l_2)}(s)-D_{(\epsilon_1,\epsilon_2,l_2)}(s)+D_{(\epsilon_1, \epsilon_2, l_1,l_2)}(s),
 \end{equation*}
 where we use the same notation as in Subsection \ref{dirichlet series inert}, meaning that the corresponding index means the variables are $0$. We can then deal with the "easy" parts first, i.e. when the operators $\Lambda_{-}$ do not appear. We again consider $s \in \mathbb{R}$ big enough, as in Remark \ref{identity_theorem}.
 \begin{proposition}
 We have
 \begin{equation*}
     D_{(l_1,l_2)}(s)-D_{(l_1,l_2,\epsilon_1)}(s)-D_{(l_1,l_2,\epsilon_2)}(s)+D_{(l_1,l_2,\epsilon_1, \epsilon_2)}(s)=
 \end{equation*}
 \begin{equation*}
 =\frac{\langle\tilde{\phi}_1, \tilde{\psi}_1\rangle_{\mathcal{A}}}{\alpha_1-\alpha_2}\left[\frac{\alpha_1^3(1-p^{2k-4}X_1^2)X^2}{Q_{p,G}^{(2)}(X_1)} - \frac{\alpha_2^3(1-p^{2k-4}X_2^2)X^2}{Q_{p,G}^{(2)}(X_2)}\right],
 \end{equation*}
 where $X = p^{-(k+s-1)}$ and $X_i = \alpha_ip^{-(2k+s-4)}$, as in Subsection \ref{dirichlet series inert}.
 \end{proposition}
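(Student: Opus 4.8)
The plan is to reproduce, with the obvious doubling of indices, the computation used for the inert analogue (the proposition computing $D_{(l)}-D_{(\epsilon,l)}$). First I would note that, with the convention that each subscripted variable is set equal to $0$, the alternating sum $D_{(l_1,l_2)}-D_{(l_1,l_2,\epsilon_1)}-D_{(l_1,l_2,\epsilon_2)}+D_{(l_1,l_2,\epsilon_1,\epsilon_2)}$ is precisely the inclusion--exclusion selector isolating the range $l_1=l_2=0$, $\epsilon_1\geq 1$, $\epsilon_2\geq 1$, with $m\geq 0$ unrestricted. Once $l_1=l_2=0$, every $\Lambda_-$ operator and every accompanying power of $p$ and of $\pi,\overline{\pi}$ disappears, the index $p^{m+l_1+l_2}$ collapses to $p^m$, and the only surviving operator in each bracket is $T_-(p^m)$ acting on $\tilde\phi_1$ and paired against $\tilde\psi_{p^m}$.

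Next I would substitute $a_{p^{m+\epsilon_1+\epsilon_2}}(\alpha_1-\alpha_2)=\alpha_1^{m+\epsilon_1+\epsilon_2+1}-\alpha_2^{m+\epsilon_1+\epsilon_2+1}$ and separate the resulting expression into an $\alpha_1$--part and an $\alpha_2$--part. Using the adjointness $T_-(p^m)^{\mathrm{adj}}=T_+(p^m)$ on $P$--forms, each summand becomes $\langle\tilde\phi_1,\tilde\psi_{p^m}\mid T_+(p^m)\rangle_{\mathcal{A}}$ times a monomial in $\alpha_i$, and the three exponential factors separate completely. The two sums over $\epsilon_1,\epsilon_2\geq 1$ are independent geometric series $\sum_{\epsilon\geq 1}(\alpha_i X)^{\epsilon}=\alpha_i X/(1-\alpha_i X)$, which together contribute $\alpha_i^2 X^2/(1-\alpha_i X)^2$; the remaining sum over $m$ is the spinor series $\sum_{m\geq 0}\tilde\psi_{p^m}\mid T_+(p^m) X_i^m$ with $X_i=\alpha_i p^{-(2k+s-4)}$.

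For the $m$--sum I would apply Proposition \ref{prop:f1} to the eigenform $G$ with $m=1$; since $\psi_{1/p}=\psi_{1/p^2}=0$ and $(p,1)=1$ the right-hand side collapses, giving $\sum_{m\geq 0}\tilde\psi_{p^m}\mid T_+(p^m)X_i^m = Q_{p,G}^{(2)}(X_i)^{-1}(1-p^{k-3}X_i)^2(1-p^{2k-4}X_i^2)\tilde\psi_1$. The decisive step, and the one I expect to require the most care, is the elementary identity $p^{k-3}X_i=\alpha_i X$ (immediate from $X=p^{-(k+s-1)}$ and $X_i=\alpha_i p^{-(2k+s-4)}$): it makes the factor $(1-p^{k-3}X_i)^2$ coming from the rationality theorem cancel exactly against the denominator $(1-\alpha_i X)^2$ produced by the doubled geometric series. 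After this cancellation the $\alpha_i$--part reduces to $\alpha_i^3 X^2(1-p^{2k-4}X_i^2)\,Q_{p,G}^{(2)}(X_i)^{-1}\langle\tilde\phi_1,\tilde\psi_1\rangle_{\mathcal{A}}$; subtracting the $\alpha_2$--part from the $\alpha_1$--part and dividing by $\alpha_1-\alpha_2$ yields the asserted closed form. Beyond this cancellation, the main bookkeeping hazard is keeping the conjugation convention in $\langle\cdot,\cdot\rangle_{\mathcal{A}}$ and the inclusion--exclusion signs consistent with the inert computation; the Maass-space hypothesis on $F$ plays no role here, since $\tilde\phi_1$ is never acted upon after the adjoint move.
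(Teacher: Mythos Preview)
Your proposal is correct and follows essentially the same approach as the paper. The only cosmetic difference is that the paper writes out each of the four $D_{(\cdots)}$ terms separately, sums the $\epsilon$-series from $0$ (getting $1/(1-\alpha_i X)$ and its square), and then combines them as $1-\tfrac{2}{1-\alpha_i X}+\tfrac{1}{(1-\alpha_i X)^2}=\tfrac{\alpha_i^2X^2}{(1-\alpha_i X)^2}$, whereas you identify the alternating sum up front as the inclusion--exclusion selector for $\epsilon_1,\epsilon_2\geq 1$ and sum the tails directly; the key cancellation $(1-p^{k-3}X_i)^2=(1-\alpha_iX)^2$ and the use of Proposition~\ref{prop:f1} are identical.
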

 \begin{proof}
 We have
 \begin{align*}
 D_{(l_1,l_2)}(s)&= \sum_{\epsilon_1,\epsilon_2,m\geq0}\langle\tilde{\phi}_1\mid T_{-}(p^{m}),\tilde{\psi}_{p^{m}}\rangle_{\mathcal{A}} a_{p^{m+\epsilon_1+\epsilon_2}}p^{-(k+s-1)(\epsilon_1+\epsilon_2)}p^{-m(2k+s-4)},\\
 D_{(l_1,l_2,\epsilon_1)}(s) &= \sum_{\epsilon_2,m\geq 0}\langle\tilde{\phi}_1\mid T_{-}(p^{m}),\tilde{\psi}_{p^{m}}\rangle_{\mathcal{A}}a_{p^{m+\epsilon_2}}p^{-(k+s-1)\epsilon_2}p^{-m(2k+s-4)},\\
 D_{(l_1,l_2,\epsilon_2)}(s) &= \sum_{\epsilon_1,m\geq 0}\langle\tilde{\phi}_1\mid T_{-}(p^{m}),\tilde{\psi}_{p^{m}}\rangle_{\mathcal{A}}a_{p^{m+\epsilon_1}}p^{-(k+s-1)\epsilon_1}p^{-m(2k+s-4)},\\
 D_{(l_1,l_2,\epsilon_1,\epsilon_2)}(s) &= \sum_{m\geq 0}\langle\tilde{\phi}_1\mid T_{-}(p^{m}),\tilde{\psi}_{p^{m}}\rangle_{\mathcal{A}}a_{p^{m}}p^{-m(2k+s-4)}.
 \end{align*}
 Using now the fact that $a_{p^m} = (\alpha_1^{m+1}-\alpha_2^{m+1})/(\alpha_1-\alpha_2)$ and the fact that the adjoint of $T_{-}(p^{m})$ is $T_{+}(p^m)$, when they are acting on $P$-forms (see also in \cite[Proposition $5.1$]{gritsenko} and Subsection \ref{dirichlet series inert}), we get
 \begin{align*}
    D_{(l_1,l_2)}(s)(\alpha_1-\alpha_2)&= \alpha_1\sum_{\epsilon_1,\epsilon_2,m\geq 0}\langle\tilde{\phi}_1,\tilde{\psi}_{p^{m}}\mid T_{+}(p^m)\rangle_{\mathcal{A}} (\alpha_1 p^{-(k+s-1)})^{\epsilon_1+\epsilon_2}(\alpha_1 p^{-(2k+s-4)})^m\\
    &-\alpha_2 \sum_{\epsilon_1,\epsilon_2,m\geq 0}\langle\tilde{\phi}_1,\tilde{\psi}_{p^{m}}\mid T_{+}(p^m)\rangle_{\mathcal{A}} (\alpha_2 p^{-(k+s-1)})^{\epsilon_1+\epsilon_2}(\alpha_2 p^{-(2k+s-4)})^m,
 \end{align*}
 and similarly for the others. Now, by Proposition \ref{prop:f1}, we obtain, as in Proposition \ref{easy_part_dirichlet_inert}:
 \begin{equation*}
     \sum_{m=0}^{\infty}\langle\tilde{\phi}_1,\tilde{\psi}_{p^{m}}\mid T_{+}(p^m)\rangle_{\mathcal{A}}X_1^m = (1-p^{k-3}X_1)^2(1-p^{2k-4}X_1^2)\langle\tilde{\phi}_1, \tilde{\psi}_1\rangle_{\mathcal{A}}Q_{p,G}^{(2)}(X_1)^{-1}.
 \end{equation*}
 Also, $\sum_{\epsilon_2=0}^{\infty}(\alpha_1 p^{-(k+s-1)})^{\epsilon_2} = \frac{1}{1-\alpha_1p^{-(k+s-1)}}$ and similarly for $\epsilon_1$ and
 \begin{equation*}\sum_{\epsilon_1, \epsilon_2=0}^{\infty}(\alpha_1 p^{-(k+s-1)})^{(\epsilon_1+\epsilon_2)} = \left(\frac{1}{1-\alpha_1p^{-(k+s-1)}}\right)^2.
 \end{equation*}
 Hence, we obtain
 \begin{equation*}
     D_{(l_1,l_2)}(s)-D_{(l_1,l_2,\epsilon_1)}(s)-D_{(l_1,l_2,\epsilon_2)}(s)+D_{(l_1,l_2,\epsilon_1, \epsilon_2)}(s)=
 \end{equation*}
 \begin{align*}
     =\frac{\langle\tilde{\phi}_1,\tilde{\psi}_1\rangle_{\mathcal{A}}}{\alpha_1-\alpha_2}&\left[\frac{\alpha_1(1-p^{k-3}X_1)^2(1-p^{2k-4}X_1^2)}{Q_{p,G}^{(2)}(X_1)}\left(1-\frac{2}{1-\alpha_1p^{-(k+s-1)}}+\left(\frac{1}{1-\alpha_1p^{-(k+s-1)}}\right)^2\right)- \right.\\ 
     &\left. -\frac{\alpha_2(1-p^{k-3}X_2)^2(1-p^{2k-4}X_2^2)}{Q_{p,G}^{(2)}(X_2)}\left(1-\frac{2}{1-\alpha_2p^{-(k+s-1)}}+\left(\frac{1}{1-\alpha_2p^{-(k+s-1)}}\right)^2\right)\right]=
\end{align*}
\begin{equation*}
     =\frac{\langle\tilde{\phi}_1, \tilde{\psi}_1\rangle_{\mathcal{A}}}{\alpha_1-\alpha_2}\left[\frac{\alpha_1^3(1-p^{2k-4}X_1^2)X^2}{Q_{p,G}^{(2)}(X_1)} - \frac{\alpha_2^3(1-p^{2k-4}X_2^2)X^2}{Q_{p,G}^{(2)}(X_2)}\right].
 \end{equation*}
 \end{proof}
\subsection{Calculation of the Dirichlet Series - Second Part}
In the following, we define
\begin{equation*}
    \textup{ }Y_1 := \pi^{k}p^{-(2k+s-4)}, \textup{ }Y_2 := \overline{\pi}^{k}p^{-(2k+s-4)}, \textup{ }X := p^{-(k+s-1)}, \textup{ } X_i := \alpha_i p^{-(2k+s-4)}, \textup{ }i=1,2.
\end{equation*}
Let us now consider the series
\begin{equation*}
    D_{(\epsilon_1, \epsilon_2, l_2)}(s) = \sum_{l_1, m \geq 0}\langle \tilde{\phi}_1\mid T_{-}(p^{m})U_{\pi^{l_1}}, \tilde{\psi}_{p^{m+l_1}}\rangle_{\mathcal{A}}a_{p^{m}}p^{-sl_1}p^{-(2k+s-4)m}.
\end{equation*}
Using the fact that $a_{p^m} = (\alpha_1^{m+1}-\alpha_2^{m+1})/(\alpha_1-\alpha_2)$ and the relation between $U_\pi$ and $\Lambda_{-}(\pi)$ of equation \eqref{relation Lambda, U}, we obtain that
\begin{equation*}
    (\alpha_1-\alpha_2)D_{(\epsilon_1, \epsilon_2, l_2)}(s) = \alpha_1S_1(s) - \alpha_2S_2(s),
\end{equation*}
where 
\begin{equation*}
    S_i(s) := \sum_{l, m \geq 0}\langle \tilde{\phi}_1 \mid T_{-}(p^{m})\Lambda_{-}(\pi^{l}), \tilde{\psi}_{p^{m+l}}\rangle_{\mathcal{A}}p^{-(2k+s-4)l}\pi^{lk}(\alpha_ip^{-(2k+s-4))})^m.
\end{equation*}
Using now the fact that the adjoint (with respect to the inner product of $P$-forms) of $T_{-}(p)$ is $T_{+}(p)$ and of $\Lambda_{-}(\pi)$ is $\Lambda_{+}(\overline{\pi})$ (Lemma \ref{adjoint}) and that $T_{-}(p)$ and $\Lambda_{-}(\pi)$ commute, we get
\begin{equation*}
    S_i(s) = \sum_{l, m \geq 0}\langle \tilde{\phi}_1, \tilde{\psi}_{p^{m+l}}\mid T_{+}(p^{m})\Lambda_{+}(\overline{\pi}^{l})\rangle_{\mathcal{A}}X_i^{m}Y_1^{l} = \sum_{l, m \geq 0}\langle \tilde{\phi}_1, \tilde{\psi}_{p^{m+l}}\mid T_{+}(p^{m})\Lambda_{+}(\overline{\pi}^{l})Y_2^{l}\rangle_{\mathcal{A}}X_i^{m},
\end{equation*}
because we have a Hermitian inner product (and therefore we have to conjugate in the second component of the inner product). We remind the reader that we work with $s \in \mathbb{R}$ big enough.
\begin{lemma}
For $i=1, 2$, we have
\begin{equation*}
    \sum_{l, m \geq 0} \tilde{\psi}_{p^{m+l}}\mid T_{+}(p^{m})\Lambda_{+}(\overline{\pi}^{l})X_i^{m}Y_2^{l} = \frac{1}{Q_{p,G}^{(2)}(X_i)}\sum_{l\geq 0}\left[\tilde{\psi}_{p^{l}} - \tilde{\psi}_{p^{l-1}}|T_{-}(p)X_i + p\tilde{\psi}_{p^{l-2}}|\Lambda_{-}(p)X_i^2\right]\mid B(X_i)\Lambda_{+}(\overline{\pi}^{l})Y_2^{l},
\end{equation*}
with $B$ the polynomial of Proposition \ref{prop:f1}.
\end{lemma}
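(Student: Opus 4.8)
The plan is to derive the identity by applying Proposition \ref{prop:f1} to the eigenform $G$ once for each fixed $l$, with the ``initial index'' chosen to be $p^l$, and then to act with $\Lambda_+(\overline{\pi}^l)$ and sum over $l$. The key observation is that the three-term bracket on the right-hand side of the claim is exactly the output of Proposition \ref{prop:f1} for this choice of initial index, so that no new computation beyond that proposition is required.

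First I would fix $l \geq 0$ and specialise Proposition \ref{prop:f1} (applied to $G$) by replacing its index $m$ with $p^l$ and renaming its summation variable to $m$. Since $G$ is a Hecke eigenform, $Q_{p,G}^{(2)}(X_i)$ is a scalar polynomial in $X_i$ and can be divided across, yielding the index-$p^l$ identity of $P$-forms
\[
\sum_{m\geq 0}\tilde{\psi}_{p^{l+m}}\mid T_+(p^m)\,X_i^m = \frac{1}{Q_{p,G}^{(2)}(X_i)}\Big(\tilde{\psi}_{p^l} - \tilde{\psi}_{p^{l-1}}\mid T_-(p)\,X_i + p\,\tilde{\psi}_{p^{l-2}}\mid \Lambda_-(p)\,X_i^2\Big)\mid K(X_i),
\]
where we adopt the usual convention $\tilde{\psi}_{p^{j}}=0$ for $j<0$, so that the two correction terms drop out for $l=0$ and $l=1$ respectively. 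Here the transition from Fourier--Jacobi coefficients (for which Proposition \ref{prop:f1} is stated) to the $P$-forms $\tilde{\psi}$ is legitimate because the action of $H_p^{1,1}$ on $P$-forms is defined compatibly with the Jacobi-form action (Definition \ref{jacobi}). A quick index check confirms both sides lie in index $p^l$: the operator $T_+(p^m)$ divides the index by $p^m$, whereas $T_-(p)$ and $\Lambda_-(p)$ multiply it by $p$ and $p^2$, and $K(X_i)$ is index-preserving.

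I would then apply $\Lambda_+(\overline{\pi}^l)$ to both sides of this identity, multiply through by $Y_2^l$, and sum over $l\geq 0$. Because the slash is a right action of the Hecke algebra, $\big(\tilde{\psi}_{p^{l+m}}\mid T_+(p^m)\big)\mid\Lambda_+(\overline{\pi}^l) = \tilde{\psi}_{p^{l+m}}\mid T_+(p^m)\Lambda_+(\overline{\pi}^l)$, so the left-hand side becomes precisely the double sum in the statement; on the right, $K(X_i)$ simply composes with $\Lambda_+(\overline{\pi}^l)$, and the scalar $1/Q_{p,G}^{(2)}(X_i)$, which is independent of $l$, pulls out of the $l$-sum, reproducing the claimed expression. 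The part demanding most care is this bookkeeping rather than any genuine obstacle: keeping $T_+(p^m)$ and $\Lambda_+(\overline{\pi}^l)$ in the order dictated by the right action, remembering that $\Lambda_+(\overline{\pi}^l)$ is a single Hecke operator sending index $p^l$ to index $1$ (as recorded in Proposition \ref{prop:f2} and its proof), and noting that for $\textup{Re}(s)$ large all the series converge absolutely so that the rearrangements are justified; alternatively one may read the whole statement as an identity of formal power series in $X_i$ and $Y_2$.
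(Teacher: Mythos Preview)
Your proposal is correct and takes essentially the same approach as the paper, which simply records that the identity ``follows immediately by Proposition \ref{prop:f1}''; you have merely unpacked this by specialising the proposition to $G$ with initial index $p^l$, acting by $\Lambda_+(\overline{\pi}^l)$, and summing over $l$.
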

\begin{proof}
The proof follows immediately by Proposition \ref{prop:f1}.
\end{proof}
Let us now compute each of the sums occuring above.
\begin{proposition}\label{comp1}
For $i=1,2$, we have
\begin{equation*}
    \frac{1}{1-\alpha_i X}\sum_{l\geq 0}\tilde{\psi}_{p^l}\mid B(X_i)\Lambda_{+}(\overline{\pi}^l)Y_2^{l} = 
\end{equation*}
\begin{equation*}
    \frac{\tilde{\psi}_1\mid S_{\pi}(Y_2)}{{D_{\pi, G}^{(2)}(Y_2)}}-p^2 \frac{\left[\tilde{\psi}_p-\tilde{\psi}_1\mid\Lambda_{-}(\overline{\pi})Y_2\right]\mid S_{\pi}(Y_2)\Lambda_{+}(\pi)\Delta_{\overline{\pi}}Y_2X_i}{D_{\pi, G}^{(2)}(Y_2)} + \left[(1-\alpha_iX)(1-p^{2k-4}X_i^2)-1\right]\tilde{\psi}_1.
\end{equation*}
\end{proposition}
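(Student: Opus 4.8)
My plan is to clear the prefactor by multiplying through by $(1-\alpha_iX)$ and then match the coefficient of each power $Y^l$ on the two sides. The two rational terms on the right are themselves generating functions furnished by Proposition \ref{prop:f2} applied to $G$: with $m=1$ it yields $\sum_{l\geq 0}\tilde{\psi}_{p^l}\mid\Lambda_{+}(\overline{\pi}^l)Y^l=\tilde{\psi}_1\mid S_{\pi}(Y)\,D_{\pi,G}^{(2)}(Y)^{-1}$, the first term, and with $m=p$ it yields $\sum_{l\geq 0}\tilde{\psi}_{p^{l+1}}\mid\Lambda_{+}(\overline{\pi}^l)Y^l=[\tilde{\psi}_p-\tilde{\psi}_1\mid\Lambda_{-}(\overline{\pi})Y]\mid S_{\pi}(Y)\,D_{\pi,G}^{(2)}(Y)^{-1}$, which after right multiplication by $\Lambda_{+}(\pi)\Delta_{\overline{\pi}}$ and the shift $l\mapsto l+1$ produces the second term. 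Because $G$ is an eigenform, $D_{\pi,G}^{(2)}(Y)$ is a scalar and commutes with all operators, so these rearrangements are legitimate and the claim reduces to identities in $H_p^{1,1}$ acting on the index-$p^l$ forms $\tilde{\psi}_{p^l}$.

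The coefficient $l=0$ I would check directly. Since $\tilde{\psi}_1$ has index $1$, coprime to $p$, Proposition \ref{prop:f1} gives $\tilde{\psi}_1\mid K(X_i)=(1-p^{k-3}X_i)^2(1-p^{2k-4}X_i^2)\tilde{\psi}_1$. The elementary identity $p^{k-3}X_i=\alpha_iX$, immediate from $X=p^{-(k+s-1)}$ and $X_i=\alpha_ip^{-(2k+s-4)}$, turns this into $(1-\alpha_iX)^2(1-p^{2k-4}X_i^2)\tilde{\psi}_1$; dividing by $1-\alpha_iX$ gives $(1-\alpha_iX)(1-p^{2k-4}X_i^2)\tilde{\psi}_1$, which is precisely the $l=0$ contribution $\tilde{\psi}_1$ of the first term plus the correction $[(1-\alpha_iX)(1-p^{2k-4}X_i^2)-1]\tilde{\psi}_1$.

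The heart of the matter, and the step I expect to be the main obstacle, is the operator identity on index-$p^l$ forms for $l\geq 1$, namely $\tilde{\psi}_{p^l}\mid K(X_i)\Lambda_{+}(\overline{\pi}^l)=(1-\alpha_iX)\,\tilde{\psi}_{p^l}\mid\bigl[\Lambda_{+}(\overline{\pi}^l)-p^2X_i\,\Lambda_{+}(\overline{\pi}^{l-1})\Lambda_{+}(\pi)\Delta_{\overline{\pi}}\bigr]$. This says that right multiplication by $\Lambda_{+}(\overline{\pi}^l)$ collapses the degree-four polynomial $K$ to degree two and extracts exactly one linear factor $(1-\alpha_iX)$, and it is the split counterpart of the inert relations $S^{(2)}(X)^{\textup{prim}}=S^{(2)}(X)^{\textup{factor}}(1+X)$ and $\phi\mid S^{(2)}(X)T_{+}(p)=\phi\mid T_{+}(p)S^{(2)}(X)^{\textup{factor}}$. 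I would establish it by substituting the explicit coefficients of $K$ from \cite{gritsenko} and reducing each product $K_j\Lambda_{+}(\overline{\pi}^l)$ with the relations of Table \ref{table:1} (such as $\Lambda_{+}(\pi)\Lambda_{-}(\overline{\pi})=p^3\Delta_p$ and $T_{+}(p)\Lambda_{-}(\overline{\pi})=p^2\Delta_{\overline{\pi}}T(\pi)$) together with Lemma \ref{action}. The genuine difficulty, and the reason Heim's inert computation does not carry over verbatim, is that, unlike the inert operator $1+p(\nabla_p-p\Delta_p)X^2$, the polynomial $K$ does \emph{not} act as the identity on indices divisible by $p$; only the coprime-index formula of Proposition \ref{prop:f1} is at our disposal, so the vanishing of the top-degree part of $K$ and the appearance of the $\Lambda_{+}(\pi)\Delta_{\overline{\pi}}$ term must be extracted from the split Hecke relations rather than from a simple index-triviality.

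Finally I would reassemble. Summing the $l\geq 1$ identity against $Y^l$ turns $\sum_{l\geq 1}\tilde{\psi}_{p^l}\mid K(X_i)\Lambda_{+}(\overline{\pi}^l)Y^l$ into $(1-\alpha_iX)$ times the $l\geq 1$ part of the first term, minus $p^2X_i(1-\alpha_iX)Y$ times the shifted generating function that gives the second term; combined with the $l=0$ computation and divided by $1-\alpha_iX$, this is exactly the asserted equality. Everything beyond the Hecke-algebra bookkeeping of the displayed operator identity is formal manipulation of telescoping series already licensed by Propositions \ref{prop:f1}, \ref{prop:f2} and the factorisation of $D_{\pi}^{(2)}$ in Proposition \ref{s polynomials}, so that bookkeeping is where the real work lies.
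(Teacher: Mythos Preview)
Your proposal is correct and uses the same ingredients as the paper: the vanishing $K_3\Lambda_+(\overline{\pi})=K_4\Lambda_+(\overline{\pi})=0$, the commutation relations of Table~\ref{table:1} (specifically $T(\overline{\pi},\pi)\Lambda_+(\overline{\pi})=\Lambda_+(\overline{\pi})T(\overline{\pi},\pi)$, $T(\pi,\overline{\pi})\Lambda_+(\overline{\pi})=p^2\Delta_{\overline{\pi}}\Lambda_+(\pi)$, and the corresponding $K_2$ relation), Proposition~\ref{prop:f2}, and Lemma~\ref{action}. The organizational difference is that the paper expands $K=1-K_1X_i+K_2X_i^2-K_3X_i^3+K_4X_i^4$ and computes each generating series $\sum_l\tilde{\psi}_{p^l}\mid K_j\Lambda_+(\overline{\pi}^l)Y^l$ separately before combining, whereas you first collapse $K(X_i)\Lambda_+(\overline{\pi}^l)$ acting on $\tilde{\psi}_{p^l}$ into the factored form $(1-\alpha_iX)\bigl[\Lambda_+(\overline{\pi}^l)-p^2X_i\Lambda_+(\overline{\pi}^{l-1})\Lambda_+(\pi)\Delta_{\overline{\pi}}\bigr]$ and then sum; this is the same computation in a different order, and your packaging makes the appearance of the common factor $(1-\alpha_iX)$ more transparent.
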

\begin{proof}
Using the commutativity relations from Table \ref{table:1}, we obtain (using also equation \eqref{coeffs of K})
\begin{equation}\label{k_2lambda+}
    B_2\Lambda_{+}(\overline{\pi}) = p^2\Delta_{\overline{\pi}}\Lambda_{+}(\pi)T(\overline{\pi},\pi), \textup{ }B_3\Lambda_{+}(\overline{\pi}) = B_4\Lambda_{+}(\overline{\pi}) = 0
\end{equation}
Hence, from the rationality theorem given in Proposition \ref{prop:f2}, we obtain
\begin{itemize}
    \item $\sum_{l\geq 0} \tilde{\psi}_{p^{l}} \mid \Lambda_{+}(\overline{\pi}^{l})Y_2^{l} = \frac{\tilde{\psi}_1\mid S_{\pi}(Y_2)}{D_{\pi, G}^{(2)}(Y_2)}$.
    \item $\sum_{l\geq 0} \tilde{\psi}_{p^{l}}\mid T(\overline{\pi}, \pi)\Lambda_{+}(\overline{\pi}^{l})Y_2^{l} = \sum_{l\geq 0} \tilde{\psi}_{p^{l}}\mid \Lambda_{+}(\overline{\pi}^{l})T(\overline{\pi}, \pi)Y_2^{l}  = \frac{\tilde{\psi}_1 \mid S_{\pi}(Y_2)T(\overline{\pi},\pi)}{D_{\pi, G}^{(2)}(Y_2)}$.
    \item $\begin{aligned}[t]\sum_{l \geq 0} \tilde{\psi}_{p^{l}} \mid T(\pi, \overline{\pi})\Lambda_{+}(\overline{\pi}^{l})Y_2^{l} &= \tilde{\psi}_1\mid T(\pi, \overline{\pi}) + p^2 \sum_{l \geq 1}\tilde{\psi}_{p^{l}}\mid \Delta_{\overline{\pi}}\Lambda_{+}(\pi)\Lambda_{+}(\overline{\pi}^{l-1})Y_2^{l}\\ &= \tilde{\psi}_1\mid T(\pi, \overline{\pi}) + p^2\frac{\left[\tilde{\psi}_p-\tilde{\psi}_{1}\mid \Lambda_{-}(\overline{\pi})Y_2\right]\mid S_{\pi}(Y)\Lambda_{+}(\pi)\Delta_{\overline{\pi}}Y_2}{D_{\pi, G}^{(2)}(Y_2)}\end{aligned}$.
    \item $\begin{aligned}[t]\sum_{l \geq 0} \tilde{\psi}_{p^{l}}\mid B_2 \Lambda_{+}(\overline{\pi}^{l})Y_2^{l} &= \tilde{\psi}_1 \mid B_2 + p^2 \sum_{l \geq 1} \tilde{\psi}_{p^{l}} \mid \Lambda_{+}(\overline{\pi}^{l-1})\Lambda_{+}(\pi)T(\overline{\pi}, \pi)\Delta_{\overline{\pi}}Y_2^{l}\\ &=\tilde{\psi}_{1} \mid B_2 + p^2\frac{\left[\tilde{\psi}_p - \tilde{\psi}_1 \mid \Lambda_{-}(\overline{\pi})Y_2\right] \mid S_{\pi}(Y_2)\Lambda_{+}(\pi)\Delta_{\overline{\pi}}T(\overline{\pi}, \pi)Y_2}{D_{\pi, G}^{(2)}(Y_2)}\end{aligned}$.
    \item $\sum_{l \geq 0} \tilde{\psi}_{p^{l}} \mid B_3 \Lambda_{+}(\overline{\pi}^{l})Y_2^{l} = \tilde{\psi}_1 \mid B_3$.
    \item $\sum_{l \geq 0} \tilde{\psi}_{p^{l}} \mid B_4 \Lambda_{+}(\overline{\pi}^{l})Y_2^{l} = \tilde{\psi}_1 \mid B_4$.
\end{itemize}
By putting all these together and then using Lemma \ref{action}, together with the fact that $a_iX = p^{k-3}X_i$, we obtain the result.
\end{proof}
Let us now consider the third sum.
\begin{proposition}\label{lem: Lambda-}
For $i=1,2$, we have
\begin{align*}
    \frac{1}{1-\alpha_iX}\sum_{l \geq 0}\tilde{\psi}_{p^{l-2}}\mid \Lambda_{-}(p)X_i^2 \mid B(X_i)\Lambda_{+}(\overline{\pi}^{l})Y_2^{l} &= \frac{\tilde{\psi}_1 \mid S_{\pi}(Y_2)U_{\pi}(X_i)X_i^2Y_2^2}{D_{\pi, G}^{(2)}(Y_2)} \\
    &= \frac{p^{2k-4}(p^{k-3}-p^{2k-4}X_i)\tilde{\psi}_{1}\mid S_{\pi}(Y_2)\Delta_{\overline{\pi}}Y_2^2X_i^2}{D_{\pi, G}^{(2)}(Y_2)},
\end{align*}
\begin{flushleft}
where we define $U_{\pi}(t) := p^4\Delta_{\overline{\pi}}\Delta_{p}(T(\overline{\pi}, \pi) - p^4\Delta_{p}t) \in H_{p}^{1,1}[t]$.
\end{flushleft}
\end{proposition}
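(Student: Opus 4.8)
The plan is to follow the template of Proposition \ref{comp1}, the one new feature being the leading operator $\Lambda_{-}(p)$ decorating the sum. First I would dispose of the boundary: since $\tilde{\psi}_{p^{-2}} = \tilde{\psi}_{p^{-1}} = 0$, the terms $l=0,1$ do not contribute and the summation effectively starts at $l=2$. This is why, in contrast to Proposition \ref{comp1}, no low-order correction terms survive and the right-hand side is purely the ``leading'' expression proportional to $\tilde{\psi}_1 \mid S_{\pi}(Y)(\cdots)/D_{\pi,G}^{(2)}(Y)$.

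The heart of the argument is to compute the operator product $\Lambda_{-}(p)K(X_i)\Lambda_{+}(\overline{\pi}^{l})$ so as to shift the summation index by two into the range of the rationality theorem \ref{prop:f2}. I would write $\Lambda_{-}(p) = \Lambda_{-}(\pi)\Lambda_{-}(\overline{\pi})$ and repeatedly use the relation $\Lambda_{-}(\overline{\pi})\Lambda_{+}(\overline{\pi}) = pT(\overline{\pi},\pi)$ --- the pull-back to $H_p^{1,1}$ of the identity $\Lambda_{-}^{1,3}\Lambda_{+}^{3,1} = pT_0(p,1,1,p)$ established in the proof of Theorem \ref{theor: fact} --- together with the commutation entries of Table \ref{table:1} and the annihilation relations $K_3\Lambda_{+}(\overline{\pi}) = K_4\Lambda_{+}(\overline{\pi}) = 0$ already exploited in Proposition \ref{comp1}. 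The absorption of two factors of $\Lambda_{+}(\overline{\pi})$ into $\Lambda_{-}(p)$ produces the central similitude operators $\Delta_{\pi},\Delta_{\overline{\pi}},\Delta_p$, while the collapse of $K$ produces exactly the linear polynomial $U_{\pi}(X_i) = p^4\Delta_{\overline{\pi}}\Delta_p\big(T(\overline{\pi},\pi) - p^4\Delta_pX_i\big)$ together with a scalar factor $(1-\alpha_iX)$. Concretely, the target is an operator identity of the shape $\Lambda_{-}(p)K(X_i)\Lambda_{+}(\overline{\pi}^{l}) = (1-\alpha_iX)\,U_{\pi}(X_i)\,\Lambda_{+}(\overline{\pi}^{l-2})$ (up to the explicit powers of $X_i$ and $Y$), which pairs $\tilde{\psi}_{p^{l-2}}$ with $\Lambda_{+}(\overline{\pi}^{l-2})$ and pulls out a factor $Y^2$; the emerging $(1-\alpha_iX)$ cancels the prefactor $(1-\alpha_iX)^{-1}$ exactly as in Proposition \ref{comp1}, using $p^{k-3}X_i = \alpha_iX$.

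Since $U_{\pi}(X_i)$ is built from $\Delta$-operators and $T(\overline{\pi},\pi)$, which commute with $\Lambda_{+}(\overline{\pi})$, it may be pulled outside the sum to act last, leaving $\sum_{\delta\geq 0}\tilde{\psi}_{p^{\delta}}\mid\Lambda_{+}(\overline{\pi}^{\delta})Y^{\delta}$. This is evaluated by Proposition \ref{prop:f2} with $m=1$ (where the term $\tilde{\psi}_{p^{-1}}$ vanishes) to give $\tilde{\psi}_1 \mid S_{\pi}(Y)/D_{\pi,G}^{(2)}(Y)$, whence the first displayed form. The second equality is then a pure eigenvalue computation: applying Lemma \ref{action} to replace $T(\overline{\pi},\pi)$ by $p^{k-3}$ and using $\Delta_p \mapsto p^{2k-8}$, the factor $\Delta_p\big(T(\overline{\pi},\pi) - p^4\Delta_pX_i\big)$ evaluates to $p^{2k-8}(p^{k-3} - p^{2k-4}X_i)$ and hence $U_{\pi}(X_i)$ to $p^{2k-4}(p^{k-3} - p^{2k-4}X_i)\Delta_{\overline{\pi}}$, which is the stated closed form.

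The main obstacle is the operator bookkeeping in the absorption step. Correctly tracking the compensating similitude factors $\Delta_{\pi},\Delta_{\overline{\pi}},\Delta_p$ --- which, as noted after Table \ref{table:1}, are present precisely to account for the second coordinate of the similitude under the isomorphism of Subsection \ref{gl4} --- through the non-commutative products, and verifying that the degree-four polynomial $K(X_i)$ genuinely collapses to the linear $U_{\pi}(X_i)$ (with the spurious scalar $(1-\alpha_iX)$ extracted), is where exponent and sign errors are most likely to arise.
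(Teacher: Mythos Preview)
Your outline is correct and matches the paper's proof: one first simplifies $\Lambda_{-}(p)K(X_i)$ (using $\Lambda_{-}(p)K_3=\Lambda_{-}(p)K_4=0$ and the relations in Table~\ref{table:1}), then absorbs two factors of $\Lambda_{+}(\overline{\pi})$ to obtain, for $l\geq 2$, the identity $\Lambda_{-}(p)K(X_i)\Lambda_{+}(\overline{\pi}^{l}) = \Lambda_{+}(\overline{\pi}^{l-2})(1-T(\overline{\pi},\pi)X_i)U_{\pi}(X_i)$, after which Proposition~\ref{prop:f2} and Lemma~\ref{action} finish the job. One small precision: the factor emerging from the operator calculation is $(1-T(\overline{\pi},\pi)X_i)$, not the scalar $(1-\alpha_iX)$; it becomes $(1-p^{k-3}X_i)=(1-\alpha_iX)$ only after $\Lambda_{+}(\overline{\pi}^{l-2})$ has brought the form down to index~$1$ so that Lemma~\ref{action} applies---and note that the pulled-back relation you quote should read $\Lambda_{-}(\overline{\pi})\Lambda_{+}(\overline{\pi})=p\Delta_{\overline{\pi}}T(\overline{\pi},\pi)$, with the $\Delta_{\overline{\pi}}$ present.
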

\begin{proof}
We will first simplify $\Lambda_{-}(p)B(X_i)$. But $\Lambda_{-}(p)=\Lambda_{-}(\pi)\Lambda_{-}(\overline{\pi})=\Lambda_{-}(\overline{\pi})\Lambda_{-}(\pi)$, so from the relations of Table \ref{table:1} and equation \eqref{coeffs of K}, we have:
\begin{itemize}
    \item $\Lambda_{-}(p)B_1 = \Lambda_{-}(p)(T(\pi, \overline{\pi})+T(\overline{\pi}, \pi)) = p^2\left(\Delta_{\overline{\pi}}\Lambda_{-}(\pi)^2+\Delta_{\pi}\Lambda_{-}(\overline{\pi})^2\right)$.
    \item $\Lambda_{-}(p)B_2 = p^4\Delta_{p}\Lambda_{-}(p)$.
    \item $\Lambda_{-}(p)B_3 = \Lambda_{-}(p)B_4 = 0$.
\end{itemize}
Also, again from Table \ref{table:1}, we have that
\begin{itemize}
\item $\Lambda_{-}(\pi)\Lambda_{+}(\overline{\pi}) = p^3\Delta_p$.
\item $\Lambda_{-}(\overline{\pi})\Lambda_{+}(\overline{\pi}) = p\Delta_{\overline{\pi}}T(\overline{\pi},\pi)$.
\item $\Lambda_{-}(p)\Lambda_{+}(\overline{\pi}^2) = \Lambda_{-}(\overline{\pi})\Lambda_{-}(\pi)\Lambda_{+}(\overline{\pi})\Lambda_{+}(\overline{\pi}) = p^4\Delta_p\Delta_{\overline{\pi}}T(\overline{\pi},\pi)$.
\end{itemize}
Now, $T(\overline{\pi},\pi)$ and $\Delta_p$ commute with $\Lambda_{+}(\overline{\pi})$ and so for $l \geq 2$, we can write 
\begin{align*}
    \Lambda_{-}(p)B(X_i)\Lambda_{+}(\overline{\pi}^l) &= \Lambda_{+}(\overline{\pi}^{l-2})(p^4\Delta_p\Delta_{\overline{\pi}}T(\overline{\pi},\pi)-(p^4\Delta_{\overline{\pi}}\Delta_pT(\overline{\pi},\pi)^2 + p^8\Delta_{\overline{\pi}}\Delta_p^2)X_i+p^8\Delta_p^2\Delta_{\overline{\pi}}T(\overline{\pi},\pi)X_i^2)\\
    &= \Lambda_{+}(\overline{\pi}^{l-2})(1-T(\overline{\pi}, \pi)X_i)U_{\pi}(X_i).
\end{align*}
Hence,
\begin{align*}
   \sum_{l \geq 0}\tilde{\psi}_{p^{l-2}}\mid (\Lambda_{-}(p)X_i^2) \mid B(X_i)\Lambda_{+}(\overline{\pi}^{l}) Y_2^l&=\sum_{l \geq 2} \tilde{\psi}_{p^{l-2}} \mid \Lambda_{+}(\overline{\pi}^{l-2})Y_2^{l-2}(1-T(\overline{\pi},\pi)X_i)U_{\pi}(X_i)X_i^2Y_2^2= \\
   &=(1-\alpha_iX)\frac{\tilde{\psi}_1 \mid S_{\pi}(Y_2)U_{\pi}(X_i)X_i^2Y_2^2}{D_{\pi, G}^{(2)}(Y_2)},
\end{align*}
by Proposition \ref{prop:f2} and Lemma \ref{action}. Hence, the result follows.
\end{proof}
Finally, for the middle term, we have:
\begin{proposition}\label{prop: t-}
We have, for $i=1,2$
\begin{equation*}
-\frac{1}{1-\alpha_i X}\sum_{l\geq 0}\tilde{\psi}_{p^{l-1}}\mid T_{-}(p)X_i \mid B(X_i)\Lambda_{+}(\overline{\pi}^{l})Y_2^{l} = 
\end{equation*}
\begin{equation*}
    =-p^2\frac{\tilde{\psi}_1\mid S_{\pi}(Y_2)T(\pi)\Delta_{\overline{\pi}}X_iY_2}{D_{\pi,G}^{(2)}(Y_2)}+p^5\frac{\left[\tilde{\psi}_p-\tilde{\psi}_1 \mid \Lambda_{-}(\overline{\pi})Y_2\right]\mid S_{\pi}(Y_2)\Delta_{p}\Delta_{\overline{\pi}}T_{+}(p)Y_2^2X_i^2}{D_{\pi, G}^{(2)}(Y_2)} + p^{2k-4}\tilde{\psi}_{1}\mid T(\overline{\pi})Y_2X_i^2.
\end{equation*}
\end{proposition}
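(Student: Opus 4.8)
The plan is to imitate the proofs of Propositions \ref{comp1} and \ref{lem: Lambda-}: reduce the operator word $T_{-}(p)\,K(X_i)\,\Lambda_{+}(\overline{\pi}^{l})$ to a power of $\Lambda_{+}(\overline{\pi})$ times a short polynomial in $X_i$, reindex the sum over $l$, collapse it with the rationality theorem of Proposition \ref{prop:f2}, and read off the boundary contributions using Lemma \ref{action}.

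First I would expand $K(t) = 1 - K_1 t + K_2 t^2 - K_3 t^3 + K_4 t^4$ and move $T_{-}(p)$ to the right through the successive factors of $\Lambda_{+}(\overline{\pi})$. The key identity here is $T_{-}(p)\Lambda_{+}(\overline{\pi}) = p^2\Delta_{\overline{\pi}}T(\pi)$, read off from Table \ref{table:1}, and it already produces the operator $T(\pi)\Delta_{\overline{\pi}}$ and the power $p^2$ occurring in the first term on the right-hand side. The coefficients $K_3$ and $K_4$ are annihilated by $\Lambda_{+}(\overline{\pi})$ (the relations $K_3\Lambda_{+}(\overline{\pi})=K_4\Lambda_{+}(\overline{\pi})=0$ were already used in Proposition \ref{comp1}), while $T(\pi)$, $\Delta_{\overline{\pi}}$ and $\Delta_p$ commute with $\Lambda_{+}(\overline{\pi})$. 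Hence, for $l$ large, $T_{-}(p)K(X_i)\Lambda_{+}(\overline{\pi}^{l})$ becomes $\Lambda_{+}(\overline{\pi}^{l-1})$ (and, from the $K_2$-contribution, $\Lambda_{+}(\overline{\pi}^{l-2})$) times a polynomial whose coefficients are exactly $T(\pi)\Delta_{\overline{\pi}}$ and $\Delta_p\Delta_{\overline{\pi}}T_{+}(p)$.

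Next I would substitute this into $\sum_{l\geq 0}\tilde{\psi}_{p^{l-1}}\mid(\cdots)\Lambda_{+}(\overline{\pi}^{l})Y^{l}$ and shift the summation index ($j=l-1$, resp.\ $j=l-2$). Each resulting series has the shape $\sum_{j\geq 0}\tilde{\psi}_{p^{j}}\mid(\mathrm{op})\Lambda_{+}(\overline{\pi}^{j})Y^{j}$ handled in the bullet list of Proposition \ref{comp1}, so Proposition \ref{prop:f2} turns it into $\tilde{\psi}_1\mid S_{\pi}(Y)(\cdots)/D_{\pi,G}^{(2)}(Y)$. The bracket $[\tilde{\psi}_p-\tilde{\psi}_1\mid\Lambda_{-}(\overline{\pi})Y]$ in the second term is precisely the numerator furnished by Proposition \ref{prop:f2} when the index shift leaves a form of index $p$, exactly as in the third and fourth bullet points of the proof of Proposition \ref{comp1}. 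The scalar prefactor $\tfrac{1}{1-\alpha_i X}$, together with the identity $\alpha_i X = p^{k-3}X_i$ and Lemma \ref{action} (which evaluates the index-one actions $\tilde{\psi}_1\mid T(\pi,\overline{\pi})$ and $\tilde{\psi}_1\mid T(\overline{\pi},\pi)$ as $p^{k-3}\tilde{\psi}_1$), is then propagated and telescoped exactly as in \ref{comp1}.

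The main obstacle will be the low-$l$ terms. Since $T_{-}(p)$ multiplies the Jacobi index by $p$ while $\Lambda_{+}(\overline{\pi}^{l})$ divides it by $p^{l}$, the terms with small $l$ fall outside the generic reduction and must be isolated by hand; it is these that produce the correction summand $p^{2k-4}\tilde{\psi}_1\mid T(\overline{\pi})YX_i^2$ and that fix the precise numerical factors in the first two terms. The most delicate bookkeeping is keeping $\Delta_{\pi}$ and $\Delta_{\overline{\pi}}$ distinct throughout — these arise from the second-coordinate normalisation explained just before Table \ref{table:1} — and matching the accumulated powers of $p$ against the stated right-hand side.
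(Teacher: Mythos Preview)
Your proposal is correct and follows essentially the same approach as the paper: kill the $K_3,K_4$ terms, push $T_{-}(p)$ through $\Lambda_{+}(\overline{\pi})$ via $T_{-}(p)\Lambda_{+}(\overline{\pi})=p^2\Delta_{\overline{\pi}}T(\pi)$, reindex, apply Proposition~\ref{prop:f2}, and treat the small-$l$ boundary separately. One point worth flagging: the paper handles the $K_2$ coefficient not by quoting a ready-made relation from Table~\ref{table:1}, but by passing to the $\textup{GL}_4$ parabolic algebra $H_{1,2,1}$, decomposing $K_2$ explicitly into left cosets, and computing $T_{-}(p)K_2\Lambda_{+}(\overline{\pi}^{l})$ directly there; this yields $p^5\Delta_{\overline{\pi}}\Delta_p\Lambda_{+}(\overline{\pi}^{l-2})T_{+}(p)T(\overline{\pi},\pi)$ for $l\geq 2$ and the boundary term $p^4\Delta_p T(\overline{\pi})T(\overline{\pi},\pi)$ at $l=1$. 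Your outline leaves this step implicit, so be prepared to supply that computation (or an equivalent identity for $T_{-}(p)K_2\Lambda_{+}(\overline{\pi})$) when you write it out in full. Also, the paper uses $T_{-}(p)K_3=T_{-}(p)K_4=0$ rather than $K_3\Lambda_{+}(\overline{\pi})=K_4\Lambda_{+}(\overline{\pi})=0$ to eliminate those terms; either works here since $l\geq 1$.
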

\begin{proof}
Firstly, we have no terms for $l=0$, so we consider $l \geq 1$. The idea is to pass $\Lambda_{+}(\overline{\pi}^m)$ to the left for some $m$, so that it acts on the Fourier-Jacobi coefficients, and then we will be able to apply the rationality Proposition \ref{prop:f2}. From equation \eqref{k_2lambda+}, we have $B_3\Lambda_{+}(\overline{\pi})=B_4\Lambda_{+}(\overline{\pi})=0$.
Now, using Table \ref{table:1}, we have $T_{-}(p)\Lambda_{+}(\overline{\pi}) = p^2\Delta_{\overline{\pi}}T(\pi)$ and that $T(\pi)$ commutes with $\Lambda_{+}(\overline{\pi})$. Therefore, we can compute:
\begin{equation*}
    \sum_{l\geq 0}\tilde{\psi}_{p^{l-1}}\mid T_{-}(p)X_i\mid\Lambda_{+}(\overline{\pi}^{l})Y_2^l = p^2\sum_{l\geq 1}\tilde{\psi}_{p^{l-1}}\mid\Lambda_{+}(\overline{\pi}^{l-1})Y_2^{l-1}T(\pi)\Delta_{\overline{\pi}}X_i Y_2 = p^2\frac{\tilde{\psi}_1 \mid S_{\pi}(Y_2)T(\pi)\Delta_{\overline{\pi}}Y_2X_i}{D_{\pi, G}^{(2)}(Y_2)}.
\end{equation*}
Let us now deal with $T_{-}(p)B_1\Lambda_{+}(\overline{\pi}^{l})$. We remind the reader that $B_1 = T(\pi, \overline{\pi})+T(\overline{\pi},\pi)$. We will deal with each part separately. By Table \ref{table:1}, we have $(l \geq 1)$
\begin{equation*}
    T_{-}(p)T(\overline{\pi}, \pi)\Lambda_{+}(\overline{\pi}^{l}) = T_{-}(p)\Lambda_{+}(\overline{\pi}^{l})T(\overline{\pi}, \pi) = p^2\Delta_{\overline{\pi}}\Lambda_{+}(\overline{\pi}^{l-1})T(\pi)T(\overline{\pi}, \pi).
\end{equation*}
For the other part, if $l \geq 2$,
\begin{equation*}
    T_{-}(p)T(\pi,\overline{\pi})\Lambda_{+}(\overline{\pi}^{l})= p^2\Delta_{\overline{\pi}}T_{-}(p)\Lambda_{+}(\pi)\Lambda_{+}(\overline{\pi}^{l-1}) = p^4\Delta_{p}T(\overline{\pi})\Lambda_{+}(\overline{\pi}^{l-1}) = p^5\Delta_{\overline{\pi}}\Delta_{p}\Lambda_{+}(\overline{\pi}^{l-2})T_{+}(p).
\end{equation*}
For $l=1$, we have
\begin{equation*}
    T_{-}(p)T(\pi,\overline{\pi})\Lambda_{+}(\overline{\pi})= p^2T_{-}(p)\Lambda_{+}(\overline{\pi}) = p^4\Delta_{p}T(\overline{\pi}).
\end{equation*}
Finally, we will deal with the term $T_{-}(p)B_2\Lambda_{+}(\overline{\pi}^{l})$. 
Using equation \eqref{k_2lambda+} and relations of Table \ref{table:1}, we have for $l \geq 2$,
\begin{align*}
    T_{-}(p)B_2\Lambda_{+}(\overline{\pi}^l) = p^2\Delta_{\overline{\pi}}T_{-}(p)\Lambda_{+}(\pi)T(\overline{\pi},\pi)\Lambda_{+}(\overline{\pi}^{l-1})&=p^{4}\Delta_pT(\overline{\pi})\Lambda_{+}(\overline{\pi}^{l-1})T(\overline{\pi},\pi)\\
    &= p^5\Delta_{\overline{\pi}}\Delta_p\Lambda_{+}(\overline{\pi}^{l-2})T_{+}(p)T(\overline{\pi},\pi).
\end{align*}
Finally, for $l=1$, we get
\begin{equation*}
    T_{-}(p)B_2\Lambda_{+}(\overline{\pi})=  p^4\Delta_p T(\overline{\pi})T(\overline{\pi},\pi).
\end{equation*}
Applying now Proposition \ref{prop:f2} and using Lemma \ref{action} as well, we obtain the stated result.
\end{proof}
\subsection{Calculation of the Dirichlet Series - Third Part}
We will now deal with the Dirichlet series
\begin{multline*}
    D_{(\epsilon_1, \epsilon_2)}(s) = \sum_{l_1,l_2,m \geq 0}\langle \tilde{\phi}_1\mid T_{-}(p^{m})\Lambda_{-}(\pi^{l_2})\Lambda_{-}(\overline{\pi}^{l_1}), \tilde{\psi}_{p^{m+l_1+l_2}}\rangle_{\mathcal{A}}a_{p^{m}} p^{(4-2k)l_2} p^{(4-2k)l_1}\times \\ \times \pi^{l_2 k} \overline{\pi}^{l_1k} p^{-s(l_1+l_2)}p^{-(2k+s-4)m} :=
\end{multline*}
\begin{equation*}
    := (\alpha_1 V_1(s) - \alpha_2 V_2(s))/(\alpha_1-\alpha_2),
\end{equation*}
where
\begin{equation*}
    V_i(s) := \sum_{l_1,l_2,m \geq 0}\langle \tilde{\phi}_1, \tilde{\psi}_{p^{m+l_1+l_2}}\mid T_{+}(p^{m})\Lambda_{+}(\pi^{l_1})\Lambda_{+}(\overline{\pi}^{l_2})\rangle_{\mathcal{A}}X_i^mY_2^{l_1}Y_1^{l_2}.
\end{equation*}
Here, we again set $X_i = \alpha_i p^{-(2k+s-4)}$, $Y_1 = \pi^{k}p^{-(2k+s-4)}$, $Y_2 = \overline{\pi}^{k}p^{-(2k+s-4)}$ and we keep in mind that the operators $T_{+}(p), \Lambda_{+}(\pi), \Lambda_{+}(\overline{\pi})$ all commute with each other. This follows from the fact that $j_{+}$ of equation \eqref{+- embeddings} is a ring homomorphism and $H(\Gamma_{1}, S_{p}^{1})$ is commutative.
\begin{lemma}
For $i=1,2$, we have 
\begin{equation*}
    \sum_{l_1,l_2, m \geq 0} \tilde{\psi}_{p^{m+l_1+l_2}}\mid T_{+}(p^{m})\Lambda_{+}(\pi^{l_1})\Lambda_{+}(\overline{\pi}^{l_2})X_i^mY_1^{l_1}Y_2^{l_2} =
\end{equation*}
\begin{equation*}
    =Q_{p,G}^{(2)}(X_i)^{-1}\sum_{l_1,l_2\geq 0}\left[\tilde{\psi}_{p^{l_1+l_2}} - \tilde{\psi}_{p^{l_1+l_2-1}}\mid T_{-}(p)X_i + p\tilde{\psi}_{p^{l_1+l_2-2}}\mid \Lambda_{-}(p)X_i^2\right]\mid B(X_i)\Lambda_{+}(\overline{\pi}^{l_2})\Lambda_{+}(\pi^{l_1})Y_1^{l_1}Y_2^{l_2}.
\end{equation*}
\end{lemma}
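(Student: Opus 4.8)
The plan is to reduce the identity to the single-prime rationality theorem, Proposition \ref{prop:f1}, applied to the eigenform $G$, with the operators $\Lambda_{+}(\pi)$, $\Lambda_{+}(\overline{\pi})$ and the weights $Y_1,Y_2$ treated as spectators in the summation over $m$. First I would fix $l_1,l_2 \geq 0$, set $n := l_1+l_2$, and isolate the inner sum over $m$. By the definition of the action of a product of Hecke operators on $P$-forms, one has
\[
\tilde{\psi}_{p^{m+n}}\mid T_{+}(p^{m})\Lambda_{+}(\pi^{l_1})\Lambda_{+}(\overline{\pi}^{l_2}) = \left(\tilde{\psi}_{p^{m+n}}\mid T_{+}(p^{m})\right)\mid \Lambda_{+}(\pi^{l_1})\Lambda_{+}(\overline{\pi}^{l_2}),
\]
so that the operators $\Lambda_{+}(\pi^{l_1})\Lambda_{+}(\overline{\pi}^{l_2})$ and the monomial $Y_1^{l_1}Y_2^{l_2}$ pull outside the sum over $m$ once $l_1,l_2$ are held fixed.

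Next I would recognise $\sum_{m\geq 0}\tilde{\psi}_{p^{m+n}}\mid T_{+}(p^{m})X_i^{m}$ as precisely the left-hand side of Proposition \ref{prop:f1}, with the index ``$m$'' of that proposition played by $p^{n}$ and with $X=X_i$. Since Proposition \ref{prop:f1} holds for an arbitrary Hecke eigenform, applying it to $G$ gives
\[
\sum_{m\geq 0}\tilde{\psi}_{p^{m+n}}\mid T_{+}(p^{m})X_i^{m} = Q_{p,G}^{(2)}(X_i)^{-1}\left(\tilde{\psi}_{p^{n}} - \tilde{\psi}_{p^{n-1}}\mid T_{-}(p)X_i + p\,\tilde{\psi}_{p^{n-2}}\mid\Lambda_{-}(p)X_i^{2}\right)\mid K(X_i),
\]
where $\tilde{\psi}_{p^{n-1}}$ and $\tilde{\psi}_{p^{n-2}}$ are interpreted as $0$ for negative exponent, consistent with the convention $\phi_{m/p},\phi_{m/p^{2}}$ of Proposition \ref{prop:f1}. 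As $Q_{p,G}^{(2)}(X_i)^{-1}$ is a scalar (the eigenvalue polynomial evaluated at $X_i$), it factors out of every sum. Re-applying $\Lambda_{+}(\pi^{l_1})\Lambda_{+}(\overline{\pi}^{l_2})Y_1^{l_1}Y_2^{l_2}$ to both sides and summing over $l_1,l_2$ then produces exactly the right-hand side of the lemma; at this point I would invoke the fact that $\Lambda_{+}(\pi)$ and $\Lambda_{+}(\overline{\pi})$ commute (Table \ref{table:1}) to identify the order $\Lambda_{+}(\overline{\pi}^{l_2})\Lambda_{+}(\pi^{l_1})$ appearing in the statement with $\Lambda_{+}(\pi^{l_1})\Lambda_{+}(\overline{\pi}^{l_2})$, and to note that $K(X_i)$ is applied before the $\Lambda_{+}$-operators on both sides so the orderings match.

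The computation is essentially bookkeeping, and I expect no genuine obstacle. The one point deserving care is the legitimacy of interchanging the three summations over $l_1,l_2,m$, which I would justify by absolute convergence of the full multiple series for $\mathrm{Re}(s)$ large. The commutativity of $T_{+}(p)$ with both $\Lambda_{+}(\pi)$ and $\Lambda_{+}(\overline{\pi})$, recorded before the lemma and in Table \ref{table:1}, is what guarantees that the $m$-summation can be performed independently of $l_1,l_2$ and that regrouping the operator products leaves the value unchanged; beyond this, the lemma is simply a repackaging of Proposition \ref{prop:f1} across the two-variable index $(l_1,l_2)$.
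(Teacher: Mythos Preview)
Your proposal is correct and takes essentially the same approach as the paper, which simply states that the result follows immediately from Proposition \ref{prop:f1}. You have spelled out the bookkeeping---fixing $l_1,l_2$, applying the rationality theorem to the inner $m$-sum with index $p^{l_1+l_2}$, and then restoring the $\Lambda_+$ operators---that the paper leaves implicit.
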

\begin{proof}
The proof follows immediately by Proposition \ref{prop:f1}.
\end{proof}
We will now deal with each sum occurring above.
\begin{proposition}\label{lem:sum1}
For $i=1, 2$, we have
\begin{equation*}
    \sum_{l_1,l_2\geq 0}\tilde{\psi}_{p^{l_1+l_2}}\mid B(X_i)\Lambda_{+}(\overline{\pi}^{l_2})\Lambda_{+}(\pi^{l_1})Y_1^{l_1}Y_2^{l_2} = 
\end{equation*}
\begin{equation*}
    =(1-p^{2k-5}Y_1Y_2)(1 - p^2Y_2Y_1^{-1}\lambda_{\overline{\pi}}X_i)(1-p^2Y_1Y_2^{-1}\lambda_{\pi}X_i)\frac{\tilde{\psi}_1 \mid S_{\overline{\pi}}(Y_1)S_{\pi}(Y_2)}{D_{\overline{\pi}, G}^{(2)}(Y_1)D_{\pi, G}^{(2)}(Y_2)}
\end{equation*}
\begin{equation*}
    - \left[(p^{k-3}-p^2Y_1Y_2^{-1}\lambda_{\pi})X_i+p^{2k-4}X_i^2\right]\frac{\tilde{\psi}_1 \mid S_{\overline{\pi}}(Y_1)}{D_{\overline{\pi},G}^{(2)}(Y_1)} - \left[(p^{k-3}-p^2Y_2Y_1^{-1}\lambda_{\overline{\pi}})X_i+p^{2k-4}X_i^2\right]\frac{\tilde{\psi}_1 \mid S_{\pi}(Y_2)}{D_{\pi,G}^{(2)}(Y_2)}+
\end{equation*}
\begin{equation*}
    +p^2\frac{\left[\tilde{\psi}_p-\tilde{\psi}_1\mid\Lambda_{-}(\overline{\pi})Y_2\right]\mid S_{\pi}(Y_2)\Lambda_{+}(\pi)\Delta_{\overline{\pi}}T(\overline{\pi},\pi)Y_2X_i^2}{D_{\pi, G}^{(2)}(Y_2)} +
\end{equation*}
\begin{equation*}
    +p^2\frac{\left[\tilde{\psi}_p-\tilde{\psi}_1\mid\Lambda_{-}(\pi)Y_1\right]\mid S_{\overline{\pi}}(Y_1)\Lambda_{+}(\overline{\pi})\Delta_{\pi}T(\pi,\overline{\pi})Y_1X_i^2}{D_{\overline{\pi}, G}^{(2)}(Y_1)}+
\end{equation*}
\begin{equation*}
    + \left[(1-\alpha_iX)^2(1-p^{2k-4}X_i^2)+2\alpha_iX-(1-p^{2k-4}X_i^2)\right]\tilde{\psi}_1,
\end{equation*}
where $\lambda_{\pi}, \lambda_{\overline{\pi}}$ are the eigenvalues of $\Delta_{\pi}, \Delta_{\overline{\pi}}$ respectively.
\end{proposition}
\begin{proof}
Firstly, using Proposition \ref{prop:f2} and commutativity relations of Table \ref{table:1}, we have
\begin{align*}
    \sum_{l_1,l_2\geq 0} \tilde{\psi}_{p^{l_1+l_2}}\mid \Lambda_{+}(\overline{\pi}^{l_2})\Lambda_{+}(\pi^{l_1})Y_1^{l_1}Y_2^{l_2} &= \sum_{l_1\geq 0}\frac{\left[\tilde{\psi}_{p^{l_1}}-\tilde{\psi}_{p^{l_1-1}}\mid \Lambda_{-}(\overline{\pi})Y_2\right]\mid S_{\pi}(Y_2)\Lambda_{+}(\pi^{l_1})Y_1^{l_1}}{D_{\pi,G}^{(2)}(Y_2)}\\
    &= \frac{(1-p^{2k-5}Y_1Y_2)\tilde{\psi}_1\mid S_{\overline{\pi}}(Y_1)S_{\pi}(Y_2)}{D_{\overline{\pi},G}^{(2)}(Y_1)D_{\pi, G}^{(2)}(Y_2)}.
\end{align*}
Now, from equation \eqref{coeffs of K}, $B_1=T(\pi,\overline{\pi})+T(\overline{\pi},\pi)$. Hence, from Proposition \ref{prop:f2} and Table \ref{table:1}:
\begin{equation*}
    \sum_{l_1,l_2\geq 0} \tilde{\psi}_{p^{l_1+l_2}}\mid T(\pi, \overline{\pi})\Lambda_{+}(\overline{\pi}^{l_2})\Lambda_{+}(\pi^{l_1})Y_2^{l_2}Y_1^{l_1} = \sum_{l_1,l_2\geq 0} \tilde{\psi}_{p^{l_1+l_2}}\mid \Lambda_{+}(\pi^{l_1}) T(\pi, \overline{\pi})\Lambda_{+}(\overline{\pi}^{l_2})Y_2^{l_2}Y_1^{l_1} = 
\end{equation*}
\begin{equation*}
    = \sum_{l_1 \geq 0} \tilde{\psi}_{p^{l_1}} \mid \Lambda_{+}(\pi^{l_1})T(\pi, \overline{\pi})Y_1^{l_1} + p^2\sum_{l_1 \geq 0, l_2 \geq 1} \tilde{\psi}_{p^{l_1+l_2}}\mid \Lambda_{+}(\pi^{l_1+1})\Lambda_{+}(\overline{\pi}^{l_2-1})\Delta_{\overline{\pi}}Y_1^{l_1}Y_2^{l_2}
\end{equation*}
\begin{equation*}
    =\frac{\tilde{\psi}_{1}\mid S_{\overline{\pi}}(Y_1)T(\pi, \overline{\pi})}{D_{\overline{\pi}, G}^{(2)}(Y_1)}+p^2Y_2Y_1^{-1}\left((1-p^{2k-5}Y_1Y_2)\frac{\tilde{\psi}_1\mid S_{\overline{\pi}}(Y_1)S_{\pi}(Y_2)\Delta_{\overline{\pi}}}{D_{\overline{\pi},G}^{(2)}(Y_1)D_{\pi, G}^{(2)}(Y_2)} - \frac{\tilde{\psi}_1\mid S_{\pi}(Y_2)\Delta_{\overline{\pi}}}{D_{\pi, G}^{(2)}(Y_2)}\right),
\end{equation*}
and we get an analogous result for
\begin{equation*}
    \sum_{l_1,l_2\geq 0} \tilde{\psi}_{p^{l_1+l_2}}\mid T(\overline{\pi}, \pi)\Lambda_{+}(\overline{\pi}^{l_2})\Lambda_{+}(\pi^{l_1})Y_2^{l_2}Y_1^{l_1}.
\end{equation*}
Next, using Table \ref{table:1}, we observe that for $l_1, l_2 \geq 1$ we have 
\begin{align}\label{k_2}
    B_2\Lambda_{+}(\overline{\pi}^{l_2})\Lambda_{+}(\pi^{l_1}) &= p\Lambda_{+}(\pi)\Lambda_{-}(\overline{\pi})\Lambda_{+}(\overline{\pi}^{l_2})\Lambda_{+}(\pi^{l_1}) = p\Lambda_{+}(\pi)\Lambda_{-}(\overline{\pi})\Lambda_{+}(\pi)\Lambda_{+}(\pi^{l_1-1})\Lambda_{+}(\overline{\pi}^{l_2}) \nonumber\\
    &=p^4\Lambda_{+}(\pi)\Delta_{p}\Lambda_{+}(\pi^{l_1-1})\Lambda_{+}(\overline{\pi}^{l_2}) = p^4\Lambda_{+}(\pi^{l_1})\Lambda_{+}(\overline{\pi}^{l_2})\Delta_{p}.
\end{align}
Hence,
\begin{equation*}
    \sum_{l_1,l_2 \geq 0}\tilde{\psi}_{p^{l_1+l_2}}\mid B_2\Lambda_{+}(\pi^{l_1})\Lambda_{+}(\overline{\pi}^{l_2})Y_1^{l_1}Y_2^{l_2} =
\end{equation*}
\begin{equation*}
    \sum_{l_1 \geq 0} \tilde{\psi}_{p^{l_1}}\mid B_2 \Lambda_{+}(\pi^{l_1})Y_1^{l_1} + \sum_{l_2 \geq 0} \tilde{\psi}_{p^{l_2}}\mid B_2 \Lambda_{+}(\pi^{l_2})Y_2^{l_2} + \sum_{l_1,l_2\geq 1}\tilde{\psi}_{p^{l_1+l_2}}\mid B_2\Lambda_{+}(\overline{\pi}^{l_2})\Lambda_{+}(\pi^{l_1})Y_1^{l_1}Y_2^{l_2} -\textup{ } \tilde{\psi}_1\mid B_2 =  
\end{equation*}
\begin{equation*}
    = \tilde{\psi}_1\mid B_2 + p^2\frac{\left[\tilde{\psi}_p-\tilde{\psi}_1\mid\Lambda_{-}(\overline{\pi})\right]\mid S_{\pi}(Y_2)\Lambda_{+}(\pi)\Delta_{\overline{\pi}}T(\overline{\pi},\pi)Y_2}{D_{\pi, G}^{(2)}(Y_2)} + 
\end{equation*}
\begin{equation*}
+p^2\frac{\left[\tilde{\psi}_p-\tilde{\psi}_1\mid\Lambda_{-}(\pi)\right]\mid S_{\overline{\pi}}(Y_1)\Lambda_{+}(\overline{\pi})\Delta_{\pi}T(\pi,\overline{\pi})Y_1}{D_{\overline{\pi}, G}^{(2)}(Y_1)}+
\end{equation*}
\begin{equation*}
    +p^{2k-4}\left(\frac{(1-p^{2k-5}Y_1Y_2)\tilde{\psi}_1\mid S_{\overline{\pi}}(Y_1)S_{\pi}(Y_2)}{D_{\overline{\pi},G}^{(2)}(Y_1)D_{\pi, G}^{(2)}(Y_2)} - \frac{\tilde{\psi}_1\mid S_{\pi}(Y_2)}{D_{\pi, G}^{(2)}(Y_2)}-\frac{\tilde{\psi}_1\mid S_{\overline{\pi}}(Y_1)}{D_{\overline{\pi}, G}^{(2)}(Y_1)}+ \tilde{\psi}_1\right),
\end{equation*}
as the sum
\begin{equation*}
    \sum_{l_1,l_2\geq 1} \tilde{\psi}_{p^{l_1+l_2}}\mid \Lambda_{+}(\pi^{l_1})\Lambda_{+}(\overline{\pi}^{l_2})p^4\Delta_{p}Y_1^{l_1}Y_2^{l_2}
\end{equation*}
can be computed to be
\begin{equation*}
    p^{2k-4}\left(\frac{(1-p^{2k-5}Y_1Y_2)\tilde{\psi}_1\mid S_{\overline{\pi}}(Y_1)S_{\pi}(Y_2)}{D_{\overline{\pi},G}^{(2)}(Y_1)D_{\pi, G}^{(2)}(Y_2)} - \frac{\tilde{\psi}_1\mid S_{\pi}(Y_2)}{D_{\pi, G}^{(2)}(Y_2)}-\frac{\tilde{\psi}_1\mid S_{\overline{\pi}}(Y_1)}{D_{\overline{\pi}, G}^{(2)}(Y_1)}+ \tilde{\psi}_1\right).
\end{equation*}
Finally,
\begin{equation*}
    \sum_{l_1,l_2\geq 0} \tilde{\psi}_{p^{l_1+l_2}}\mid B_3\Lambda_{+}(\overline{\pi}^{l_2})\Lambda_{+}(\pi^{l_1})Y_1^{l_1}Y_2^{l_2} = \tilde{\psi}_{1}\mid B_3,
\end{equation*}
and
\begin{equation*}
    \sum_{l_1,l_2\geq 0} \tilde{\psi}_{p^{l_1+l_2}}\mid B_4\Lambda_{+}(\overline{\pi}^{l_2})\Lambda_{+}(\pi^{l_1})Y_1^{l_1}Y_2^{l_2} = \tilde{\psi}_{1}\mid B_4,
\end{equation*}
as $B_3\Lambda_{+}(\overline{\pi}) = B_4\Lambda_{+}(\overline{\pi}) = B_3\Lambda_{+}(\pi) = B_4\Lambda_{+}(\pi) = 0$ from equation \eqref{k_2lambda+}.
\end{proof}
\begin{proposition}
For $i=1,2$, we have
\begin{equation*}
    \sum_{l_1,l_2 \geq 0}\tilde{\psi}_{p^{l_1+l_2-2}} \mid \Lambda_{-}(p)B(X_i)\Lambda_{+}(\pi^{l_1})\Lambda_{+}(\overline{\pi}^{l_2})Y_1^{l_1}Y_2^{l_2}X_i^2 = 
\end{equation*}
\begin{equation*}
    = (1-\alpha_iX)\left[\frac{\tilde{\psi}_1 \mid S_{\pi}(Y_2)U_{\pi}(X_i)X_i^2Y_2^2}{D_{\pi, G}^{(2)}(Y_2)} + \frac{\tilde{\psi}_1 \mid S_{\overline{\pi}}(Y_1)U_{\overline{\pi}}(X_i)X_i^2Y_1^2}{D_{\overline{\pi}, G}^{(2)}(Y_1)}\right]+
\end{equation*}
\begin{equation*}
    +p^{4k-10}Y_1Y_2(1-p^{2k-5}Y_1Y_2)(1-p^2Y_2Y_1^{-1}\lambda_{\overline{\pi}}X_i)(1-p^2Y_1Y_2^{-1}\lambda_{\pi}X_i)X_i^2\frac{\tilde{\psi}_1 \mid S_{\overline{\pi}}(Y_1)S_{\pi}(Y_2)}{D_{\overline{\pi}, G}^{(2)}(Y_1)D_{\pi, G}^{(2)}(Y_2)}
\end{equation*}
\begin{equation*}
    -p^{4k-10}X_i^3Y_1Y_2(p^{k-3}- p^2Y_1Y_2^{-1}\lambda_{\pi})\frac{\tilde{\psi}_{1}\mid S_{\overline{\pi}}(Y_1)}{D_{\overline{\pi},G}^{(2)}(Y_1)}-p^{4k-10}X_i^3Y_1Y_2(p^{k-3}- p^2Y_2Y_1^{-1}\lambda_{\overline{\pi}})\frac{\tilde{\psi}_{1}\mid S_{\pi}(Y_2)}{D_{\pi,G}^{(2)}(Y_2)},
\end{equation*}
where again $U_{\pi}(t) := p^4\Delta_{\overline{\pi}}\Delta_p(T(\overline{\pi}, \pi)-p^4\Delta_p t) \in H_{p}^{1,1}[t]$, as in Proposition \ref{lem: Lambda-}.
\end{proposition}
\begin{proof}
For the proof, we rewrite the sum as follows:
\begin{equation*}
    \sum_{l_1, l_2 \geq 0} = \sum_{l_1 = 0, \textup{ }l_2 \geq 2} + \sum_{l_2 =0, \textup{ }l_1 \geq 2} + \sum_{l_1,l_2 \geq 1}.
\end{equation*}
We know how to compute the first two sums by Proposition \ref{lem: Lambda-}, so will now deal with the last one. We rewrite this as
\begin{equation*}
    \sum_{l_1,l_2 \geq 1}\tilde{\psi}_{p^{l_1+l_2-2}} \mid \Lambda_{-}(p)B(X_i)\Lambda_{+}(p)\Lambda_{+}(\pi^{l_1-1})\Lambda_{+}(\overline{\pi}^{l_2-1})Y_1^{l_1}Y_2^{l_2}X_i^2.
\end{equation*}
But
\begin{equation*}
    \Lambda_{-}(p)B(X_i)\Lambda_{+}(p) = p^6\Delta_p^{2}(1 - B_1 X_i + p^4\Delta_p X_i^2),
\end{equation*}
as we can obtain by Table \ref{table:1} or the relations written in \cite[p. 2881-2882]{gritsenko3}. Now, using Proposition \ref{prop:f2}, we get
\begin{equation*}
    \sum_{l_1,l_2 \geq 1}\tilde{\psi}_{p^{l_1+l_2-2}} \mid \Lambda_{+}(\pi^{l_1-1})\Lambda_{+}(\overline{\pi}^{l_2-1})Y_1^{l_1}Y_2^{l_2}X_i^2 = Y_1Y_2 \sum_{l_1,l_2 \geq 0}\tilde{\psi}_{p^{l_1+l_2}} \mid \Lambda_{+}(\pi^{l_1})\Lambda_{+}(\overline{\pi}^{l_2})Y_1^{l_1}Y_2^{l_2}X_i^2 = 
\end{equation*}
\begin{equation*}
    = (1-p^{2k-5}Y_1Y_2)Y_1Y_2X_i^2\frac{\tilde{\psi}_1\mid S_{\overline{\pi}}(Y_1)S_{\pi}(Y_2)}{D_{\overline{\pi},G}^{(2)}(Y_1)D_{\pi, G}^{(2)}(Y_2)}.
\end{equation*}
Also, $B_1 = T(\pi, \overline{\pi})+T(\overline{\pi}, \pi)$ and we have
\begin{equation*}
    \sum_{l_1,l_2 \geq 1} \tilde{\psi}_{p^{l_1+l_2-2}}\mid T(\pi, \overline{\pi})\Lambda_{+}(\pi^{l_1-1})\Lambda_{+}(\overline{\pi}^{l_2-1})Y_1^{l_1}Y_2^{l_2} = \sum_{l_1,l_2 \geq 1} \tilde{\psi}_{p^{l_1+l_2-2}}\mid \Lambda_{+}(\pi^{l_1-1})T(\pi, \overline{\pi})\Lambda_{+}(\overline{\pi}^{l_2-1})Y_1^{l_1}Y_2^{l_2} = 
\end{equation*}
\begin{equation*}
    =Y_1Y_2\sum_{l_1,l_2\geq 0} \tilde{\psi}_{p^{l_1+l_2}} \mid \Lambda_{+}(\pi^{l_1})T(\pi, \overline{\pi})\Lambda_{+}(\overline{\pi}^{l_2})Y_1^{l_1}Y_2^{l_2}=
\end{equation*}
\begin{equation*}
    = Y_1Y_2 \sum_{l_1 \geq 0} \tilde{\psi}_{p^{l_1}} \mid \Lambda_{+}(\pi^{l_1})T(\pi, \overline{\pi})Y_1^{l_1} + p^2Y_1Y_2 \sum_{l_1 \geq 0, l_2 \geq 1} \tilde{\psi}_{p^{l_1+l_2}}\mid \Lambda_{+}(\pi^{l_1+1})\Lambda_{+}(\overline{\pi}^{l_2-1})\Delta_{\overline{\pi}}Y_1^{l_1}Y_2^{l_2}
\end{equation*}
\begin{equation*}
    =Y_1Y_2\frac{\tilde{\psi}_{1}\mid S_{\overline{\pi}}(Y_1)T(\pi, \overline{\pi})}{D_{\overline{\pi}, G}^{(2)}(Y_1)}+p^2Y_2^2\sum_{l_1\geq 0}\frac{\left[\tilde{\psi}_{p^{l_1+1}}-\tilde{\psi}_{p^{l_1}}\mid \Lambda_{-}(\overline{\pi})Y_2\right]\mid S_{\pi}(Y_2)\Lambda_{+}(\pi^{l_1+1})\Delta_{\overline{\pi}}Y_1^{l_1+1}}{D_{\pi, G}^{(2)}(Y_2)}
\end{equation*}
\begin{equation*}
    =Y_1Y_2\frac{\tilde{\psi}_{1}\mid S_{\overline{\pi}}(Y_1)T(\pi, \overline{\pi})}{D_{\overline{\pi}, G}^{(2)}(Y_1)}+p^2Y_2^2\left((1-p^{2k-5}Y_1Y_2)\frac{\tilde{\psi}_1\mid S_{\overline{\pi}}(Y_1)S_{\pi}(Y_2)\Delta_{\overline{\pi}}}{D_{\overline{\pi},G}^{(2)}(Y_1)D_{\pi, G}^{(2)}(Y_2)} - \frac{\tilde{\psi}_1\mid S_{\pi}(Y_2)\Delta_{\overline{\pi}}}{D_{\pi, G}^{(2)}(Y_2)}\right),
\end{equation*}
We obtain a similar expression for $T(\overline{\pi}, \pi)$ and then the result follows.
\end{proof}
\begin{proposition}
For $i=1, 2$, we have
\begin{equation*}
    \sum_{l_1,l_2 \geq 0}\tilde{\psi}_{p^{l_1+l_2-1}} \mid T_{-}(p)B(X_i)\Lambda_{+}(\pi^{l_1})\Lambda_{+}(\overline{\pi}^{l_2})Y_1^{l_1}Y_2^{l_2} = 
\end{equation*}
\begin{equation*}
(1-\alpha_iX)\textup{ }\times
\end{equation*}
\begin{equation*}
    \times\left[p^2\frac{\tilde{\psi}_1\mid S_{\pi}(Y_2)T(\pi)\Delta_{\overline{\pi}}X_iY_2}{D_{\pi,G}^{(2)}(Y_2)}-p^5\frac{\left[\tilde{\psi}_p-\tilde{\psi}_1 \mid \Lambda_{-}(\overline{\pi})Y_2\right]\mid S_{\pi}(Y_2)\Delta_{p}\Delta_{\overline{\pi}}T_{+}(p)Y_2^2X_i^2}{D_{\pi, G}^{(2)}(Y_2)}- p^{2k-4}\tilde{\psi}_{1}\mid T(\overline{\pi})Y_2X_i^2\right.+
\end{equation*}
\begin{equation*}
    \left.+p^2\frac{\tilde{\psi}_1\mid S_{\overline{\pi}}(Y_1)T(\overline{\pi})\Delta_{\pi}X_iY_1}{D_{\overline{\pi},G}^{(2)}(Y_1)}-p^5\frac{\left[\tilde{\psi}_p-\tilde{\psi}_1 \mid \Lambda_{-}(\pi)Y_1\right]\mid S_{\overline{\pi}}(Y_1)\Delta_{p}\Delta_{\pi}T_{+}(p)Y_1^2X_i^2}{D_{\overline{\pi}, G}^{(2)}(Y_1)} - p^{2k-4}\tilde{\psi}_{1}\mid T(\pi)Y_1X_i^2\right]+
\end{equation*}
\begin{equation*}
     +\frac{1}{2}p^{2k-5}(1+p^{2k-4}X_i^2)X_i \textup{ }\times
\end{equation*}
\begin{equation*}
\times\left[(1-p^{2k-5}Y_1Y_2)\frac{\left[\tilde{\psi}_p-\tilde{\psi}_1 \mid \Lambda_{-}(\pi)Y_1\right]\mid S_{\overline{\pi}}(Y_1)S_{\pi}(Y_2)T_{+}(p)Y_1Y_2}{D_{\overline{\pi},G}^{(2)}(Y_1)D_{\pi, G}^{(2)}(Y_2)} - \frac{\tilde{\psi}_1\mid \Lambda_{-}(\overline{\pi})S_{\pi}(Y_2)T_{+}(p)Y_1Y_2^2}{D_{\pi, G}^{(2)}(Y_2)}+\right.
\end{equation*}
\begin{equation*}
+\left.(1-p^{2k-5}Y_1Y_2)\frac{\left[\tilde{\psi}_p-\tilde{\psi}_1 \mid \Lambda_{-}(\overline{\pi})Y_2\right]\mid S_{\pi}(Y_2)S_{\overline{\pi}}(Y_1)T_{+}(p)Y_1Y_2}{D_{\pi,G}^{(2)}(Y_2)D_{\overline{\pi}, G}^{(2)}(Y_1)} - \frac{\tilde{\psi}_1\mid \Lambda_{-}(\pi)S_{\overline{\pi}}(Y_1)T_{+}(p)Y_2Y_1^2}{D_{\overline{\pi}, G}^{(2)}(Y_1)}\right]-
\end{equation*}
\begin{equation*}
    -X_i^2\left[p^5(1-p^{2k-5}Y_1Y_2)\frac{\left[\tilde{\psi}_p-\tilde{\psi}_1 \mid \Lambda_{-}(\pi)Y_1\right] \mid S_{\overline{\pi}}(Y_1)S_{\pi}(Y_2)\Delta_{\overline{\pi}}\Delta_{p}T_{+}(p)Y_2^2}{D_{\overline{\pi}, G}^{(2)}(Y_1)D_{\pi, G}^{(2)}(Y_2)} - p^5\frac{\tilde{\psi}_p \mid S_{\pi}(Y_2)T_{+}(p)\Delta_{\overline{\pi}}\Delta_pY_2^2}{D_{\pi, G}^{(2)}(Y_2)}\right.
\end{equation*}
\begin{equation*}
    \left.+\textup{ } p^{2k-4}Y_2\left(\frac{\tilde{\psi}_1 \mid S_{\overline{\pi}}(Y_1)T(\overline{\pi})}{D_{\overline{\pi}, G}^{2}(Y_1)} - \tilde{\psi}_1 \mid T(\overline{\pi})\right)\right.+
\end{equation*}
\begin{equation*}
    +p^5(1-p^{2k-5}Y_1Y_2)\frac{\left[\tilde{\psi}_p-\tilde{\psi}_1 \mid \Lambda_{-}(\overline{\pi})Y_2\right] \mid S_{\pi}(Y_2)S_{\overline{\pi}}(Y_1)\Delta_{\pi}\Delta_{p}T_{+}(p)Y_1^2}{D_{\pi, G}^{(2)}(Y_2)D_{\overline{\pi}, G}^{(2)}(Y_1)} - p^5\frac{\tilde{\psi}_p \mid S_{\overline{\pi}}(Y_1)T_{+}(p)\Delta_{\pi}\Delta_pY_1^2}{D_{\overline{\pi}, G}^{(2)}(Y_1)}
\end{equation*}
\begin{equation*}
    \left.+ \textup{ }p^{2k-4}Y_1\left(\frac{\tilde{\psi}_1 \mid S_{\pi}(Y_2)T(\pi)}{D_{\pi, G}^{2}(Y_2)} - \tilde{\psi}_1 \mid T(\pi)\right)\right].
\end{equation*}
\end{proposition}
\begin{proof}
If $l_1=0$ or $l_2=0$, then we know how to compute this by Proposition \ref{prop: t-}. So, assume $l_1, l_2 \geq 1$. Now,
\begin{equation*}
    T_{-}(p)\Lambda_{+}(\overline{\pi}^{l_2})\Lambda_{+}(\pi^{l_1}) = p^3\Lambda_{+}(\overline{\pi}^{l_2-1})\Lambda_{+}(\pi^{l_1-1})T_{+}(p)\Delta_p,
\end{equation*}
using that $T_{-}(p)\Lambda_{+}(p) = p^3\Delta_p T_{+}(p)$. Hence, we have 
\begin{equation*}
    \sum_{l_1,l_2\geq 1} \tilde{\psi}_{p^{l_1+l_2-1}}\mid T_{-}(p)\Lambda_{+}(\pi^{l_1})\Lambda_{+}(\overline{\pi}^{l_2})Y_1^{l_1}Y_2^{l_2} =p^3\sum_{l_1,l_2\geq 1} \tilde{\psi}_{p^{l_1+l_2-1}} \mid \Lambda_{+}(\pi^{l_1-1})\Lambda_{+}(\overline{\pi}^{l_2-1})T_{+}(p)\Delta_pY_1^{l_1}Y_2^{l_2} = 
\end{equation*}
\begin{equation*}
    = p^3\sum_{l_1 \geq 1} \frac{\left[\tilde{\psi}_{p^{l_1}}-\tilde{\psi}_{p^{l_1-1}}\mid \Lambda_{-}(\overline{\pi})Y_2\right] \mid S_{\pi}(Y_2)\Lambda_{+}(\pi^{l_1-1})T_{+}(p)\Delta_pY_1^{l_1}Y_2}{D_{\pi, G}^{(2)}(Y_2)}
\end{equation*}
\begin{equation*}
    = p^3\frac{\left[\tilde{\psi}_p-\tilde{\psi}_1 \mid \Lambda_{-}(\pi)Y_1\right]\mid S_{\overline{\pi}}(Y_1)S_{\pi}(Y_2)T_{+}(p)\Delta_pY_1Y_2}{D_{\overline{\pi},G}^{(2)}(Y_1)D_{\pi, G}^{(2)}(Y_2)} - p^3\frac{\tilde{\psi}_1\mid \Lambda_{-}(\overline{\pi})S_{\pi}(Y_2)T_{+}(p)\Delta_pY_1Y_2^2}{D_{\pi, G}^{(2)}(Y_2)}
\end{equation*}
\begin{equation*}
    - p^6\frac{\left[\tilde{\psi}_p-\tilde{\psi}_1 \mid \Lambda_{-}(\pi)Y_1\right]\mid S_{\overline{\pi}}(Y_1)S_{\pi}(Y_2)T_{+}(p)\Delta_p^2Y_1^2Y_2^2}{D_{\overline{\pi},G}^{(2)}(Y_1)D_{\pi, G}^{(2)}(Y_2)}=
\end{equation*}
\begin{equation*}
    = p^3(1-p^{2k-5}Y_1Y_2)\frac{\left[\tilde{\psi}_p-\tilde{\psi}_1 \mid \Lambda_{-}(\pi)Y_1\right]\mid S_{\overline{\pi}}(Y_1)S_{\pi}(Y_2)T_{+}(p)\Delta_pY_1Y_2}{D_{\overline{\pi},G}^{(2)}(Y_1)D_{\pi, G}^{(2)}(Y_2)} - p^3\frac{\tilde{\psi}_1\mid \Lambda_{-}(\overline{\pi})S_{\pi}(Y_2)T_{+}(p)\Delta_pY_1Y_2^2}{D_{\pi, G}^{(2)}(Y_2)}.
\end{equation*}\\
We note here that the last expression is not (visibly) symmetric when we interchange $\pi \longleftrightarrow \overline{\pi}$. In order to make it symmetric, we compute it by first calculating the series involving the operator $\Lambda_{-}(\pi)$ first and hence we can write
\begin{equation*}
    \sum_{l_i\geq 1} \tilde{\psi}_{p^{l_1+l_2-1}}\mid T_{-}(p)\Lambda_{+}(\pi^{l_1})\Lambda_{+}(\overline{\pi}^{l_2})Y_1^{l_1}Y_2^{l_2} =
\end{equation*}
\begin{align*}
     =\frac{1}{2}p^{2k-5}&\left[(1-p^{2k-5}Y_1Y_2)\frac{\left[\tilde{\psi}_p-\tilde{\psi}_1 \mid \Lambda_{-}(\pi)Y_1\right]\mid S_{\overline{\pi}}(Y_1)S_{\pi}(Y_2)T_{+}(p)Y_1Y_2}{D_{\overline{\pi},G}^{(2)}(Y_1)D_{\pi, G}^{(2)}(Y_2)} - \frac{\tilde{\psi}_1\mid \Lambda_{-}(\overline{\pi})S_{\pi}(Y_2)T_{+}(p)Y_1Y_2^2}{D_{\pi, G}^{(2)}(Y_2)}+\right. \\
     &+\left.(1-p^{2k-5}Y_1Y_2)\frac{\left[\tilde{\psi}_p-\tilde{\psi}_1 \mid \Lambda_{-}(\overline{\pi})Y_2\right]\mid S_{\pi}(Y_2)S_{\overline{\pi}}(Y_1)T_{+}(p)Y_1Y_2}{D_{\pi,G}^{(2)}(Y_2)D_{\overline{\pi}, G}^{(2)}(Y_1)} - \frac{\tilde{\psi}_1\mid \Lambda_{-}(\pi)S_{\overline{\pi}}(Y_1)T_{+}(p)Y_2Y_1^2}{D_{\overline{\pi}, G}^{(2)}(Y_1)}\right].
\end{align*}
Moreover, as in equation \eqref{k_2}, we have that
\begin{equation*}
    B_2\Lambda_{+}(\overline{\pi}^{l_2})\Lambda_{+}(\pi^{l_1})= p^4\Delta_p \Lambda_{+}(\pi^{l_1})\Lambda_{+}(\overline{\pi}^{l_2}),
\end{equation*}
and so
\begin{equation*}
 T_{-}(p)B_2\Lambda_{+}(\overline{\pi}^{l_2})\Lambda_{+}(\pi^{l_1}) =p^7\Lambda_{+}(\overline{\pi}^{l_2-1})\Lambda_{+}(\pi^{l_1-1})T_{+}(p)\Delta_p^{2}.
\end{equation*}
Finally, for the last one, we note $B_1 = T(\pi, \overline{\pi})+T(\overline{\pi}, \pi)$. Now
\begin{equation*}
    \sum_{l_1,l_2 \geq 1} \tilde{\psi}_{p^{l_1+l_2-1}} \mid T_{-}(p)T(\pi, \overline{\pi})\Lambda_{+}(\overline{\pi}^{l_2})\Lambda_{+}(\pi^{l_1})Y_1^{l_1}Y_2^{l_2} = 
\end{equation*}
\begin{equation*}
    =p^2\sum_{l_1,l_2 \geq 1} \tilde{\psi}_{p^{l_1+l_2-1}}\mid T_{-}(p)\Lambda_{+}(\pi^{l_1+1})\Lambda_{+}(\overline{\pi}^{l_2-1})\Delta_{\overline{\pi}}Y_1^{l_1}Y_2^{l_2}=
\end{equation*}
\begin{equation*}
    =p^2\sum_{l_1\geq 1, l_2=1} + \textup{ }p^2\sum_{l_1\geq 1, l_2 \geq 2}.
\end{equation*}
For the first sum, we have
\begin{equation*}
    p^2\sum_{l_1\geq 1} \tilde{\psi}_{p^{l_1}}\mid T_{-}(p)\Lambda_{+}(\pi^{l_1+1})\Delta_{\overline{\pi}}Y_1^{l_1}Y_2 = p^4Y_2\sum_{l_1 \geq 1} \tilde{\psi}_{p^{l_1}} \mid \Lambda_{+}(\pi^{l_1})T(\overline{\pi})\Delta_p Y_1^{l_1} = 
\end{equation*}
\begin{equation*}
     = p^4Y_2\sum_{l_1 \geq 0} \tilde{\psi}_{p^{l_1}} \mid \Lambda_{+}(\pi^{l_1})T(\overline{\pi})\Delta_p Y_1^{l_1} - p^4Y_2\tilde{\psi}_1 \mid T(\overline{\pi}) \Delta_p =
\end{equation*}
\begin{equation*}
    = p^{2k-4}Y_2\left[\frac{\tilde{\psi}_1 \mid S_{\overline{\pi}}(Y_1)T(\overline{\pi})}{D_{\overline{\pi}, G}^{2}(Y_1)} - \tilde{\psi}_1 \mid T(\overline{\pi})\right].
\end{equation*}
For the second
\begin{equation*}
    p^2 \sum_{l_1 \geq 1, l_2 \geq 2} \tilde{\psi}_{p^{l_1+l_2-1}}\mid T_{-}(p)\Lambda_{+}(\pi^{l_1+1})\Lambda_{+}(\overline{\pi}^{l_2-1})\Delta_{\overline{\pi}}Y_1^{l_1}Y_2^{l_2} =
\end{equation*}
\begin{equation*}
    = p^5 \sum_{l_1 \geq 1, l_2 \geq 2} \tilde{\psi}_{p^{l_1+l_2-1}} \mid \Lambda_{+}(\overline{\pi})^{l_2-2}\Lambda_{+}(\pi^{l_1}) \Delta_{\overline{\pi}}\Delta_{p} T_{+}(p) Y_1^{l_1}Y_2^{l_2} =
\end{equation*}
\begin{equation*}
    = p^5 \sum_{l_1 \geq 1} \frac{\left[\tilde{\psi}_{p^{l_1+1}}-\tilde{\psi}_{p^{l_1}}\mid \Lambda_{-}(\overline{\pi})Y_2\right]\mid S_{\pi}(Y_2) \Lambda_{+}(\pi^{l_1})\Delta_{\overline{\pi}}\Delta_{p} T_{+}(p)Y_2^{2}Y_1^{l_1}}{D_{\pi, G}^{(2)}(Y_2)}.
\end{equation*}
But
\begin{equation*}
    \sum_{l_1 \geq 1} \tilde{\psi}_{p^{l_1+1}} \mid S_{\pi}(Y_2) \Lambda_{+}(\pi^{l_1}) \Delta_{\overline{\pi}}\Delta_p T_{+}(p) Y_2^2Y_1^{l_1} =
\end{equation*}
\begin{equation*}
    = \frac{\left[\tilde{\psi}_p-\tilde{\psi}_1 \mid \Lambda_{-}(\pi)Y_1\right] \mid S_{\overline{\pi}}(Y_1)S_{\pi}(Y_2)\Delta_{\overline{\pi}}\Delta_{p}T_{+}(p)Y_2^2}{D_{\overline{\pi}, G}^{(2)}(Y_1)} - \tilde{\psi}_p \mid S_{\pi}(Y_2)T_{+}(p)\Delta_{\overline{\pi}}\Delta_pY_2^2,
\end{equation*}
and
\begin{equation*}
    \sum_{l_1 \geq 1} \tilde{\psi}_{p^{l_1}} \mid \Lambda_{-}(\overline{\pi})Y_2 S_{\pi}(Y_2) \Lambda_{+}(\pi^{l_1})\Delta_{\overline{\pi}}\Delta_p T_{+}(p) Y_2^2Y_1^{l_1} =
\end{equation*}
\begin{equation*}
    = p^3\sum_{l_1 \geq 1} \tilde{\psi}_{p^{l_1}} \mid \Lambda_{+}(\pi^{l_1-1})Y_1^{l_1} S_{\pi}(Y_2) T_{+}(p)\Delta_{\overline{\pi}}\Delta_p^2 Y_2^{3} = 
\end{equation*}
\begin{equation*}
    =p^3\frac{\left[\tilde{\psi}_{p}-\tilde{\psi}_1 \mid \Lambda_{-}(\pi)Y_1\right] \mid S_{\overline{\pi}}(Y_1)S_{\pi}(Y_2)\Delta_{\overline{\pi}}\Delta_p^{2}T_{+}(p)Y_2^3Y_1}{D_{\overline{\pi}, G}^{(2)}(Y_1)}.
\end{equation*}
Hence, in total
\begin{equation*}
    \sum_{l_1,l_2 \geq 1} \tilde{\psi}_{p^{l_1+l_2-1}} \mid T_{-}(p)T(\pi, \overline{\pi})\Lambda_{+}(\overline{\pi}^{l_2})\Lambda_{+}(\pi^{l_1})Y_1^{l_1}Y_2^{l_2} =
\end{equation*}
\begin{equation*}
    = p^5\frac{\left[\tilde{\psi}_p-\tilde{\psi}_1 \mid \Lambda_{-}(\pi)Y_1\right] \mid S_{\overline{\pi}}(Y_1)S_{\pi}(Y_2)\Delta_{\overline{\pi}}\Delta_{p}T_{+}(p)Y_2^2}{D_{\overline{\pi}, G}^{(2)}(Y_1)D_{\pi, G}^{(2)}(Y_2)} - p^5\frac{\tilde{\psi}_p \mid S_{\pi}(Y_2)T_{+}(p)\Delta_{\overline{\pi}}\Delta_pY_2^2}{D_{\pi, G}^{(2)}(Y_2)} 
\end{equation*}
\begin{equation*}
    - p^8\frac{\left[\tilde{\psi}_{p}-\tilde{\psi}_1 \mid \Lambda_{-}(\pi)Y_1\right] \mid S_{\overline{\pi}}(Y_1)S_{\pi}(Y_2)\Delta_{\overline{\pi}}\Delta_p^{2}T_{+}(p)Y_2^3Y_1}{D_{\overline{\pi}, G}^{(2)}(Y_1)D_{\pi, G}^{(2)}(Y_2)} + p^{2k-4}Y_2\left[\frac{\tilde{\psi}_1 \mid S_{\overline{\pi}}(Y_1)T(\overline{\pi})}{D_{\overline{\pi}, G}^{2}(Y_1)} - \tilde{\psi}_1 \mid T(\overline{\pi})\right],
\end{equation*}
and the corresponding expression for $T(\overline{\pi},\pi)$.
\end{proof}
\subsection{Final expression for the Dirichlet series}
We recall that
 \begin{equation*}
     D_{F,G,h}^{(p)}(s) = D_{(\epsilon_1,\epsilon_2)}(s)+D_{(l_1,l_2)}(s) + D_{(\epsilon_1,l_2)}(s) +
 D_{(\epsilon_2, l_1)}(s) -
 \end{equation*}
 \begin{equation*}
 -D_{(\epsilon_1,\epsilon_2,l_1)}(s) -D_{(\epsilon_2,l_1,l_2)}(s)- D_{(\epsilon_1,l_1,l_2)}(s)-D_{(\epsilon_1,\epsilon_2,l_2)}(s)+D_{(\epsilon_1, \epsilon_2, l_1,l_2)}(s).
 \end{equation*}
 Now, if
 \begin{equation*}
    (\alpha_1-\alpha_2)D_{(\epsilon_1, \epsilon_2, l_2)}(s) = \alpha_1S_1(s)-\alpha_2S_2(s),
 \end{equation*}
 we have
 \begin{equation*}
     (\alpha_1-\alpha_2)D_{(\epsilon_1,l_2)}(s) = \frac{\alpha_1}{1-\alpha_1X}S_1(s) - \frac{\alpha_2}{1-\alpha_2X}S_2(s),
 \end{equation*}
 so 
 \begin{equation*}
      (\alpha_1-\alpha_2)[D_{(\epsilon_1,l_2)}(s) - D_{(\epsilon_1, \epsilon_2, l_2)}(s)] = \frac{\alpha_1^2X}{1-\alpha_1X}S_1(s) - \frac{\alpha_2^2X}{1-\alpha_2X}S_2(s).
 \end{equation*}
 We also recall that we have
 \begin{equation*}
     (\alpha_1-\alpha_2)D_{(\epsilon_1, \epsilon_2)}(s) = \alpha_1V_1(s) - \alpha_2V_2(s).
\end{equation*}
We can now state:
\begin{theorem}\label{Main Theorem, split case}
Let $2 \neq p = \pi \overline{\pi}$ be a split prime in $\mathcal{O}_K$. Let $F,G \in S_{2}^{k}$ and $h \in S_{1}^{k}$ be Hecke eigenforms, all having totally real Fourier coefficients, $h$ normalised, and $F$ belonging in the Maass space. Let also $\phi_1, \psi_1$ be the first Fourier-Jacobi coefficients of $F,G$ respectively and $X_i = \alpha_i p^{-(2k+s-4)}$, $Y_1 = \pi^{k}p^{-(2k+s-4)}$, $Y_2 = \overline{\pi}^{k}p^{-(2k+s-4)}$. We then have for $\textup{Re}(s)$ large enough 
\begin{equation*}
    (\alpha_1-\alpha_2)D_{F,G,h}^{(p)}(s) = 
    \frac{1}{Q_{p,G}^{(2)}(X_1)}\langle\tilde{\phi}_1, P(\alpha_2, \overline{s};\textup{ } G)\rangle_{\mathcal{A}} - \frac{1}{Q_{p,G}^{(2)}(X_2)}\langle\tilde{\phi}_1, P(\alpha_1, \overline{s};\textup{ } G)\rangle_{\mathcal{A}},
\end{equation*}
where (keeping in mind the conjugation because of the inner product)
\begin{equation*}
P(\alpha_i, s;\textup{ } G):=\alpha_iX_ip^{k-2}(1-p^{k-2}X_i)\left[(1+p^{3k-8}X_iY_1Y_2)\frac{\tilde{\psi}_1\mid S_{\pi}(Y_2)}{D_{\pi, G}^{(2)}(Y_2)}-Y_1\frac{\tilde{\psi}_1\mid S_{\pi}(Y_2)T(\pi)}{D_{\pi,G}^{(2)}(Y_2)} +\right.
\end{equation*}
\begin{equation*}
    +\left.(1+p^{3k-8}X_iY_1Y_2)\frac{\tilde{\psi}_1\mid S_{\overline{\pi}}(Y_1)}{D_{\overline{\pi}, G}^{(2)}(Y_1)}-Y_2\frac{\tilde{\psi}_1\mid S_{\overline{\pi}}(Y_1)T(\overline{\pi})}{D_{\overline{\pi},G}^{(2)}(Y_1)}\right]-
\end{equation*}
\begin{equation*}
    -\frac{1}{2}\alpha_iX_ip^{2k-5}Y_1Y_2(1-p^{k-2}X_i)^2\left[(1-p^{2k-5}Y_1Y_2)\frac{\left[\tilde{\psi}_p-\tilde{\psi}_1 \mid \Lambda_{-}(\pi)Y_1\right]\mid S_{\overline{\pi}}(Y_1)S_{\pi}(Y_2)T_{+}(p)}{D_{\overline{\pi},G}^{(2)}(Y_1)D_{\pi, G}^{(2)}(Y_2)}-\right. 
\end{equation*}
\begin{equation*}
    \left.-\frac{\tilde{\psi}_1\mid \Lambda_{-}(\overline{\pi})S_{\pi}(Y_2)T_{+}(p)Y_2}{D_{\pi, G}^{(2)}(Y_2)}+
    (1-p^{2k-5}Y_1Y_2)\frac{\left[\tilde{\psi}_p-\tilde{\psi}_1 \mid \Lambda_{-}(\overline{\pi})Y_2\right]\mid S_{\pi}(Y_2)S_{\overline{\pi}}(Y_1)T_{+}(p)}{D_{\pi,G}^{(2)}(Y_2)D_{\overline{\pi}, G}^{(2)}(Y_1)} - \right.
 \end{equation*}
 \begin{equation*}
\left.-\frac{\tilde{\psi}_1\mid \Lambda_{-}(\pi)S_{\overline{\pi}}(Y_1)T_{+}(p)Y_1}{D_{\overline{\pi}, G}^{(2)}(Y_1)}\right]+
\end{equation*}
\begin{equation*}
    +\alpha_i(1-p^{2k-5}Y_1Y_2)(1+p^{4k-9}Y_1Y_2X_i^2)(1-p^{k-2}X_i)^2\frac{\tilde{\psi}_1 \mid S_{\overline{\pi}}(Y_1)S_{\pi}(Y_2)}{D_{\overline{\pi}, G}^{(2)}(Y_1)D_{\pi, G}^{(2)}(Y_2)},
\end{equation*}
with $S_{\pi}, S_{\overline{\pi}}$ the polynomials defined in Proposition \ref{s polynomials} and $\Lambda_{-}(\pi), \Lambda_{-}(\overline{\pi}), T(\pi), T(\overline{\pi}), T_{+}(p)$ are operators defined in Subsection \ref{operators_split}. Also, $Q_{p}^{(2)}$ and $D_{\pi}^{(2)}, D_{\overline{\pi}}^{(2)}$ denote the $p$-factors of Gritsenko's and standard's $L$-function respectively, as in Definitions \ref{gritsenko l-function} and \ref{standard l-function} and $D_{F,G,h}^{(p)}$ is the $p$-factor of the Dirichlet series, as in equation \eqref{p-part, split}.
\end{theorem}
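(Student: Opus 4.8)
The plan is to follow the three-part decomposition already set up above and then assemble the pieces. I would start from the Maass-space rewriting of $D_{F,G,h}(s)$, which replaces every Fourier--Jacobi coefficient $\phi_m$ by $m^{3-k}\tilde\phi_1|T_-(m)$, so that the whole $p$-part is carried by the single coefficient $\tilde\phi_1$ against $\tilde\psi_1,\tilde\psi_p$ on the $G$-side. The inclusion--exclusion splitting into the nine sums $D_{(\epsilon_1,\epsilon_2)}, D_{(l_1,l_2)}, D_{(\epsilon_1,l_2)},\dots$ is forced by the constraints $\min(\epsilon_1,l_1)=0$ and $\min(\epsilon_2,l_2)=0$, and after converting the $U_{\pi^{l_1}},U_{\overline\pi^{l_2}}$ into $\Lambda_-(\pi^{l_1}),\Lambda_-(\overline\pi^{l_2})$ via the relation $\phi|_k\Lambda_-(\pi)=p^{2k-4}\pi^{-k}\phi|_kU_\pi$, each sum is treated by the same mechanism: apply $a_{p^m}=(\alpha_1^{m+1}-\alpha_2^{m+1})/(\alpha_1-\alpha_2)$ to linearise in the Satake parameters of $h$, move the lowering operators $T_-(p^m)$ and $\Lambda_-(\pi^{l_1}),\Lambda_-(\overline\pi^{l_2})$ across the pairing $\langle\cdot,\cdot\rangle_{\mathcal{A}}$ using the $P$-form adjointness relations $T_-(p)^{\mathrm{adj}}=T_+(p)$ and $\Lambda_-(\pi)^{\mathrm{adj}}=\Lambda_+(\overline\pi)$, and collapse the geometric series in $\epsilon_1,\epsilon_2$.

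For the resulting sums over the raising operators I would invoke the two rationality theorems applied to the Fourier--Jacobi coefficients of $G$: Proposition~\ref{prop:f1}, which supplies $Q_{p,G}^{(2)}(X_i)^{-1}$ through the polynomial $K$, and Proposition~\ref{prop:f2}, which supplies $D_{\pi,G}^{(2)},D_{\overline\pi,G}^{(2)}$ through $S_\pi,S_{\overline\pi}$. The indispensable algebraic input is the multiplication table of Hecke operators, Table~\ref{table:1}, together with Lemma~\ref{action}: these let me commute the raising operators $\Lambda_+(\pi^{l_1}),\Lambda_+(\overline\pi^{l_2})$ to the left so they act directly on the Fourier--Jacobi coefficients, and they evaluate the residual operators $T(\pi,\overline\pi),T(\overline\pi,\pi),\Delta_\pi,\Delta_{\overline\pi}$ on $\tilde\psi_1$. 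This is exactly the term-by-term work performed in the First, Second and Third Parts, so those three computations are the engine of the proof.

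Finally I would recombine the nine sums. The key bookkeeping device is the identity $D_{(\epsilon_1,l_2)}(s)-D_{(\epsilon_1,\epsilon_2,l_2)}(s)=\frac{\alpha_1^2X}{1-\alpha_1X}S_1-\frac{\alpha_2^2X}{1-\alpha_2X}S_2$ (and its $\pi\leftrightarrow\overline\pi$ mirror), which re-expresses the mixed contributions through the quantities $S_1,S_2$ computed in the Second Part, while $D_{(\epsilon_1,\epsilon_2)}=\alpha_1T_1-\alpha_2T_2$ furnishes the fully-$\Lambda$ part from the Third Part. Multiplying through by $(\alpha_1-\alpha_2)$ and regrouping everything by the two denominators $Q_{p,G}^{(2)}(X_1)^{-1}$ and $Q_{p,G}^{(2)}(X_2)^{-1}$, one reads off the claimed closed form $\langle\tilde\phi,T(X_i)\rangle_{\mathcal{A}}$, in which $S_\pi,S_{\overline\pi}$ and $\Lambda_-(\pi),\Lambda_-(\overline\pi),T(\pi),T(\overline\pi),T_+(p)$ appear precisely as produced by the three Parts.

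The hard part will be this last assembly. The expression for $T(X_i)$ is clean only after extensive cancellation among the many terms of the Third Part, and the answer must be symmetric under $\pi\leftrightarrow\overline\pi$ even though individual intermediate sums — notably the $T_-(p)$ contribution — are computed in a manifestly asymmetric order and have to be symmetrised by hand. Tracking the powers of $p$, the recurring factors $(1-p^{2k-5}Y_1Y_2)$ and $(1-p^{k-2}X_i)$, and the denominators $D_{\pi,G}^{(2)}(Y_2)D_{\overline\pi,G}^{(2)}(Y_1)$ through this recombination is what makes the calculation ``very complicated'': the principal risk is arithmetic bookkeeping rather than any missing structural ingredient, and indeed the step that remains open is only the further identification of this closed form with the degree-twelve twisted Euler factor, not the derivation of the form itself.
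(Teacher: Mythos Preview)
Your proposal is correct and follows essentially the same approach as the paper: the theorem is obtained precisely by assembling the computations of the First, Second, and Third Parts via the inclusion--exclusion decomposition, the adjointness relations, the rationality theorems (Propositions~\ref{prop:f1} and~\ref{prop:f2}), Table~\ref{table:1}, and Lemma~\ref{action}, together with the bookkeeping identity $D_{(\epsilon_1,l_2)}-D_{(\epsilon_1,\epsilon_2,l_2)}=\frac{\alpha_1^2X}{1-\alpha_1X}S_1-\frac{\alpha_2^2X}{1-\alpha_2X}S_2$ that you single out. Your remarks on the need to symmetrise the $T_{-}(p)$ contribution by hand and on the final recombination being a matter of careful bookkeeping rather than a missing structural idea accurately reflect the paper's own treatment.
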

\begin{proof}
We observe that both the left and right hand side of the claimed equation in the Theorem are holomorphic functions in $s$ for $\textup{Re}(s)$ large enough. Hence, it is enough to prove the equality for $s \in \mathbb{R}$ (see also the Remark \ref{identity_theorem} before Proposition \ref{easy_part_dirichlet_inert}). But then this follows by putting together the results of the last three Subsections.
\end{proof}
We finally have the following Proposition about the relation of $S_{\overline{\pi}}(Y_1)S_{\pi}(Y_2)$ with known $L$-functions.
\begin{proposition}
Assume $2\neq p = \pi \overline{\pi}$ is a split prime in $\mathcal{O}_K$. We have
\begin{equation*}
    S_{\overline{\pi},F}(Y_1)S_{\pi, F}(Y_2) = L_p(s+k-2, f)L_p\left(s+k-2, f, \left(\frac{-4}{p}\right)\right),
\end{equation*}
where $f \in S_{k-1}\left(\Gamma_{0}(4), \left(\frac{-4}{\cdot}\right)\right)$ is the modular form whose Maass lift is $F$, as in Proposition \ref{maass_lift}.
\begin{proof}
    Let us first consider $S_{\pi, F}(Y_2)$. By assuming that (here, $\mid_{k-1}$ is the usual $\textup{GL}_2$-action)
\begin{equation*}
    f \mid_{k-1} T(p) = a(p)f,
\end{equation*}
and using \cite[Lemma 3.3]{gritsenko_maass}, we obtain that
\begin{equation*}
    \tilde{\phi}_1 \mid_{k} T(\overline{\pi}) = p^{k-2}(\overline{\pi})^{-k}a(p)\tilde{\phi}_1.
\end{equation*}
Using now the fact that $Y_2 = \overline{\pi}^{k}p^{-(2k+s-4)}$ and that
\begin{equation*}
    S_{\pi}(Y_2) = 1 - T(\overline{\pi})Y_2 + p\Delta_{\overline{\pi}}T(\pi, \overline{\pi})Y_2^2,
\end{equation*}
we get
\begin{equation*}
    S_{\pi, F}(Y_2) = 1 - p^{-k-s+2}a(p)+p^{-k-2s+2} = L_{p}(s+k-2, f),
\end{equation*}
and similarly for $S_{\overline{\pi}}(Y_1)$. Given that $\left(\frac{-4}{p}\right) = 1$ in this case, the result follows. 
\end{proof}
\end{proposition}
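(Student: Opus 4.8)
The plan is to compute the three $F$-eigenvalues entering $S_{\pi}(t)$ and $S_{\overline{\pi}}(t)$, substitute the explicit values of $Y_1$ and $Y_2$, and recognise the two resulting quadratics as the local Euler factor of $f$. First I would fix the normalisation $f\mid_{k-1}T(p) = a(p)f$ and record the Satake parameters of $f$ via $1 - a(p)p^{-w} + \left(\frac{-4}{p}\right)p^{k-2}p^{-2w} = (1-\alpha_p p^{-w})(1-\beta_p p^{-w})$, so that $\alpha_p+\beta_p = a(p)$ and $\alpha_p\beta_p = \left(\frac{-4}{p}\right)p^{k-2}$. Since $p$ splits we have $\left(\frac{-4}{p}\right) = 1$, whence $\alpha_p\beta_p = p^{k-2}$ and, crucially, $L_p\left(s+k-2, f, \left(\frac{-4}{p}\right)\right) = L_p(s+k-2, f)$. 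This last remark already reduces the proposition to the single identity $S_{\pi,F}(Y_2) = S_{\overline{\pi},F}(Y_1) = L_p(s+k-2, f)$.

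The heart of the argument is to read off the eigenvalues of the operators $T(\overline{\pi})$, $\Delta_{\overline{\pi}}$ and $T(\pi,\overline{\pi})$ that define $S_{\pi}(t)$ (cf. Proposition \ref{s polynomials}). For the linear coefficient I would invoke the Maass-space structure of $F$ (Proposition \ref{maass_lift}) together with \cite[Lemma $3.3$]{gritsenko_maass}, which transports the parabolic Hecke action on the first Fourier-Jacobi coefficient of $F$ to the elliptic Hecke action on $f$; this should yield $\tilde{\phi}_1\mid_k T(\overline{\pi}) = p^{k-2}\overline{\pi}^{-k}a(p)\,\tilde{\phi}_1$. The quadratic coefficient $p\Delta_{\overline{\pi}}T(\pi,\overline{\pi})$ is then handled by two short scalar computations: the eigenvalue of $\Delta_{\overline{\pi}}$ is $p^{2k-4}\overline{\pi}^{-2k}$, obtained from the scalar matrix $\overline{\pi}1_4$ whose similitude is $\mu(\overline{\pi}1_4) = p$ and whose factor of automorphy is $\overline{\pi}^2$, while the eigenvalue of $T(\pi,\overline{\pi})$ is $p^{k-3}$ by Lemma \ref{action} and the remark following it.

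Substituting $Y_2 = \overline{\pi}^{k}p^{-(2k+s-4)}$ into $S_{\pi}(Y_2)$, the powers of $\overline{\pi}$ cancel: the linear term becomes $a(p)p^{2-k-s}$ and the quadratic term becomes $p^{2-k-2s}$, so that $S_{\pi,F}(Y_2) = 1 - a(p)p^{2-k-s} + p^{2-k-2s}$, which is exactly $L_p(s+k-2, f)$ by the definition of $\alpha_p,\beta_p$ and the relation $\alpha_p\beta_p = p^{k-2}$. The identical computation with $\pi$ and $\overline{\pi}$ interchanged, using $Y_1 = \pi^{k}p^{-(2k+s-4)}$, gives $S_{\overline{\pi},F}(Y_1) = L_p(s+k-2, f)$. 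Combining the two factors and applying the reduction from the first step produces $S_{\overline{\pi},F}(Y_1)S_{\pi,F}(Y_2) = L_p(s+k-2, f)^2 = L_p(s+k-2, f)L_p\left(s+k-2, f, \left(\frac{-4}{p}\right)\right)$, as claimed.

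The main obstacle will be the eigenvalue extraction in the second step: correctly pinning down the eigenvalue of $T(\overline{\pi})$ from the Maass relation and tracking the exact powers of $\overline{\pi}$ contributed by $\Delta_{\overline{\pi}}$ and by Lemma \ref{action}, since any slip in these exponents would break the cancellation against the $\overline{\pi}^{k}$ appearing in $Y_2$. Once the three eigenvalues are in hand, the remaining passage to $L_p(s+k-2, f)$ and the use of $\left(\frac{-4}{p}\right)=1$ are purely formal bookkeeping.
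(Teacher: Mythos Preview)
Your proposal is correct and follows essentially the same approach as the paper: compute the eigenvalues of $T(\overline{\pi})$, $\Delta_{\overline{\pi}}$, and $T(\pi,\overline{\pi})$ on $\tilde{\phi}_1$ via \cite[Lemma~3.3]{gritsenko_maass} and Lemma~\ref{action}, substitute $Y_2 = \overline{\pi}^k p^{-(2k+s-4)}$, and recognise the resulting quadratic as $L_p(s+k-2,f)$, then use $\left(\tfrac{-4}{p}\right)=1$. You are in fact slightly more explicit than the paper, which suppresses the intermediate eigenvalues of $\Delta_{\overline{\pi}}$ and $T(\pi,\overline{\pi})$ and jumps directly to the final expression.
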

\section{Euler Product}
We can now use the above calculations in order to deduce the following Theorem:
\begin{theorem}\label{Main Theorem, Euler Product}
Assume $F,G,h$ satisfy the same assumptions as in the beginning of Subsection \ref{dirichlet series inert}, with $\psi_1 \not \equiv 0$. We then have that the series $D_{F,G,h}(s)$ of Theorem \ref{integral_representation_theorem} has an Euler product of the form
\begin{equation*}
    D_{F,G,h}(s) = 4\beta_k \langle \tilde{\phi}_1, \tilde{\psi}_1\rangle_{\mathcal{A}}\prod_{p \textup{ prime}}\frac{D_{F,G,h}^{(p)}(s)}{\langle \tilde{\phi}_1, \tilde{\psi}_1 \rangle_\mathcal{A}},
\end{equation*}
where $D_{F,G,h}^{(p)}(s)$ has been defined in equations \eqref{p-part dirichlet} and \eqref{p-part, split} for $p\neq 2$ and for $p=2$, we define
\begin{equation*}
    D_{F,G,h}^{(2)}(s) := \sum_{l, \epsilon, m \geq 0}\langle \tilde{\phi}_1\mid T_{-}(2^m)U_{\pi^{l}}, \tilde{\psi}_{2^{m+l}}\rangle_{\mathcal{A}}a_{2^{m+\epsilon}}2^{-sl}2^{-(k+s-1)\epsilon}2^{-(2k+s-4)m},
\end{equation*}
with $\pi := (1+i)$, together with the condition $\textup{min}(l, \epsilon)=0$.
\end{theorem}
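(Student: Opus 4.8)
The plan is to show that the coefficients of $D_{F,G,h}(s)$ are multiplicative over the rational primes, so that the series factors into a product of the local series $D_{F,G,h}^{(p)}(s)$ computed prime-by-prime in the previous sections, the normalising quantity $\langle\tilde\phi_1,\tilde\psi_1\rangle_{\mathcal{A}}$ being the ``trivial'' ($p$-adically empty) contribution at each place. First I would record that for $\mathrm{Re}(s)$ large the triple sum defining $D_{F,G,h}(s)$ converges absolutely, so that any rearrangement into a product over primes is legitimate.

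The first genuine step is combinatorial: the index set of triples $(m,l,\epsilon)$, with $m\in\mathbb{N}$ and $l,\epsilon\in\mathbb{Z}[i]$ normalised (positive real part, non-negative imaginary part) and coprime, is in canonical bijection with the restricted direct product over the rational primes of the local index sets. This is immediate from unique factorisation in $\mathbb{Z}$ and in $\mathbb{Z}[i]$, together with the fact that the global condition $(l,\epsilon)=1$ is equivalent to coprimality at each rational prime: for an inert $p$ the local datum is $(m_p,l_p,\epsilon_p)$ with $\min(l_p,\epsilon_p)=0$; for a split $p=\pi\overline\pi$ it is the pair of exponents of $\pi,\overline\pi$ occurring in $l$ and in $\epsilon$ together with $m_p$, subject to $\min(l_\pi,\epsilon_\pi)=\min(l_{\overline\pi},\epsilon_{\overline\pi})=0$; and for $p=2$ it is a power of $\pi=1+i$ with the single coprimality condition, exactly as in the definition of $D_{F,G,h}^{(2)}(s)$ in the statement.

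Next I would establish the multiplicativity of the summand. The power factors $N(l)^{-s}N(\epsilon)^{-(k+s-1)}m^{-(2k+s-4)}$ are completely multiplicative, and $a_{mN(\epsilon)}$ is multiplicative because $h$ is a normalised Hecke eigenform, so that $a_{m_1N(\epsilon_1)m_2N(\epsilon_2)}=a_{m_1N(\epsilon_1)}a_{m_2N(\epsilon_2)}$ whenever the two data have disjoint prime support. Invoking the tensor decomposition $H^{1,1}=\bigotimes_p H_p^{1,1}$ (and $H^{2}=\bigotimes_p H_p^{2}$), the operator $T_-(m)U_l$ splits as a product of commuting local operators supported at distinct primes. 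The crux is that the Petersson pairing $\langle\tilde\phi_1\mid T_-(m)U_l,\tilde\psi_{mN(l)}\rangle_{\mathcal{A}}$ factors across primes up to the single normalisation $\langle\tilde\phi_1,\tilde\psi_1\rangle_{\mathcal{A}}$. This is supplied by the $P$-form adjointness relations (the adjoint of $T_-$ being $T_+$, of $\Lambda_-(p)$ being $\Lambda_+(p)$, and the split relation $\Lambda_-(\pi)^{\mathrm{adj}}=\Lambda_+(\overline\pi)$) together with the local rationality theorems: Propositions \ref{prop: spinor primes} and \ref{rat2} for inert primes, and Propositions \ref{prop:f1} and \ref{prop:f2} for split primes. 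These linearise the Hecke action locally, so that summation over the exponents at one prime is independent of the others and produces precisely the rational function in the local parameters which, multiplied by $\langle\tilde\phi_1,\tilde\psi_1\rangle_{\mathcal{A}}$, constitutes $D_{F,G,h}^{(p)}(s)$; this is exactly the content of Theorems \ref{Main Theorem, inert case} and \ref{Main Theorem, split case}. The ramified prime $p=2$ is handled by the same scheme with $\pi=1+i$, the computation being a variant of the inert case, and yields the stated $D_{F,G,h}^{(2)}(s)$.

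The main obstacle is the factorisation of the inner product itself, since $\tilde\psi_{mN(l)}$ has composite index and is \emph{not} an eigenform of index one, so one cannot naively separate variables in the pairing. The resolution is structural: the tensor decomposition of the Hecke algebra guarantees that operators attached to distinct primes commute and act independently, while the local rationality theorems transport all index-raising operators to the $\tilde\psi$-side by adjunction, collapsing each local block to a scalar multiple of $\langle\tilde\phi_1,\tilde\psi_1\rangle_{\mathcal{A}}$. Once multiplicativity is in hand, the Euler product follows by the standard Dirichlet-series argument, with the prefactor $4\beta_k\langle\tilde\phi_1,\tilde\psi_1\rangle_{\mathcal{A}}$ arising from the global rewriting of $D_{F,G,h}(s)$ recorded at the start of Subsection \ref{dirichlet series inert}.
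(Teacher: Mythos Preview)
Your outline is essentially the paper's approach: decompose the index set over rational primes, invoke multiplicativity of the power factors and of $a_{mN(\epsilon)}$, split the Hecke operators via $H^{1,1}=\bigotimes_p H_p^{1,1}$, and then argue that the local sum at each prime separates from the rest. The paper formalises this last step exactly as you suggest, proving two propositions (one for inert, one for split primes) asserting that the $p$-part of the sum with fixed prime-to-$p$ data $(m',l',\epsilon')$ equals $\langle\tilde\phi_1\mid T_-(m')\Lambda_-(l'),\tilde\psi_{m'N(l')}\rangle_{\mathcal A}\,a_{m'N(\epsilon')}$ times $D_{F,G,h}^{(p)}(s)/\langle\tilde\phi_1,\tilde\psi_1\rangle_{\mathcal A}$.

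There is one point where the paper is more explicit than your sketch, and it is worth flagging. You write that the rationality theorems ``collapse each local block to a scalar multiple of $\langle\tilde\phi_1,\tilde\psi_1\rangle_{\mathcal A}$''. For inert primes this is literally true (Theorem~\ref{Main Theorem, inert case}). For split primes it is not: the expression $T(X_i)$ in Theorem~\ref{Main Theorem, split case} still contains terms such as $\tilde\psi_p\mid S_{\overline\pi}(Y_1)S_\pi(Y_2)T_+(p)$ and $\tilde\psi_1\mid\Lambda_-(\overline\pi)S_\pi(Y_2)T_+(p)$, which the rationality theorems alone do not reduce to scalars. What the paper actually verifies is that when the computation is redone with base index $n=m'N(l')$, every occurrence of $\tilde\psi_1$ becomes $\tilde\psi_n\mid T_+(m')\Lambda_+(l')$ and every occurrence of $\tilde\psi_p$ becomes $\tilde\psi_{pn}\mid T_+(m')\Lambda_+(l')$, and then uses that $G$ is an eigenform for the \emph{full} algebra $H^2_p$ together with the embedding $\epsilon\colon H^2_p\hookrightarrow H^{1,1}_p$ (Proposition~\ref{embedding}) to show that $\tilde\psi_{pn}\mid T_+(p)$, $\tilde\psi_{pn}\mid\Lambda_+(\pi)$, $\tilde\psi_{pn}\mid\Lambda_+(\overline\pi)$ satisfy the \emph{same} relations to $\tilde\psi_n$ (with the same eigenvalue coefficients $\lambda_p$, $\lambda_{T_1}^G$, etc.) as $\tilde\psi_p$ does to $\tilde\psi_1$. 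It is this invariance of the Hecke relations under shift of base index, not the rationality theorems alone, that makes the local factor independent of $(m',l',\epsilon')$. Your resolution of the ``main obstacle'' should cite this mechanism rather than the rationality theorems by themselves.
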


The proof of this Theorem is the subject matter of this 
Section. We first need to define some elements of the global Hecke ring $H^{1,1}$. Let $m \geq 1$ and $l \in \mathcal{O}_K$. We then define
\begin{equation*}
    T_{-}(m) := j_{-} (T(m)), \textup{ }\Lambda_{-}(l) := j_{-}\left(\Gamma_{1}\textup{diag}(l, l)\Gamma_1\right),
\end{equation*}
where $j_{-}$ is the embedding of equation \eqref{+- embeddings}. Here, $T(m)$ is the standard Hecke element in $H^{1}$, as in Definition \ref{maass_defn}. We then observe that 
\begin{equation}\label{multiplicative}
    T_{-}(m_1m_2) = T_{-}(m_1)T_{-}(m_2), \textup{ }\Lambda_{-}(l_1l_2) = \Lambda_{-}(l_1)\Lambda_{-}(l_2)
\end{equation}
when $m_1,m_2 \in \mathbb{N}$ and $l_1,l_2 \in \mathcal{O}_K$ are co-prime. This follows from the corresponding statements for $H^{1}$ and the fact that the $j_{-}$ embedding is a ring homomorphism. We also claim that these elements commute with our known Hecke elements when we allow co-prime arguments.
\begin{lemma}\label{different primes}
    Let $p \neq 2$ be any rational prime. Assume that $m \in \mathbb{N}$ and $l \in \mathcal{O}_K$ are co-prime to $p$. Then, the elements $T_{-}(m)$ and $\Lambda_{-}(l)$ commute with all the elements listed in Subsection \ref{hecke inert} (if $p$ is inert) and all the elements listed in Subsection \ref{operators_split} (if $p$ splits).
\end{lemma}
\begin{proof}
    The proof is done case by case. By the multiplicative property of equation \eqref{multiplicative}, it suffices to consider $m, l$ prime elements, co-prime to $p$. Assume first $p$ is inert. Let then $X$ be either $T_{-}$ or $\Lambda_{-}$ with the corresponding argument being prime co-prime to $p$. By \cite[Lemma 3.8]{gritsenko}, we have
    \begin{equation}\label{comm}
        \epsilon(T_{1,p})X = X\epsilon(T_{1,p}), \textup{ }\epsilon(T_p)X = X\epsilon(T_p). 
    \end{equation}
    The first equation now gives (from the proof of Proposition \ref{prop: rankin_prime})
    \begin{equation*}
        \left(T^{J}(p) + \Lambda_{-}(p)+\Lambda_{+}(p) + \nabla_{p} - \Delta_p\right)X = X\left(T^{J}(p) + \Lambda_{-}(p)+\Lambda_{+}(p) + \nabla_{p} - \Delta_p\right).
    \end{equation*}
    By then looking at the different signatures of the elements (see Definition \ref{signature}) and using \cite[Proposition 3.3]{gritsenko} or \cite[Section 3.3]{heim}, we obtain the relations
    \begin{align*}
        \Lambda_{-}(p)X &= X\Lambda_{-}(p).\\
        \Lambda_{+}(p)X &= X\Lambda_{+}(p).\\
        \left(T^{J}(p)+(\nabla_p-\Delta_p)\right)X &= X\left(T^{J}(p)+(\nabla_p-\Delta_p)\right).
    \end{align*}
    Now, $X$ commutes with $\Delta_p$ and we can show commutativity with $\nabla_p$ using coset decompositions. Then, commutativity with $T^{J}(p)$ follows from the third equation above. Finally, commutativity with $T_{+}(p), T_{-}(p)$ follows from the second equation in \ref{comm}, as $\epsilon(T_p) = T_{+}(p)+T_{-}(p)$.\\\\
    For the split case, we proceed similarly, using the embeddings of the standard elements $T_{\pi}, T_{\overline{\pi}}$ and $T_p$ (which follow from Proposition \ref{embedding}). The only relation we do not obtain immediately is the commutativity with each of $T(\pi, \overline{\pi})$ and $T(\overline{\pi}, \pi)$. Instead, we get the commutativity with their sum (from the $\epsilon$-embedding of $T_p$). But, from Table \ref{table:1}, we have $\Lambda_{-}(\pi)\Lambda_{+}(\pi) = p\Delta_{\pi}T(\pi, \overline{\pi})$ and then commutativity follows from the commutativity of $X$ with the $\Lambda_{\pm}(\pi)$ elements and the fact that $\Delta_{\pi}$ is a unit in $H^{1,1}$. 
\end{proof}
Let us now focus on the proof of Theorem \ref{Main Theorem, Euler Product}. We need to distinguish cases when $p$ is inert or splits in $\mathbb{Z}[i]$. We have the following two Propositions, the proof of which is essentially the same.
\begin{proposition}
Let $p$ be an inert prime. Let $m' \in \mathbb{N}$ and $l', \epsilon' \in \mathbb{Z}[i]$ all relative prime to $p$. Then, we claim
\begin{equation*}
    \sum_{\substack{l, \epsilon, m \geq 0\\\textup{min}(l,\epsilon)=0}}\langle \tilde{\phi}_1\mid T_{-}(m'p^{m})\Lambda_{-}(l'p^l), \tilde{\psi}_{m'N(l')p^{m+2l}}\rangle_{\mathcal{A}}a_{m'N(\epsilon')p^{m+2\epsilon}}p^{-(3k+2s-8)l}p^{-2(k+s-1)\epsilon}p^{-(2k+s-4)m} = 
\end{equation*}
\begin{equation*}
    = \langle\tilde{\phi}_1\mid T_{-}(m')\Lambda_{-}(l'), \tilde{\psi}_{m'N(l')}\rangle_{\mathcal{A}}a_{m'N(\epsilon')}\left(\frac{D_{F,G,h}^{(p)}(s)}{\langle \tilde{\phi}_1, \tilde{\psi}_1 \rangle_\mathcal{A}}\right).
\end{equation*}
\begin{proposition}
Let $2 \neq p=\pi\overline{\pi}$ be a prime that splits in $\mathbb{Z}[i]$. Let $m' \in \mathbb{N}$ and $l', \epsilon' \in \mathbb{Z}[i]$ all relative prime to $p$ (or equivalently coprime to both $\pi, \overline{\pi}$). Then, we claim
\begin{multline*}
\sum_{\substack{l_1,l_2,\\ \epsilon_1, \epsilon_2,m \geq 0\\ \textup{min}(l_i,\epsilon_i)=0}}\langle \tilde{\phi}_1 \mid T_{-}(m'p^{m})\Lambda_{-}(l'\pi^{l_1}\overline{\pi}^{l_2}), \tilde{\psi}_{m'N(l')p^{m+l_1+l_2}}\rangle_{\mathcal{A}}a_{m'N(\epsilon')p^{m+\epsilon_1+\epsilon_2}} p^{(4-2k)l_1} p^{(4-2k)l_2} \pi^{l_1 k} \overline{\pi}^{l_2k}\times\\\times p^{-s(l_1+l_2)}p^{-(k+s-1)(\epsilon_1+\epsilon_2)}p^{-(2k+s-4)m}
 \end{multline*}
\begin{equation*}
    =\langle\tilde{\phi}_1\mid T_{-}(m')\Lambda_{-}(l'), \tilde{\psi}_{m'N(l')}\rangle_{\mathcal{A}}a_{m'N(\epsilon')}\left(\frac{D_{F,G,h}^{(p)}(s)}{\langle \tilde{\phi}_1, \tilde{\psi}_1 \rangle_\mathcal{A}}\right).
\end{equation*}
\end{proposition}
\end{proposition}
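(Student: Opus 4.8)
The plan is to prove both statements at once, since the only prime-dependent input is the local evaluation already performed in Sections~\ref{inert primes} and~\ref{split primes}. First I would isolate the three sources of multiplicativity. The global Hecke ring decomposes as the restricted tensor product $H(\Gamma_2,S^2)=\bigotimes_p H(\Gamma_2,S^2_p)$ (see \cite{gritsenko}), and this descends through the embedding $\epsilon$ of Lemma~\ref{lemma:embeddings} to $H^{1,1}$; hence, because $m'$ and $l'$ are prime to $p$, we may factor $T_-(m'p^m)=T_-(m')T_-(p^m)$ and $\Lambda_-(l'p^l)=\Lambda_-(l')\Lambda_-(p^l)$, with the prime-to-$p$ factors commuting with the $p$-power factors. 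Since $h$ is a normalised Hecke eigenform its coefficients are multiplicative, so $a_{m'N(\epsilon')p^{m+2\epsilon}}=a_{m'N(\epsilon')}\,a_{p^{m+2\epsilon}}$, and finally $N(\cdot)$ is completely multiplicative, splitting the Dirichlet factors accordingly.

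With the operators and coefficients factored, I would transfer the $p$-power operators to the $\tilde\psi$-entry of the inner product using the adjoint relations on $P$-forms (the $P$-form adjoint of $T_-$ is $T_+$, that of $\Lambda_-(p)$ is $\Lambda_+(p)$ in the inert case and of $\Lambda_-(\pi)$ is $\Lambda_+(\overline\pi)$ in the split case, as recorded in the excerpt and in \cite[Prop.~5.1]{gritsenko}), leaving $T_-(m')\Lambda_-(l')$ acting on $\tilde\phi_1$. The remaining sum over the $p$-exponents $(l,\epsilon,m)$ is then formally identical to the sum defining $D^{(p)}_{F,G,h}(s)$, with the only change that the base Fourier--Jacobi coefficient $\tilde\psi_1$ is replaced by $\tilde\psi_{m'N(l')}$ and one carries along the extra commuting factors $T_+(m')\Lambda_+(l')$.

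The decisive point is that the rationality theorems used to collapse these sums---Propositions~\ref{prop: spinor primes} and~\ref{rat2} in the inert case, and Propositions~\ref{prop:f1} and~\ref{prop:f2} in the split case---are stated for an \emph{arbitrary} index $m$. Consequently the entire local evaluation leading to Theorems~\ref{Main Theorem, inert case} and~\ref{Main Theorem, split case} goes through verbatim with base index $m'N(l')$, and since the resulting rational function is built only from the Satake/Hecke eigenvalues of $F,G,h$ and from $s$ (through $Q^{(2)}_{p,G}$, $D^{(2)}_{p,G}$ and the Maass factor $S^{(2),\mathrm{factor}}_F$), it does not depend on the base index; it equals exactly $D^{(p)}_{F,G,h}(s)/\langle\tilde\phi_1,\tilde\psi_1\rangle_{\mathcal A}$. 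Once the $p$-summation has returned the second argument to index one, undoing the adjoints recovers $\langle\tilde\phi_1\mid T_-(m')\Lambda_-(l'),\tilde\psi_{m'N(l')}\rangle_{\mathcal A}$, which, multiplied by $a_{m'N(\epsilon')}$, is precisely the prime-to-$p$ factor on the right-hand side. Throughout one must thread the constraint $\min(l,\epsilon)=0$---inherited from $\gcd(p,q)=1$ in the original Dirichlet series---so that the isolated $p$-local sum matches the constrained sum in the definition of $D^{(p)}$.

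The main obstacle is verifying that the long split-prime computation of Section~\ref{split primes} is genuinely robust under this change of base index: each commutation relation of Table~\ref{table:1} and each invocation of Propositions~\ref{prop:f1}--\ref{prop:f2} en route to Theorem~\ref{Main Theorem, split case} must be re-read with $m'N(l')$ in place of $1$, and one must check that the commuting operators $T_+(m')$ and $\Lambda_+(l')$ pass cleanly through the $p$-local collapse without coupling to it. Since the treatment follows \cite{gritsenko} and mirrors the multiplicativity step in \cite{heim} and \cite{gritsenko_1995}, the efficient route is to phrase everything through the index-general rationality theorems rather than to recompute the collapse.
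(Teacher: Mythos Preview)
Your approach is essentially the same as the paper's: factor the operators and Fourier coefficients by multiplicativity, push the $p$-power operators across via adjoints, and rerun the local computation with base index $m'N(l')$ instead of $1$. The paper phrases it exactly this way, noting that every occurrence of $\tilde\psi_1$ in the original argument becomes $\tilde\psi_{m'N(l')}\mid T_+(m')\Lambda_+(l')$, and that the prime-to-$p$ operators commute through.

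There is, however, one point you flag as ``the main obstacle'' but do not actually resolve, and it is precisely the step the paper spends its effort on. In the original local computations there are terms that do \emph{not} come directly from the rationality theorems applied at the base index: specifically the $\tilde\psi_p$-type contributions (e.g.\ via the middle term $\tilde\psi_{p^{2l-1}}\mid T_-(p)$ in the inert case, and the analogous $\tilde\psi_p\mid S_\pi(Y_2)T_+(p)$, $\tilde\psi_p\mid S_\pi(Y_2)S_{\overline\pi}(Y_1)T_+(p)$ terms in the split case). When the base index is shifted these become $\tilde\psi_{p\,m'N(l')}$-terms, and one must show that applying $T_+(p)$ (inert) or $T_+(p),\Lambda_+(\pi),\Lambda_+(\overline\pi)$ (split) to them returns an eigenvalue times $\tilde\psi_{m'N(l')}$ with the \emph{same} eigenvalue as in the base-index-$1$ case. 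The paper does this explicitly via the parabolic embeddings: from $\epsilon(T_p)=T_+(p)+T_-(p)$ one gets $\tilde\psi_{p\,m'N(l')}\mid T_+(p)=\lambda_p\,\tilde\psi_{m'N(l')}$ (the $T_-$-piece vanishes since $p\nmid m'N(l')$); in the split case the relations $\epsilon(T_2)=T_-(p)+T_+(p)+T(\pi,\overline\pi)+T(\overline\pi,\pi)$ and $\epsilon(T_1)=\Lambda_-(\overline\pi)+T(\overline\pi)+\Lambda_+(\overline\pi)$ yield the analogous identities (with a correction $-2p^{k-3}$ from Lemma~\ref{action}). Without this verification your assertion that ``the resulting rational function does not depend on the base index'' is unjustified; with it, your argument is complete and coincides with the paper's.
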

\begin{proof}
The proof is analogous to the proof of the results in Sections \ref{inert primes} and \ref{split primes}. By the multiplicative property of equation \eqref{multiplicative}, we rewrite the sum in the inert case as 
\begin{multline*}
    a_{m'N(\epsilon')}\sum_{l, \epsilon, m \geq 0}\langle \tilde{\phi}_1\mid T_{-}(m')\Lambda_{-}(l'), \tilde{\psi}_{m'N(l')p^{m+2l}} \mid T_{+}(p^{m})\Lambda_{+}(p^l)\rangle_{\mathcal{A}}a_{p^{m+2\epsilon}}p^{-(3k+2s-8)l}\times\\\times p^{-2(k+s-1)\epsilon}p^{-(2k+s-4)m},
\end{multline*}
using the multiplicativity property of the Fourier coefficients of $h$ as well. Similarly, we rewrite the sum for the split case in an analogous way.\\

We can now apply the rationality Propositions, as in Sections \ref{inert primes} and \ref{split primes}. The difference is that every time we previously had the term $\tilde{\psi}_1$, we will instead now have $\tilde{\psi}_{m'N(l')}$. This follows from fact that $m'N(l')$ is co-prime to $p$ (so terms of the form $\tilde{\psi}_{m'N(l')/p}$ vanish). Similarly, we now have terms of the form $\widetilde{\psi}_{pm'N(l')} \mid T_{+}(p)$ instead of the terms $\widetilde{\psi}_p \mid T_{+}(p)$.\\

By the calculations leading to Theorems \ref{Main Theorem, inert case} and \ref{Main Theorem, split case}, we claim that the expressions involving the Fourier-Jacobi coefficients of $G$ (i.e. before taking the inner product with $\widetilde{\phi}_1 \mid T_{-}(m')\Lambda_{-}(l')$), can be written in the form $\widetilde{\psi}_{m'N(l')} \mid R(Y,Y_1,Y_2,X_1,X_2)$, where $R$ is a polynomial with coefficients involving the operators $T^{J}(p), \textup{ } T(\pi), \textup{ }T(\overline{\pi}), \textup{ }T(\pi, \overline{\pi}), \textup{ }T(\overline{\pi},\pi)$ and is independent of $m', l'$.
\begin{flushleft}
Let us first deal with the inert case. The only expressions that are not into the form claimed above, are these of the form $\tilde{\psi}_{pm'N(l')} \mid T_{+}(p)$ (see Proposition \ref{prop4}). But, we can write
\begin{equation*}
    \tilde{\psi}_{pm'N(l')} \mid T_{+}(p) = \lambda_p \tilde{\psi}_{m'N(l')},
\end{equation*} 
where $\lambda_p$ is the eigenvalue of the operator $T_p \in H^{2}_p$, when it acts on $G$, i.e. $G \mid_kT_p = \lambda_{p}G$. This is true because of the embedding
\begin{equation*}
    \epsilon(T_p) = T_{+}(p)+T_{-}(p),
\end{equation*}
as in the proof of Proposition \ref{prop: rankin_prime}. So, we get
\begin{equation*}
    \tilde{\psi}_{m'N(l')} \mid\mid T_p = \lambda_p \tilde{\psi}_{m'N(l')},
\end{equation*}
and
\begin{equation*}
    \tilde{\psi}_{m'N(l')} \mid\mid T_p = \tilde{\psi}_{pm'N(l')} \mid T_{+}(p) + \tilde{\psi}_{m'N(l')/p} \mid T_{-}(p) = \tilde{\psi}_{pm'N(l')} \mid T_{+}(p).
\end{equation*}
\end{flushleft}
The same can be said for the split case as well. From Theorem \ref{Main Theorem, split case}, we will have terms of the form 
\begin{equation*}
\tilde{\psi}_{pm'N(l')} \mid S_{\overline{\pi}}(Y_1)S_{\pi}(Y_2)T_{+}(p), \textup{ }\tilde{\psi}_{m'N(l')}\mid \Lambda_{-}(\pi)S_{\overline{\pi}}(Y_1)S_{\pi}(Y_2)T_{+}(p), \textup{ }\tilde{\psi}_{m'N(l')} \mid \Lambda_{-}(\overline{\pi})S_{\pi}(Y_2)T_{+}(p),
\end{equation*}
(and the corresponding expressions for $\overline{\pi}$). But, from Table \ref{table:1}, we have the relations:
\begin{itemize}
    \item $\begin{aligned}[t]S_{\overline{\pi}}(Y_1)S_{\pi}(Y_2)T_{+}(p) &= T_{+}(p)\left[1-T(\pi)Y_1+p^3\Delta_{p}Y_1Y_2\right]\left[1-T(\overline{\pi})Y_2\right] + \\&+\Lambda_{+}(\pi)\left[T(\overline{\pi},\pi)Y_1 - p^2\Delta_{\overline{\pi}}Y_2\right]\left[1-T(\overline{\pi})Y_2\right]-\\&-p^2\Delta_{\pi}Y_1\Lambda_{+}(\overline{\pi})\left[1-T(\pi)Y_1+p^3\Delta_{p}Y_1Y_2\right]\left[1-T(\overline{\pi})Y_2\right]+\Lambda_{+}(\overline{\pi})S_{\overline{\pi}}(Y_1)Y_2.\end{aligned}$
    \item $\begin{aligned}[t]\Lambda_{-}(\pi)S_{\overline{\pi}}(Y_1)S_{\pi}(Y_2)T_{+}(p) &= p^2\Delta_{\pi}T(\overline{\pi})\left[1-T(\pi)Y_1+p^3\Delta_{p}Y_1Y_2\right]\left[1-T(\overline{\pi})Y_2\right] + \\&+p\Delta_{\pi}T(\pi,\overline{\pi})\left[T(\overline{\pi},\pi)Y_1 - p^2\Delta_{\overline{\pi}}Y_2\right]\left[1-T(\overline{\pi})Y_2\right]-\\&-p^5\Delta_{\pi}\Delta_pY_1\left[1-T(\pi)Y_1+p^3\Delta_{p}Y_1Y_2\right]\left[1-T(\overline{\pi})Y_2\right]+p^3\Delta_pS_{\overline{\pi}}(Y_1)Y_2.\end{aligned}$ 
    \item $\begin{aligned}[t]\Lambda_{-}(\overline{\pi})S_{\pi}(Y_2)T_{+}(p) &= p^2\Delta_{\overline{\pi}}T(\pi) - \left[p^5\Delta_{\overline{\pi}}\Delta_p - p\Delta_{\overline{\pi}}T(\pi, \overline{\pi})T(\overline{\pi}, \pi) + p^2\Delta_{\overline{\pi}}T(\pi)T(\overline{\pi})\right]Y_2 +\\&+ p^5\Delta_{\overline{\pi}}\Delta_pT(\overline{\pi})Y_2^2.\end{aligned}$
\end{itemize}
Now, from Proposition \ref{embedding}, we have
\begin{equation*}
    \epsilon(T_p) = T_{-}(p)+T_{+}(p)+T(\pi,\overline{\pi})+T(\overline{\pi},\pi),
\end{equation*}
and so we get
\begin{align*}
    \tilde{\psi}_{m'N(l')} \mid \mid T_p &= \tilde{\psi}_{m'N(l')} \mid \mid \left(T_{+}(p)+T_{-}(p)+T(\pi,\overline{\pi})+T(\overline{\pi},\pi)\right)\\
    &= \tilde{\psi}_{pm'N(l')} \mid T_{+}(p) + 0 + \tilde{\psi}_{m'N(l')} \mid (T(\pi,\overline{\pi})+T(\overline{\pi},\pi))\\
    &= \tilde{\psi}_{pm'N(l')} \mid T_{+}(p) + \tilde{\psi}_{m'N(l')} \mid 
    (T(\pi,\overline{\pi})+T(\overline{\pi},\pi)).
\end{align*}
But $\tilde{\psi}_{m'N(l')} \mid \mid T_p = \lambda_p \tilde{\psi}_{m'N(l')}$, where $\lambda_p$ is the eigenvalue of $G$ corresponding to $T_p$, and so we obtain
\begin{equation*}
    \tilde{\psi}_{pm'N(l')} \mid T_{+}(p) = \lambda_p\tilde{\psi}_{m'N(l')} -\tilde{\psi}_{m'N(l')} \mid (T(\pi,\overline{\pi})+T(\overline{\pi},\pi)).
\end{equation*}
Moreover, again from Proposition \ref{embedding}, we have
\begin{equation*}
    \epsilon(T_{\overline{\pi}}) = \Lambda_{-}(\overline{\pi})+T(\overline{\pi})+\Lambda_{+}(\overline{\pi}).
\end{equation*}
Hence, by a similar argument as above, we get
\begin{equation*}
    \tilde{\psi}_{pm'N(l')}\mid \Lambda_{+}(\overline{\pi}) = \lambda_{T_{\overline{\pi}}}\tilde{\psi}_{m'N(l')} - \tilde{\psi}_{m'N(l')}\mid T(\overline{\pi}),
\end{equation*}
and similarly for $\Lambda_{+}(\pi)$.\\

In particular, our claim now follows for both the inert and split case and therefore, the expression involving the Fourier-Jacobi coefficients of $G$ can be written in the form $\widetilde{\psi}_{m'N(l')} \mid R$, where $R = R(Y,Y_1,Y_2,X_1,X_2)$ is a polynomial with coefficients involving the operators $T^{J}(p), \textup{ }T(\pi), \textup{ }T(\overline{\pi}), \textup{ }T(\pi, \overline{\pi}
), \textup{ }T(\overline{\pi},\pi)$. These are all self-adjoint operators (see \cite[Lemma 4.3]{gritsenko}). Moreover, since $F$ is in the Maass space, from \cite[Theorem, p. 2911]{gritsenko}, $\tilde{\phi}_1$ is an eigenform for these operators, as these all have signature $1$. By now writing $R_{F}$ for the polynomial obtained when we substitute the eigenvalues of $\tilde{\phi}_1$ with respect to the above operators and using the commutativity of Lemma \ref{different primes}, we can write
\begin{align*}
    \langle \tilde{\phi}_1 \mid T_{-}(m')\Lambda_{-}(l'), \tilde{\psi}_{m'N(l')} \mid R\rangle_{\mathcal{A}} 
    &= \langle \tilde{\phi}_1 \mid \overline{R} T_{-}(m')\Lambda_{-}(l'), \tilde{\psi}_{m'N(l')}\rangle_{\mathcal{A}} \\
    &=\overline{R_{F}}\langle \tilde{\phi}_1 \mid T_{-}(m')\Lambda_{-}(l'), \tilde{\psi}_{m'N(l')} \rangle_{\mathcal{A}}\\
    &=\frac{1}{\langle \tilde{\phi}_1, \tilde{\psi}_1 \rangle}\langle \tilde{\phi}_1 \mid T_{-}(m')\Lambda_{-}(l'), \tilde{\psi}_{m'N(l')} \rangle_{\mathcal{A}} \overline{R_{F}}\langle \tilde{\phi}_1, \tilde{\psi}_1 \rangle_{\mathcal{A}}\\
    &=\frac{1}{\langle \tilde{\phi}_1, \tilde{\psi}_1 \rangle}\langle \tilde{\phi}_1 \mid T_{-}(m')\Lambda_{-}(l'), \tilde{\psi}_{m'N(l')} \rangle_{\mathcal{A}} \langle \tilde{\phi}_1, \tilde{\psi}_1 \mid R \rangle_{\mathcal{A}},
\end{align*}
where $\overline{R_{F}}$ is the polynomial obtained by taking the complex conjugate. The result now follows by comparing with the initial expression for $D_{F, G,h}^{(p)}(s)$, as the rightmost term is what we have originally (i.e., for $m'=l'=1$).
\end{proof}
The proof of Theorem \ref{Main Theorem, Euler Product} now follows from the above two Propositions by working prime by prime and factoring from the initial Dirichlet series the corresponding expression for each prime.
\renewcommand{\abstractname}{Acknowledgements}
\begin{abstract}
The second author was supported by the HIMR/UKRI "Additional Funding Programme for Mathematical Sciences", grant number EP/V521917/1 as well as by a scholarship by Onassis Foundation. The authors would also like to thank the anonymous referee, whose remarks contributed significantly in the improvement of the initial manuscript.
\end{abstract}
\printbibliography
\end{document}